\documentclass[11pt, letterpaper]{article}
\usepackage[margin=.8in]{geometry}
\usepackage[sort, numbers]{natbib}
\usepackage{amssymb}
\usepackage{amsmath}
\usepackage{amsthm}
\usepackage{dsfont}
\usepackage{graphicx}
\usepackage{color}
\usepackage{bm}
\usepackage{soul}
\usepackage{enumitem}
\usepackage{titling}
\usepackage{hyperref}
\hypersetup{
	pdftitle={Occupation of random walk cut points},
	pdfauthor={Y. Gao, X. Li, P. Panov, D. Shiraishi},
%	bookmarks=true,
	bookmarksnumbered=true,
	bookmarksopen=true,
	bookmarksopenlevel=1,
	breaklinks=false,
	backref=false,
	colorlinks=true
}

\setlength{\parindent}{12pt}
\bibliographystyle{plainnat}
\numberwithin{equation}{section}
\numberwithin{figure}{section}

\newtheorem{theorem}{Theorem}[section]
\newtheorem{definition}[theorem]{Definition}
\newtheorem{lemma}[theorem]{Lemma}
\newtheorem{proposition}[theorem]{Proposition}
\newtheorem{corollary}[theorem]{Corollary}

\newtheorem{remark}[theorem]{Remark}

\theoremstyle{remark}

\newcommand{\Ac}{\mathcal{A}}
\newcommand{\Bc}{\mathcal{B}}

\newcommand{\Dc}{\mathcal{D}}
\newcommand{\Ec}{\mathcal{E}}
\newcommand{\Fc}{\mathcal{F}}
\newcommand{\Nc}{\mathcal{N}}
\newcommand{\Mc}{\mathcal{M}}
\newcommand{\eM}{\mathcal{M}}
\newcommand{\Pc}{\mathcal{P}}

\newcommand{\Uc}{\mathcal{U}}
\newcommand{\Vc}{\mathcal{V}}
\newcommand{\Xc}{\mathcal{X}}
\newcommand{\Zc}{\mathcal{Z}}

\newcommand{\Wc}{\mathcal{W}}
\newcommand{\Kc}{\mathcal{K}}

\newcommand{\Eb}{\mathbb{E}}

\newcommand{\Nb}{\mathbb{N}}
\newcommand{\Pb}{\mathbb{P}}
\newcommand{\Rb}{\mathbb{R}}
\newcommand{\Zb}{\mathbb{Z}}

\newcommand{\Sc}{\mathcal{S}}
\newcommand{\Af}{\mathfrak{A}}
\newcommand{\gf}{\mathfrak{g}}

\newcommand{\Qf}{\mathbf{Q}}

\newcommand{\Gt}{\widetilde{G}}
\newcommand{\Kt}{\widetilde{K}}

\newcommand{\baK}{\overline{K}}
\newcommand{\baS}{\overline{S}}
\newcommand{\baW}{\overline{W}}

\newcommand{\babet}{\overline{\beta}}

\newcommand{\bagam}{\overline{\gamma}}
\newcommand{\baGam}{\overline{\Gamma}}

\newcommand{\dist}{\mathrm{dist}}
\newcommand{\cut}{\mathrm{cut}}
\newcommand{\Cont}{\mathrm{Cont}}

\newcommand{\len}{\mathrm{len}}

\newcommand{\eps}{\varepsilon}
\newcommand{\wh}{\widehat}
\newcommand{\wt}{\widetilde}
\newcommand{\ol}{\overline}

\newcommand{\state}{\mathcal{X}}
\newcommand{\ball}{\mathcal{B}}
\newcommand{\op}{\operatorname}
\newcommand{\Rd}{\Rb^d}
\newcommand{\Zd}{\Zb^d}
\newcommand{\upinf}{\uparrow \infty}
\newcommand{\limup}[1]{\lim_{#1 \upinf}}

\newcommand{\eZn}{\Zc_n}
\newcommand{\norm}[1]{||#1||}

\newcommand{\floo}[1] {\lfloor #1 \rfloor}

\setlength{\droptitle}{-5em} 

\title{Scaling limit of the occupation measure of random walk cut points}
\author{Yifan Gao\thanks{City University of Hong Kong. \href{mailto:yifangao@cityu.edu.hk}{yifangao@cityu.edu.hk}} \and Xinyi Li\thanks{Beijing International Center for Mathematical Research, Peking University. \href{mailto:xinyili@bicmr.pku.edu.cn}{xinyili@bicmr.pku.edu.cn}} \and Petr Panov\thanks{Optiver Holding B.V. \href{mailto:pvpanov93@gmail.com}{pvpanov93@gmail.com}} \and Daisuke Shiraishi\thanks{Graduate School of Informatics, Kyoto University. \href{mailto:shiraishi@acs.i.kyoto-u.ac.jp}{shiraishi@acs.i.kyoto-u.ac.jp}}}
\date{\today}
\setcounter{tocdepth}{2}

\begin{document}	
\maketitle
\begin{abstract}
	We consider the occupation measure of the cut points of a simple random walk on a $d$-dimensional cubic lattice for $d=2,3$, and we show that the scaling limit of the occupation measure in weak topology is the natural fractal measure on the Brownian cut points defined via its Minkowski content. 
\end{abstract}

\tableofcontents

\section{Introduction}
A point $x$ is a \emph{cut point} for a curve $\gamma$ in $\mathbb{R}^d$, $d\geq 2$, if $x$ has been visited only once by $\gamma$ and $\gamma\setminus\{x\}$ is not connected. In a similar fashion, one can define cut points for discrete paths in $\mathbb{Z}^d$. The set of cut points of Brownian motion and simple random walk in two and three dimensions form important examples of random fractals in the continuum and the discrete respectively. It is very natural to wonder if the latter is the scaling limit of the former, just as Brownian motion is the scaling limit of simple random walk. In this work, we answer this question positively by showing that the rescaled occupation measure of the random walk cut points converges weakly to the occupation measure of Brownian cut points.

\medskip

To describe our results more precisely and explain how they relate to previous results in this direction,
we start by discussing the intersection exponents and we recall some facts about the behavior of the Brownian and the random walk cut points.
In \cite{RWcuttimes}, Lawler showed that for $d\leq 3$, 
\begin{equation} \label{eq:nonint_exp_RW}
	\Pb \{ S^1[0, n] \cap S^2(0, n] = \emptyset  \} \asymp
	n^{-\xi / 2},
\end{equation}
where $S^{1}$ and $ S^{2}$ are independent simple random walks in $\mathbb{Z}^{d}$ started at the origin, $\xi=\xi_d$ is the \emph{intersection exponent},  and $\asymp$ means ``within multiplicative constants of'' (see Section \ref{sec:def} for a precise definition). {We also remark that the probability above is $\ge c$ for $d \ge 5$  and $\asymp (\log n)^{-1/2}$ for $d=4$; see e.g.\ \cite[Section 10]{RWintro}.

We can write an expression similar to \eqref{eq:nonint_exp_RW} for Brownian motion, although it is a bit more subtle. {If $W:[0,\infty)\mapsto \Rd$ is a standard Brownian motion and $d \le 3$, then for any $0 < s < t$, almost surely, $W(s) \not \in \Ac_{W[0, t]}$} (here and below, we denote the set of cut points of a curve $\gamma$ by $\Ac_{\gamma}$). If $W^1$ and $W^2$ are independent Brownian motions with $W^{1} (0) = 0$ and $ W^{2} (0)  \in \partial \Dc$ where $\Dc$ stands for the unit open ball  around the origin, then it is proved that 
\begin{equation} \label{eq:nonint_exp_BM}
	\Pb\{W^1[0, n] \cap W^2[0, n] = \emptyset \} \asymp
	n^{-\xi / 2}.
\end{equation}
Indeed, the classical gambler's ruin estimate shows that $\xi = 2$ for $d = 1$. {The estimate \eqref{eq:nonint_exp_BM} has been obtained in \cite{hausdorff} for $d=2, 3$. Moreover, in \cite{exp2}, Lawler, Schramm and Werner determined that $\xi = 5 / 4$ for $d = 2$ by making use of techniques developed in studying Schramm-Loewner evolution (SLE).} While the exact value of $\xi$ for $d = 3$ is not known, {bounds $1/2 < \xi < 1$ have been established rigorously} in \cite{burdzy1990, BMinvar}, and numerical simulations in \cite{3expsim} suggest that $\xi$ is close to $.58$. We also mention that the probability in \eqref{eq:nonint_exp_BM} is equal to $1$ for $d \ge 4$; see e.g.\ \cite[Section 9]{morters2010brownian}.

The estimate \eqref{eq:nonint_exp_BM} suggests that the set of Brownian cut points enjoys fractal nature, and in fact, Lawler in \cite{hausdorff} showed that $\Ac_{W[0, 1]}$ has box and Hausdorff dimension $\delta$, where 
\begin{equation}\label{eq:eta}
	\delta := d - \eta, \quad  \eta := \xi + d - 2.
\end{equation}
It is then a natural task to construct the occupation measure of Brownian cut points. Recently it has been showed in \cite{mink_cont} that  the $\delta$-Minkowski content of Brownian cut points exists, it is non-trivial and it induces a natural fractal measure on Brownian cut points. Here we state a specific version tailored for our setup (see Section \ref{subsec:review} for more discussions). Consider the standard Brownian motion $\gamma$ started from $0$ and  stopped when it exits $\Dc$, and let $H_{r} (x) = \{ \dist(x, \Ac_\gamma) \le e^{-r} \}$. We set
\begin{equation*}
\Cont_\delta(\Ac_\gamma) :=
\limup{r} \, \int_{\Rd}  e^{r\eta} \, 1_{H_{r} (x)} dx 
\end{equation*}
for the $\delta$-dimensional Minkowski content of the cut points of path $\gamma$. Using results in \cite{mink_cont}, one can define almost surely a regular non-atomic finite Borel measure $\nu$ by letting $\nu(\cdot) = \Cont_\delta (\cdot \cap \Ac_\gamma)$. 

We now define the (discrete) occupation measure of random walk cut points. For $n\geq 0$ (not necessarily an integer), we write
\begin{equation} \label{eq:def_rwocc}
	\nu_n :=
	c_1 \, e^{-n(2 - \xi)} \sum_{x \in {\eZn \cap \Ac_{\lambda}}} \delta_{x},
\end{equation}
where $\delta_{x}$ is a unit point mass at $x$, $\lambda$ is a simple random walk in $\eZn := e^{-n} \Zd$ started at $0$ and stopped when it exits $\Dc\cap\eZn$ (see Section~\ref{sec:def}, in particular the part on SRW, for a precise definition), {and $c_1$ is the universal constant provided by Theorem~\ref{t:one_pt}.}

We are now ready to state our main result.
\begin{theorem}\label{thm:REALMAIN}
The law of $\nu_{n}$ converges weakly to that of $\nu$ with respect to the topology of weak convergence of finite measures. 
\end{theorem}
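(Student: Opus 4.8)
The standard route for this kind of scaling-limit statement is a two-step argument: (i) show that the sequence of (laws of) random measures $\nu_n$ is tight in the space of finite Borel measures on $\overline{\Dc}$ equipped with the weak topology, and (ii) identify the limit by showing convergence of finitely many ``test statistics,'' typically the integrals $\int f\,d\nu_n$ against a suitable separating class of continuous test functions $f$, jointly with the random walk path itself. The natural coupling is a strong coupling (a dyadic/KMT-type or Skorokhod-embedding coupling) of the rescaled simple random walk $\lambda$ with a Brownian motion $W$ started at $0$ and stopped on exiting $\Dc$, so that $\lambda$ and $W$ are uniformly close on $\overline{\Dc}$ with overwhelming probability. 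Under such a coupling one wants to compare $\nu_n$ with the Minkowski-content measure $\nu$ built from $W$; the whole problem is then reduced to estimating, for $f$ ranging over (say) Lipschitz functions, the difference $\Eb\big[\big(\int f\,d\nu_n - \int f\,d\nu\big)^2\big]\to 0$.

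\smallskip

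\textbf{Step 1 (Tightness).} Using the one-point function estimate (Theorem~\ref{t:one_pt}) one gets $\Eb[\nu_n(\overline{\Dc})] = c_1 e^{-n(2-\xi)} \cdot \#\{x \in \eZn\} \cdot \Pb\{x \in \Ac_\lambda\} = O(1)$ uniformly in $n$, so the total masses are tight on $[0,\infty)$. Since $\overline{\Dc}$ is compact, this already gives tightness of the laws of $\nu_n$ in the weak topology (a bounded set of measures on a compact metric space is relatively compact), so every subsequence has a weakly convergent further subsequence. Hence it suffices to show that every subsequential limit equals (in law) $\nu$.

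\smallskip

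\textbf{Step 2 (One- and two-point estimates / second moment).} Fix a Lipschitz $f \colon \overline{\Dc}\to\Rb$. Write $X_n := \int f\,d\nu_n = c_1 e^{-n(2-\xi)}\sum_{x} f(x)\,1\{x\in\Ac_\lambda\}$ and $X := \int f\,d\nu$. The first moment of $X_n$ converges to $\Eb[X]$ by Theorem~\ref{t:one_pt} together with a Riemann-sum argument, using that the continuum one-point density is (up to the constant $c_1$) the limit of $e^{n\eta}\Pb\{x\in\Ac_\lambda\}$ and that $\nu$ has the corresponding expectation. For the variance one needs a sharp \emph{two-point function}: an estimate of the form
\begin{equation*}
\Pb\{x,y \in \Ac_\lambda\} \asymp e^{-2n\eta}\, |x-y|^{-\eta} \quad\text{for } |x-y| \gg e^{-n},
\end{equation*}
(with the matching up-to-constants behavior near the boundary), which feeds into $\Eb[X_n^2]\to\Eb[X^2]$ again by dominated Riemann-sum convergence, the key integrability input being $\eta < d$ so that $\int\!\int |x-y|^{-\eta}\,dx\,dy < \infty$. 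One then checks $\Eb[X^2]=\lim\Eb[X_n^2]$ against the known second-moment formula for the Minkowski-content measure $\nu$ from \cite{mink_cont}. Combined with convergence of all joint moments (or just the first two moments of $\int f\,d\nu_n$ for $f$ in a countable separating family, plus an $L^2$ argument), this forces each subsequential limit of $\nu_n$ to have the same law as $\nu$. Actually carrying this out requires knowing that $\nu$ is determined by its ``moments'' in the relevant sense; alternatively, one couples $\lambda$ to $W$ and shows directly that $\int f\,d\nu_n \to \int f\,d\nu$ in $L^2$ under the coupling, by proving that cut points of $\lambda$ and cut points of $W$ occupy, up to $o(1)$-mass, the same $e^{-n}$-neighborhoods --- this is the ``strong approximation'' philosophy and avoids any moment-problem subtlety.

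\smallskip

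\textbf{Main obstacle.} The hard part is the quantitative comparison at the level of occupation: knowing that $\lambda$ is uniformly close to $W$ does \emph{not} immediately imply their cut-point measures are close, because being a cut point is a delicate global property (a small perturbation can create or destroy cut points). The crux is therefore a \emph{stability/coupling estimate} showing that, with high probability, every $\eZn$-cut point of $\lambda$ lies within $e^{-n(1-o(1))}$ of $\Ac_W$ and vice versa, together with control of the exceptional ``near-cut'' points where the walk and Brownian motion disagree. This rests on separation-type lemmas and quasi-multiplicativity for the non-intersection event (the analogues of \eqref{eq:nonint_exp_RW}--\eqref{eq:nonint_exp_BM} with sharp error terms, uniform over starting configurations), which is exactly where most of the technical work --- and presumably the bulk of the paper --- goes. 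In dimension $d=2$ one additionally has SLE$_{8/3}$ inputs available, but the argument should be dimension-uniform for $d\in\{2,3\}$ given the requisite random-walk estimates.
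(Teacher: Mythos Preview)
Your overall architecture matches the paper: tightness is cheap (compact support plus uniform first-moment bound), and identification of the limit goes via $L^2$ convergence under a coupling of $\lambda$ with the Brownian path $W$ --- this is exactly Theorem~\ref{thm:vg}, from which Theorem~\ref{thm:REALMAIN} follows at once. The gap is in how you propose to establish the $L^2$ convergence. Convergence of $\Eb[\nu_n(f)]$ and $\Eb[\nu_n(f)^2]$ alone does not identify the limiting law (you acknowledge the moment-problem issue), so everything rests on the cross term $\Eb[\nu_n(f)\,\nu(f)]$ under the coupling, which unwinds to mixed probabilities of the form ``$x$ is a random-walk cut point and $y$ is close to a Brownian cut point''.

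Your suggested mechanism for this --- a stability estimate to the effect that every $\eZn$-cut point of $\lambda$ lies within $e^{-n(1-o(1))}$ of $\Ac_W$ and conversely --- is essentially Hausdorff convergence of the two cut-point sets, which the paper explicitly flags as \emph{open} (last paragraph of the introduction). The paper circumvents this by introducing a mesoscopic \emph{cut-ball} event at scale $e^{-n/4}$ (Definitions~\ref{def:cut-bm} and~\ref{def:cut-Db}) and proving the chain
\[
\text{RW cut point}\ \longleftrightarrow\ \text{RW cut ball}\ \longleftrightarrow\ \text{BM cut ball}\ \longleftrightarrow\ \text{BM Minkowski content}.
\]
The first comparison (Propositions~\ref{prop:f2},~\ref{prop:AKAA}) uses an inward coupling of non-intersecting random walks (Theorem~\ref{coupling}); the second (Proposition~\ref{prop:ct-3}) uses the Skorokhod embedding and exploits that cut-\emph{ball} events, being mesoscopic, are robust to the coupling error in a way that cut-\emph{point} events are not; the third (Theorem~\ref{thm:tvv}) adapts~\cite{mink_cont}. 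The cut-ball intermediary is the missing idea in your sketch: it trades the unstable microscopic cut-point property for a stable mesoscopic one before passing between the discrete and the continuum.
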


\medskip

This result is quite intuitive since if one couples a simple random walk and a Brownian motion together via Skorokhod embedding, then the cut points of both trajectories should appear roughly at the same locations and hence the continuous occupation measure should be well approximated by an appropriately renormalized discrete counterpart. In fact, Theorem \ref{thm:REALMAIN} is a corollary of the following $L^2$-convergence of occupation measures. 

\begin{theorem}\label{thm:vg}
        There is a coupling (see Section \ref{subsec:dcc} for more details) for the simple random walk $\lambda$ and the Brownian motion $\gamma$ defined above such that for any real-valued bounded continuous function $g$ on $\overline\Dc$, 
	\begin{equation}\label{eq:nug}
	\lim_{n\rightarrow\infty}\Eb[ (\nu_n(g)-\nu(g))^2 ] =0.
	\end{equation}
	Moreover, $\nu_n$ converges in probability for the weak topology towards $\nu$. 
\end{theorem}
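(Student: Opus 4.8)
The plan is to prove Theorem~\ref{thm:vg} via a second-moment (variance) argument, and then deduce Theorem~\ref{thm:REALMAIN} from it by a standard soft argument. I will first set up the coupling of Section~\ref{subsec:dcc} between the simple random walk $\lambda$ in $\eZn$ and the Brownian motion $\gamma$: via (dyadic) Skorokhod embedding, on a single probability space, the two paths stay close in the sup-norm with overwhelming probability, and moreover each cut point of one path should, with high probability, have a nearby point of the other path that is ``almost'' a cut point. The backbone of the proof is a one-point function estimate and a two-point function estimate. The one-point estimate---which is the content of the cited Theorem~\ref{t:one_pt}, providing the universal constant $c_1$---says that for $x$ in the bulk of $\Dc$ at scale $e^{-n}$,
\begin{equation*}
	\Pb\{ x \in \Ac_\lambda \} \approx c_1^{-1} \, e^{-n\eta} \, \Phi(x),
\end{equation*}
where $\Phi$ is the continuous density (with respect to Lebesgue measure) of the expected Minkowski-content measure $\Eb[\nu(\cdot)]$; integrating against $g$ this gives $\Eb[\nu_n(g)] \to \Eb[\nu(g)]$. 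The two-point estimate says that for $x,y$ in the bulk with $|x-y|$ of macroscopic or mesoscopic order,
\begin{equation*}
	\Pb\{ x,y \in \Ac_\lambda \} \lesssim e^{-2n\eta} \, |x-y|^{-\eta} \, (\text{bounded factor}),
\end{equation*}
and more precisely is asymptotic to the continuum two-point function of $\nu$. Summing over $x,y \in \eZn \cap \Dc$ weighted by $g(x)g(y)$ and by $c_1^2 e^{-2n(2-\xi)}$, this yields $\Eb[\nu_n(g)^2] \to \Eb[\nu(g)^2]$; the integrability near the diagonal is guaranteed because $\eta < d$ (equivalently $\delta > 0$), so $|x-y|^{-\eta}$ is locally integrable in $\Rd$ for $d = 2,3$.

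Next I would prove the cross term $\Eb[\nu_n(g)\,\nu(g)] \to \Eb[\nu(g)^2]$ using the coupling. Here I would use a ``mixed'' two-point estimate: for $x$ a lattice point and $z \in \Dc$, under the coupling
\begin{equation*}
	\Pb\{ x \in \Ac_\lambda,\ \dist(z,\Ac_\gamma) \le e^{-r} \} \approx (\text{appropriate product/kernel}),
\end{equation*}
obtained by showing that conditionally on a cut point of $\lambda$ at $x$, the Brownian path $\gamma$ has a cut point near $x$ with the right conditional intensity, and vice versa. Concretely, one writes $\nu(g) = \lim_r \int e^{r\eta} g(z) 1_{H_r(z)}\,dz$ and $\nu_n(g) = c_1 e^{-n\eta} \sum_x g(x) 1\{x \in \Ac_\lambda\}$, multiplies, takes expectations, and compares the resulting double integral/sum to that for $\Eb[\nu(g)^2]$ using a near-coupling of the cut-point sets at all relevant scales. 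Combining the three limits gives
\begin{equation*}
	\Eb[(\nu_n(g) - \nu(g))^2] = \Eb[\nu_n(g)^2] - 2\Eb[\nu_n(g)\nu(g)] + \Eb[\nu(g)^2] \to 0,
\end{equation*}
which is \eqref{eq:nug}. Convergence in probability for the weak topology then follows because $L^2$-convergence of $\nu_n(g)$ for every $g$ in a countable convergence-determining class (e.g.\ a dense family in $C(\ol\Dc)$) forces $\nu_n \to \nu$ in probability in the weak topology; and Theorem~\ref{thm:REALMAIN} follows since convergence in probability (on a coupling space) implies convergence in law.

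The main obstacle, I expect, is controlling the two-point function and its mixed analogue uniformly enough to pass to the limit---especially near the diagonal and near the boundary $\partial\Dc$. Near the diagonal, one needs a bound of the form $\Pb\{x,y \in \Ac_\lambda\} \lesssim e^{-2n\eta}|x-y|^{-\eta}$ that is uniform in $n$ and valid down to $|x-y| \gtrsim e^{-n}$; establishing this requires a quasi-multiplicativity / separation-lemma analysis of non-intersection events for the two ``halves'' of the walk around $x$ and around $y$, in the spirit of Lawler's work on intersection exponents, and then an additional argument (a Fubini-type estimate plus the existence of the limiting two-point density from \cite{mink_cont}) to upgrade the bound to a genuine asymptotic. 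Near $\partial\Dc$, the intensity of cut points degenerates and one must check the contribution of a boundary collar is negligible uniformly in $n$; this is where knowing that $\nu$ is a finite non-atomic measure with no mass on $\partial\Dc$, together with a matching uniform upper bound on $\Eb[\nu_n(\text{collar})]$, closes the argument. The coupling errors (Skorokhod embedding discrepancies, and the event that a cut point of $\lambda$ fails to have a nearby near-cut-point of $\gamma$) must be shown to contribute $o(1)$ in $L^2$, which again reduces to the same family of one- and two-point estimates applied on the bad events.
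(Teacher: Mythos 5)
Your high-level strategy matches the paper's: Theorem~\ref{thm:vg} is established via a second-moment computation, expanding $\Eb[(\nu_n(g)-\nu(g))^2]$ into $\Eb[\nu_n(g)^2]$, $\Eb[\nu(g)^2]$, and a cross term, and verifying that each converges to the common limit $\Eb[\nu(g)^2]$. The final step (passing from $L^2$-convergence of $\nu_n(g)$ for a dense class of $g$ to convergence in probability for the weak topology) is also the paper's argument. However, there are two substantive deviations, one organizational and one a genuine gap.

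The organizational difference: the paper does not work directly with general continuous $g$. It first proves the box-level $L^2$-bound (Theorem~\ref{thm:rw_occ}), which is where all the hard analytic work lives, and then derives Theorem~\ref{thm:vg} by a purely soft argument: approximate $g$ by a dyadic step function $g_k$ (using uniform continuity of $g$ on $\overline{\Dc}$), bound $\Eb[(\nu_n(g)-\nu_n(g_k))^2]$ using the moment bound \eqref{eq:eb-3} and the boundary/near-diagonal estimate Lemma~\ref{lem:boundary}, do the same for $\nu$, then apply Theorem~\ref{thm:rw_occ} box by box to $g_k$. Your approach attempts to expand the double integral for general $g$ directly, folding what is really the proof of Theorem~\ref{thm:rw_occ} into the proof of Theorem~\ref{thm:vg}. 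That is not wrong per se, but it obscures the modular structure the paper deliberately sets up, and it requires uniformity in $z,w$ that the box decomposition is designed to supply cleanly.

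The genuine gap is in the cross term. You write that one should show ``conditionally on a cut point of $\lambda$ at $x$, the Brownian path $\gamma$ has a cut point near $x$ with the right conditional intensity.'' This is exactly the statement that cannot be proved directly, and it is the reason the paper introduces cut balls. The Skorokhod embedding only controls $|\lambda-\gamma|$ up to scale $e^{(1/2+\eps)n}$, which is exponentially larger than the lattice scale $e^{-n}$; so a cut point of $\lambda$ (a microscopic object) cannot be matched to a cut point of $\gamma$ (a Hausdorff-measure-zero set) by the coupling alone. The paper instead replaces $\nu$ by a mesoscopic cut-ball approximation $\widetilde\nu_{n/4}$ (Theorem~\ref{thm:tvv} quantifies the error), defines the matching discrete cut-ball event $K_{3n/4}(z)$ at scale $e^{-n/4}$ (Definition~\ref{def:cut-Db}, which sits above the coupling error $e^{-5n/8}$), and then proves the chain of comparisons $\Pb\{K_{3n/4}(z)\cap\Ac_n(w)\}\simeq_V \Pb\{\widetilde K_{3n/4}^{(n)}(z)\cap\Ac_n(w)\}\simeq_V f(n)\Pb\{\Ac_n(z)\cap\Ac_n(w)\}$ (Propositions~\ref{prop:AKAA} and~\ref{prop:ct-3}), feeding into $\Eb[\nu_n(V)\widetilde\nu_{n/4}(V)]\simeq\Eb[\nu(V)^2]$ in Proposition~\ref{prop:vgtv}. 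Behind these lies the inward coupling of Section~\ref{sec:coupling}, the Markov-property version of the Skorokhod embedding (Lemma~\ref{lem:sem}), and the cut-ball Green's-function calculus of Section~\ref{sec:gcb}. Without this mesoscopic intermediary, your mixed estimate $\Pb\{x\in\Ac_\lambda,\ \dist(z,\Ac_\gamma)\le e^{-r}\}$ has no handle: for $r$ comparable to $n$, the coupling cannot resolve the geometry, and for $r$ much smaller than $n$ you no longer recover $\nu$ in the limit. Identifying the correct mesoscopic scale $e^{-3n/4}$ (between the lattice scale and the coupling error) and building the cut-ball comparison around it is the crucial missing idea.
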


In its turn, Theorem \ref{thm:vg} relies on the following local $L^2$-bound for occupation measures. 

\begin{theorem}
	\label{thm:rw_occ}
	Under the same coupling as Theorem \ref{thm:vg}, there is a constant $u>0$ such that for any ``nice'' box $V$ in $\Dc$ (see \eqref{eq:nice-box} for a precise definition),
	\[
	\Eb [(\nu_n(V) - \nu(V))^2] \le c \, e^{-un},
	\]
	where $c=c(V)$ is a positive constant that depends on $V$. 
\end{theorem}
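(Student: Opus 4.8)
The plan is to expand the square
\[
\Eb[(\nu_n(V) - \nu(V))^2] = \Eb[\nu_n(V)^2] - 2\,\Eb[\nu_n(V)\nu(V)] + \Eb[\nu(V)^2]
\]
and show each term converges to the same limit, which by a second-moment/two-point-function computation should be $\int_{V}\int_{V} G(x,y)\,dx\,dy$ for an appropriate two-point function $G$, with the errors being summable geometric (i.e.\ $O(e^{-un})$). To do this I would first establish \emph{one-point estimates}: by Theorem~\ref{t:one_pt} (the cited universal-constant result), $\Eb[\nu_n(V)] = c_1 e^{-n(2-\xi)}\sum_{x\in \eZn\cap V}\Pb\{x\in\Ac_\lambda\}$, and the asymptotics of the discrete cut-point probability $\Pb\{x\in\Ac_\lambda\}$ should match, up to the constant $c_1^{-1}$ and a power-law error, the Brownian cut-point ``intensity'' so that $\Eb[\nu_n(V)]\to \Eb[\nu(V)] = \int_V \rho(x)\,dx$ for the appropriate intensity $\rho$. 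The Minkowski-content convergence from \cite{mink_cont} gives the analogous statement on the Brownian side, including a rate.

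The core is the \emph{two-point (second-moment) estimate}. For $\Eb[\nu_n(V)^2]$ one writes a double sum over $x,y\in\eZn\cap V$ of $\Pb\{x,y\in\Ac_\lambda\}$, and the key input is a sharp two-point function: for $x\neq y$ at mutual distance $\asymp e^{-n}\cdot m$ (i.e.\ $|x-y|\asymp e^{-m'}$ on the macroscopic scale), the probability that both are cut points factorizes, up to constants and polynomial corrections, as (prob.\ $x$ is a cut point)$\times$(prob.\ $y$ is a cut point of the piece of the walk near $y$)$\times$(a ``gluing'' factor coming from the non-intersection of the excursions between the coarse-grained pieces through $x$ and $y$). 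This is the standard decomposition used for intersection/cut exponents — decompose the walk at its last visit to a ball around $x$ and first visit to a ball around $y$, use the quasi-multiplicativity (``up-to-constants submultiplicativity is actually multiplicativity'') of the non-intersection probabilities, and the strong Markov property. The analogous decomposition holds for the Brownian two-point function, and the Skorokhod/dyadic coupling of Section~\ref{subsec:dcc} lets one compare the discrete and continuous excursions on each scale with an error that is polynomially small in the scale gap, hence summable. The cross term $\Eb[\nu_n(V)\nu(V)]$ is handled the same way, correlating the discrete cut-point event for $\lambda$ near $x$ with the Minkowski-content contribution of $\gamma$ near $y$, again using that under the coupling $\lambda$ and $\gamma$ stay close.

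Concretely, the steps in order: (1) record the one-point asymptotics with a power-law rate, reducing to Theorem~\ref{t:one_pt} and the rate in the Minkowski-content result; (2) prove the sharp two-point function for random-walk cut points, $\Pb\{x,y\in\Ac_\lambda\}\asymp e^{-n\eta}\,\rho_n(x)\rho_n(y)\,|x-y|^{-\eta}$-type bounds together with a \emph{matching} asymptotic (not just $\asymp$) obtained by the same quasi-multiplicativity argument that yields \eqref{eq:nonint_exp_RW}, and separately the continuum analogue; (3) partition the double sum/integral over $V\times V$ by dyadic annuli $|x-y|\in[2^{-k-1},2^{-k}]$, apply the coupling scale-by-scale so that the difference of the discrete and continuous two-point contributions on annulus $k$ is $O(e^{-uk})$ uniformly, and sum; (4) near the diagonal ($|x-y|\lesssim e^{-n}$, i.e.\ $x=y$ or nearest-neighbor scale) bound the contribution crudely — the number of such pairs times the single-point intensity $e^{-n(2-\xi)}$ is itself $O(e^{-un})$ since $2-\xi>0$ in $d=2,3$ — so this region is negligible; (5) assemble to get $\Eb[\nu_n(V)^2], \Eb[\nu_n(V)\nu(V)], \Eb[\nu(V)^2]$ all equal to $\int_V\int_V G$ up to $O(e^{-un})$, and subtract.

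The main obstacle I expect is step (3): transferring the two-point estimate through the coupling with a \emph{geometric} (not merely $o(1)$) rate, uniformly over all pairs $x,y\in V$. The danger zones are pairs at intermediate scales, where one must control, simultaneously, (a) the KMT/Skorokhod coupling error between the walk and Brownian excursions on \emph{each} of the logarithmically many scales between $e^{-n}$ and $|x-y|$, and (b) the stability of the non-intersection (cut) event under these small perturbations — i.e.\ a ``quasi-continuity''/near-independence estimate showing that shifting each excursion by a polynomially small amount changes the relevant non-intersection probability by at most a polynomially small relative amount. Establishing this robust version of quasi-multiplicativity for cut points — essentially a strong-coupling-compatible separation lemma — is the technical heart; everything else is bookkeeping with geometric series. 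One also needs the one-point and two-point estimates to hold with the \emph{same} universal constant $c_1$ on the discrete side as the Minkowski-content normalization on the continuous side, which is exactly why $c_1$ in \eqref{eq:def_rwocc} is pinned to Theorem~\ref{t:one_pt}; verifying this consistency is a prerequisite rather than a difficulty per se.
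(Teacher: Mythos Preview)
Your overall $L^2$ strategy---expand the square and show each of the three terms equals $\int_V\int_V G^{\cut}_\Dc(z,w)\,dz\,dw$ up to $O(e^{-un})$---is exactly the paper's, and your treatment of $\Eb[\nu_n(V)^2]$ and $\Eb[\nu(V)^2]$ via sharp two-point functions (Theorem~\ref{t:two_pt} and Theorem~\ref{thm:gr}) is right. The gap is in the cross term $\Eb[\nu_n(V)\nu(V)]$, where your proposal (``correlating the discrete cut-point event for $\lambda$ near $x$ with the Minkowski-content contribution of $\gamma$ near $y$'') runs into the very obstacle you flag in step~(3) but does not resolve.

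The difficulty is that $\nu(V)=\lim_{s\to\infty}J_{s,V}$ with $J_s(z)=e^{\eta s}\mathbf{1}_{H_s(z)}$, and the event $H_s(z)=\{\dist(z,\Ac_\gamma)\le e^{-s}\}$ still refers to \emph{actual} Brownian cut points. Even if you take $s$ mesoscopic (say $s=n/4$), deciding $H_s(z)$ requires knowing whether the Brownian path has a cut point somewhere in $\Dc_{-s}(z)$, which is a zero-scale property of the path and is not stable under the $O(e^{(1/2+\eps)n})$ Skorokhod perturbation. The paper says this explicitly at the start of Section~\ref{subsec:cbd}: $H_s(z)$ is ``hard to relate'' to the discrete cut-point event. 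So the ``quasi-continuity/near-independence estimate'' you hope for---that perturbing the excursions by a polynomially small amount changes the cut-point event probability by a polynomially small relative amount---is not available for $H_s$.

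What the paper does instead is introduce a third measure $\wt\nu_{s}(V)=\int_V L_s(z)\,dz$ built from the \emph{cut-ball} event $\wt K_s(z)$ (Definition~\ref{def:cut-bm}): the ball $\Dc_{-s}(z)$ is a cut ball if the first-entry and last-exit pieces of $\gamma$ through it are disjoint and the intermediate excursion stays in $\Dc_{-2s/3}(z)$. This event is defined purely at the mesoscopic scales $e^{-s}$ and $e^{-2s/3}$, with no reference to actual cut points, so it \emph{is} robust under the coupling. The proof then goes via the triangle inequality $\|\nu_n-\nu\|_2\le\|\nu_n-\wt\nu_{n/4}\|_2+\|\wt\nu_{n/4}-\nu\|_2$: the second piece is handled entirely on the Brownian side (Theorem~\ref{thm:tvv}, using Propositions~\ref{prop:one-point}--\ref{prop:two-point} to match $L_s$ with $J_s$), and the first piece (Proposition~\ref{prop:vgtv}) is where the coupling enters. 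Its cross term $\Eb[\nu_n(V)\wt\nu_{n/4}(V)]$ unpacks to $\Pb\{\Ac_n(z)\cap\wt K^{(n)}_{3n/4}(w)\}$, which is then compared to the \emph{discrete} cut-ball event $K_{3n/4}(z)$ (Proposition~\ref{prop:ct-3}, via Skorokhod embedding and its partial Markov property, Lemma~\ref{lem:sem}), and the discrete cut-ball event is in turn matched to the discrete cut-point event by the inward coupling of Section~\ref{sec:coupling} (Propositions~\ref{prop:f2}, \ref{prop:AKAA}).

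In short: the missing idea in your plan is the cut-ball intermediary. Everything you wrote about quasi-multiplicativity, separation, and scale-by-scale coupling is correct intuition, but it is applied to the cut-ball events, not to $H_s$ or to raw cut-point events directly.
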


We now discuss the proof of Theorem \ref{thm:rw_occ}. 

In order to relate the discrete occupation measure to the continuum, it is necessary to take one step back by looking at the cut balls (which can be regarded as a kind of approximation for cut points; see Section \ref{sec:gcb} for a precise definition). By adapting the ideas of \cite{mink_cont}, we are able to show in Propositions \ref{prop:one-point} and \ref{prop:two-point} that the Brownian cut-point Green's function, which is a key quantity in showing the existence of the Minkowski content of cut points, can also be interpreted as the Green's function of the Brownian cut balls. Similar arguments also work in the discrete case, see Propositions \ref{prop:f2} and \ref{prop:AKAA}, with the help of the coupling of pairs of non-intersecting random walks with different ``initial configurations'', which we discuss in Section \ref{sec:coupling}. 

We are left with the most difficult part in the $L^2$-control in Theorem \ref{thm:rw_occ}: the cross-terms. More precisely, under the Skorokhod embedding that couples together the random walk and the Brownian motion, we need to compare discrete and continuous cut-ball events, with the presence of a random walk cut point at mesoscopic distance apart. This crucial part is carried out  in  Proposition \ref{prop:ct-3}.

Our strategy relies on $L^2$-approximation to establish the convergence of discrete occupation measures  (in order to prove the convergence in natural parametrization) or the existence of natural measures for random fractals. This strategy is pretty stable and has been applied in various models, e.g.\ critical planar percolation, Schramm-Loewner evolutions, loop-erased random walks, Brownian cut points; see \cite{garban2013pivotal, lawler2015minkowski, LSNPLERW,holden2022natural,mink_cont} respectively. Indeed the scheme of our work can be adapted to another type of random fractals - the frontier (outer boundary) of  random walk and Brownian motion in two dimensions. In a forthcoming work \cite{NPfrontier},  three of the authors of this work are going to show that the frontier of planar random walk converges in  the natural parametrization towards  that of the planar Brownian motion, which is a variant of ${\rm SLE}_{8/3}$. In the course of the proof, a key step is to establish the convergence of the occupation measure of the frontier of the random walk, which will be tackled in a similar fashion as in this work. 

However, there is a major difference between our work and those cited above that deal with convergence of occupation measures of discrete random fractals where one builds $L^2$-approximation upon the knowledge that the scaling limit of the corresponding discrete fractal viewed as a set exists in the {\it Hausdorff} sense. In contrast, in this work we skip this procedure and work directly from a strong coupling of random walks and Brownian motion. It is then very natural to wonder  if one can also establish the convergence of the set of random walk cut points to that of Brownian cut points with respect to the Hausdorff distance (which does not trivially follow from our approximation scheme). We plan to tackle this question in a future work.
\medskip

The structure of this paper is as follows. 
We start by setting up the notation and basic denfinitions in Section~\ref{sec:def}. We then recall some standard facts on the Brownian motion and the simple random walk and introduce the path measures (Section~\ref{sec:prelims}) which will be used throughout this work. In Section~\ref{sec:nonint} we review some well-known results about the non-intersecting simple random walks together with the non-intersecting  Brownian paths. In Section~\ref{sec:moment}, we give some moment bounds on the number of random walk cut points. In Section~\ref{sec:gcb}, we review the cut-point Green's function and give another description for it via the cut-ball event. Section~\ref{sec:pt_estim} is dedicated to showing sharp one- and two-point convergence between random walk and Brownian cut points via the strong approximation. Then, we compare the cut-point event with the discrete cut-ball event, and the discrete cut-ball event with the continuous cut-ball event (Section~\ref{sec:cpcb}). For this purpose, we prepare in advance an inward coupling result for non-intersecting random walks in Section~\ref{sec:coupling}. Finally, we wrap up proofs of the main theorems Theorems~\ref{thm:vg} and \ref{thm:rw_occ} in Section~\ref{sec:measure_conv}.

{\bf Acknowledgments:} The authors thank Gregory F. Lawler for helpful and inspiring discussions. Part of this work was carried out when the second and third authors were at the University of Chicago.
YG is supported by a GRF grant from the Research Grants Council of the Hong Kong SAR (project CityU11306719).
XL is supported by National Key R\&D Program of China (No.\ 2021YFA1002700 and No.\ 2020YFA0712900) and NSFC (No.\ 12071012). DS is supported by JSPS Grant-in-Aid for Scientific Research (C) 22K03336, JSPS Grant-in-Aid for Scientific Research (B) 22H01128
and 21H00989.
}

\section{Notation and basic definitions}\label{sec:def}
In this section we {introduce basic notation, conventions and key  objects of our investigation.} 

\paragraph{Notation and conventions.} We let $\Nb:=\{0,1,2,3,\cdots\}$, $\Zb$ and $\Rb$ stand for the set of natural numbers, integers and reals resp. For $A, B \subset \Rd$, we define the distance between them by
$$\dist(A,B) = \inf_{x \in A, \, y \in B }|x - y|,$$
where $| \cdot |$ stands for the Euclidean distance. 
We extend this definition to a sequence of sets in $\Rb^d$ as follows. Suppose $A_i\subset \Rd$ for $1\le i\le m$, then we define 
\[
\dist(A_1,\ldots,A_m):=\min_{1\le i<j\le m} \dist(A_i,A_j).
\]
We allow the set to be a single point in $\Rb^d$, in which case we just write $x\in \Rb^d$ for $\{x\}$. For example, we write $\dist(x,B):=\dist(\{x\},B)$ for brevity. 
For $A \subset \Rd$ and $r>0$, we let
\begin{equation}\label{eq:sausage}
	\Theta(A,r):=\{ x\in\Rb^d: \dist(x,A)\le r \},
\end{equation}
which is the set of all points within a distance $r$ of the set $A$. We call $\Theta(A,r)$ the $r$-sausage of $A$. 
The sausage also applies to a pair of sets in $\Rb^d$:
\[
\Theta((A,B),r):=(\Theta(A,r),\Theta(B,r)) \quad \text{ for } A,B\subset \Rb^d.
\]
For two sets $A$ and $B$ in $\Rb^d$, the Hausdorff distance between $A$ and $B$ is given by
\begin{equation}\label{eq:d-H}
	d_H\left(A, B\right)=\inf \left\{r>0: A \subset \Theta(B,r) \text { and } B \subset \Theta(A,r) \right\}.
\end{equation}
For $z\in \Rb^d$ and $r>0$, we write 
\[
D_r(z)=D(z,r):=\{ x \in \Rd : |x-z| < r \},\quad B_r(z)=B(z,r)=D_r(z)\cap \Zd.
\]
Sometimes it will be more convenient to use exponential scales so that we abbreviate
\[
\Dc_r(z):=D_{e^r}(z), \quad  \Bc_r(z):= D_{e^r}(z)\cap \Zd. 
\]
We omit the dependence on $z$ from the notation when $z=0$.
We will use $\Dc$ to denote the unit disc $D(0,1)$.

We also give some notations in the discrete setting.
If $A \subset \Zd$, we let $\partial A$ and $\partial_{i} A$ be the outer and inner boundary of $A$ respectively, i.e., 
\begin{align*}
&\partial A = \{ x \in \mathbb{Z}^{d} \setminus A  :  \exists y \in A \text{ such that } |x- y| = 1 \} \\
&\partial_{i} A = \{ x \in  A  :  \exists y \in \mathbb{Z}^{d} \setminus A \text{ such that } |x- y| = 1 \}.
\end{align*}
We define $\overline{A} = A \cup \partial A$. For $x \in \mathbb{R}$, $\floo{x}$ denotes the floor function that gives as output the greatest integer less than or equal to $x$. When $x = (x_{1}, \cdots x_{d} ) \in \mathbb{R}^{d}$, we write $\floo{x} = ( \floo{x_{1}}, \cdots , \floo{x_{d}} )$. Furthermore, for $x \in \mathbb{R}^{d}$, we let $x_n=\floo{e^{n}x}$ and $x^{(n)} = e^{-n}x_n$ be the (discrete) blow-up and the discretized approximation of $x$ respectively.
Let $\eZn=e^{-n}\Zb^d$ be the grid with mesh size $e^{-n}$.
For a domain $D$ in $\Rb^d$, we let $D^{(n)}$ denote the largest connected subset of $D \cap \eZn$. 

We will often use positive finite universal (except the dimension $d$) constants which will be denoted by $c$, $c'$, $c''$, $u$ or $u'$, and whose values may change between lines. If a constant depends on a parameter we will use a bracket to indicate it. For example, $c=c(z)$ means the constant $c$ depends on $z$. If a constant has a subscript larger than $0$, then its value is fixed throughout the paper, while the subscript $0$ is reserved for constants that are only fixed within a given proof. If $(a_n)$ and $(b_n)$ are positive sequences, then we write 
\begin{itemize}
	\item {$a_n = O(b_n)$ or $a_n\lesssim b_n$ if there exists $c > 0$ such that $a_n \le c \, b_n$ for all $n$;}
	\item $a_n \asymp b_n$ if $a_n \lesssim b_n$ and $b_n \lesssim a_n$;
	\item {$a_{n}\simeq b_{n}$ if there exists $u > 0$ such that $a_{n}=b_{n}[1+O(e^{-un})]$.}
\end{itemize}
{If $(a_n)$ are real sequences, we still write $a_n=O(b_n)$ if $|a_n|=O(b_n)$.} 
If we add {subscripts} to these symbols above it means that the implied constant $c$ {or $u$} depends on the subscripts. For example, $a_n = O_z(b_n)$ means $a_n \le c(z) \, b_n$.

\paragraph{Paths.}
We let $\Pc$ denote the set of continuous (continuous-time) paths, that is, continuous mappings $\gamma : [0, t_\gamma] \mapsto \Rd$, where $t_\gamma\in (0,\infty)$ is referred to as the \emph{duration} of $\gamma$.   
Define the hitting time of $D\subset \Rb^d$ for any path $\gamma$ in $\Pc$ as follows:
\begin{equation}\label{eq:hitting}
	H_D(\gamma):=\inf \{ t \ge 0 : \gamma(t) \in D \}.
\end{equation}
We will write $H_r(\gamma)=H_{\partial \Dc_r}(\gamma)$ for simplicity.
Define the metric on $\Pc$ by 
\[
\rho(\gamma,\beta) = |t_{\gamma}-t_{\beta}| + \sup_{0\le s\le 1} |\gamma(t_{\gamma}s)-\beta(t_{\beta}s)|,
\]
under which $\Pc$ is a separable metric space (see \cite[Section 2.4]{Kara-Shre} for this).
If $\gamma_1,\gamma_2 \in \Pc$, then we define their concatenation 
\begin{equation}\label{eq:oplus0}
\gamma := \gamma_1 \oplus \gamma_2
\end{equation}
as an element of $\Pc$, such that $\gamma(t) = \gamma_1(t)$ for $t \in [0, t_{\gamma_1}]$ and 
\[
\gamma(t) =
\gamma_2(t - t_{\gamma_1}) - \gamma_2(0) + \gamma_1 (t_{\gamma_1})
\]
for $t > t_{\gamma_1}$. 
We use bars to denote pairs of sets in $\Rd$. If $\bagam = (\gamma_1, \gamma_2) \in \Pc \otimes \Pc$ and $\babet = (\beta_1, \beta_2) \in \Pc \otimes \Pc$, 
we let $\bagam \oplus \babet = (\gamma_1 \oplus \beta_1, \gamma_2 \oplus \beta_2)$. Finite paths can be reversed: if $\gamma \in \Pc$, then its reversal $\gamma^R \in \Pc$ has duration $t_\gamma$, and $\gamma^R(t) = \gamma(t_\gamma - t)$ for $t \in [0, t_\gamma]$.
We use the Brownian scaling to multiply paths by scalars, i.e., if $L > 0$, $\gamma \in \Pc$ and $\beta = L \circ\gamma$, then $t_\beta = L^2 t_\gamma$ and $\beta(L^2 s) = L\gamma(s)$ for $s \in [0, t_\gamma]$. 
For $\gamma, \beta \in \Pc$, we say $\gamma$ intersects $\beta$ if their traces intersect, i.e. 
\[
\gamma \cap \beta \ne \emptyset \iff
\exists s \in [0, t_\gamma], \, q \in [0, t_\beta] : \, \gamma(s) = \beta(q).
\]

The continuous paths will be used to represent realizations of Brownian motions. In respect to random walks, we also introduce discrete (discrete-time) paths. A discrete path $\lambda$ in $\Zb^d$ of length $\len(\lambda)\in \Nb$ is an ordered sequence of nearest-neighbor vertices in $\Zb^d$, denoted by $[\lambda(0),\lambda(1),\ldots,\lambda(\len(\lambda))]$.
We view a discrete path $\lambda$ as an element of $\Pc$ by interpolating $\lambda$ between neighboring vertices linearly such that it spends $1/d$ units of time to traverse each edge (this particular choice of $1/d$ will become clear when we compare the Brownian motion and simple random walk in Section~\ref{subsec:coup}). Thus, $\lambda$ has duration $t_{\lambda}=\len(\lambda)/d$ under this special interpolation. In this way, the previous notions also apply for discrete paths in $\Zb^d$. With a slight abuse of notation, we will use $H_D(\gamma)$ to denote $H_{D\cap \Zb^d}(\gamma)$ when $\gamma$ is obtained from a discrete path via interpolation as above.

\paragraph{Path measures. \label{par:path_measures}}

Let $\eM$ denote the set of finite positive measures on $\Pc$, where each element $\mu \in \eM$ is described by its total mass $\norm{\mu} \in (0, \infty)$ and a probability measure $\widehat{\mu} := \mu / \norm{\mu}$. All the operations that can be applied to $\Pc$ can also be applied to $\eM$ via a pushforward. We can restrict $\mu \in \eM$ to any Borel set $F \subset \Pc$ by letting $\mu|_F(\cdot) = \mu( \cdot \cap F)$, or equivalently $\mu|_F(d \gamma) = \mu(d\gamma) \, 1_F(\gamma)$. 
If $\mu \in \eM$, then we can sample ``random'' paths from it even though it is not necessarily a probability measure. {To see what we mean by sampling here, suppose that $f$ is a measurable function on $\Pc$ and we want to compute $\mu[f] := \int_{\Pc} f (\gamma)  \mu (d \gamma)$. To this end, we sample $\gamma$ from the probability measure $\widehat{\mu}$, compute $\Eb[f (\gamma)]$, and then multiply the result by $\norm{\mu}$. 
	
	If $\mu_1$ and $\mu_2$ are two measure in $\Mc$, we use $\mu_1\otimes\mu_2$ to denote the product measure on $\Pc\otimes\Pc$ such that
\begin{equation}\label{eq:otimes}
		\mu_1\otimes\mu_2((d\gamma_1,d\gamma_2))=\mu_1(d\gamma_1)\,\mu_2(d\gamma_2).
\end{equation}
We also introduce the concatenation for a special class of path measures. If $\mu_1$ and $\mu_2$ are measures supported on $\Gamma_1=\{ \gamma\in \Pc: \gamma(t_{\gamma})=x \}$ and $\Gamma_2=\{ \gamma\in \Pc: \gamma(0)=x \}$, respectively, then we 
\begin{equation}\label{eq:oplus}
	\text{use $\mu_1\oplus\mu_2$ to denote the image of $\mu_1\otimes\mu_2$ under the continuous map $(\gamma_1,\gamma_2)\mapsto\gamma_1\oplus\gamma_2$}
\end{equation}
(see \eqref{eq:oplus0} for the definition of path concatenations for paths).
The notion of $\otimes$ and $\oplus$ for path measures extends naturally to any number of measures. 
}

\paragraph{Brownian motion (BM).}
A standard Brownian motion in $\Rd$ is denoted by $W = \{ W(t) \}_{t \ge 0}$.
If $A$ is a $W$-measurable event, i.e., if $A \in \sigma(W)$, and $x \in \Rd$, then we let $\Pb^x\{A\}$ denote the probability of $A$ {assuming} $W(0) = x$. 
If $D \subset \Rd$, we denote the hitting time of $D$ for the Brownian motion $W$ by
\begin{equation}\label{eq:hitting-W}
T_D:=H_D(W) =
\inf \{ t \ge 0 : W(t) \in D \}.
\end{equation}
Abbreviate $T_r:=T_{\partial\Dc_r}(W)$.
We write $\baW = (W_1, W_2)$ for a pair of independent Brownian motions. 
Similarly, if $A \in \sigma(\baW)$ and $x, y \in \Rd$, then $\Pb^{x, y}(A)$ is the probability of $A$ given $\baW(0) = (x, y)$, i.e., given
that $W_1(0) = x$ and $W_2(0) = y$.

\paragraph{Simple random walk (SRW).}\label{para:RW}
We use $S = \{ S(t) \}_{t \ge 0}$ to denote the discrete-time simple random walk $S = \{ S(t) \}_{t \ge 0}$ in $\Zd$, where we interpolate $S$ as we did earlier for general discrete paths so that $S$ and $W$ have the same covariance. 
We use $\Pb^x$ to indicate that $S$ starts from $x$.
If $D \subset \Rd$, we define the hitting time of $D$ for the simple random walk by
\begin{equation}\label{eq:hitting-S}
\tau_D:=H_D(S) =
\inf \{ t \ge 0 : S(t) \in D \}.
\end{equation}
We write $\tau_r$ for $\tau_{\partial\Bc_r}(S)$.
Recall the Brownian scaling for paths. We now let $S^{(n)} = e^{-n} S$ stand for the simple random walk in $\eZn$ that spends $d^{-1}e^{-2n}$ units of time to traverse each edge of $\eZn$. 
Let $\baS = (S_1, S_2)$ denote a pair of independent random walks and $\baS^{(n)}$ denote the corresponding walks in $\eZn$.

\section{Preliminary facts on simple random walks and Brownian motions \label{sec:prelims}}
In this section, we first collect some standard facts about simple random walks and Brownian motions in Section~\ref{subsec:bt}, then review couplings {between} them in Section~\ref{subsec:coup}. Finally in Section~\ref{subsec:pm} we introduce {several types of path measures that we will use later.}

\subsection{Basic tools}\label{subsec:bt}
\paragraph{Intersections.}
If $\bagam =(\gamma_{1},\gamma_{2})=\overline W$ or $\bagam =\overline S$ and if the stating point of $\gamma_{1}$ is close to that of $\gamma_{2}$, then with high probability $\gamma_{1}$ intersects $\gamma_{2}$ very quickly when $d=2,3$. In this subsection, we will discuss  this kind of ``hittability'' of $\gamma_{i}$. 

We begin with the $d=2$ case, in which we have the Beurling projection theorem as follows.

\begin{proposition}[\emph{Beurling estimates:} Theorem 3.76, \cite{conf_proc}; Theorem
	2.5.2, \cite{RWintersect}]
	\label{p:Beurling} There exists $c > 0$ such that if $d=2$, $r\geq0$, and $\gamma\in\Pc$ connects $0$ to $\Dc^c_{r}$,
	then for the Brownian motion $W$,
	\[
	\Pb^{x}\{T_{r}(W)<T_{\gamma}(W)\}\leq c\sqrt{|x|/e^{r}},
	\]
	and also for the simple random walk $S$,
	\[
	\Pb^{x}\{\tau_{r}(S)<\tau_{\gamma}(S)\}\leq c\sqrt{|x|/e^{r}}.
	\]
\end{proposition}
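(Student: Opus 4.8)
The statement to be proved is the Beurling estimate (Proposition~\ref{p:Beurling}), and since the excerpt already cites \cite{conf_proc} and \cite{RWintersect} for it, a proof proposal should really sketch how one derives these two bounds rather than re-derive everything from scratch. Here is the plan.

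\textbf{Overall strategy.} The classical Beurling projection theorem states that among all connected sets joining a circle of radius $\rho$ to a circle of radius $R>\rho$ around a point $x$, the half-line segment $[\rho,R]$ minimizes the probability that Brownian motion started at distance $\rho$ reaches radius $R$ before hitting the set. The plan is: first establish the continuous estimate via the Beurling projection theorem together with an explicit computation of the escape probability for the extremal slit configuration; then transfer the bound to the simple random walk by a coupling/comparison argument (or by invoking the discrete Beurling estimate of Kesten, as recorded in \cite{RWintersect}).

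\textbf{Step 1: the Brownian case.} I would first reduce to the case $|x| < e^r$ (otherwise the statement is trivial since the probability is at most $1$ and $\sqrt{|x|/e^r}\ge 1$). Write $\rho = |x|$ and $R = e^r$. Since $\gamma$ connects $0$ to $\Dc_r^c$, the trace of $\gamma$ is a connected set meeting both $\partial D(0,\rho')$ for every $\rho' \le R$ and $\partial \Dc_r$; in particular it separates, within the annulus $\{\rho \le |z| \le R\}$, the inner circle from the outer circle in a strong sense. By the Beurling projection theorem, the harmonic measure from $x$ of $\partial\Dc_r$ in the domain $\Dc_r \setminus \gamma$ is bounded above by the corresponding quantity when $\gamma$ is replaced by the radial slit $\{te^{i\theta_0} : 0 \le t \le R\}$ for the $\theta_0$ with $x = \rho e^{i\theta_0}$ — i.e., by the probability that BM from $\rho$ (on the positive real axis, say, after rotating) exits the slit disc $\Dc_r \setminus [0,R]$ through $\partial \Dc_r$. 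This last quantity is computed explicitly by mapping the slit disc conformally to a standard domain (the map $z \mapsto \sqrt{z}$ sends the slit disc to a half-disc, then a further Möbius/trigonometric map to a half-plane or strip), yielding that it is $\le c\sqrt{\rho/R} = c\sqrt{|x|/e^r}$. This is the main analytic content; the constant $c$ is universal.

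\textbf{Step 2: the random walk case.} For SRW the cleanest route is to quote the discrete Beurling estimate (Kesten; see \cite[Theorem 2.5.2]{RWintersect} as cited), which says precisely that if a connected set $A\subset\Zb^2$ (here the vertex set traversed by $\gamma$, or its $1$-neighborhood) joins $0$ to $\partial B_r$, then $\Pb^x\{\tau_r(S) < \tau_A(S)\} \le c\sqrt{|x|/e^r}$. Alternatively, one can derive it from Step 1 via the strong coupling (KMT/Skorokhod embedding) between $S$ and $W$: couple $S$ and $W$ so that they stay within $O(\log(e^r))$ of each other up to time $\tau_r \wedge T_r$; if $|x|$ is larger than a constant multiple of this error the event for $S$ forces a nearby event for $W$ against a slightly fattened $\gamma$, and if $|x|$ is of constant order the bound $\sqrt{|x|/e^r}$ is comparable to $1/\sqrt{e^r}$ which can be absorbed — but handling the logarithmic coupling error against the $\sqrt{\cdot}$ scaling is delicate, so invoking Kesten's discrete theorem directly is the honest choice and is exactly what the cited reference provides.

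\textbf{Main obstacle.} The real work is entirely in Step 1: correctly invoking the Beurling projection theorem (one must be careful that $\gamma$ genuinely disconnects the relevant circles inside the annulus, which it does since it is connected and touches both boundary circles) and then carrying out the explicit conformal computation of the escape probability for the radial slit to extract the sharp exponent $1/2$ and a universal constant. The discrete statement is then not a new difficulty but a citation. Accordingly, I would present Step 1 in detail and dispatch Step 2 by reference to \cite{conf_proc, RWintersect}.
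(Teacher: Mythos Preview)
Your proposal is correct, and in fact the paper gives no proof at all for this proposition: it is stated purely as a citation of \cite[Theorem~3.76]{conf_proc} and \cite[Theorem~2.5.2]{RWintersect}, so your sketch of the Beurling projection argument for the Brownian case and the invocation of Kesten's discrete estimate for the random walk case is exactly the content of those references and goes beyond what the paper itself records.
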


Note that we can take the constant $c$ in Proposition \ref{p:Beurling} uniformly in $\gamma$. In contrast to this, a similar uniform estimate does not hold for $d=3$ since  with probability one a three-dimensional Brownian motion cannot intersect a line. However, thanks to the fact that the Hausdorff dimension of the trace of a three-dimensional Brownian motion is equal to 2 almost surely, 
it is a ``hittable'' set in the following sense. For $\epsilon \in (0,1)$ and $b \in (0, \infty)$, we define 

\[
Z_{r}^{\Gamma}(\gamma,\epsilon,b):=\sup_{y}\Pb^{y}\{\Gamma[0,H_{r+1}]\cap\gamma[0,H_{r+1}]=\emptyset\},
\]
where 
\begin{itemize}
\item $\gamma$ is a path from $0$ to infinity,

\item $\Pb^{y}$ stands for the probability law of a random path $\Gamma$ assuming that $\Gamma(0)=y$,

\item the supremum is taken over all $y\in\Dc_{r}$ satisfying that $\dist(y,\gamma[0,H_{r+1}])\leq be^{(1-\epsilon)r}$.

\end{itemize}
Then we have the following lemma which is an analogue of Proposition \ref{p:Beurling}  for $d=3$. 

\begin{lemma}[\emph{Freezing lemmas:} Lemmas 2.4 and 2.6, \cite{RWcuttimes}]
	\label{l:freezing}  Let $d=3$.  
        For every $b, M \in (0, \infty)$ and $\epsilon \in (0, 1)$,
	there exist finite positive constants $u, C$ such that for any $r > 0$,
	$x\in\Dc_{r}$ and $\baGam =\overline W$ or $\overline S$,
	\[
	\Pb^{x}\{Z_{r}^{\Gamma_{1}}(\Gamma_{2},\epsilon,b)\geq\exp(-u r)\}\leq C\exp(-Mr).
	\]
	(Notice that $Z_{r}^{\Gamma_{1}}(\Gamma_{2},\epsilon,b)$ is a function of $\Gamma_{2}$.)
\end{lemma}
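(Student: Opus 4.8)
The statement to prove is Lemma~\ref{l:freezing}, the ``freezing lemma'' for $d=3$. Since the excerpt explicitly attributes it to Lemmas 2.4 and 2.6 of \cite{RWcuttimes}, my plan is essentially to explain how one reduces the claim to the $2$-dimensional-projection/entropy estimates already available there, rather than to reprove those estimates from scratch.

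\medskip

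\textbf{Plan of proof.} The quantity $Z_r^{\Gamma_1}(\Gamma_2,\epsilon,b)$ is, by definition, a functional of $\Gamma_2$ alone: it records how badly an independent path $\Gamma_1$, started from the worst admissible location near $\Gamma_2[0,H_{r+1}]$, can fail to hit $\Gamma_2[0,H_{r+1}]$ before exiting $\Dc_{r+1}$. The key structural fact, specific to $d=3$, is that the trace of Brownian motion (resp.\ simple random walk) run up to time $H_{r+1}$ is, with overwhelming probability, a \emph{uniformly hittable} set at scale $e^{(1-\epsilon)r}$: any point within distance $be^{(1-\epsilon)r}$ of the trace will, with probability at least $1 - \exp(-ur)$, be hit by an independent path before that path travels distance of order $e^{r}$. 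I would first record the one-scale version of this: for a fixed dyadic-type scale, the probability that a Brownian path leaves a ``bad'' neighborhood unhit decays like a (small) power of the ratio of scales; this is the content of the hittability estimates underpinning the nonintersection exponent bound $\Pb\{S^1[0,n]\cap S^2(0,n]=\emptyset\}\asymp n^{-\xi/2}$ for $d=3$, and is exactly what Lemmas 2.4 and 2.6 of \cite{RWcuttimes} are set up to deliver.

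\medskip

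\textbf{Key steps.} (i) Fix $b,M\in(0,\infty)$ and $\epsilon\in(0,1)$. Decompose the trace $\Gamma_2[0,H_{r+1}]$ into excursions between concentric spheres $\partial\Dc_{k}$, $k=0,1,\dots,\lceil r\rceil$; on each annulus the piece of $\Gamma_2$ has, with high probability, diameter at least a constant multiple of $e^{k}$ and is ``spread out'' in the sense needed for hittability. (ii) Show that, off an event of probability at most $C\exp(-Mr)$ (using standard large-deviation / Beurling-type bounds for the number of returns, the maximal displacement, and the absence of long thin tubes), the random set $\Gamma_2[0,H_{r+1}]$ is \emph{deterministically} hittable: namely $Z_r^{\Gamma_1}(\Gamma_2,\epsilon,b)\le\exp(-ur)$ for a fixed $u>0$. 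This is where one invokes the core estimate of \cite{RWcuttimes}: conditional on a ``good'' realization of $\Gamma_2$, an independent $\Gamma_1$ started at distance $\le be^{(1-\epsilon)r}$ sees $\Gamma_2$ within $O(1)$ scales with probability at least $1-e^{-u r}$, by iterating the single-scale hittability estimate across $\asymp\epsilon r$ scales and taking a union bound over the (polynomially many) possible starting locations. (iii) Combine: $\Pb^x\{Z_r^{\Gamma_1}(\Gamma_2,\epsilon,b)\ge\exp(-ur)\}\le\Pb^x\{\Gamma_2\text{ not good}\}\le C\exp(-Mr)$. The same argument runs verbatim for $\baGam=\overline S$ using the discrete analogues (the random walk version of the Beurling estimate in Proposition~\ref{p:Beurling} and the discrete hittability bounds), which is why the statement is uniform over $\baGam=\overline W$ or $\overline S$.

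\medskip

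\textbf{Main obstacle.} The delicate point is step (ii): quantifying ``$\Gamma_2$ is a good, uniformly hittable set'' with an error probability that beats \emph{any} prescribed polynomial (in fact exponential) rate $\exp(-Mr)$, while the hittability gain per scale is only $\exp(-u r)$ for a small fixed $u$. One has to be careful that the ``bad'' event for $\Gamma_2$ — e.g.\ $\Gamma_2$ creating a long narrow corridor that an independent path can slip through, or $\Gamma_2$ failing to reach a macroscopic fraction of some annulus — genuinely has probability decaying faster than $\exp(-Mr)$ for the given $M$; this forces the constants $u,C$ to depend on $b,M,\epsilon$, as stated. The cleanest route is simply to quote Lemmas 2.4 and 2.6 of \cite{RWcuttimes}, where exactly this bookkeeping is done; I would present the proof as: ``this is immediate from [those lemmas] once one notes that our $Z_r^\Gamma$ is their hittability functional with parameters matched as follows,'' and then carry out the (routine) translation of notation and the observation that the scaling $e^{-n}$-mesh and $1/d$-speed interpolation conventions of the present paper do not affect the estimate.
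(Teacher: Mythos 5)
Your proposal correctly observes that the paper gives no proof of Lemma~\ref{l:freezing}: the result is a pure citation to Lemmas 2.4 and 2.6 of \cite{RWcuttimes}, and you adopt the same route (quote those lemmas) while supplying a sketch of their content. The sketch itself — split into $\asymp r$ radial scales, show that off an event of probability at most $Ce^{-Mr}$ most scales are ``hittable,'' and iterate the single-scale escape bound across the $\asymp \epsilon r$ scales that $\Gamma_1$ must cross before reaching $\partial\Dc_{r+1}$, compounding the survival probability to $\le e^{-ur}$ — has the right shape, and you are right that $u,C$ must depend on $b,M,\epsilon$, since the large-deviation rate controlling the number of bad scales trades off against the per-scale hitting gain.

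One specific claim in your sketch is, however, wrong. For the discrete case $\baGam = \overline S$ you invoke ``the random walk version of the Beurling estimate in Proposition~\ref{p:Beurling}.'' That proposition is stated, and holds, only for $d=2$; Lemma~\ref{l:freezing} is explicitly a $d=3$ statement, and the paper emphasizes just before stating it that no such uniform estimate exists in three dimensions (a 3D Brownian motion or random walk almost surely fails to hit a fixed line). The freezing lemma is precisely the $d=3$ \emph{substitute} for what Beurling provides in the plane, so it cannot be deduced from a discrete Beurling estimate. In \cite{RWcuttimes} the random walk case is handled by a direct capacity-type hittability estimate for the trace of a three-dimensional walk (exploiting that the trace has Hausdorff dimension two) or by transferring the Brownian statement through a strong coupling; your sketch should appeal to one of these rather than to Proposition~\ref{p:Beurling}.
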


\paragraph{Some estimates on hitting probabilities}

The next lemma gives some useful estimates on a solution of the Dirichlet problem on an annulus, which will be used repeatedly in this paper. 

\begin{lemma}[Theorem 3.18, \cite{morters2010brownian}; Propositions 6.4.1 and 6.4.2, \cite{RWintro}]
	\label{l:trans_recur} For $x\in\partial\Dc$ and $k, l >0$, we have that 
	\[
	\Pb^{x}\{T_{-l}(W)<T_{k}(W)\}=\begin{cases}
		k/(k+l) & \text{ if }d=2,\\
		(1-e^{-k})/(e^{l}-e^{-k}) & \text{ if }d=3.
	\end{cases}
	\]
	For $m<n$ and $y\in\Bc_n\setminus\overline\Bc_m$,
	\[
	\Pb^{y}\{\tau_{m}(S)<\tau_{n}(S)\}=\begin{cases}
		(n-|y|+O(m^{-1}))/(n-m) & \text{ if }d=2,\\
		m|y|^{-1}[1+O(m^{-1})] & \text{ if }d=3.
	\end{cases}
	\] 
\end{lemma}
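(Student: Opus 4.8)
\textbf{Proof plan for Lemma \ref{l:trans_recur}.}

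The statement is a collection of classical gambler's-ruin-type computations, so the plan is to treat each case by finding the appropriate harmonic function and applying the optional stopping theorem (in the continuous case) or solving the corresponding discrete Dirichlet problem (in the lattice case). For the Brownian motion part, I would work with the radial process $R_t = |W(t)|$, which for $d=2$ has the property that $\log R_t$ is a local martingale (since $\log|\cdot|$ is harmonic on $\Rd\setminus\{0\}$), and for $d=3$ the function $|x|^{-1}$ is harmonic away from the origin, so $R_t^{-1}$ is a local martingale. Concretely, for $d=2$ and $x\in\partial\Dc$ I would apply optional stopping to $\log R_t$ at the time $T_{-l}\wedge T_k$: since the exit distribution is supported on the two spheres $\partial\Dc_{-l}$ and $\partial\Dc_k$, writing $p=\Pb^x\{T_{-l}<T_k\}$ gives $0 = \log|x| = p(-l) + (1-p)k$, hence $p = k/(k+l)$. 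For $d=3$ I would do the same with $R_t^{-1}$: $1 = |x|^{-1} = p\,e^{l} + (1-p)e^{-k}$, which rearranges to $p = (1-e^{-k})/(e^{l}-e^{-k})$. One should check integrability/uniform-integrability so that optional stopping is justified — this is standard because $R_t$ stays in the bounded annulus $[e^{-l}, e^k]$ up to the stopping time, so the stopped martingale is bounded; I would mention this but not belabour it.

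For the simple random walk part, the cleanest route is to compare $|S(t)|$ with the same harmonic functions and control the discretization error. For $d=2$: the function $x\mapsto |x|$ is not exactly harmonic for the lattice walk, but $\Eb^x[|S(1)|] - |x| = O(|x|^{-1})$ uniformly (a standard second-order Taylor expansion of $|\cdot|$, using that the walk has mean zero and the right covariance), so $M_t := |S(t)| - \sum_{s<t} O(|S(s)|^{-1})$ is a martingale; applying optional stopping at $\tau_m\wedge\tau_n$ and using that $|S|$ stays in the annulus $[m,n]$ (so each correction term is $O(m^{-1})$ and there are $O((n-m)\cdot\text{something})$ of them — more carefully, one uses that the expected exit time is $O(n^2)$ but the accumulated error is still $O(m^{-1})$ after dividing through) yields $m - |y| + O(m^{-1}) = (m-n)\Pb^y\{\tau_m<\tau_n\} + $ (boundary overshoot terms, themselves $O(1)$ and absorbed), giving the stated formula. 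For $d=3$ the analogous input is $\Eb^x[|S(1)|^{-1}] - |x|^{-1} = O(|x|^{-3})$, leading by the same optional-stopping argument to $|y|^{-1}[1+O(m^{-1})] = \Pb^y\{\tau_m<\tau_n\}\cdot m^{-1}[1+O(m^{-1})] + (1-\Pb^y\{\tau_m<\tau_n\})\cdot n^{-1}[1+\ldots]$, and since $m^{-1}\gg n^{-1}$ the second term is lower order, producing $\Pb^y\{\tau_m<\tau_n\} = m|y|^{-1}[1+O(m^{-1})]$.

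Since the lemma is explicitly attributed to Theorem 3.18 of \cite{morters2010brownian} and Propositions 6.4.1--6.4.2 of \cite{RWintro}, the honest "proof" here is really just a pointer to those references together with the one-line indication of the harmonic functions involved; I would state it as such rather than reproducing the discrete potential-theory estimates in full. The only genuine subtlety — and thus the part I would be most careful about — is bookkeeping the error terms in the SRW case: one must verify that the overshoot at the boundary spheres $\partial\Bc_m$ and $\partial\Bc_n$ contributes only to the $O(m^{-1})$ (resp. $O(m^{-1})$-relative) error and not at leading order, which relies on $|S|$ changing by $O(1)$ per step so the overshoot is $O(1)$, negligible against $n-m$ in the $d=2$ case and handled by the relative-error formulation in the $d=3$ case. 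For the purposes of this paper I would simply cite \cite{RWintro} for this and move on.
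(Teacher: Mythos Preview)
The paper does not prove this lemma; it is stated with citations to \cite{morters2010brownian} and \cite{RWintro} and used as a black box throughout. Your conclusion that the honest proof here is simply a pointer to those references is exactly what the paper does, and your optional-stopping argument for the Brownian part (via $\log|W_t|$ when $d=2$ and $|W_t|^{-1}$ when $d=3$) is the standard computation and is correct.

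There is, however, a genuine gap in your $d=2$ SRW sketch. The function $|x|$ is the wrong near-martingale: while it is true that $\Eb^x[|S(1)|]-|x|=O(|x|^{-1})$, the expected exit time from the annulus $\Bc_n\setminus\Bc_m$ is of order $e^{2n}$, so the accumulated drift correction $\sum_{s<\tau}O(|S_s|^{-1})$ is of order $e^{2n-m}$ and swamps the leading term rather than being absorbed into an $O(m^{-1})$. The correct discrete substitute is the potential kernel $a(x)$, which is \emph{exactly} discrete-harmonic on $\Zd\setminus\{0\}$ and satisfies $a(x)=\tfrac{2}{\pi}\log|x|+\kappa+O(|x|^{-2})$; optional stopping applied to $a(S_t)$ is what \cite[Proposition~6.4.1]{RWintro} actually uses, and the error then comes only from the $O(|x|^{-2})$ asymptotic of $a$ and the $O(1)$ boundary overshoot, both of which are harmless. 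Since you ultimately defer to \cite{RWintro} anyway, this does not affect your bottom line, but the sketched route as written would not close.
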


\subsection{Couplings}\label{subsec:coup}
{ Couplings between random walks and Brownian motions play a central role in the investigation of the cut points of random walk through those of Brownian motion, which is one of the main goals of this work. In this subsection, we  introduce two types of such couplings, both with their own  advantages and drawbacks.

\paragraph{The KMT coupling or strong approximation.}
The so-called Koml\'os-Major-Tusn\'ady (KMT) coupling is arguably one of the most powerful couplings between random walks and Brownian motion.} We refer to Theorem 7.1.1 of \cite{RWintro} for a reference, which only gives the proof for $d=1,2$. One can also see Theorem 1.3 of \cite{zaitsev1998multidimensional} for a rigorous proof for any $d$ dimensions.

\begin{theorem}[KMT coupling]\label{thm:kmt}
	There exists a constant $c>0$ and a coupling $\Pb$ of the simple random walk $S$ in $\Zb^d$ and the Brownian motion $W$ in $\Rb^d$ such that for all $\lambda>0$ and each $n\in\Nb$,
	\[
	\Pb\{ \max_{0\le t\le n} |S_t-W_t|>c(\lambda+1)\log n \} < c n^{-\lambda}.
	\]
\end{theorem}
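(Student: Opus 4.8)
The final statement in the excerpt is the KMT coupling theorem (Theorem~\ref{thm:kmt}), which is quoted from the literature rather than proved in this paper. Since the authors cite \cite{RWintro} and \cite{zaitsev1998multidimensional}, I will describe how one would prove it from scratch, following the classical dyadic (Hungarian) construction.

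\textbf{Overall strategy.} The plan is to build the coupling by a recursive dyadic refinement (the Koml\'os--Major--Tusn\'ady bootstrap). The core of the argument is a \emph{one-step} quantitative Gaussian approximation: given the endpoint of a random walk bridge (or the increment of a sum of i.i.d.\ mean-zero, finite-exponential-moment random variables over a block of length $m$), one shows that the conditional law of the midpoint can be coupled with a Gaussian so that the discrepancy is logarithmically small with overwhelming probability. Iterating this over the dyadic tree of scales $n, n/2, n/4, \dots, 1$ and summing the errors along each root-to-leaf path gives a total error of order $\log n$. One first does this in $d=1$ coordinate-wise; for general $d$ one either runs the one-dimensional construction on each coordinate (the coordinates of SRW are not independent, so this needs the more careful multidimensional version of \cite{zaitsev1998multidimensional}) or invokes the vector-valued strong approximation directly.

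\textbf{Key steps, in order.} First I would reduce to the statement that for each dyadic level $k$ and each block $B$ of length $2^k$, conditionally on the walk's values at the two endpoints of $B$, the walk's value at the midpoint of $B$ can be coupled to the corresponding conditional Gaussian (a Brownian bridge midpoint) with error $O(\log(2^k)/2^{?})$ — more precisely with an error whose tail is $\exp(-c t)$ at level $t$. The engine here is the \emph{conditional quantile coupling / Tusn\'ady's lemma}: if $X$ is a sum of $m$ i.i.d.\ steps and $Z\sim N(0,m)$, one can put them on the same space so that $|X-Z|\le C(1+X^2/m)$ with the comparison made through matching quantiles, using a sharp Berry--Esseen-type expansion of the binomial/random-walk distribution function against the normal one (Stirling's formula plus Cram\'er-type large-deviation corrections, which is where the finite exponential moment of the step distribution enters). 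Second, I would set up the recursion: start by coupling $S_n$ with $W_n\sim N(0,n\,\mathrm{Id}/d$ times the appropriate scaling$)$; then, conditionally, couple the two midpoints at time $n/2$; then recurse on each half. Third, I would control the accumulated error: the discrepancy at a leaf (a single time point $t\le n$) is a sum of at most $\log_2 n$ contributions, one per dyadic ancestor; each contribution, conditioned on the past, has a sub-exponential tail with a scale that does not grow, so a union bound over the $O(n)$ leaves and the $\log n$ levels, combined with a geometric-series estimate, yields
\[
\Pb\Big\{ \max_{0\le t\le n} |S_t - W_t| > c(\lambda+1)\log n \Big\} < c\, n^{-\lambda}
\]
for a suitable absolute constant $c$. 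Finally, one passes from the dyadic times to all integer times $t\le n$ using the modulus of continuity of Brownian motion and the fact that SRW moves by at most one step per unit time, which costs only an additional $O(\sqrt{\log n})$ and is absorbed into the $O(\log n)$.

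\textbf{Main obstacle.} The delicate point — and the reason the theorem is hard — is the sharp conditional coupling at a single dyadic step with an error that is \emph{genuinely} of order $\log$ (not a power of the block length): this requires the precise two-term asymptotic expansion of the random walk's distribution function relative to the Gaussian, uniformly in a large-deviation window, and the quantile coupling must be carried out \emph{conditionally} on the endpoint values so that errors compose cleanly down the tree without the conditioning destroying independence. In the multidimensional setting the extra difficulty is that one cannot simply multiply one-dimensional couplings, since the coordinate processes of a single SRW are dependent; handling this is exactly the content of \cite{zaitsev1998multidimensional}, and for the present paper it suffices to cite that result. I would therefore present the $d=2$ case via the coordinate-wise Hungarian construction as in \cite[Theorem 7.1.1]{RWintro} and invoke \cite[Theorem 1.3]{zaitsev1998multidimensional} as a black box for general $d$, which is precisely what the statement in the excerpt does.
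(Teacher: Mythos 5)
You correctly identify that the paper does not prove Theorem~\ref{thm:kmt} but cites it (to \cite[Theorem 7.1.1]{RWintro} for $d=1,2$ and \cite[Theorem 1.3]{zaitsev1998multidimensional} for general $d$), which is also what your proposal does at the level of what the paper itself asserts. Your sketch of the underlying Hungarian construction — the dyadic bootstrap, the Tusn\'ady quantile coupling at a single step, the accumulation of $O(\log n)$ error along root-to-leaf paths, the modulus-of-continuity correction at non-dyadic times, and the multidimensional subtlety from dependent SRW coordinates — is a correct and fair outline of the standard proof and matches the spirit of the cited sources, so there is no gap to flag.
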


Note that the simple random walk $S$ {that we use} in this paper spends $1/d$ units of time on each edge in $\Zb^d$, so that we do not need to rescale the Brownian motion by $\frac{1}{\sqrt{d}}$. Since we will consider these two processes until the first exit time of some ball, we also need the following strong approximation, which is a corollary of the KMT coupling. We refer to Corollary 3.2 of \cite{MR2191635} for the case $d=2$, and {the $d=3$ case can be proved similarly.} 
 
 We also recall the following version for stopped processes, which will be repeatedly used in this work.
 \begin{corollary}[Strong approximation]\label{cor:sa}
 	There exists a constant ${\cal K}>0$ and a coupling $\Pb$ of the simple random walk $S$ in $\Zb^d$ and the Brownian motion $W$ in $\Rb^d$ such that 
 	\[
 	\Pb\{ \max_{0\le t\le \tau_{n+1}\vee T_{n+1}} |S_t-W_t|>{\cal K} n \} = O(e^{-10 n}).
 	\]
 \end{corollary}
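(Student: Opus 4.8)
\textbf{Proof plan for Corollary~\ref{cor:sa}.}

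The plan is to deduce this stopped-process version from the running-maximum estimate in Theorem~\ref{thm:kmt} together with standard exit-time tail bounds. First I would work on the KMT coupling $\Pb$ from Theorem~\ref{thm:kmt}, and observe that everything happening before time $\tau_{n+1}\vee T_{n+1}$ is, with overwhelming probability, controlled by what happens before some deterministic horizon $N = C e^{2n}$ for a large enough constant $C$. Indeed, the Brownian exit time $T_{n+1}$ of the ball $\Dc_{n+1}=D(0,e^{n+1})$ satisfies $\Pb\{T_{n+1} > N\} \le \exp(-c N e^{-2n})$ by the usual Gaussian bound on $\sup_{t\le t_0}|W(t)|$ (or by iterating Lemma~\ref{l:trans_recur}-type estimates across $O(1)$ annuli); choosing $C$ large makes this $O(e^{-20n})$, say. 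A matching bound for $\tau_{n+1}$ for the simple random walk follows either directly from standard SRW exit-time tails, or, once one is on the good KMT event, from the Brownian bound since on that event $|S_t - W_t| = O(n)$ uniformly on $[0,N]$, so $S$ cannot stay inside $\Bc_{n+1}$ much longer than $W$ stays inside a slightly larger ball. Thus $\Pb\{\tau_{n+1}\vee T_{n+1} > N\} = O(e^{-20n})$.

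Next, on the complementary event $\{\tau_{n+1}\vee T_{n+1} \le N\}$ I would simply apply Theorem~\ref{thm:kmt} at the deterministic time $N = \lceil C e^{2n}\rceil$: with $\lambda$ chosen as a large constant (depending only on the target exponent $10$ and on $C$), Theorem~\ref{thm:kmt} gives
\[
\Pb\Big\{ \max_{0\le t\le N} |S_t - W_t| > c(\lambda+1)\log N \Big\} < c N^{-\lambda} = O(e^{-10n}),
\]
after taking $\lambda$ large enough that $2\lambda > 10$ relative to the $e^{2n}$ growth of $N$. Since $\log N = 2n + O(1)$, the threshold $c(\lambda+1)\log N$ is at most $\Kc n$ for a suitable constant $\Kc = \Kc(\lambda)$ and all large $n$ (absorbing small $n$ into the constant in $O(\cdot)$). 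Combining the two bad events by a union bound yields
\[
\Pb\Big\{ \max_{0\le t\le \tau_{n+1}\vee T_{n+1}} |S_t - W_t| > \Kc n \Big\}
\le \Pb\{\tau_{n+1}\vee T_{n+1} > N\} + \Pb\Big\{ \max_{0\le t\le N} |S_t - W_t| > \Kc n \Big\} = O(e^{-10n}),
\]
which is the claimed estimate, with the coupling $\Pb$ being the KMT coupling itself.

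The only mildly delicate point — and the step I would expect to require the most care — is the exit-time tail bound for $\tau_{n+1}$, i.e.\ ruling out that the random walk loiters inside $\Bc_{n+1}$ for an atypically long time. The cleanest route is probably to not treat $\tau_{n+1}$ independently at all but to bootstrap off the coupling: first establish $\Pb\{T_{n+1}>N/2\} = O(e^{-20n})$ for Brownian motion, then on the good event of Theorem~\ref{thm:kmt} at horizon $N$ note that $W$ has exited $\Dc_{n+2}$ by time $N/2$ (after enlarging $C$), hence $S$, staying within $O(n)$ of $W$, has exited $\Bc_{n+1}$ by time $N/2 < N$; the probability that this fails is then bounded by the KMT failure probability plus the Brownian tail, both $O(e^{-20n})$. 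This circular-looking argument is legitimate because the KMT estimate at the deterministic time $N$ does not itself reference $\tau_{n+1}$. With that in hand the rest is routine, and one should remark (as the paper implicitly does via its $\asymp$/$O$ conventions) that the final exponent $10$ is not special — any fixed power of $e^{-n}$ is attainable by enlarging $\Kc$ and $\lambda$.
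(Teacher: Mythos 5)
The paper does not actually give a proof of this corollary; it cites Corollary~3.2 of \cite{MR2191635} for $d=2$ and asserts that $d=3$ is analogous. Your overall plan --- cap the random time by a deterministic horizon $N$, apply the running-maximum KMT estimate of Theorem~\ref{thm:kmt} at time $N$, and control the exit-time tail separately --- is the standard way such stopped-process versions are derived and is the approach of the cited reference, so it is the right strategy.

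There is, however, a genuine quantitative error in your choice of horizon that, as written, makes a key step false. You set $N=Ce^{2n}$ for a \emph{fixed} constant $C$ and then assert
\[
\Pb\{T_{n+1}>N\}\le \exp\!\big(-cNe^{-2n}\big)=O(e^{-20n})\quad\text{``after choosing $C$ large''}.
\]
But with $N=Ce^{2n}$ you have $Ne^{-2n}=C$, so the right-hand side is $\exp(-cC)$, a \emph{fixed positive constant}: it does not tend to $0$ as $n\to\infty$, no matter how large the constant $C$ is chosen, and in particular it is not $O(e^{-20n})$. The same defect carries over to your bootstrap bound for $\tau_{n+1}$, where you again use horizon $N/2\asymp e^{2n}$. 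The repair is straightforward: take $N\asymp n\,e^{2n}$ (or $n^{a}e^{2n}$ for any fixed $a\ge1$). Then $Ne^{-2n}\asymp n$, so $\Pb\{T_{n+1}>N\}\lesssim e^{-c'n}$ with $c'$ made as large as desired by adjusting the implicit constant, which gives the claimed $O(e^{-20n})$; the KMT failure probability $N^{-\lambda}$ only gets smaller; and since $\log N=2n+O(\log n)$ is still $O(n)$, the threshold $c(\lambda+1)\log N$ remains $\le \Kc n$ after a harmless enlargement of $\Kc$. With this single change, the union bound, the choice of $\lambda\ge 5$, and your bootstrap argument for $\tau_{n+1}$ via the good KMT event all go through and deliver the corollary.
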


\medskip

However, there is one instance where we cannot apply Corollary \ref{cor:sa} for technical reasons (see Remark~\ref{sesa} for a detailed explanation). Instead we use the Skorokhod embedding.
\paragraph{Skorokhod embedding.}\label{para:se}
We refer readers to Section 3 of \cite{RWcuttimes} for details.
Let $X^1,\ldots, X^d$ be $d$ independent one-dimensional standard Brownian motions.
Define the stopping times $\xi^j_n:=\{ t>\xi^j_{n-1}: |X^j(t)-X^j(\xi^j_{n-1})|=1 \}$ for all $n\ge 1$ with $\xi^j_0=0$.
Let $Z_n=(Z_n^1,\ldots,Z_n^d)$ be a {$d$-dimensional} process independent of $X^j$'s {satisfying that} $Z_0=(0,\ldots,0)$ and $Z_n-Z_{n-1}$ for $n\ge 1$ are independent with distribution $\Pb\{ Z_n-Z_{n-1}=e_j \} = 1/d$ for all $1\le j\le d$, where $e_j$ is the unit vector whose $j$-th exponent is equal to $1$. 
Let  
\[
W(t)=(X^1(t),\ldots, X^d(t)),\quad \quad S(t)=(X^1(\xi^1(Z_{[td]}^1)),\ldots, X^d(\xi^d(Z_{[td]}^d))).
\]
Then, $W(t)$ is a $d$ dimensional BM and $S(t)$ is a $d$-dimensional SRW. Using exponential estimates (in the beginning of Section 3 in \cite{RWcuttimes}), one can derive the following result.

\begin{lemma}[Lemma 3.2, \cite{RWcuttimes}]\label{Skoro}
	Let $W$ and $S$ be coupled as above. Then for any $\eps>0$ there exists $u>0$ such that 
	\begin{equation}\label{eq:se}
	\Pb\{ \max_{0\le t\le \tau_{n+1}\vee T_{n+1}} |S_t-W_t|>e^{(1/2+\eps)n} \} = O(e^{-e^{un}}).
	\end{equation}
\end{lemma}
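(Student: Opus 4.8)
\textbf{Proof proposal for Lemma~\ref{Skoro}.}

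The plan is to reduce the claim to a coordinate-wise statement about the discrepancy between a one-dimensional Brownian motion $X^j$ and the time-changed walk $X^j(\xi^j(\cdot))$, and then to control the number of steps taken by the walk before the stopping time $\tau_{n+1}\vee T_{n+1}$. First I would observe that on the event that neither $W$ nor $S$ has exited $\Dc_{n+1}$ by time $t$, every coordinate $X^j(s)$ for $s\le t$ stays in an interval of length $O(e^{n+1})$, and correspondingly the number of unit-increments $\xi^j$ that the $j$-th walk coordinate has completed, call it $N^j(t)$, is at most $Z^j_{\lfloor td\rfloor}$, which up to a deterministic factor is comparable to $td$. So the relevant time horizon is $t\asymp e^{2n}$, and the number of clock-ticks of each coordinate walk is at most of order $e^{2n}$ with overwhelming probability (this is where the exponential tail estimates from the beginning of Section~3 of \cite{RWcuttimes} enter: both the deviation of $Z^j_m$ from $m/d$ and the deviation of $\xi^j_m$ from $m$ have stretched-exponential or better tails).

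The key step is the following: by the strong Markov property applied at the stopping times $\xi^j_m$, the differences $X^j(\xi^j_m)-X^j(\xi^j_{m-1})$ are i.i.d.\ $\pm 1$ Rademacher variables, and the ``error'' we must bound is, for each coordinate and each $s$, the difference between $X^j(s)$ and its value at the last completed level-crossing time before $s$. Since between consecutive times $\xi^j_{m-1}$ and $\xi^j_m$ the Brownian path $X^j$ moves by at most (in fact exactly, at the endpoints) $1$ in the relevant sense, the dominant contribution to $|S_t-W_t|$ is governed by (i) the gap between $td$ and the realized walk index $Z^j_{\lfloor td\rfloor}$, measured in Brownian-time units through the $\xi^j$'s, and (ii) the oscillation of $X^j$ over a single excursion interval $[\xi^j_{m-1},\xi^j_m]$. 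For (ii), the modulus of continuity of Brownian motion over a random time interval that is itself exponentially unlikely to exceed, say, $e^{\eps n/2}$, gives an oscillation that is $o(e^{(1/2+\eps)n})$ outside an event of probability $O(e^{-e^{un}})$; for (i), one uses the law of large numbers for $Z^j$ together with the fact that $\xi^j_m-m$ concentrates, both with doubly-exponential tails after taking the maximum over the $O(e^{2n})$ relevant indices via a union bound (the union bound costs only a polynomial-in-$e^{n}$ factor, which is absorbed into the doubly-exponential bound). Summing the coordinate estimates and rechecking that the exit times $\tau_{n+1},T_{n+1}$ are themselves $O(e^{2n})$ with doubly-exponential confidence closes the argument.

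I expect the main obstacle to be bookkeeping the two time-scales cleanly: the Brownian clock runs for time $\asymp e^{2n}$, the walk takes $\asymp e^{2n}$ steps, and one must union-bound the relevant one-dimensional fluctuation events over all of these steps while keeping the error probability at the doubly-exponential level $O(e^{-e^{un}})$ and the error size at $e^{(1/2+\eps)n}$. Concretely, the delicate point is that we need the maximum over $m\le e^{2n}$ of $|\xi^j_m - m|$ (and of $|Z^j_m - m/d|$) to be $o(e^{(1/2+\eps)n})$ with doubly-exponential confidence; this follows from Bernstein/Hoeffding-type exponential inequalities for sums of i.i.d.\ variables with exponential moments (the hitting times $\xi^j_m-\xi^j_{m-1}$ have exponential moments, and the $Z^j$-increments are bounded), but one must choose the threshold as $e^{\eps' n}$ for a suitably small $\eps'<\eps$ so that the large-deviation rate, multiplied by $m\asymp e^{2n}$, still beats the polynomial union-bound cost. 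Everything else — reducing the $d$-dimensional bound to $d$ one-dimensional bounds, identifying the relevant time horizon with the exit time, invoking the Markov property at the $\xi^j_m$'s — is routine, and indeed this is exactly the computation carried out in Section~3 of \cite{RWcuttimes}, so in the write-up I would cite that reference for the one-dimensional exponential estimates and only assemble them here.
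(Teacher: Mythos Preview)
The paper itself gives no proof of this lemma; it simply cites \cite{RWcuttimes} and remarks that the result follows from the exponential estimates at the start of Section~3 there. Your overall strategy---reduce to one-dimensional coordinates, bound the time horizon $\tau_{n+1}\vee T_{n+1}$ by $O(e^{2n})$ with doubly-exponential confidence, and then control the mismatch between the Brownian clock and the walk clock---is exactly the intended route.

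There is, however, a genuine scaling error in your sketch. You write that ``we need the maximum over $m\le e^{2n}$ of $|\xi^j_m - m|$ (and of $|Z^j_m - m/d|$) to be $o(e^{(1/2+\eps)n})$.'' This is false and cannot be achieved: $\xi^j_m - m$ is a sum of $m$ i.i.d.\ mean-zero variables with positive variance, so for $m\asymp e^{2n}$ its typical size is $e^n$, already much larger than $e^{(1/2+\eps)n}$ when $\eps<1/2$. The correct requirement is that the \emph{time} discrepancy $|\xi^j(Z^j_{\lfloor td\rfloor}) - t|$ be at most $e^{(1+\eps')n}$ for some $\eps'<2\eps$; this \emph{does} follow from the exponential-moment large-deviation bounds you mention (the deviation is of order $e^{\eps' n}$ standard deviations, giving a rate $e^{2\eps' n}$ which beats the polynomial union-bound cost). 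The \emph{spatial} error $|S_j(t)-W_j(t)|=|X^j(\xi^j(Z^j_{\lfloor td\rfloor}))-X^j(t)|$ is then controlled by the modulus of continuity of $X^j$ over a time interval of length $e^{(1+\eps')n}$, which is $O(e^{(1+\eps')n/2})=o(e^{(1/2+\eps)n})$ with the required doubly-exponential confidence. Once you insert this missing square-root step converting time error to space error, the argument goes through as you describe.
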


Compared with Corollary \ref{cor:sa}, the benefit of Lemma \ref{Skoro} is that one can get a certain joint Markov property from this coupling, which is known to fail for the strong coupling in Corollary \ref{cor:sa}. Now, we present a version that we will use later.

Let $\epsilon, b > 0$ with $1/2+\eps<b<1$. Let $x\in\Zb^d$ with $\dist(0,x,\partial\Dc_n)>e^{bn}$. Let $V(x)$ be the event that $S$ visits $x$ only once before $\tau_n$. Define the following random times:
\begin{itemize}
	\item Let $\iota_1 = \tau_{\partial \Bc_{bn-2}(x)}$.
	\item Let $\iota_2$ be the first time after $\tau_{x}$ that $S$ hits $\partial\Bc_{bn-2}(x)$.
	\item Let $\widetilde{\iota}_1 = T_{\partial\Dc_{bn-1}(x)}$.
	\item Let $\widehat{\iota}_{1}$ be the last time before $T_n$ that $W$ visits $\partial\Dc_{(1/2+\eps)n}(x)$.
	\item Let $\widetilde{\iota}_2$ be the first time after $\widehat{\iota}_{1}$ that $W$ hits $\partial\Dc_{bn-1}(x)$.
\end{itemize}
See Figure \ref{fig:random-times} for an illustration.

\begin{figure}[!h]
		\begin{center}
			\includegraphics[scale=0.6]{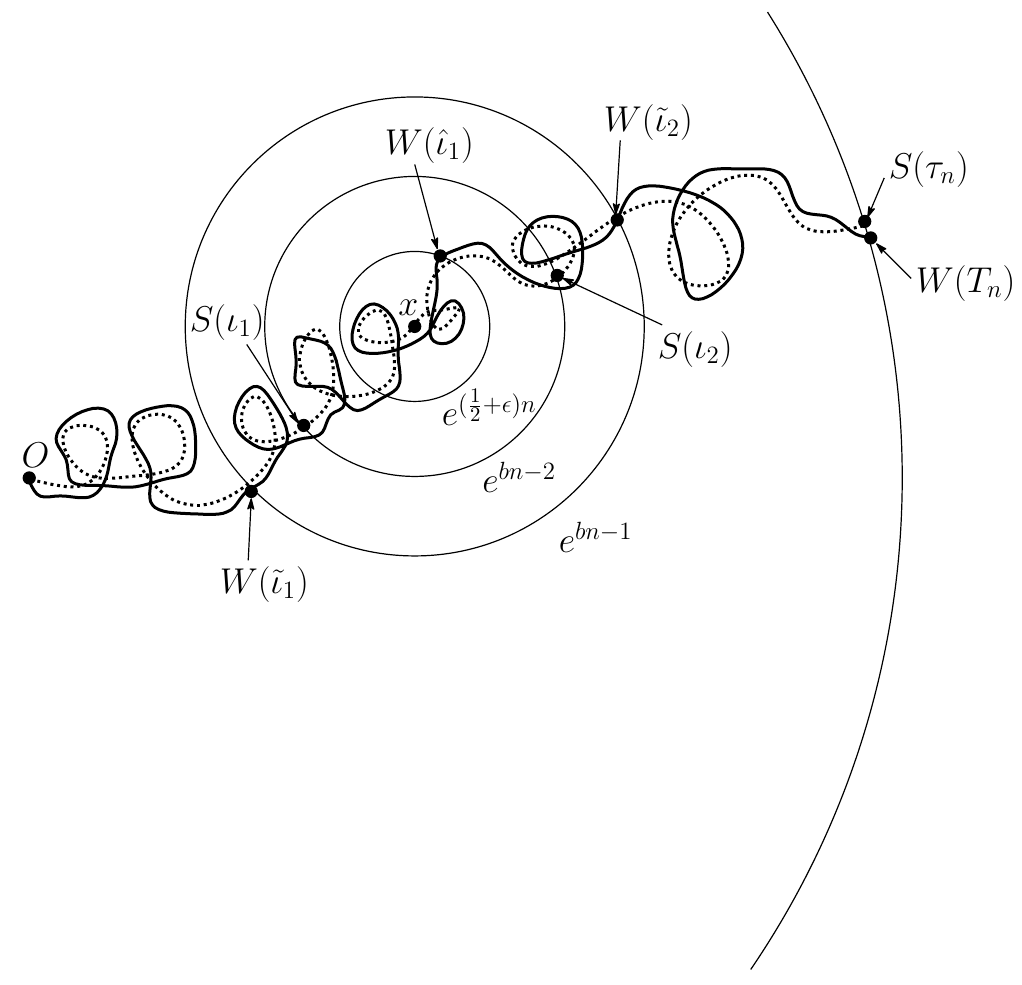}
			\caption{Illustration for random times used in Lemma \ref{lem:sem}. The thick solid curve stands for BM while SRW is depicted by the thick dotted curve.}
			\label{fig:random-times}
		\end{center}
	\end{figure}

\begin{lemma}\label{lem:sem}
	Let $W$ and $S$ be coupled under the Skorokhod embedding.
	For any $\eps, b >0$ with $1/2+\eps<b<1$, there exists $u>0$ such that the following holds for all $x$ satisfying $\dist(0,x,\partial\Dc_n)>e^{bn}$. There is an event $\Upsilon$ 
	with
	\begin{equation}\label{caution-5}
	\Pb\{ \Upsilon^c\cap V(x) \} = O(e^{-e^{un}})
	\end{equation}
	such that on $\Upsilon\cap V(x)$, $S[\iota_1,\iota_2]$ is measurable with respect to $W[\widetilde{\iota}_1, \widetilde{\iota}_2]$ and the processes $Z_n$. 
\end{lemma}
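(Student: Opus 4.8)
\textbf{Proof plan for Lemma~\ref{lem:sem}.}

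The plan is to exploit the ``nesting of scales'' $(1/2+\eps)n < bn-2 < bn-1 < bn < n$ together with the joint Markov structure of the Skorokhod embedding. The key point is that between the times $\iota_1$ and $\iota_2$ the walk $S$ is confined, up to a rare event, to a ball of radius roughly $e^{bn}$ around $x$, and on this range the error $|S_t-W_t|$ is at most $e^{(1/2+\eps)n}$ by Lemma~\ref{Skoro}; so $W$ and $S$ ``see the same excursion'' near $x$ at scale $bn$. First I would define the good event $\Upsilon$ as the intersection of three things: (i) the strong-approximation event from Lemma~\ref{Skoro}, i.e.\ $\max_{0\le t\le \tau_{n+1}\vee T_{n+1}}|S_t-W_t|\le e^{(1/2+\eps)n}$; (ii) an event ensuring that, on $V(x)$, the two visits of $S$ to $\partial\Bc_{bn-2}(x)$ at times $\iota_1,\iota_2$ are correctly ``shadowed'' by $W$ crossing the slightly larger sphere $\partial\Dc_{bn-1}(x)$ at $\widetilde\iota_1,\widetilde\iota_2$ and not before/after in a way that would break the correspondence; (iii) an event that forces $\widehat\iota_1$ (the last visit of $W$ to $\partial\Dc_{(1/2+\eps)n}(x)$ before $T_n$) to occur \emph{after} $S$ has already left $\Bc_{bn-2}(x)$ for the last time, so that the time window $[\widetilde\iota_1,\widetilde\iota_2]$ of $W$ genuinely contains all of $S[\iota_1,\iota_2]$.

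The main content of the argument is the measurability claim on $\Upsilon\cap V(x)$. Here the point is that the Skorokhod embedding builds $S$ from $W=(X^1,\dots,X^d)$ by reading off the values of $X^j$ at the one-sided hitting times $\xi^j(\cdot)$, indexed by the auxiliary process $Z$. Thus $S[\iota_1,\iota_2]$ is a deterministic functional of the \emph{trajectory segment} of each $X^j$ over the time interval during which the corresponding clock runs between $\iota_1$ and $\iota_2$, together with $Z$. On the good event the spatial displacement of $S$ over $[\iota_1,\iota_2]$ is $O(e^{bn})$ (it stays in an annular region around $\partial\Bc_{bn-2}(x)$ until it finally exits, after the excursion to $x$), so the $X^j$-clocks relevant to this segment run only within the $W$-time window $[\widetilde\iota_1,\widetilde\iota_2]$: the left endpoint is controlled because $W$ enters $\Dc_{bn-1}(x)$ at $\widetilde\iota_1$ and the $\xi^j$-clock steps have size $1$, far smaller than the gap $e^{bn-1}-e^{bn-2}$ to the $S$-sphere; the right endpoint is controlled by clause (iii), since after $\widehat\iota_1$ the walk $W$ (hence, up to the $e^{(1/2+\eps)n}$ error, the walk $S$) is near or past scale $bn-1$ and does not return to scale $bn-2$ before $T_n$. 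Putting these together, $S[\iota_1,\iota_2]$ is recovered as a measurable function of $W|_{[\widetilde\iota_1,\widetilde\iota_2]}$ and $Z$.

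For the probability bound \eqref{caution-5}, I would bound $\Pb\{\Upsilon^c\cap V(x)\}$ by the sum of the failure probabilities of (i), (ii), (iii). Clause (i) fails with probability $O(e^{-e^{un}})$ directly from Lemma~\ref{Skoro}. For (ii) and (iii), conditionally on $V(x)$ the obstruction is that $S$ (equivalently $W$, up to the small error) makes an anomalously large back-and-forth oscillation across the dyadic scales $(1/2+\eps)n, bn-2, bn-1, bn$ near $x$ without, or before, separating $x$ from $\partial\Dc_n$; each such scale-crossing failure can be estimated by the gambler's-ruin / hitting estimates of Lemma~\ref{l:trans_recur} combined with the strong-approximation event, and since the relevant scales are separated by a constant factor while the error is $e^{(1/2+\eps)n}\ll e^{bn}$, these are again superpolynomially small, of order $O(e^{-e^{un}})$ after possibly shrinking $u$. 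The main obstacle I anticipate is the bookkeeping in step (iii): one must be careful that the \emph{last} exit of $S$ from $\Bc_{bn-2}(x)$ and the \emph{last} visit $\widehat\iota_1$ of $W$ to $\partial\Dc_{(1/2+\eps)n}(x)$ interlace in the right order on $V(x)$, since this is precisely what guarantees that the $W$-window $[\widetilde\iota_1,\widetilde\iota_2]$ is neither too short (missing part of the $S$-excursion) nor dependent on $W$ outside that window; making this interlacing robust under the $e^{(1/2+\eps)n}$ coupling error, rather than merely almost sure, is where the quantitative estimates have to be assembled carefully.
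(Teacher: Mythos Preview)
Your high-level picture is right: exploit the Skorokhod construction $S(t)=(X^1(\xi^1(Z^1_{[td]})),\dots,X^d(\xi^d(Z^d_{[td]})))$ to show that the Brownian-time intervals $[\xi^j(Z^j_{[\iota_1 d]}),\xi^j(Z^j_{[\iota_2 d]})]$ are contained in $[\widetilde\iota_1,\widetilde\iota_2]$. But the argument you outline has a genuine gap in the probability estimate and is also vague at the key step of the measurability.

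\medskip
\textbf{The probability bound.} You write that the failure of clauses (ii) and (iii) ``can be estimated by the gambler's-ruin / hitting estimates of Lemma~\ref{l:trans_recur}'' and that this yields $O(e^{-e^{un}})$. It does not. The gambler's-ruin bounds of Lemma~\ref{l:trans_recur} control spatial exit probabilities and give at best $O(e^{-cn})$; no iteration of them produces a doubly-exponential bound. In the paper the $O(e^{-e^{un}})$ comes from \emph{temporal} large-deviation estimates, namely (a) the exponential concentration of the Skorokhod clocks $\xi^j(k)$ around $k$ (sums of i.i.d.\ exit times with exponential tails), and (b) the exponential tails of Brownian hitting times of spheres at scale $e^{bn}$ around their typical value $e^{2bn}$. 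These are the ``standard exponential estimates'' referred to in the proof; they are what must be invoked here, not Lemma~\ref{l:trans_recur}.

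\medskip
\textbf{The measurability step.} Your sentence ``the $\xi^j$-clock steps have size $1$, far smaller than the gap $e^{bn-1}-e^{bn-2}$'' mixes a time scale (the clock increments $\xi^j_m-\xi^j_{m-1}$) with a spatial scale, and does not by itself pin down where the clocks sit relative to $\widetilde\iota_1,\widetilde\iota_2$. The paper closes this by introducing intermediate $W$-times
\[
t_1=T_{\partial D(x,\,e^{bn-2}+e^{(1/2+\eps)n})},\qquad
t_2=\inf\{t\ge\widehat\iota_1:\ W(t)\in\partial D(x,\,e^{bn-2}+e^{(1/2+\eps)n})\},
\]
which on the coupling event satisfy $\widetilde\iota_1\le t_1\le\iota_1\le\iota_2\le t_2\le\widetilde\iota_2$ deterministically. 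One then only needs the clock comparison $\xi^j(Z^j_{[t_1 d]})\ge \widetilde\iota_1$ and $\xi^j(Z^j_{[t_2 d]})\le \widetilde\iota_2$, which follows from the two temporal estimates above: $|\xi^j(Z^j_{[t_1 d]})-t_1|\le e^{(1+\eps)n}$ and $t_1-\widetilde\iota_1\ge e^{(2b-\eps)n}$, both holding except on a set of probability $O(e^{-e^{un}})$, and using $1+\eps<2b-\eps$ (i.e.\ $1/2+\eps<b$). Your clauses (ii)--(iii) are aiming at the same containment, but without the $t_1,t_2$ device and without the correct time-concentration inputs you cannot carry it out.
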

\begin{proof} 
        Take $\eps, b >0$ with $1/2+\eps<b<1$.
        Under the Skorokhod embedding,
        the event 
        \[
        E:=\Big\{ \max_{0\le t\le \tau_{n+1}\vee T_{n+1}} |S_t-W_t|>e^{(1/2+\eps)n} \Big\} 
        \] 
        satisfies $\Pb\{E^c\}\lesssim e^{-e^{-un}}$.       
	Define
	\[
	t_1 := T_{\partial D(x,e^{bn-2}+e^{(1/2+\eps)n})},
	\quad 
	t_2 := \inf\{ t\ge \widehat{\iota}_{1}: W(t)\in \partial D(x,e^{bn-2}+e^{(1/2+\eps)n}) \}.
	\]
	Then, on the event $E$, we have  
	\[
	\widetilde{\iota}_1\le t_1\le \iota_1\le \iota_2\le t_2 \le \widetilde{\iota}_2.
	\]	
	By standard exponential estimates (see the estimates above Lemma 3.1 in \cite{RWcuttimes}), we have 
	\begin{equation}\label{caution-2}
	 	\Pb \big\{ \{ e^{(2b-\eps) n}\le t_1\le e^{(2+\eps) n} \}^{c} \cap E \cap V (x) \big\} \lesssim  e^{-e^{un}}.
 \end{equation}
	In a similar way,
	we have for each $1\le j\le d$, 
	\[
	\Pb\big\{ \{ |\xi^j(Z_{[t_1d]}^j))-t_1|\ge e^{(1+\eps)n} \}\cap  E \cap V (x)  \big\} \lesssim e^{-e^{un}},
	\]
	and 
	\begin{equation}\label{caution-3}
	\Pb\big\{ \{ t_1-\widetilde{\iota}_1 \le e^{(2b-\eps)n} \} \cap  E \cap V (x)  \big\} \lesssim e^{-e^{un}}.
	\end{equation}
	The above estimates combined with the fact that $1/2+\eps<b$ show that for every $j$,
	\[
	\Pb\big\{  \{ \xi^j(Z_{[t_1d]}^j) \le \widetilde{\iota}_1 \} \cap  E \cap V (x)  \big\} \lesssim e^{-e^{un}}.
	\]
	Furthermore, by union bound, we have
	\[
	\Pb\big\{ \cup_{j=1}^d \{ \xi^j(Z_{[t_1d]}^j) \le \widetilde{\iota}_1 \} \cap V (x) \big\}  \lesssim e^{-e^{un}}.
	\]
	Similarly, one can show that
	\[
	\Pb\big\{ \cup_{j=1}^d \{ \xi^j(Z_{[t_2d]}^j) \ge \widetilde{\iota}_2 \} \cap V (x) \big\}  \lesssim e^{-e^{un}}.
	\]
	Letting 
	\begin{equation}
		\Upsilon:= \{ \widetilde{\iota}_1 \le \inf \{\xi^j(Z_{[t_1d]}^j): 1\le j\le d\} \le  \sup \{\xi^j(Z_{[t_2d]}^j): 1\le j\le d\} \le \widetilde{\iota}_2  \}\cap E,
	\end{equation}
    we know from the construction of the Skorokhod embedding that $S[\iota_1,\iota_2]$ is measurable with respect to $W[\widetilde{\iota}_1, \widetilde{\iota}_2]$ and the processes $Z_n$ on the event $\Upsilon\cap V(x)$.
	Moreover, we have 
	\begin{equation}\label{eq:iotas}
	\Pb\big\{ \Upsilon^c  \cap V (x) \big\} \lesssim e^{-e^{un}}.
	\end{equation}
	This finishes the proof.
\end{proof}

\subsection{Path measures}\label{subsec:pm}
We introduce various path measures for random walk and Brownian motion in this subsection.

\paragraph{Random walk measures}

For any two subsets $A,B$ in $\Zb^d$,
let $\Gamma_{A,B}$ be the set of (discrete) paths in $\Zb^d$ from $A$ to $B$. For $D\subseteq \Zb^d$,  we write $\Gamma^D_{A,B}$ for the set of paths in $\Gamma_{A,B}$ that stay in $D$, except maybe for the endpoints. Furthermore, let $\nu_{A,B}$ and $\nu^D_{A,B}$ be the \emph{random walk path measure} which assigns weight $(2d)^{-\mathrm{len}(\lambda)}$ to each path $\lambda$ in $\Gamma_{A,B}$ and $\Gamma^D_{A,B}$, respectively. In other words, $\nu^D_{A,B}$ is $\nu_{A,B}$ restricted to $\Gamma^D_{A,B}$.
If $A$ or $B$ is just a single point, we write $\Gamma_{x, B}$, $\Gamma_{A, y}$ and $\Gamma_{x, y}$ for $\Gamma_{\{x\}, B}$, $\Gamma_{A, \{y\}}$ and $\Gamma_{\{x\}, \{ y \}}$ respectively. These abbreviations also apply to $\Gamma^D_{A,B}$, $\nu_{A,B}$ and $\nu^D_{A,B}$. For example, $\nu^D_{x,B}$ represents $\nu^D_{\{x\}, B}$.

With the above notion, one can describe the law of a simple random walk in different ways in terms of random walk path measures.
For example, if $D$ is finite, one has 
\begin{equation}\label{eq:S-stop}
\Pb^x\{ S[0,\tau_{\partial D}]\in\cdot \}=\nu^D_{x,\partial D} (\cdot) \quad \text{ for any } x\in D.
\end{equation}
Furthermore, if $\overline{B}\subseteq D$ and $x\in D\setminus \ol B$, we can decompose each $\lambda\in \Gamma^D_{x,\partial D}$ that intersects $\ol B$ by its first and last visits to $\partial B$ as follows:
\begin{equation}\label{eq:decom1}
\lambda=\lambda_1\oplus\omega\oplus\lambda_2,
\end{equation}
where 
\begin{itemize}
	\item $\lambda_{1} = \lambda [0, a_{1}] $ with $a_{1} := H_{\ol B}(\lambda)= \inf \{ t \ge 0 :  \lambda (t) \in \overline{B} \}$,
	
	\item $\omega =  \lambda [a_{1}, a_{2}] $ with $a_{2} := H_{\ol B}(\lambda^R)= \sup \{ t \le t_\lambda :  \lambda (t) \in \overline{B} \}$,
	
	\item $\lambda_{2} = \lambda [a_{2}, t_\lambda ]$.
\end{itemize}
Since the above decomposition is unique, which defines a map from $\Gamma^D_{x,\partial D}$ to $\Gamma^{D\setminus \overline{B}}_{x,\partial B}\times \Gamma^D_{\partial B,\partial B}\times \Gamma^{D\setminus \overline{B}}_{\partial B,\partial D}$ by $\lambda\mapsto (\lambda_1,\omega,\lambda_2)$, and thanks to the strong Markov property of the simple random walk, we can write (cf. \eqref{eq:oplus} for the meaning of $\oplus$ on measures)
\begin{equation}\label{eq:decom2}
	\nu^D_{x,\partial D} 
        =\sum_{y\in \partial B}\sum_{z\in \partial B} \nu^{D\setminus \overline{B}}_{x,y}\oplus \nu^D_{y,z}\oplus \nu^{D\setminus \overline{B}}_{z,\partial D}.
\end{equation}
Therefore, to sample a simple random walk $\lambda$ from $x$ to its first hitting of $\partial D$ that is restricted to visit $\ol B$ (note that the total mass is not $1$ with this restriction), we can 
\begin{itemize}
	\item first sample a path $\lambda_1$ from $\nu^{D\setminus \overline{B}}_{x,\partial B}$,
	\item and sample another path $\lambda_2$, independently of $\lambda_1$, from $\nu^{D\setminus \overline{B}}_{\partial B,\partial D}$, 
	\item then given $\lambda_1,\lambda_2$ with ending and starting points $y,z$ on $\partial B$, we sample the third path $\omega$ from $\nu^D_{y,z}$,
	\item finally we concatenate these three paths in the way of \eqref{eq:decom1} to recover $\lambda$.
\end{itemize}
We refer the reader to the paragraph in Section~\ref{sec:def} on path measures for exact meaning of sampling here. 
This kind of decomposition is quite useful when we deal with the cut ball event later. 
In fact, one can also consider the first-entry or last-exit decomposition as well, which will decompose a path into two parts according to the first hitting time $a_1$ or last-exit time $a_2$ as above. We omit the details.

The path measure satisfies the reversibility in the following sense. If 
\begin{equation*}
\text{$\lambda$ and $\eta$ are sampled according to $\nu^{D\setminus \overline{B}}_{z,\partial D} \Big/ \| \nu^{D\setminus \overline{B}}_{z,\partial D} \|$ and $\nu^{D\setminus \overline{B}}_{\partial D,z} \Big/ \| \nu^{D\setminus \overline{B}}_{\partial D,z} \|$ respectively,}
\end{equation*}
 then $\lambda$ has the same distribution as that of $\eta^{R}$.

For $x,y$ in $D$, the Green's function for the simple random walk is defined by 
\[
G_D(x,y):=\|\nu^D_{x,y}\|.
\]
Abbreviate $G_n(x,y)=G_{\Bc_n}(x,y)$ and $G_n(y)=G_n(0,y)$.

\paragraph{Brownian measures \label{subsec:br_meas}}

We now introduce a Brownian analogue of random walk measures defined above. 
Such measures have already been introduced and intensively investigated in Lawler's book \cite{conf_proc}, especially in Section 5.2, ibid. We will briefly review some of them, and refer the reader to \cite{conf_proc} for further details and proofs.
 
We start with notation for sets of (continuous) paths. We adapt the notation used for discrete paths to the continuous case with a tilde above to emphasize this. For example, we use $\wt\Gamma^D_{A,B}$ to denote the set of (continuous) paths in $\Pc$ from $A$ to $B$ that stay in $D$, except maybe for the endpoints. Here $A,B$ and $D$ are sets in $\Rb^d$.

For $x,y\in\Rd$ and $t>0$, we define the \emph{(Brownian) bridge
	measure} $\mu_{x,y;t}$ by 
	$$ \mu_{x,y;t} = p(x,y;t) \, \widehat{\mu}_{x,y;t},$$
	where $\widehat{\mu}_{x,y;t}$ stands for the law of the Brownian bridge from $x$ to $y$ with duration $t$, and we denote the heat kernel by  $p(x,y;t):=(2\pi t)^{-d/2}e^{-|x-y|^{2}/2t}$. 
 Let $\mu_{x,y}:=\int_{0}^{\infty}\,\mu_{x,y;t}\, dt$ be the \emph{Brownian path measure} from $x$ to $y$.
 
 Next, we consider the path measure inside a domain as before.  Although the underlying domain can be made more general, we will restrict ourselves to \emph{nice} domains to avoid some boundary issues. 
 A domain $D$ in $\Rb^d$ is called nice if it is connected, bounded and its boundary $\partial D$ is piecewise analytic. 
If $x,y\in D$, then $\mu_{x,y;t}^{D}$ is the
restriction of $\mu_{x,y;t}$ to $\widetilde{\Gamma}^{D}_{x,y}$, with total mass
denoted by $p_{D}(x,y;t)$. 
For $x,y\in D$
with $x\neq y$, then the
\emph{(interior Brownian) path measure} $\mu_{x,y}^{D}:=\int_{0}^{\infty}\,\mu_{x,y;t}^{D}\, dt$
has total mass given by the \emph{Green's function} (for the Brownian motion) in $D$, defined as follows:
\[
\Gt_{D}(x,y)=\int_{0}^{\infty}\,p_{D}(x,y;t)\,dt.
\]
Abbreviate $\Gt_n(x,y)=\Gt_{\Dc_n}(x,y)$ and $\Gt_n(y)=\Gt_n(0,y)$.

	We will also make use of the Brownian \emph{interior-to-boundary} and \emph{boundary-to-boundary} excursion measures.   
For any $x\in D$,
	let $\mu^D_{x,\partial D}$ denote the probability measure of $W[0,T_{\partial D}]$,
	with $W$ starting at $x$. 
Since $D$ is a nice domain, we can write $\mu^D_{x,\partial D}=\int_{\partial D}\,\mu_{x,y}^{D}\,\sigma(dy)$,
	with $\sigma(dy)$ being the surface measure on $\partial D$ (area if $d = 3$ and length if $d = 2$), 
	where $\mu_{x,y}^{D}$ for $x\in D$ and $y\in \partial D$ denotes a measure supported on $\tilde\Gamma^D_{x,y}$ with total mass given by the usual Poisson kernel. The normalized probability measure $\hat\mu_{x,y}^{D}$ is the law of Brownian motion conditioned to exit $D$ “at $y$”.
	By taking the reverse of path from $\mu_{x,y}^{D}$ we get the \emph{ boundary-to-interior excursion measure}. In a similar fashion we can also define the  \emph{boundary-to-boundary excursion measure}.

Recall that $\gamma^R$ stands for the reversal of $\gamma$. Then,
we have $(\mu_{x,y}^{D})^R=\mu_{y,x}^{D}$ for all the Brownian
measures defined thus far, where $(\mu_{x,y}^{D})^R$ is the pushforward of $\mu_{x,y}^{D}$ under the ``reverse'' function $(\cdot)^R$. This allows for last-exit decomposition
formulas, which are the main reason for the path measure formalism.
Some of these formulas can be found in \cite[Section 5.2] {conf_proc}.
For our goals it is sufficient to point out, that, thanks to the strong
Markov property of Brownian motion, if $B$
and $D$ are some nice domains with $\overline{B}\subset D$,
then we can do the following decompositions (recall \eqref{eq:oplus}): 
\begin{equation}\label{eq:dec1}
\mu_{x,\partial D}^{D}=\int_{\partial B} \,\mu_{x,y}^{B}\oplus\mu_{y,\partial D}^{D}\,\sigma(dy)\  \text{ for any } x\in B,
\end{equation}
\begin{equation}\label{eq:dec-last}
	\mu_{x,\partial D}^{D}=\int_{\partial B} \,\mu_{x,y}^{D}\oplus\mu_{y,\partial D}^{D\setminus \overline{B}}\,\sigma(dy)\  \text{ for any } x\in B,
\end{equation}
and 
\begin{equation}\label{eq:dec2}
	\mu^D_{x,\partial D}\mid_{\gamma\cap B\neq\emptyset}
	\,=\int_{\partial B} \,\int_{\partial B} \, \mu^{D\setminus \overline{B}}_{x,y}\oplus \mu^D_{y,z}\oplus \mu^{D\setminus \overline{B}}_{z,\partial D} \,\sigma(dy)\,\sigma(dz) \  \text{ for any } x\in D\setminus \overline{B}.
\end{equation}
The above three formulas can be obtained by using the first-entry, last-exit, and first-entry and last-exit decomposition, respectively. 
Any other formulae that we use in the sequel follows immediately by
iterating these observations, and, perhaps, by also using the reversibility
of the interior Brownian measures.
\bigskip

Next, we will show that although the total mass of point-to-point path measure in the plane is infinity, it will become finite if one restricts it to the collection of non-disconnecting or non-intersecting paths.
\begin{lemma}
	\label{l:paths_discon_beurling} There exist positive constants $c,u$ such that the
	following is true. Suppose that $d=2$, $x,y,z\in\Rb^2$ and $2|x|<r:=|y|\land|z|$. Then,
	\begin{equation}
		\mu_{x,y}\{\gamma\in\widetilde{\Gamma}_{x,y}:\gamma\text{ does not disconnect }0\text{ from }z\}\leq c(r/|x|)^{-u}.\label{eq:path_measure_disconnection}
	\end{equation}
	This is also true if $\gamma'\in\widetilde\Gamma_{0,z}$ and we have an additional
	restriction to $\gamma$ such that $\gamma\cap\gamma'=\emptyset$. Moreover, it still holds if we transfer to the discrete setting, that is, $\nu_{x,y}$ in place of $\mu_{x,y}$ with $x,y,z\in \Zb^2$.
\end{lemma}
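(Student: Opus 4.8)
The plan is to prove \eqref{eq:path_measure_disconnection} by a dyadic decomposition of the path according to its maximal distance from the origin, combined with the Beurling estimate (Proposition \ref{p:Beurling}) applied at each scale. The underlying heuristic is that a Brownian path from $x$ to $y$ which does not disconnect $0$ from $z$ must, in particular, cross every annulus $\{e^{j-1} \le |w| \le e^j\}$ with $2|x| \le e^{j} \le r$ without closing up a loop that would separate $0$ from the far boundary; but the probability that a Brownian excursion traverses an annulus of modulus $\asymp 1$ without disconnecting is bounded away from $1$, and these events are roughly independent across well-separated scales, yielding a geometric (hence power-law) decay in the number of scales $\asymp \log(r/|x|)$.

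First I would reduce to a probabilistic statement. Write $\mu_{x,y} = \int_0^\infty \mu_{x,y;t}\,dt$ and split the path at its last visit to $\partial D(0,2|x|)$ and its first visit to $\partial D(0,r)$; using the last-exit / first-entry decompositions analogous to \eqref{eq:dec1}--\eqref{eq:dec2}, the total mass of $\mu_{x,y}$ restricted to paths reaching $\partial D(0,r)$ factors through a Brownian motion started near $\partial D(0, 2|x|)$ and run until it exits $D(0,r)$ (the remaining pieces contribute a bounded factor, since $\wt G$ and the relevant Poisson kernels are comparable at these scales, and one uses that $|y| \wedge |z| \ge r$ so the endpoint $y$ is itself at distance $\ge r$). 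So it suffices to show: for a Brownian motion $W$ started at $w$ with $|w| = 2|x|$,
\[
\Pb^{w}\{ W[0, T_{r}] \text{ does not disconnect } 0 \text{ from } \partial D(0,r) \} \le c (r/|x|)^{-u}.
\]
Then I would run the scale-by-scale argument: for each integer $j$ with $\log(2|x|) \le j \le \log r$, let $E_j$ be the event that the portion of $W$ between its last visit to $\partial D(0, e^{j-1})$ and its first subsequent visit to $\partial D(0, e^{j})$, together with what it has already drawn, fails to disconnect $0$ from $\infty$ within the annulus. On the complement of disconnection, all $E_j$ occur; by the strong Markov property and a Beurling-type bound (a path crossing an annulus of bounded modulus disconnects the two boundary circles with probability $\ge p_0 > 0$, uniformly — this is where Proposition \ref{p:Beurling} enters, applied to the already-drawn curve connecting the inner circle to the outer), each conditional probability is $\le 1 - p_0$, so the product over $\asymp \log(r/|x|)$ disjoint scales is $\le (1-p_0)^{c\log(r/|x|)} = (r/|x|)^{-u}$.

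For the additional restriction $\gamma \cap \gamma' = \emptyset$ with $\gamma' \in \wt\Gamma_{0,z}$, I would observe that the same estimate holds verbatim: non-disconnection is still forced (indeed, a path from $x$ to $y$ that avoids a path from $0$ to $z$ and also does not disconnect $0$ from $z$ is even more constrained), and the Beurling estimate is already stated uniformly over the obstacle curve $\gamma$, so conditioning on $\gamma'$ and applying the same annulus argument to $W$ avoiding $\gamma'$ (using the second, SRW-compatible, form of Proposition \ref{p:Beurling} in the discrete case) gives the bound with the same exponent. The discrete statement with $\nu_{x,y}$ in place of $\mu_{x,y}$ follows by the identical decomposition, replacing $\wt G$ by $G$, the Poisson kernel by its discrete analogue, and Brownian disconnection by the event that the random walk loop fails to separate $0$ from $\partial B(0,r)$, invoking the random walk version of the Beurling estimate; the comparability of discrete and continuous Green's functions / hitting probabilities at mesoscopic scales (Lemma \ref{l:trans_recur}) makes the bounded correction factors go through.

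I expect the main obstacle to be the bookkeeping in the first reduction step: making precise that restricting $\mu_{x,y}$ (which has infinite total mass) to paths that reach $\partial D(0,r)$ really does produce a finite measure whose mass is controlled by the stated probability, and handling the two "dangling" pieces of the path (the initial segment from $x$ before reaching $\partial D(0, 2|x|)$, which may wander, and the final segment from $\partial D(0,r)$ to $y$) without losing control — one needs that $\mu_{x,y}$ restricted to these short-range pieces has mass $\asymp \wt G(x,y)$-type quantities that are finite and that the non-disconnection constraint on the long-range middle piece is genuinely the binding one. Once that decomposition is set up cleanly, the dyadic Beurling argument is routine.
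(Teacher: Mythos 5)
Your core mechanism --- iterating the Beurling estimate over the $\asymp\log(r/|x|)$ dyadic annuli between $\partial D(0,2|x|)$ and $\partial D(0,r)$ --- is also what drives the paper's proof. But the gap you flag in the reduction step is not just bookkeeping; it is the crux. When you factor $\mu_{x,y}$ at the first visit to $\partial D(0,r)$, the terminal piece is a whole-plane path measure $\mu_{v,y}$ from a point $v\in\partial D(0,r)$ to $y$, and in $d=2$ its total mass $\wt G_{\Rb^2}(v,y)$ is \emph{infinite} because planar Brownian motion is recurrent; having $|y|\ge r$ does not help. So the remaining pieces do not contribute a bounded factor, and the single-crossing probability you reduce to cannot by itself control $\mu_{x,y}$ restricted to non-disconnection. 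One must invoke the non-disconnection constraint on the terminal piece as well: every return to the inner disk after reaching $\partial D(0,r)$ creates another crossing of the large annulus and hence pays another Beurling factor.

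This is exactly what the paper's proof does. It decomposes each path into its alternating crossings between the fixed circles $\partial B_2$ and $\partial B_r$, bounds the measure of paths that make $2k-1$ such crossings without disconnecting by $r^{-c(2k-1)}$ --- the final dangling piece, which on this event never returns to the inner circle, then lives in a transient domain and has bounded mass --- and sums the resulting geometric series over $k\ge 1$. Your dyadic-scale Beurling argument is precisely what supplies the per-crossing factor $r^{-c}$; the missing ingredient is the outer sum over the number of crossings, which is where the infinite total mass of $\mu_{x,y}$ in the recurrent dimension gets absorbed. (The same issue arises verbatim in the discrete version, since $G_{\Zb^2}(v,y)$ is likewise infinite.)
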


\begin{proof}
	We will prove \eqref{eq:path_measure_disconnection} in the continuous setting for an illustration, and the other case can be proved in a similar way.
	We assume without loss of generality that $|x|=1$ and $2<r=|z|<|y|$. For each path $\gamma$ in $\widetilde{\Gamma}_{x,y}$, we decompose it into crossings between $\partial B_2$ and $\partial B_r$, that is, 
	let $s_0=t_0=0$, and for $k\ge 1$,
	\[
	s_k=\inf\{ t\ge t_{k-1}: \gamma(t)\in \partial B_r \}, \quad
	t_k=\inf\{ t\ge s_{k-1}: \gamma(t)\in \partial B_2 \}.  
	\]
	Let $U_k$ be the set of paths in $\widetilde{\Gamma}_{x,y}$ with $2k-1$ crossings, i.e., $s_k<\infty$ and $t_k=\infty$. For some universal constant $c>0$, each crossing has a probability at most $r^{-c}$ not to disconnect $0$ from $z$. Moreover, the last part of the decomposition $\gamma[s_k,t_{\gamma}]$ is a path in $\wt\Gamma^{\overline{B_r}^c}_{\partial B_r,y}$, which has total mass bounded by $1$. Therefore, by the strong Markov property,
	\[
	\mu_{x,y}\{ U_k \}\le r^{-c(2k-1)}.
	\]
	Therefore,
	\[
	\mu_{x,y}\{\gamma\in\widetilde{\Gamma}_{x,y}:\gamma\text{ does not disconnect }0\text{ from }z\}
	\le \sum_{k\ge 1}\mu_{x,y}\{ U_k \} \le \sum_{k\ge 1}  r^{-c(2k-1)}.
	\]
	This concludes the proof of  \eqref{eq:path_measure_disconnection}.
	
	If one replaces the non-disconnection requirement by non-intersection with another path $\gamma'$, one can easily conclude the proof by noting that $\gamma\cap\gamma'=\emptyset$ implies that
	$\gamma$ does not disconnect $0$ from $z$.
\end{proof}

\section{Non-intersecting paths \label{sec:nonint}}
In this section, we discuss the probability measure of two random walks (resp.\ two Brownian motions) conditioned not to intersect each other and introduce various estimates, in particular {\it separation lemmas} (see the paragraph above Lemma \ref{lem:sep-SRW} for more explanations), regarding these objects.
Note that from this section onwards, we always call this pair of walks non-intersecting (NI) random walks (resp.\  Brownian motions), abbreviated as NIRW's (resp.\  NIBM's).

\subsection{NIRW's}

We first discuss NIRW's. Let $\Gamma_m$ be the set of paths $\gamma$ such that $\Pb\{ S[0, \tau_m]=\gamma \}>0$ where $S$ is the simple random walk started from $0$. Define the set of NI pairs of such paths:
\begin{equation}\label{eq:tXr}
	\Xc_{m}=\, \{\bagam=(\gamma^{1},\gamma^{2})\in\Gamma_m\times\Gamma_m: \gamma^{1}(s)\neq \gamma^{2}(t) \text{ for all } (s,t)\neq (0,0)\}.
\end{equation}
For any $w\in \Zb^d$, write $ \Xc_{r}(w):=w+ \Xc_{r}$ for the translation of NI paths in $\Xc_{r}$ to the point $w$. 

Define the NI event for RW's started from the origin as
\begin{equation}\label{eq:tilde-A'}
	A_{m}:=\{  (S^{1}[0,\tau_{m}],S^{2}[0,\tau_{m}])\in \Xc_{m} \},
\end{equation} 
where $S^i$ above is a random walk started from $0$ for each $i=1,2$.
For $m>l>0$, define the NI event with initial configuration $\overline\gamma\in \Xc_l$ as
\begin{equation}\label{eq:tilde-A}
A_{m}(\overline \gamma):= \{S^{1}[0,\tau_{m}]\cap(S^{2}[0,\tau_{m}]\cup \gamma^2)=(S^{1}[0,\tau_{m}]\cup \gamma^{1})\cap S^{2}[0,\tau_{m}]=\emptyset\},
\end{equation}
where $S^i$ is a random walk started from $\gamma^i(t_{\gamma^i})$.

We now discuss a very important tool, \emph{the separation lemma},  that will be used a lot of times when we analyze NIRW's. Roughly speaking, it says that if two independent random walks do not intersect each other, then with uniformly positive probability they will be ``well-separated'' at the end. To be more precise, we first define the ``quality of separation'' as follow:
\begin{equation}\label{eq:RW-quality}
\Delta_m:=e^{-m} \min_{i=1,2} \dist\big( S^i(\tau_m),S^{3-i}[0,{\tau_m}] \big).
\end{equation}
Then, we say {the pair $(S^{1}[0,\tau_{m}], S^{2}[0,\tau_{m}] )$} is \emph{well-separated} if $\Delta_m\ge 1/10$. 

\begin{lemma}[Separation lemma for NIRW's: Proposition 2.1, \cite{MR3787377}]\label{lem:sep-SRW}
	There exists a universal constant $c>0$ such that for all $m\ge 10 l$ and any initial configuration $\overline\gamma\in {\Xc}_{l}$,
	\begin{equation}
		\Pb\{ \Delta_m\ge 1/10 \mid A_{m}(\overline \gamma) \}\ge c.
	\end{equation}
\end{lemma}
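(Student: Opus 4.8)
The plan is to prove the separation lemma by the standard iterative scheme: show that, conditionally on non-intersection over a dyadic (in exponential scale) shell, there is a uniformly positive chance to bring the two walks into a well-separated configuration, and then boost this into the "at the end of $[0,\tau_m]$" statement by running the argument over the last few scales. Throughout, the key input is a \emph{one-step improvement estimate}: there exists $\rho>0$ such that for any initial configuration $\bar\gamma \in \Xc_l$ with $l \ge \log(1/\rho)$-type separation already in place, the probability that the walks stay non-intersecting while exiting $\partial\Bc_{l+C}$ \emph{and} end up well-separated on $\partial\Bc_{l+C}$ is comparable (up to a universal constant) to the unconditioned non-intersection probability $\Pb\{A_{l+C}(\bar\gamma)\}$. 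This is exactly the content one extracts from \cite{MR3787377}, so I would cite Proposition 2.1 there directly rather than redo it; but let me sketch the mechanism.

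First I would record the monotonicity/quasi-multiplicativity of the non-intersection probabilities: for $m \ge l$, $\Pb\{A_m(\bar\gamma)\} \asymp \Pb\{A_l(\bar\gamma)\} \cdot e^{-(m-l)\xi/2}$ with constants uniform in $\bar\gamma \in \Xc_l$ provided $\bar\gamma$ is well-separated, and in general $\Pb\{A_m(\bar\gamma)\} \le c\, e^{-(m-l)\xi/2}$ and $\Pb\{A_m(\bar\gamma)\} \ge c\, \Delta_l^{c'} e^{-(m-l)\xi/2}$ (the separation of $\bar\gamma$ enters polynomially). These follow from \eqref{eq:nonint_exp_RW}, the strong Markov property, and the Beurling/Freezing estimates (Proposition~\ref{p:Beurling} and Lemma~\ref{l:freezing}) which control the probability that a walk started close to another path avoids it. Second, I would set up the iteration: pick a large integer $K$ and look at the scales $m - jK$ for $j = 0, 1, \dots, J$ with $J \asymp \log(m/l)$. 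At each scale I claim a uniformly positive conditional probability of the walks being well-separated on $\partial\Bc_{m-jK}$ given $A_m(\bar\gamma)$; the point is that conditioning on the \emph{full} non-intersection event up to scale $m$ does not destroy the positive chance of separation at an intermediate scale, because one can decompose $A_m(\bar\gamma)$ at that scale and use quasi-multiplicativity to absorb the "future" (from the intermediate scale to $m$) into a universal constant, which is where well-separatedness is used to get a \emph{two-sided} bound.

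The core lemma behind the iteration is the following "separation in one shell" statement: there is $C$ and $c>0$ such that for every $\bar\gamma \in \Xc_l$,
\[
\Pb\{ \Delta_{l+C} \ge 1/10,\ (S^1[0,\tau_{l+C}], S^2[0,\tau_{l+C}]) \in \Xc_{l+C} \mid A_{l+C}(\bar\gamma) \} \ge c.
\]
To prove this one runs each walk from $\partial\Bc_l$ a further distance $\sim e^{l}$; with positive probability (bounded below uniformly, by a gambler's-ruin/annulus-crossing estimate, Lemma~\ref{l:trans_recur}) each walk first moves into a favorable "corner" cone and then exits $\partial\Bc_{l+C}$ through a small cap far from the other walk's trace, using the Beurling estimate ($d=2$) or the Freezing lemma ($d=3$) to say the other walk, being ``frozen'' at distance $\gtrsim e^{l}$, has positive chance to avoid this cone. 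Combined with the fact that the non-intersection probability over this single shell is itself only down by a universal constant, Bayes' rule gives the claim. Iterating $J$ times, the probability of failing to be well-separated at \emph{every} intermediate scale is at most $(1-c)^{J} \to 0$, but we only need \emph{one} success followed by preservation of separation to the end — and once well-separated, the walks stay well-separated until $\tau_m$ with probability bounded below (again Beurling/Freezing), which yields the lemma with the constant $c$ uniform over $m \ge 10l$ and $\bar\gamma \in \Xc_l$.

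The main obstacle is the conditioning: one is conditioning on a global, non-local event ($A_m(\bar\gamma)$) and wants a local conclusion (separation at one scale), so the naive Markov decomposition leaves a conditioned "future" factor that is not obviously bounded below. The resolution — and the technically delicate heart of the argument, carried out in \cite{MR3787377} — is precisely the quasi-multiplicativity up-to-constants of non-intersection probabilities \emph{with polynomial dependence on the input separation}, so that after one successful separation step the future factor is two-sidedly comparable to the unconditioned one. Since Lemma~\ref{lem:sep-SRW} is quoted verbatim from \cite[Proposition 2.1]{MR3787377}, for the present paper I would simply invoke that reference; the sketch above is the road map of why it holds. The Brownian analogue (which the paper will also need) is entirely parallel, replacing $\tau$ by $T$ and the discrete Beurling estimate by its continuous counterpart.
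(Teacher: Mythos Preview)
The paper does not prove this lemma but simply cites Proposition~2.1 of \cite{MR3787377}, which is exactly what you ultimately advocate; your sketch of the standard iterative separation argument is a correct road map of that proof. One minor slip: in the paper's exponential-radius convention (balls $\Bc_m$ of radius $e^m$) the non-intersection probability decays like $e^{-\xi(m-l)}$, not $e^{-\xi(m-l)/2}$ --- the exponent $\xi/2$ in \eqref{eq:nonint_exp_RW} refers to time $n$, which corresponds to spatial radius $\sqrt{n}$.
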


We also have the sharp estimate for the probability of the NI event $A_{m}(\overline \gamma)$.

\begin{proposition}[{Corollary 4.2, \cite{2sidedwalk}}]
	\label{prop:sharp-rw}
	For each $l\ge 0$ and $\overline\gamma\in {\Xc}_{l}$, there exists $0<q(\overline\gamma)<\infty$ such that for all $m\ge 10 l$,
	\[
	\Pb\bigl({A}_{m}(\overline \gamma)\bigr)\simeq q(\overline\gamma)e^{-\xi(m-l)}.
	\]
	In particular, it is true when $\ol\gamma$ reduced to the origin, that is, for some $0<q<\infty$, 
	\[
	\Pb\bigl({A}_{m}\bigr)\simeq q\, e^{-\xi m}.
	\]
\end{proposition}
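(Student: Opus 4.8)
Since the statement is exactly Corollary~4.2 of \cite{2sidedwalk}, one could simply invoke it; I sketch a proof along the standard separation-lemma/quasi-multiplicativity lines for non-intersecting walks. First I would record the crude bounds: $\Pb(A_m(\overline\gamma))\asymp_{\overline\gamma}e^{-\xi(m-l)}$, and more precisely that $\Pb(A_{s'}(\overline\gamma'))\le C\,e^{-\xi(s'-s)}$ with a \emph{universal} $C$ for every $\overline\gamma'\in\Xc_s$ and $s'\ge s$, while $\Pb(A_{s'}(\overline\gamma'))\asymp e^{-\xi(s'-s)}$ whenever $\overline\gamma'$ is well-separated. The upper bounds follow by restriction from the definition \eqref{eq:nonint_exp_RW} of $\xi$ together with iteration; the lower bound for well-separated configurations follows by gluing non-intersection events across a geometric sequence of scales via the separation lemma (Lemma~\ref{lem:sep-SRW}), losing only universal constants per scale. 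In particular $q_m(\overline\gamma):=e^{\xi(m-l)}\Pb(A_m(\overline\gamma))$ stays bounded between positive $\overline\gamma$-dependent constants, so it suffices to show it converges with exponential rate; its limit is then automatically in $(0,\infty)$.

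The heart of the proof is the claim that there exist $u>0$ and a bounded measurable function $\Psi$ on $\bigcup_{s\ge 0}\Xc_s$ such that for every $s$, every $\overline\gamma'\in\Xc_s$ and every $s'\ge s$,
\[
e^{\xi(s'-s)}\,\Pb\big(A_{s'}(\overline\gamma')\big)\;=\;\Psi(\overline\gamma')+O\!\big(e^{-u(s'-s)}\big),
\]
with the implied constant \emph{uniform} in $s,s',\overline\gamma'$. I would prove this by a coupling argument: starting from any configuration at a given scale, running one further scale makes the pair of walks well-separated with probability bounded below (Lemma~\ref{lem:sep-SRW}); and starting from two well-separated configurations at the same scale, one can couple the two pairs of walks — each conditioned on continued non-intersection — so that after an $O(1)$ number of scales they re-separate and are driven into a common overlapping position, hence coalesce together with all future conditioning, with probability bounded below by a universal constant. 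Iterating this coalescence over $\asymp(s'-s)$ scales shows the conditioned walks from any two initial configurations agree except with probability $O(e^{-u(s'-s)})$, which yields the displayed convergence and defines $\Psi$. Forcing the walks to ``overlap'' is where the hittability inputs enter: the Beurling estimate (Proposition~\ref{p:Beurling}) for $d=2$ and the freezing lemma (Lemma~\ref{l:freezing}) for $d=3$.

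To assemble, fix $\overline\gamma\in\Xc_l$ and $10l\le m\le m'$, and decompose at scale $m$ via the strong Markov property: with $\overline\gamma_m\in\Xc_m$ the pair of stopped trajectories up to $\tau_m$,
\[
\Pb\big(A_{m'}(\overline\gamma)\big)=\Eb\big[\ind_{A_m(\overline\gamma)}\,\Pb\big(A_{m'}(\overline\gamma_m)\big)\big].
\]
Inserting the estimate of Step~2 applied at scale $m$ and multiplying by $e^{\xi(m'-l)}$ gives
\[
q_{m'}(\overline\gamma)=e^{\xi(m-l)}\,\Eb\big[\ind_{A_m(\overline\gamma)}\,\Psi(\overline\gamma_m)\big]+O_{\overline\gamma}\!\big(e^{-u(m'-m)}\big),
\]
where the error is controlled using $\Pb(A_m(\overline\gamma))\asymp_{\overline\gamma}e^{-\xi(m-l)}$. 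The first term on the right is independent of $m'$, so letting $m'\to\infty$ shows that $q(\overline\gamma):=\lim_{m'}q_{m'}(\overline\gamma)$ exists, equals $e^{\xi(m-l)}\Eb[\ind_{A_m(\overline\gamma)}\Psi(\overline\gamma_m)]$ for \emph{every} $m\ge 10l$, and satisfies $q_{m'}(\overline\gamma)=q(\overline\gamma)+O_{\overline\gamma}(e^{-u(m'-m)})$ for all $m\le m'$. Taking $m=\lfloor m'/2\rfloor$ and using $q(\overline\gamma)>0$ yields $q_{m'}(\overline\gamma)=q(\overline\gamma)[1+O_{\overline\gamma}(e^{-um'/2})]$, i.e.\ $\Pb(A_{m'}(\overline\gamma))\simeq q(\overline\gamma)e^{-\xi(m'-l)}$. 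The statement for $\overline\gamma$ reduced to the origin is the case $l=0$.

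The main obstacle is Step~2: establishing that the pair of non-intersecting walks forgets its initial configuration exponentially fast, \emph{uniformly} over all (not just well-separated) configurations. For $d=3$ this is delicate because a three-dimensional walk cannot hit a line, so forcing two conditioned pairs to coalesce requires careful use of the freezing lemma; and one must combine the coupling with a sufficiently quantitative form of the separation lemma so that poorly separated configurations contribute only an exponentially small error.
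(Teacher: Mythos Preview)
The paper does not prove this proposition at all: it simply cites Corollary~4.2 of \cite{2sidedwalk}, with the only additional remark being that the exponential (rather than stretched-exponential) error rate follows by substituting Proposition~4.3 of \cite{analyticity} for Theorem~1.2 of \cite{2dBMnonint} at the relevant step of the argument in \cite{2sidedwalk}. So there is no ``paper's own proof'' to compare against.

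Your sketch is a reasonable outline of the standard separation/coupling/quasi-multiplicativity argument that underlies the cited result, and the ingredients you name (Lemma~\ref{lem:sep-SRW}, the Beurling estimate for $d=2$, the freezing lemma for $d=3$) are the right ones. One point worth flagging: in your Step~2 you assert a coupling that forces two well-separated conditioned pairs to \emph{coalesce} (agree exactly) after $O(1)$ scales. This is stronger than what the usual argument proves; what one actually establishes is that the two conditioned pairs can be coupled so that their \emph{future} increments agree after some scale, which is enough to compare the ratios $\Pb(A_{m'}(\cdot))/\Pb(A_m(\cdot))$ up to a multiplicative error $1+O(e^{-u(m-l)})$. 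Your assembly in Step~3 is essentially this, but the way you phrase Step~2 (``coalesce together with all future conditioning'') slightly overstates what the coupling delivers. None of this is a genuine gap in the overall strategy; it just needs a cleaner statement of exactly which quantity the coupling controls.
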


	The referenced paper \cite{2sidedwalk} has error bound $O(e^{-u\sqrt{m}})$ for $d=2$, i.e., $\Pb\bigl({A}_{m}(\overline \gamma)\bigr)= q(\overline\gamma)e^{-\xi(m-l)}[1+O(e^{-u\sqrt{m}})]$, in which the authors referred to Theorem 1.2 of \cite{2dBMnonint}
	in establishing Proposition 2.9 of \cite{2sidedwalk}; by referring to Proposition 4.3 from \cite{analyticity}
	instead, we get the desired exponential convergence rate.

\subsection{NIBM's from the origin}\label{subsec:qi}

We now discuss non-intersecting Brownian motions (abbreviated as NIBM's below) started from $0$. Let $\widetilde\Gamma_r$ be the set of paths $\gamma$ such that $\gamma(0)=0$, $\gamma(t_{\gamma})=e^r$ and $|\gamma(t)|<e^r$ for all $0\le t<t_{\gamma}$. Define the set of NI pairs of paths:
\begin{align*}
	\widetilde\Xc:=\, \{\bagam=(\gamma^{1},\gamma^{2})\in\widetilde\Gamma_0\times\widetilde\Gamma_0: \gamma^{1}(s)\neq \gamma^{2}(t) \text{ for all } (s,t)\neq (0,0) \}.
\end{align*}
We now define the quasi-invariant probability measure $\Qf$ on $\widetilde\Xc$,
which was introduced in \cite{2dBMnonint} for $d=2$, and in \cite{MR1645225}
for $d=3$. We will describe $\Qf$ and its properties using stronger
results from \cite{analyticity,BMinvar}. Informally, $\Qf$ is the
distribution {of} a pair of independent Brownian motions from $0$ to
$\partial\Dc$ ``conditioned to avoid intersection''. Since we cannot
condition on events of zero probability, we define $\Qf$ as a limit by using the procedure given below.

Let $\baK=(K^{1},K^{2})$ be a pair of two compact sets in $\Rb^d$ (either $K^1 \cap K^2 \neq \empty$ or not). Let $\overline x=(x^{1},x^{2})$ be a pair of points in $\Rb^d$. We call $(\baK,\overline x)$ an \emph{initial configuration}.
For any $z\in \Rb^d$ and $r\in \Rb$ such that $(K^1\cup K^2\cup\{x^1,x^2\})\subset \Dc_r(z)$, we define the NI event
\begin{equation}\label{eq:Ar}
	\widetilde A_{z,r}(\baK,\overline x)=\{W^{1}[0,T_{\Dc_r(z)}]\cap(W^{2}[0,T_{\Dc_r(z)}]\cup K^{2})=(W^{1}[0,T_{\Dc_r(z)}]\cup K^{1})\cap W^{2}[0,T_{\Dc_r(z)}]=\emptyset\},
\end{equation}
where $W^1$ and $W^2$ are independent Brownian motions started from $x^1$ and $x^2$ respectively. We abbreviate $\widetilde A_{r}(\baK,\overline x)=\widetilde A_{0,r}(\baK,\overline x)$.
For simplicity we assume that $z=0$, $r>2$, $(K^1\cup K^2)\subset \Dc_2$ and $x^i\in K^{i}\cap\partial\Dc$ below. The general results can be obtained by using translation- and scaling-invariance of Brownian motion.
We only consider initial configurations such that $\widetilde A_{2}(\baK,\overline x)$ has positive probability.
We also let $\widetilde A_{r}(\bagam)=\widetilde A_{r}(\bagam,\overline x)$ for $\bagam\in\widetilde\Xc$, where $\overline x=(x^{1},x^{2})$
with $x^{i}=\gamma^{i}\cap\partial\Dc$. We define $Q_{r}(\baK,\overline x)$ as the distribution of
$e^{-r}\circ\overline W[0,T_{r}]$ conditionally on $\widetilde A_{r}(\baK,\overline x)$, and $Q_{r}(\bagam)$ as
the distribution of $\bagam_{r}$ conditionally on $\widetilde A_{r}(\bagam)$, where $\bagam_{r}=e^{-r}\circ\bigl((\bagam\oplus\overline W)[0,T_{r}]\bigr)$.
We will abuse notation: if an operation other than the shrinking and
elongation of $\bagam$, which we denote by $\bagam_{r}$, is defined
for curves, then we also define it via a pushforward for distributions
on pairs of curves, e.g., if $\bagam$ has distribution $Q$, then $Q[0,T_{-r}]$
is the distribution of $\bagam[0,T_{-r}]$. Quasi-invariant distribution
$\Qf$ is defined to be such a probability measure on $\widetilde\Xc$,
that for some $u>0$, 
\begin{equation}\label{eq:qi_bm_definition}
	\norm{Q_{r}(\baK,\overline x)[T_{-r/2},T_{0}]-\Qf[T_{-r/2},T_{0}]}_{{\rm TV}}=O(e^{-ur}),
\end{equation}
uniformly in $(\baK,\overline x)$. 

We also have a separation {lemma} for NIBM's, similar to that of NIRW's. Define the separation quality by 
\begin{equation}\label{eq:quality}
\widetilde \Delta_r:=e^{-r} \min_{i=1,2} \dist\big( W^i(T_r),W^{3-i}[0,T_r] \big).
\end{equation}
Then, we say it is \emph{well-separated} if $\widetilde\Delta_r\ge 1/10$. 

\begin{lemma}[Separation lemma for NIBM's: Lemma 3.4, \cite{2dBMnonint}; Lemma 3.2, \cite{BMinvar}]\label{lem:sep-BM}
	There exists a universal constant $c>0$ such that for any initial configuration $(\baK,\overline x)$ and any $r>0$ such that $(K^1\cup K^2)\subseteq \Dc_{r/2}$,
	\begin{equation}
		\Pb\{ \widetilde\Delta_r\ge 1/10 \mid \widetilde A_r(\baK,\overline x) \}\ge c.
	\end{equation}
\end{lemma}

Analogous to Proposition~\ref{prop:sharp-rw}, it is also known that there is a bounded
function $\widetilde q:\widetilde\Xc\to(0,\infty)$ such that 
\[
\Pb\bigl(\wt A_{r}(\bagam)\bigr)\simeq \widetilde q(\bagam)\, e^{-\xi r} \quad \text{ for all }   \bagam\in\widetilde\Xc.
\]
 Similarly, for any initial configuration $(\baK,\overline x)$, there exists $0<\wt q(\baK,\overline x)<\infty$ such that
\begin{equation}\label{eq:An}
	\Pb\bigl(\widetilde A_{r}(\baK,\overline x)\bigr)\simeq \widetilde q(\baK,\overline x)\, e^{-r\xi}.
\end{equation}
We also describe a version for excursions which will be used in Section \ref{sec:gcb}. 

\begin{lemma}\label{lem:excursion}
	Let $W^1$ and $W^2$ be two independent standard Brownian motions. Let $s+1\le r$. Denote by $\sigma_s$ the last visit of $\Dc_s$ by the Brownian motion before $T_r$. Then,
	\[
	\Pb\{ W^1[\sigma_s,T_r] \cap W^2[\sigma_s,T_r]=\emptyset \} \asymp (r-s)^{2(3-d)} e^{-\xi (r-s)}.
	\]
\end{lemma}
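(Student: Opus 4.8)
The plan is to reduce Lemma~\ref{lem:excursion} to the already-stated sharp estimate \eqref{eq:An} for the non-intersection event with an initial configuration, using the separation lemma for NIBM's (Lemma~\ref{lem:sep-BM}) to handle the ``inner'' part of the picture. First I would decompose each Brownian path $W^i$ at the last visit $\sigma_s$ to $\Dc_s$: the piece $W^i[0,\sigma_s]$ lives inside $\overline\Dc_s$ (by definition of last visit, actually it can wander in all of $\Rd$ but its endpoint is on $\partial\Dc_s$ — more precisely $W^i[0,\sigma_s]$ together with the requirement that after $\sigma_s$ the walk stays outside $\Dc_s$ until $T_r$ is what we condition on), and the piece $W^i[\sigma_s,T_r]$ is an excursion from $\partial\Dc_s$ to $\partial\Dc_r$ staying outside $\Dc_s$. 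The event in question only concerns the traces on $[\sigma_s,T_r]$, so after conditioning on the configuration at scale $s$ we are looking at the probability that two such excursions, started from points on $\partial\Dc_s$, avoid each other up to $\partial\Dc_r$. By scaling this is comparable to $\Pb(\wt A_{r-s}(\baK,\ol x))$ for an appropriate initial configuration $(\baK,\ol x)$ supported near $\partial\Dc_1$ — but one must be careful because the excursions start \emph{on} the sphere and the two starting points may be very close.

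The key step is then the separation lemma. Using Lemma~\ref{lem:sep-BM} in its time-reversed form (excursions toward $\partial\Dc_s$ separate, by reversibility of Brownian motion run between two spheres), I would argue that the two excursions, conditioned to avoid each other, are with uniformly positive probability ``well-separated'' at scale $\Dc_{s+1}$, i.e.\ at unit distance inward they have starting configurations in a compact family of initial configurations $(\baK,\ol x)$ with $\wt q(\baK,\ol x)$ bounded above and below. Conditionally on such a separated configuration at scale $s+1$, \eqref{eq:An} gives $\Pb(\wt A_{r-s-1}(\baK,\ol x))\asymp e^{-\xi(r-s-1)}\asymp e^{-\xi(r-s)}$, with the implied constants uniform over the separated family. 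This accounts for the $e^{-\xi(r-s)}$ factor. The subtlety is that \eqref{eq:An} has no polynomial correction, whereas the claimed answer has the factor $(r-s)^{2(3-d)}$, which is trivial ($=1$) when $d=3$ but equals $(r-s)^{2}$ when $d=2$; so the two dimensions genuinely behave differently and the polynomial factor must come from somewhere other than the bulk non-intersection event.

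That polynomial factor is the main obstacle, and it comes from the \emph{unconditioned} probability that the two excursions started from a common (or nearly common) point on $\partial\Dc_s$ manage to reach $\partial\Dc_{s+1}$ without hitting each other and without one of them being immediately swallowed — equivalently, it is the price of getting from the degenerate ``both start at the same boundary point'' configuration out to a well-separated configuration one scale out. In $d=2$ this is a gambler's-ruin / Beurling-type computation: two excursions from the sphere of radius $e^s$ must achieve macroscopic separation, and the relevant harmonic-measure / disconnection estimate produces a factor $\asymp (r-s)^{-2}\cdot$(something), but since we also \emph{divide} by the probability of the NI event which itself is being compared, the net polynomial correction is $(r-s)^{2}$; in $d=3$ Brownian motion does not disconnect, lines are polar, and there is no such loss, giving exponent $0$. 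Concretely I would: (i) fix the outer scale and use the separation lemma plus \eqref{eq:An} to get $\Pb\{W^1[\sigma_{s+1},T_r]\cap W^2[\sigma_{s+1},T_r]=\emptyset\}\asymp e^{-\xi(r-s)}$ with uniform constants; (ii) analyze the crossing of the single annulus $\Dc_{s+1}\setminus\Dc_s$ separately, showing the conditional probability that the excursions cross it without meeting, given they are separated on the outer sphere, is $\asymp (r-s)^{2(3-d)}$ for $d=2,3$ — for $d=2$ this is where one invokes the explicit two-dimensional estimates (Lemma~\ref{l:trans_recur}, Beurling, Proposition~\ref{l:paths_discon_beurling}) to see that the excursions need to ``wrap around'' to avoid each other, and the known behavior of the disconnection exponent; (iii) combine (i) and (ii). I expect the bookkeeping of which configurations are ``close'' to the sphere and making the uniformity in (i)–(ii) precise — together with correctly extracting the $d=2$ polynomial factor from the annulus-crossing step — to be the technically delicate part, but no genuinely new input beyond Lemmas~\ref{lem:sep-BM}, \ref{l:trans_recur}, \ref{p:Beurling}, \eqref{eq:An}, and \eqref{eq:path_measure_disconnection} should be needed.
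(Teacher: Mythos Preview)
There is a genuine gap in your handling of the $d=2$ case, and it lies precisely in your step~(ii). The factor $(r-s)^{2}$ cannot come from ``crossing the single annulus $\Dc_{s+1}\setminus\Dc_s$'': that annulus has fixed aspect ratio $e$, so any probability attached to traversing it is $\asymp 1$ and cannot depend on the large parameter $r-s$. The correct source of the polynomial is the $h$-transform. The excursion $W^i[\sigma_s,T_r]$, viewed from its first visit to $\partial\Dc_{s+1}$ onward, is a Brownian motion conditioned to hit $\partial\Dc_r$ before $\partial\Dc_s$; by Lemma~\ref{l:trans_recur} that conditioning event has probability $1/(r-s)$ in $d=2$ (and $\asymp 1$ in $d=3$). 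It is this Radon--Nikodym factor, once per path, that produces $(r-s)^{2}$ --- not any local annulus crossing. Relatedly, your step~(i) as written is circular: $\Pb\{W^1[\sigma_{s+1},T_r]\cap W^2[\sigma_{s+1},T_r]=\emptyset\}$ is literally the quantity in the lemma with $s$ replaced by $s+1$, hence in $d=2$ it is $\asymp (r-s)^{2}e^{-\xi(r-s)}$, not $e^{-\xi(r-s)}$. You cannot apply \eqref{eq:An} directly to the excursions because they are conditioned processes, not free Brownian motions.

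The paper takes a different and shorter route. For $d=3$ it uses the sandwich
\[
\Pb\{W^1[T_s,T_r]\cap W^2[T_s,T_r]=\emptyset\}\le \Pb\{W^1[\sigma_s,T_r]\cap W^2[\sigma_s,T_r]=\emptyset\}\le c\,\Pb\{W^1[T_{s+1},T_r]\cap W^2[T_{s+1},T_r]=\emptyset\},
\]
where the right inequality holds because removing the conditioning ``hit $\partial\Dc_r$ before $\partial\Dc_s$'' costs only the constant $(1-e^{-1})^{-2}$ in $d=3$ by Lemma~\ref{l:trans_recur}; both ends are $\asymp e^{-\xi(r-s)}$. For $d=2$ the paper does not argue from scratch at all: it invokes \cite{MR1901950}, where the excursion non-intersection estimate (with its $(r-s)^{2}$ correction) is established. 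If you want to carry out $d=2$ directly, you must make the $h$-transform argument above precise and then show that two free Brownian motions from $\partial\Dc_{s+1}$, conditioned to avoid each other \emph{and} to avoid $\Dc_s$, still have probability $\asymp e^{-\xi(r-s)}$; that last step is essentially what \cite{MR1901950} does and is not an immediate consequence of Lemma~\ref{lem:sep-BM} and \eqref{eq:An} alone.
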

\begin{proof}
	When $d=2$, it can be shown by following the proof of Theorem 3.1 in \cite{MR1901950}. Although the referred Theorem 3.1 actually deals with the case that a packet of $2$ excursions do not intersect a packet of ``$\lambda$'' excursions, as already explained in Section 7 of \cite{MR1901950}, it holds for all general cases. One excursion does not intersect another excursion can be viewed as a special case, which corresponds to the exponent $\xi(1,1)$ in \cite{MR1901950}, we thus get Lemma~\ref{lem:excursion} when $d=2$. 
	
	When $d=3$, we only need to show that 
	\begin{equation}\label{eq:exc3}
	\Pb\{ W^1[T_s,T_r] \cap W^2[T_s,T_r]=\emptyset \}\lesssim 
	\Pb\{ W^1[\sigma_s,T_r] \cap W^2[\sigma_s,T_r]=\emptyset \}
	\lesssim \Pb\{ W^1[T_{s+1},T_r] \cap W^2[T_{s+1},T_r]=\emptyset \}.
	\end{equation}
	The first inequality is trivial. For the second inequality, we note that the law of  $W^i[\sigma_s,T_r]$ from its first visit of $\partial \Dc_{s+1}$ to first visit of $\partial \Dc_{r}$ is just a Brownian motion started uniformly from $\partial \Dc_{s+1}$ conditioned to hit $\partial \Dc_{r}$ before $\partial \Dc_{s}$. Therefore, by Lemma~\ref{l:trans_recur},
	\begin{align*}
	\Pb\{ W^1[\sigma_s,T_r] \cap W^2[\sigma_s,T_r]=\emptyset \}
	&\le \left( \frac{e^{-s}-e^{-(s+1)}}{e^{-s}-e^{-r}} \right)^{-2} \Pb\{ W^1[T_{s+1},T_r] \cap W^2[T_{s+1},T_r]=\emptyset \} \\
	&\le (1-e^{-1})^{-2}\, \Pb\{ W^1[T_{s+1},T_r] \cap W^2[T_{s+1},T_r]=\emptyset \}.
	\end{align*}
This concludes the proof of \eqref{eq:exc3}, and thus implies the lemma.
\end{proof}

Note that the distribution of $W^i[\sigma_s,T_r]$ is $\widehat\mu^{\Dc_r\setminus\overline\Dc_s}_{\partial\Dc_s,\partial\Dc_r}$. Moreover, using the last-exit decomposition \eqref{eq:dec-last}, we have 
\begin{equation*}
	\mu_{0,\partial\Dc_r}^{\Dc_r}=\int_{\partial \Dc_s} \,\mu_{0,y}^{\Dc_r}\oplus\mu_{y,\partial \Dc_r}^{\Dc_r\setminus\overline\Dc_s}\,\sigma(dy).
\end{equation*}
Note that $\mu_{0,\partial\Dc_r}^{\Dc_r}$ has total mass $1$ since it can be viewed as the law of a Brownian motion started from $0$ stopped upon reaching $\partial\Dc_r$, similar to the discrete version \eqref{eq:S-stop}. Moreover, by rotation invariance, for any $y\in\partial\Dc_s$,
\[
\|\mu_{0,y}^{\Dc_r}\|=\widetilde G_{\Dc_r}(0,(e^s,0))\asymp (r-s)^{-2(d-3)} e^{-2(d-2)s}.
\]
 Therefore, the total mass of boundary-to-boundary excursion measure has the following up-to-constants estimate:
 \[
\|\mu^{\Dc_r\setminus\overline\Dc_s}_{\partial\Dc_s,\partial\Dc_r}\|=\widetilde G_{\Dc_r}(0,(e^s,0))^{-1} \asymp (r-s)^{2(d-3)} e^{2(d-2)s}.
\] 
Combining these observations with Lemma~\ref{lem:excursion}, we obtain the following result.
\begin{lemma}\label{lem:ex-ni}
	For any $s+1\le r$, we have
	\[
	\mu^{\Dc_r\setminus\overline\Dc_s}_{\partial\Dc_s,\partial\Dc_r}\otimes \mu^{\Dc_r\setminus\overline\Dc_s}_{\partial\Dc_s,\partial\Dc_r} \{ \gamma^1\cap\gamma^2=\emptyset \} \asymp e^{2(d-2)s} e^{-\xi (r-s)}.
	\]
\end{lemma}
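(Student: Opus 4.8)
The plan is to assemble Lemma~\ref{lem:ex-ni} from two ingredients already prepared in the excerpt: the up-to-constants estimate
\[
\Pb\{ W^1[\sigma_s,T_r]\cap W^2[\sigma_s,T_r]=\emptyset \}\asymp (r-s)^{2(3-d)}e^{-\xi(r-s)}
\]
from Lemma~\ref{lem:excursion}, and the identification of the law of $W^i[\sigma_s,T_r]$ with the normalized boundary-to-boundary excursion measure $\widehat\mu^{\Dc_r\setminus\overline\Dc_s}_{\partial\Dc_s,\partial\Dc_r}$, together with the total-mass computation
\[
\|\mu^{\Dc_r\setminus\overline\Dc_s}_{\partial\Dc_s,\partial\Dc_r}\|\asymp (r-s)^{2(d-3)}e^{2(d-2)s}
\]
obtained above via rotation invariance and the Green's function asymptotics $\widetilde G_{\Dc_r}(0,(e^s,0))\asymp (r-s)^{-2(d-3)}e^{-2(d-2)s}$. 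The point is simply that the unnormalized product measure $\mu\otimes\mu$ of the non-intersection event equals $\|\mu\|^2$ times the probability of that event under the normalized (product) law.

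Concretely, first I would note that by definition of sampling from a path measure (see the path-measure paragraph in Section~\ref{sec:def}),
\[
\mu^{\Dc_r\setminus\overline\Dc_s}_{\partial\Dc_s,\partial\Dc_r}\otimes \mu^{\Dc_r\setminus\overline\Dc_s}_{\partial\Dc_s,\partial\Dc_r}\{\gamma^1\cap\gamma^2=\emptyset\}
=\|\mu^{\Dc_r\setminus\overline\Dc_s}_{\partial\Dc_s,\partial\Dc_r}\|^2\;\widehat\mu^{\Dc_r\setminus\overline\Dc_s}_{\partial\Dc_s,\partial\Dc_r}\otimes\widehat\mu^{\Dc_r\setminus\overline\Dc_s}_{\partial\Dc_s,\partial\Dc_r}\{\gamma^1\cap\gamma^2=\emptyset\}.
\]
Then I would invoke the remark preceding the lemma that $\widehat\mu^{\Dc_r\setminus\overline\Dc_s}_{\partial\Dc_s,\partial\Dc_r}$ is exactly the law of $W^i[\sigma_s,T_r]$ (the Brownian motion run from its last visit of $\Dc_s$ before $T_r$ until $T_r$), so that the normalized-probability factor equals $\Pb\{W^1[\sigma_s,T_r]\cap W^2[\sigma_s,T_r]=\emptyset\}$, which Lemma~\ref{lem:excursion} controls. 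Multiplying,
\[
\|\mu\|^2\cdot (r-s)^{2(3-d)}e^{-\xi(r-s)}\asymp (r-s)^{4(d-3)}e^{4(d-2)s}\cdot(r-s)^{2(3-d)}e^{-\xi(r-s)},
\]
and here the polynomial factors combine as $(r-s)^{4(d-3)+2(3-d)}=(r-s)^{2(d-3)}$, while the exponential factor $e^{4(d-2)s}$ should be $e^{2(d-2)s}$ — so I would double-check the exponent in $\|\mu\|$: since the excursion measure is point-to-boundary-to-boundary, its mass scales like $\|\mu_{0,y}^{\Dc_r}\|^{-1}\asymp (r-s)^{2(d-3)}e^{2(d-2)s}$ but the product of two such is relative to a single interior-to-boundary normalization, and the correct bookkeeping (the excursion measure here is $\int_{\partial\Dc_s}\mu^{\Dc_r}_{\cdot,y}$-type, mass $\asymp e^{(d-2)s}(r-s)^{d-3}$ per half) yields $\|\mu\|^2\asymp (r-s)^{2(d-3)}e^{2(d-2)s}$, after which everything collapses to $e^{2(d-2)s}e^{-\xi(r-s)}$ as claimed.

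The only genuine subtlety — and the step I would be most careful about — is this exponent bookkeeping: making sure the normalization constant $\|\mu^{\Dc_r\setminus\overline\Dc_s}_{\partial\Dc_s,\partial\Dc_r}\|$ used to pass between the product measure and the probabilistic non-intersection event is the \emph{same} constant on both occurrences and is correctly paired with the $(r-s)^{2(3-d)}e^{-\xi(r-s)}$ from Lemma~\ref{lem:excursion}, so that the surface-measure factors $\sigma(\partial\Dc_s)$ and the rotation-invariance reduction to the single point $(e^s,0)$ are accounted for exactly once. Everything else is a one-line multiplication of the three displayed $\asymp$ estimates; no new probabilistic input is needed beyond Lemma~\ref{lem:excursion}, Lemma~\ref{l:trans_recur}, and the Green's function estimate $\widetilde G_{\Dc_r}(0,(e^s,0))\asymp (r-s)^{-2(d-3)}e^{-2(d-2)s}$, all of which are available in the excerpt.
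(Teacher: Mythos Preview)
Your approach is exactly the paper's: write the product measure as $\|\mu\|^2$ times the normalized non-intersection probability, identify the latter with Lemma~\ref{lem:excursion}, and compute $\|\mu\|$ from the last-exit decomposition and the Green's function. Your confusion in the bookkeeping step is not your fault: the displayed exponents $\widetilde G_{\Dc_r}(0,(e^s,0))\asymp (r-s)^{-2(d-3)}e^{-2(d-2)s}$ and $\|\mu\|\asymp (r-s)^{2(d-3)}e^{2(d-2)s}$ in the text are typos --- the correct values are $(r-s)^{3-d}e^{-(d-2)s}$ and $(r-s)^{d-3}e^{(d-2)s}$ respectively (check $d=2$: the Green's function in $\Dc_r$ is $\tfrac{1}{\pi}(r-s)$, not $(r-s)^2$; and $d=3$: it is $\asymp e^{-s}$, not $e^{-2s}$). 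With these corrected values, $\|\mu\|^2\cdot (r-s)^{2(3-d)}e^{-\xi(r-s)} = (r-s)^{2(d-3)}e^{2(d-2)s}\cdot (r-s)^{2(3-d)}e^{-\xi(r-s)} = e^{2(d-2)s}e^{-\xi(r-s)}$, which is exactly the claim --- and this is precisely the resolution you arrived at in your ``per half'' remark, so your instinct was right.
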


\subsection{NIBM's from infinity}\label{subsec:qi-infty}
In this subsection, we briefly review the results on NIBM's from infinity, which is a variant of the results in the previous section. Most results in this subsection can be found in Section 3 of \cite{mink_cont}, so we will be quite brief below.

Let $r\ge 1$. Suppose $\overline K=(K_1,K_2)$ is a pair of compact sets in $\Dc^c_r$ and $\overline x=(x_1,x_2)$ is a pair of points such that $x_i\in K_i$. Let $W_i$ be the Brownian motion started from $x_i$. Let $Q^*_r(\baK,\overline x)$ be the probability measure of $\overline W[T_{r/2},T_{0}]$, conditioned on the event that 
\[
	\widetilde A^*_{r}(\baK,\overline x):=\Big\{W_{1}[0,T_{0}]\cap(W_{2}[0,T_{0}]\cup K_{2})=(W_{1}[0,T_{0}]\cup K_{1})\cap W_{2}[0,T_{0}]=\emptyset\Big\}\bigcap \Big\{ T_0(W_1),T_0(W_2)< \infty \Big\}.
\]
Let $\wt\Gamma_{\infty,0}$ be the set of paths that started from infinity and stopped when they reach $\partial\Dc$. 
Define the set of non-intersecting pairs in $\wt\Gamma_{\infty,0}$ as follows
\begin{align*}
	\widetilde\Xc^*:=\, \{\bagam=(\gamma_{1},\gamma_{2})\in\wt\Gamma_{\infty,0}\times\wt\Gamma_{\infty,0}: \gamma_{1} \cap \gamma_{2}=\emptyset \}.
\end{align*}
Then, there exists a probability measure $\Qf^*$ supported on $\widetilde\Xc^*$ such that for some $u>0$,
\begin{equation}\label{eq:qi_bm_definition-inv}
	\norm{ Q^*_{r}(\baK,\overline x)[T_{r/2},T_{0}]- \Qf^*[T_{r/2},T_{0}]}_{{\rm TV}}=O(e^{-ur}),
\end{equation}
uniformly in $(\baK,\overline x)$. We call $\Qf^*$ a quasi-invariant measure on NIBM's from infinity.

\medskip
We finish this short section by recording a useful reverse separation lemma for NIBM's. We use the same notation $\widetilde \Delta_r$ for quality here, but one should note that the associated Brownian motions are from outside to inside.
\begin{lemma}[Reverse separation lemma for NIBM's]\label{lem:rev-sep-BM}
	There exists a universal constant $c>0$ such that for any initial configuration $(\baK,\overline x)$ introduced previously with $r\ge 1$,
	\begin{equation}
		\Pb\{ \widetilde\Delta_0\ge 1/10 \mid \widetilde A^*_r(\baK,\overline x) \}\ge c.
	\end{equation}
\end{lemma}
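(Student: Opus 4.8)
I would establish the reverse separation lemma for NIBM's from infinity by reducing it to the already-known separation lemma for NIBM's started from the origin, Lemma~\ref{lem:sep-BM}, together with the definition of the quasi-invariant measure $\Qf^*$ and the estimate \eqref{eq:qi_bm_definition-inv}. The point is that in the regime $r \ge 1$, after the pair of Brownian motions $\overline W$ (run from $\overline x \in \Dc_r^c$ inwards and conditioned on $\widetilde A_r^*(\baK,\overline x)$) first reaches $\partial\Dc_1$, the remaining portion $\overline W[T_1, T_0]$ is, up to the non-intersection conditioning, a pair of excursions crossing the annulus $\Dc_1 \setminus \overline\Dc$. Reversing time, this pair looks like a pair of non-intersecting paths started near $\partial\Dc$ and run outward to $\partial\Dc_1$ — precisely the setting of Lemma~\ref{lem:sep-BM} at scale $r'=1$ (or a fixed scale like $r'=2$ if one needs room), with some initial configuration at the unit scale. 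Since Lemma~\ref{lem:sep-BM} gives a uniform positive lower bound on the probability of good separation at the \emph{outer} endpoint $\partial\Dc_1$ given non-intersection, reversing time turns this into a uniform lower bound on $\Pb\{\widetilde\Delta_0 \ge 1/10 \mid \widetilde A_r^*(\baK,\overline x)\}$, at least for the pair's behavior inside $\Dc_1$.

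\textbf{Key steps, in order.} First I would set up the decomposition: condition on $\overline W[0, T_1]$ (the initial inward crossing from $\overline x$ to $\partial\Dc_1$, under the law conditioned on non-intersection with each other and with $\baK$), record the hitting points $\overline y = (y_1, y_2) \in \partial\Dc_1 \times \partial\Dc_1$ and the traces $\overline{W}[0,T_1]$, and observe that conditionally on $\widetilde A_r^*(\baK,\overline x)$ the remaining pieces $\overline W[T_1, T_0]$ form two independent Brownian motions from $\overline y$ stopped at $\partial\Dc$, conditioned not to intersect each other nor the already-drawn traces. Second, I would reverse time on each piece: $W_i[T_1, T_0]^R$ is a Brownian motion from a point of $\partial\Dc$ run outward to $\partial\Dc_1$; the non-intersection conditioning is time-symmetric, so the reversed pair is distributed as a pair of NIBM's from (near) the origin scale up to $\partial\Dc_1$ with some initial configuration $(\baK', \overline x')$ determined by the reversed traces and endpoints on $\partial\Dc$. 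Here $\widetilde\Delta_0$ for the original (inward) process is exactly $\widetilde\Delta_1$ (in the sense of \eqref{eq:quality}) for the reversed (outward) process after trivial rescaling. Third, I would apply Lemma~\ref{lem:sep-BM} to this reversed pair — checking that the hypothesis $(K_1' \cup K_2') \subseteq \Dc_{r/2}$ at the appropriate scale is met (one gets extra room for free because $r \ge 1$, but it may be cleanest to run out to $\partial\Dc_2$ and apply the lemma there, then come back in) — to conclude $\Pb\{\widetilde\Delta_0 \ge 1/10 \mid \widetilde A_r^*(\baK,\overline x), \overline W[0,T_1]\} \ge c$ on a set of $\overline W[0,T_1]$-realizations of full conditional probability, and average. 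Alternatively, and perhaps more in the spirit of how the NIBM-from-infinity theory is built in \cite{mink_cont}, one can invoke \eqref{eq:qi_bm_definition-inv} to say that $Q_r^*(\baK,\overline x)[T_{1/2}, T_0]$ is within $O(e^{-u})$ in total variation of the universal measure $\Qf^*[T_{1/2},T_0]$; since under $\Qf^*$ the event $\{\widetilde\Delta_0 \ge 1/10\}$ has some fixed positive probability (itself a consequence of the separation lemma applied to $\Qf^*$), uniformity in $(\baK,\overline x)$ follows. I would likely present the argument via $\Qf^*$ for brevity, citing Section~3 of \cite{mink_cont}.

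\textbf{Where the difficulty lies.} The routine parts are the time-reversal bookkeeping and the bounded-geometry check that the reversed initial configuration sits at a bounded scale so Lemma~\ref{lem:sep-BM} applies. The one genuine subtlety is \emph{uniformity in the initial configuration} $(\baK,\overline x)$: the sets $K_1, K_2$ can be arbitrarily wild compact subsets of $\Dc_r^c$ with $r$ arbitrarily large, so one must be careful that the conditioning on $\widetilde A_r^*(\baK,\overline x)$ does not, through the far-away geometry, distort the \emph{local} (inside $\Dc_1$) behavior of the pair in a way that destroys the uniform constant. This is exactly the content that \eqref{eq:qi_bm_definition-inv} is designed to handle — it says the conditioned law, restricted to the innermost scales, is uniformly close to the universal $\Qf^*$ — so the cleanest route is to quote \eqref{eq:qi_bm_definition-inv} and reduce everything to a single statement about $\Qf^*$, for which the separation estimate is a fixed positive number. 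The main obstacle, then, is not really an obstacle once one has \eqref{eq:qi_bm_definition-inv} in hand; the work is just in phrasing the reduction correctly and noting that $\{\widetilde\Delta_0 \ge 1/10\}$ is (up to the $O(e^{-u})$ error) determined by $\overline W[T_{1/2}, T_0]$, hence its $\Qf^*$-probability transfers to $Q_r^*(\baK,\overline x)$ with a uniform error.
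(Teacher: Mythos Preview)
Your instinct to reduce to the forward separation lemma (Lemma~\ref{lem:sep-BM}) is right, but neither of your proposed mechanisms works as stated, and the paper uses a different and cleaner one: \emph{spatial inversion} $x\mapsto x/|x|^2$ (Proposition~2.2 of \cite{mink_cont}), not time reversal. Your claim that $W_i[T_1,T_0]^R$ ``is a Brownian motion from a point of $\partial\Dc$ run outward to $\partial\Dc_1$'' is false: the time reversal of a Brownian motion started at $y_i\in\partial\Dc_1$ and stopped upon hitting $\partial\Dc$ is a Doob $h$-process, not a standard Brownian motion; in the language of Section~\ref{subsec:pm}, reversing $\mu^D_{y_i,\partial\Dc}$ yields the boundary-to-interior measure $\mu^D_{\partial\Dc,y_i}$, whose normalized version is Brownian motion \emph{conditioned} on its exit point. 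Lemma~\ref{lem:sep-BM} is stated for unconditioned Brownian motions, so it does not apply to your reversed pair without further work. (Relatedly, under time reversal the terminal separation $\widetilde\Delta_0$ of the inward process becomes the \emph{initial} separation of the outward one, not its terminal quality $\widetilde\Delta_1$ as you write.) Spatial inversion, by contrast, sends Brownian motion to Brownian motion up to time change (in $d=3$ the transience conditioning already built into $\widetilde A_r^*$ is exactly what is needed), fixes $\partial\Dc$, and exchanges inside with outside, so the inward non-intersection problem becomes an outward one to which Lemma~\ref{lem:sep-BM} applies verbatim.

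Your alternative via $\Qf^*$ and \eqref{eq:qi_bm_definition-inv} has two gaps. First, you need $\Qf^*\{\widetilde\Delta_0\ge 1/10\}>0$, and you say this is ``a consequence of the separation lemma applied to $\Qf^*$''; but $\Qf^*$ is a measure on \emph{inward} paths, so Lemma~\ref{lem:sep-BM} does not apply --- what you need is precisely the reverse separation lemma you are proving. Indeed the paper later invokes Lemma~\ref{lem:rev-sep-BM} to extract this very positivity for $\Qf^*$ (see the proof of Lemma~\ref{lem:QPA}), so your argument is circular within the paper's logical structure. Second, even granting some $p_0>0$ under $\Qf^*$, the bound \eqref{eq:qi_bm_definition-inv} gives total-variation error $O(e^{-ur})$, which for $r$ close to $1$ is a fixed constant with no a~priori relation to $p_0$; you would get the conclusion only for $r$ sufficiently large, not uniformly for all $r\ge 1$ as the lemma requires.
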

It can be either proved in a similar way to that of Lemma~\ref{lem:sep-BM}, or simply using the invariance of Brownian motion under inversion (see Proposition 2.2 of \cite{mink_cont}). A corresponding reverse version for NIRW's will be given in Lemma~\ref{sep} later.

\section{Moment bounds on the number of random walk cut points}\label{sec:moment}
In this section we derive some  moment bounds on the number of random walk cut points.

Recall that $B_n$ is the discrete ball of radius $n$ around $0$ in $\Zb^d$. Let $z_1,\dots,z_k$ be $k$ points in $B_n$ which are not necessarily distinct.
We write $\vec z=(z_1,\ldots,z_k)$. Define $z_0=0$, $z_{k+1}=\partial B_n$ and 
\[
r_i^{\vec z}= d(z_i)\wedge |z_{i-1}-z_i| \wedge |z_i-z_{i+1}| \quad \text{ with } d(z_i)=\dist (z_i,\partial B_n).
\]
let $\Pi_k$ be the symmetric group on $\{1,\ldots,k\}$ and for each element $\pi\in\Pi_k$ we write $\pi(\vec z) = (z_{\pi(1)},\ldots,z_{\pi(k)})$ for the corresponding permutation of $\vec z$.
Suppose $S$ is a simple random walk started from $0$ and let $\iota_n$ be the exit time of $B_n$ by $S$. Denote the set of cut points of $S[0,\iota_n]$ by $\Af_n$. 
Let $M_n$ be the cardinality of $\Af_n$.
We first derive an upper bound on $k$-point function\footnote{In this work we actually only need one- and two-point functions, but we still write down this proposition for general $k$ for future reference	.} of random walk cut points. As a corollary, we give an upper bound for higher moments of $M_n$ for $d=3$.
\begin{proposition}\label{prop:eb}
	For $d=2,3$, there exists a constant $C=C(d)>0$ such that for all $\vec z=(z_1,\ldots,z_k)$,
		\begin{equation}\label{eq:eb-1}
		\Pb( \{ z_1,\ldots, z_k\} \subseteq \Af_n ) \le C^k \sum_{\pi\in\Pi_k}\prod_{i=1}^{k} G_{B_n}(z_{\pi(i-1)},z_{\pi(i)}) \big(r_{\pi(i)}^{\pi(\vec z)}\big)^{-\xi}.
		\end{equation}
	Moreover, if $d=3$, then
		\begin{equation}\label{eq:eb-2}
		\sum_{z_k\in B_n} G_{B_n}(z_{k-1},z_k) \big(r_{k-1}^{\vec z}\big)^{-\xi} \big(r_{k}^{\vec z}\big)^{-\xi} \le C (|z_{k-1}-z_{k-2}|\wedge d(z_{k-1}))^{-\xi} n^{2-\xi}\mbox{, and }
		\end{equation}
		\begin{equation}\label{eq:eb-3}
		\Eb[ M_n^k ] \le C^k k! (n^{2-\xi})^k.
		\end{equation}
\end{proposition}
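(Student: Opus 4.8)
The plan is to establish the three displays in turn, with \eqref{eq:eb-1} being the workhorse. For \eqref{eq:eb-1}, I would first observe that for $x$ to be a cut point of $S[0,\iota_n]$ one needs, at the very least, that $S$ visits $x$ and that the future of the walk from $x$ does not intersect (an appropriate portion of) the past. Fixing an ordering $\pi$ of the points according to the order in which they are visited, I would decompose the walk at its successive visits to $z_{\pi(1)},\dots,z_{\pi(k)}$. On the event that all $z_i$ are cut points and visited in the order $\pi$, the walk splits into $k+1$ independent pieces (by the strong Markov property): $k$ segments running $z_{\pi(i-1)}\to z_{\pi(i)}$ and a final segment $z_{\pi(k)}\to\partial B_n$, and for each $i$ the segment emanating from $z_{\pi(i)}$ must avoid the segment arriving at $z_{\pi(i)}$ up to distance roughly $r_{\pi(i)}^{\pi(\vec z)}$ from $z_{\pi(i)}$. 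Summing the Green's-function weight over the length of each bridge segment $z_{\pi(i-1)}\to z_{\pi(i)}$ gives the factor $G_{B_n}(z_{\pi(i-1)},z_{\pi(i)})$, and the non-intersection constraint near $z_{\pi(i)}$ contributes a factor $\asymp (r_{\pi(i)}^{\pi(\vec z)})^{-\xi}$ by Proposition~\ref{prop:sharp-rw} (together with the separation lemma, Lemma~\ref{lem:sep-SRW}, so that the estimate is uniform in the incoming configuration). The constant $C^k$ absorbs the per-step multiplicative constants and the $k!$ orderings are handled by the sum over $\pi\in\Pi_k$. In $d=2$ one additionally uses the Beurling estimate (Proposition~\ref{p:Beurling}) or Lemma~\ref{l:paths_discon_beurling} to control the point-to-point path-measure masses, which are otherwise infinite; in practice one restricts the bridge measures to the relevant non-disconnecting paths.

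For \eqref{eq:eb-2} ($d=3$), I would bound the sum over $z_k\in B_n$ of $G_{B_n}(z_{k-1},z_k)(r_{k-1}^{\vec z})^{-\xi}(r_k^{\vec z})^{-\xi}$ by splitting according to the scale $|z_{k-1}-z_k|$ and to which of the three terms realizes the minimum in $r_k^{\vec z}=d(z_k)\wedge|z_{k-1}-z_k|\wedge|z_k-z_{k+1}|$ (with $z_{k+1}=\partial B_n$, so $|z_k-z_{k+1}|=d(z_k)$, hence effectively $r_k^{\vec z}=d(z_k)\wedge|z_{k-1}-z_k|$). Using the standard $d=3$ Green's-function bound $G_{B_n}(z_{k-1},z_k)\asymp |z_{k-1}-z_k|^{-1}$ for $z_k$ not too close to $\partial B_n$, a dyadic-annulus computation: on the annulus $\{2^j\le|z_{k-1}-z_k|<2^{j+1}\}$ there are $\asymp 2^{3j}$ points, each contributing $\asymp 2^{-j}\cdot 2^{-j\xi}$, and pulling out the factor $(r_{k-1}^{\vec z})^{-\xi}\le(|z_{k-1}-z_{k-2}|\wedge d(z_{k-1}))^{-\xi}$; summing $2^{j(2-\xi)}$ up to $j\asymp\log n$ yields $n^{2-\xi}$ since $2-\xi>0$ in $d=3$. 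Care is needed near $\partial B_n$ where $G_{B_n}$ decays, but there the annulus count is correspondingly smaller and the bound only improves.

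Finally, \eqref{eq:eb-3} follows by induction on $k$. Writing $\Eb[M_n^k]=\sum_{\vec z}\Pb(\{z_1,\dots,z_k\}\subseteq\Af_n)$, apply \eqref{eq:eb-1}, interchange the sum over $\pi$ with the sum over $\vec z$ (each permutation contributes equally after relabelling), and peel off the innermost summation over $z_{\pi(k)}$ using \eqref{eq:eb-2}; this reduces the $k$-fold sum to a $(k-1)$-fold sum of the same shape times a factor $C\,n^{2-\xi}$, plus the extra factor $(|z_{k-1}-z_{k-2}|\wedge d(z_{k-1}))^{-\xi}$ which gets reabsorbed into the Green's-function weight at the next step; the $k!$ orderings produce the factorial. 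I expect the main obstacle to be \eqref{eq:eb-1}: making the decomposition-at-visits rigorous requires carefully isolating the non-intersection event that is actually \emph{implied} by the cut-point condition (the past/future split happens \emph{at} $z_{\pi(i)}$, but the segments themselves overlap in time with other excursions through earlier points), and invoking Proposition~\ref{prop:sharp-rw} with the separation lemma so that the per-point factor $(r_{\pi(i)}^{\pi(\vec z)})^{-\xi}$ is uniform over the (uncontrolled) shape of the incoming path — this is where the $d=2$ versus $d=3$ distinction and the Beurling-type input enter.
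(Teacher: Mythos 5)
Your approach to \eqref{eq:eb-1} matches the paper's: decompose at successive visits to the $z_{\pi(i)}$, extract a local non-intersection configuration around each cut point contributing $\asymp (r_{\pi(i)}^{\pi(\vec z)})^{-\xi}$ by the non-intersection exponent, and attribute a Green's-function weight $G_{B_n}(z_{\pi(i-1)},z_{\pi(i)})$ to each bridge, with Harnack moving bridge endpoints to the $z_i$. The paper phrases this slightly more cleanly via the path-measure formalism (sample NIRW pairs $(\gamma_i^1,\gamma_i^2)$ from $z_i$ to $\partial B(z_i, r_i^{\vec z}/4)$, then concatenate with bridges $\omega_i$), which makes the $(r_i^{\vec z})^{-\xi}$ factor automatic and avoids the uniformity worry you raise: the local NIRW pieces are sampled freely and do not depend on the incoming configuration, so no separation lemma is needed for that factor. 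Your \eqref{eq:eb-3} peeling argument is also the same as what the paper (via its references to the LERW moment-bound literature) relies on.

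There is, however, a genuine error in your sketch of \eqref{eq:eb-2}. You write that one can pull out
\[
\big(r_{k-1}^{\vec z}\big)^{-\xi}\le \big(|z_{k-1}-z_{k-2}|\wedge d(z_{k-1})\big)^{-\xi},
\]
but this inequality is backwards. Since
\[
r_{k-1}^{\vec z}=d(z_{k-1})\wedge|z_{k-2}-z_{k-1}|\wedge|z_{k-1}-z_k|\;\le\;d(z_{k-1})\wedge|z_{k-2}-z_{k-1}|=:A,
\]
raising to the power $-\xi<0$ reverses the sign, giving $(r_{k-1}^{\vec z})^{-\xi}\ge A^{-\xi}$, and this factor genuinely depends on the summation variable $z_k$ through $|z_{k-1}-z_k|$. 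Pulling it out as you propose would \emph{underestimate} the sum. The fix is a case split inside the dyadic decomposition: on the annulus $|z_{k-1}-z_k|\asymp 2^j$, if $2^j\ge A$ then indeed $r_{k-1}^{\vec z}\asymp A$ and the annulus contributes $\asymp A^{-\xi}2^{j(2-\xi)}$, which summed over $A\lesssim 2^j\lesssim n$ gives $A^{-\xi}n^{2-\xi}$; if $2^j<A$ then $r_{k-1}^{\vec z}\asymp 2^j$ and the annulus contributes $\asymp 2^{j(2-2\xi)}$, and since $\xi_3<1$ the geometric sum over $2^j<A$ is $\lesssim A^{2-2\xi}=A^{-\xi}A^{2-\xi}\le A^{-\xi}n^{2-\xi}$. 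So the stated bound is true, but not by the pull-out you write; the near regime $|z_{k-1}-z_k|<A$ must be treated separately.
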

\begin{proof}
	We start with \eqref{eq:eb-1}. We only deal the case when $z_1,\ldots,z_k$ are all distinct, otherwise we can use a simple induction argument as the second part of the proof of Proposition 5.5 in \cite{MR2683633}.
	It suffices to show that 
	\begin{equation}\label{eq:eb-4}
	\Pb( E_n(\vec z) ) \le  C^k \prod_{i=1}^{k} G_{B_n}(z_{i-1},z_{i}) \big(r_i^{\vec z}\big)^{-\xi},
	\end{equation}
	where 
	\begin{equation}\label{eq:E_ndef}
	E_n(\vec z):=\Big\{\{ z_1,\ldots, z_k\} \subseteq \Af_n\Big\} \bigcap\Big\{\mbox{$S$ visits $z_1,\ldots,z_k$ successively}\Big\}.
	\end{equation}
	From the path decomposition point of view, $\Pb( E_n(\vec z) )$ is bounded by the multiplication of total mass of the following paths:
	\begin{itemize}
		\item For each $1\le i\le k$, let $(\gamma_i^1,\gamma_i^2)$ be a pair of NIRW's from $z_i$ to $\partial B(z_i,r_i^{\vec z}/4)$ with total mass $\asymp\big(r_i^{\vec z}\big)^{-\xi}$.
		\item Denote the endpoint of $\gamma_i^j$ by $x_i^j$. Let $\omega_1$ be the path from $0$ to $x_1^1$ with total mass $G_{B_n}(0,x_1^1)$; $\omega_i$ be the path from $x_{i-1}^2$ to $x_i^1$ with total mass $G_{B_n}(x_{i-1}^2,x_i^1)$ for $2\le i\le k$; $\omega_{k+1}$ be the simple random walk from $x_k^2$ to the exit of $B_n$ with total mass $1$.
	\end{itemize}
In the above procedure, we can concatenate $\Big(\oplus_{i=1}^k \big(\omega_i\oplus [\gamma_i^1]^R\oplus \gamma_i^2\big)\Big)\oplus \omega_{k+1}$ to recover $S[0,\iota_n]$ restricted to an event that contains $E_n(\vec z)$. Here we use $\oplus_{i=1}^k$ to denote consecutive concatenations.
By the Harnack principle (see \cite[Theorem 6.3.9]{RWintro} for this), $G_{B_n}(0,x_1^1)\asymp G_{B_n}(0,z_1)$ and $G_{B_n}(x_{i-1}^2,x_i^1)\asymp G_{B_n}(z_{i-1},z_i)$ for all $2\le i\le k$. Then, \eqref{eq:eb-1} follows immediately.

Once we get \eqref{eq:eb-1}, the inequalities \eqref{eq:eb-2} and \eqref{eq:eb-3} follow from the same summing argument as in the proof of Theorem 5.6 of \cite{MR2683633} and Theorem 8.4 of \cite{LERW3exp}, which deal with the moments bounds of 2D and 3D loop-erased random walk (LERW) respectively. 
\end{proof}

\begin{remark}
For LERW, there is an exponent $\alpha=\alpha_d<1$ for $d=2,3$ associated with the escape probability (of a simple random walk started from the tip of an independent LERW), which plays the same role as $\xi=\xi_d$ here. In the summing argument in Theorem 5.6 of \cite{MR2683633} and Theorem 8.4 of \cite{LERW3exp} for the moment bounds of LERW, the fact that  $\alpha_d<1$ is essential.
 However, for our case, we only have $\xi_3<1$ but $\xi_2=5/4>1$. This is the reason that we only get the inequalities \eqref{eq:eb-2} and \eqref{eq:eb-3} for $d=3$. 
 \end{remark}

We derive the up-to-constants estimate for the one-point in the following lemma. Recall $\eta=\xi+d-2$ in \eqref{eq:eta}.
 \begin{lemma}\label{lem:biop}
	For all $z\in B_n$ with $r:=\dist(0,z,\partial B_n)$,
	\[
	\Pb( z\in\Af_n ) \asymp r^{1-\xi} n^{1-d} 1_{|z|\ge n/2} + r^{-\eta} (\log(n/r))^{d-3} 1_{0<|z|<n/2}.
	\]	
\end{lemma}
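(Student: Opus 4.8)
The plan is to estimate $\Pb(z \in \Af_n)$ by decomposing the walk at the point $z$ and using the sharp asymptotics for non-intersecting random walks from Proposition~\ref{prop:sharp-rw}, together with the Green's function estimates for the simple random walk. The key observation is that $z$ is a cut point of $S[0,\iota_n]$ if and only if $z$ is visited exactly once and, writing $S_{\tau_z} = z$, the two pieces $S[0,\tau_z]$ (reversed) and $S[\tau_z,\iota_n]$ do not intersect except at $z$. Upon a first-and-last-exit decomposition at $\partial B(z, r/4)$ (using \eqref{eq:decom1}--\eqref{eq:decom2} in the form for the walk run until exiting $B_n$), the event $\{z \in \Af_n\}$ is captured by: a path from $0$ to $z$ (mass $\asymp G_{B_n}(0,z)$ by the Harnack principle, after absorbing the short piece near $z$), a path from $z$ to $\partial B_n$, and the requirement that these two pieces avoid each other, where the local behavior near $z$ contributes the non-intersection factor. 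More precisely, run a pair of walks from $z$ outwards to $\partial B(z, r/4)$ — this pair is non-intersecting with probability $\asymp (r/4)^{-\xi} \asymp r^{-\xi}$ by Proposition~\ref{prop:sharp-rw} — then, conditioned on being well-separated at scale $r/4$ (Separation Lemma~\ref{lem:sep-SRW}), the two walks can be extended, one back to $0$ and one out to $\partial B_n$, while maintaining non-intersection, and the cost of this global extension is what produces the Green's-function-type factors.

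The main case split is according to whether $z$ is near the boundary ($|z| \ge n/2$, so $r \asymp d(z) = \dist(z, \partial B_n)$ up to the constraint $r \le |z|$, typically $r = d(z)$ but $r$ could be smaller if $z$ is close to the origin — however in this regime $|z| \ge n/2$ so $r = d(z) \wedge |z| $ and since $|z|\ge n/2 \ge d(z)$ generically... one should be careful; the stated answer uses $r = \dist(0,z,\partial B_n)$) or in the bulk ($0 < |z| < n/2$). In the near-boundary case, the dominant constraint is that once the two walks reach scale $r$ around $z$ and are separated, one of them must travel distance $\asymp r$ to exit $B_n$ while the other must not follow it; since both endpoints ($0$ and $\partial B_n$) are at distance $\asymp n$ or more, and $z$ is within $r$ of $\partial B_n$, the extension from scale $r$ to $\partial B_n$ costs a factor $\asymp (r/n)^{?}$ — more precisely, the first walk's excursion to $\partial B_n$ and the second walk's journey back toward $0$ without re-entering the tube. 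This should combine with $r^{-\xi}$ to give $r^{1-\xi} n^{1-d}$; the factor $n^{1-d}$ is the order of $G_{B_n}(0,z)$ when $|z| \asymp n$ (recall $G_{B_n}(0,z) \asymp |z|^{2-d} - n^{2-d} \asymp n^{-1} \cdot r \cdot $ correction near boundary for $d=3$, and the harmonic-measure-type decay for $d=2$). In the bulk case, $G_{B_n}(0,z) \asymp |z|^{2-d}$ for $d=3$ and $\asymp \log(n/|z|)$ for $d=2$; combined with the local non-intersection cost $r^{-\xi}$ and, crucially, the \emph{additional} requirement that the two halves do not intersect \emph{at all large scales} between $r$ and $n$ — this is where the iteration of separation lemmas over dyadic scales from $r$ up to $n$ enters — one picks up a further polynomial-in-$(n/r)$ or logarithmic factor. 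Assembling, $r^{-\xi} \cdot r^{2-d} \cdot (\text{scale iteration factor})$; for $d=3$ the scale iteration essentially costs nothing extra beyond constants (reflecting $\xi_3 < 1$, i.e. $\eta_3 < 1$, so the exponent $r^{-\eta}$ with no log), while for $d=2$ there is a $\log(n/r)$ which matches $(\log(n/r))^{d-3} = (\log(n/r))^{-1}$... wait — so actually for $d=2$ the answer has $(\log(n/r))^{-1}$, meaning the naive $\log(n/r)$ Green's function bound is \emph{divided} by a log, which comes precisely from conditioning the two walks to be non-intersecting all the way: this is the classical fact that $\Pb(\text{SRW from } 0 \text{ to } z \text{ avoids an independent second walk}) \asymp G(0,z)/(\text{something}) \cdot |z|^{-\xi+1}$ type cancellation. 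I would handle this by citing/adapting the one-point function estimate for Brownian/random-walk cut points already in the literature (e.g. the analogues of \cite{hausdorff,mink_cont} in the discrete setting, or Lawler's estimates in \cite{RWcuttimes}) rather than rederiving from scratch.

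For the lower bound, I would use the second-moment method localized at $z$, or more simply a direct construction: force the two halves to be well-separated NIRW's at scale $r$ (positive-probability event with the $r^{-\xi}$ and $G_{B_n}$ prefactors by Proposition~\ref{prop:sharp-rw} and the Harnack principle), then extend each half to its target ($0$ and $\partial B_n$) keeping them in disjoint tubes, which has probability bounded below by a constant times the corresponding harmonic-measure/Green's-function quantity using gambler's-ruin-type estimates (Lemma~\ref{l:trans_recur}) and the Beurling/Freezing estimates (Proposition~\ref{p:Beurling}, Lemma~\ref{l:freezing}) to control the non-intersection during the extension. The upper bound is the dual: Proposition~\ref{prop:eb} with $k=1$ already gives $\Pb(z \in \Af_n) \le C\, G_{B_n}(0,z) \cdot (r_1^{\vec z})^{-\xi} = C\, G_{B_n}(z)\, r^{-\xi}$, so one only needs to (a) insert the correct Green's function asymptotics ($G_{B_n}(z) \asymp r^{2-d}(\log(n/r))^{d-3} \cdot$ boundary correction; precisely $\asymp r \cdot n^{1-d}$ when $|z| \ge n/2$ and $\asymp r^{2-d}(\log(n/r))^{d-3}$ when $|z| < n/2$ — this is standard, cf. \cite[Section~6.4]{RWintro}), and (b) in the bulk case improve the bound for $d=2$ by the extra $(\log)^{-1}$, which requires more than the crude $k$-point bound: here one must use that the walk from $0$ to $z$, \emph{conditioned to avoid} the return part, has the correct normalization — this sharpening is the genuinely nontrivial input and I would extract it from the sharp NIRW asymptotics of \cite{2sidedwalk} applied with one side being the ``initial configuration'' built from the $0$-to-$z$ path.

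\textbf{The main obstacle.} The hard part is the sharp matching upper bound in the bulk regime for $d=2$, where the $(\log(n/r))^{-1}$ factor must be produced exactly; the elementary $k$-point bound \eqref{eq:eb-1} only gives the Green's function $\log(n/r)$ without the compensating denominator, so one needs to genuinely use that conditioning on non-intersection with the $0$-to-$z$ excursion suppresses the second walk's contribution by a $\log$ — i.e. one must run the argument through the quasi-invariant measure / sharp two-sided estimates rather than a union bound. I expect to spend most of the proof setting up the path decomposition so that Proposition~\ref{prop:sharp-rw} (with a genuinely $z$-dependent initial configuration) can be invoked, and then carefully tracking the Green's function asymptotics from \cite[Propositions 6.4.1--6.4.2]{RWintro} through the two regimes.
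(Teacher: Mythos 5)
Your general plan (decompose at $z$, price the local non-intersection with the NIRW exponent $\asymp r^{-\xi}$, price the global pieces with Green's-function or gambler's-ruin quantities) is the same as the paper's, and you correctly observe that the crude $k=1$ bound from Proposition~\ref{prop:eb} -- namely $G_{B_n}(0,z)\,r^{-\xi}$ -- is sharp in the boundary case and for $d=3$ in the bulk, but overshoots for $d=2$ in the bulk.

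However, your account of where the $(\log(n/r))^{d-3}$ factor comes from is not right, and this is the genuine gap. You attribute it alternately to ``iteration of separation lemmas over dyadic scales'' and to a logarithmic suppression of the Green's function $G_{B_n}(0,z)\asymp\log(n/r)$ obtained by conditioning the $0$-to-$z$ piece to avoid the return part. Neither is what happens. In the paper's construction the path is cut into five pieces $\eta_1\oplus[\gamma_1]^R\oplus\gamma_2\oplus\eta_2\oplus\eta_3$: $(\gamma_1,\gamma_2)$ is a NIRW pair from $z$ to $\partial B(z,r/2)$ with mass $\asymp r^{-\xi}$; $\eta_1$ is the bridge from $0$ to $\gamma_1$'s endpoint \emph{restricted to a tube of width $\asymp r$}, which caps its mass at $\asymp r^{2-d}$ (for $d=2$ this is $\asymp 1$, not $\log(n/r)$ -- the naive $G_{B_n}(0,z)\asymp\log(n/r)$ is precisely what the tube discards); $\eta_2$ has mass $\asymp1$; and the $(\log(n/r))^{d-3}$ factor is produced entirely by $\eta_3$, the final escape from $\partial B(z,2r)$ to $\partial B_n$ \emph{before re-entering} $B(z,3r/2)$, via the classical gambler's-ruin estimate (Lemma~\ref{l:trans_recur}). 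Multiplying, $r^{-\xi}\cdot r^{2-d}\cdot 1\cdot(\log(n/r))^{d-3}=r^{-\eta}(\log(n/r))^{d-3}$. You do not identify this escape-probability factor at all, and your factorization $r^{-\xi}\cdot r^{2-d}\cdot(\text{scale iteration})$ is internally inconsistent with the $\log(n/|z|)$ Green's function you also wrote down.

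Beyond the miscounting, your proposed remedies are heavier than what is needed: you suggest a second-moment method localized at $z$, the quasi-invariant measure, and the sharp configuration-dependent asymptotics of \cite{2sidedwalk}, and you also say you would ``extract it from the literature'' rather than prove it. The paper's argument is a direct, elementary path-decomposition for both bounds (tubes for the lower bound, drop the tube restrictions for the upper bound); it uses only the separation lemma, the gambler's-ruin estimates, and the Beurling/freezing estimates, and it does not go through the quasi-invariant machinery at all. So while the skeleton of your proposal is in the right spirit, the key quantitative step -- assigning the correct masses to the five pieces, in particular recognizing the escape piece $\eta_3$ as the source of the logarithmic factor -- is missing, and without it the $d=2$ bulk case cannot be closed.
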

\begin{proof}
	Assume $|z|\ge n/2$.
	For the upper bound, by applying \eqref{eq:eb-1} to $k=1$, we have 
	\[
	\Pb( z\in\Af_n ) \lesssim G_{B_n}(0,z) r^{-\xi}.
	\]
	The upper bound follows from the estimate $G_{B_n}(0,z)\lesssim r n^{1-d}$.
		
	As for the lower bound, the key observation is that we can construct the random path $S[0,\iota_n]$ to satisfy $z\in\Af_n$ in the following way:
	\begin{itemize}
		\item Let $(\gamma_1,\gamma_2)$ be a pair of well-separated non-intersecting random walks (see Lemma~\ref{lem:sep-SRW}) from $z$ to $\partial B(z,r/2)$ with total mass $\asymp r^{-\xi}$.
		\item Let $\eta_1$ be the SRW from $0$ to its first hitting of $B(z,r/2)$ satisfying that $\eta_1\subseteq B_n$ and the distance between the endpoints of $\eta_1$ and $\gamma_1$ is of order $r$, which has total mass $\asymp (r/n)^{d-1}$.
		\item Let $\omega$ be the path connecting the endpoints of $\eta_1$ and $\gamma_1$ which stays in a local ball of radius of order $r$ centered at the endpoint of $\gamma_1$. Then, the total mass of $\omega$ is $\asymp r^{2-d}$;
		\item Let $\eta_2$ be the simple random walk from the endpoint of $\gamma_2$ to $\partial B_n$, which has total mass $\asymp 1$.
		\item The concatenation $\eta_1\oplus \omega\oplus [\gamma_1]^R\oplus \gamma_2\oplus \eta_2$ recovers $S[0,\iota_n]$.
	\end{itemize}
Therefore, the total mass of $S[0,\iota_n]$ restricted to the event $z\in\Af_n$ is given by the multiplication of the total mass of these five pieces which leads to the conclusion.
There is a caveat that
on the event $z\in\Af_n$ there are still some mild restrictions on the paths $\eta_1,\omega$ and $\eta_2$. However, as one can easily check by using the separation lemma, these further restrictions will only change the mass up to a constant. 

Next, assume $|z|< n/2$. We will see that in this case the upper bound given by \eqref{eq:eb-1} is no longer tight when $d=2$ and $z$ is close to $0$. We now construct a path which satisfies $z\in\Af_n$:
\begin{itemize}
	\item Let $(\gamma_1,\gamma_2)$ be a pair of well-separated non-intersecting random walks from $z$ to $\partial B(z,r/2)$, whose total mass is $\asymp r^{-\xi}$.
	\item Let $\eta_1$ be the path from $0$ to the endpoint of $\gamma_1$ staying in a well-chosen tube (so that it avoids $\gamma_2$ by definition) of size of order $r$ with total mass $\asymp r^{2-d}$,
	\item Let $\eta_2$ be the SRW from the endpoint of $\gamma_2$ to its first hitting of $\partial B(z,2r)$ staying in a well-chosen tube (so that it avoids $\gamma_1$ and $\eta_1$ by definition) with total mass $\asymp 1$.
	\item Let $\eta_3$ be the SRW started from the endpoint of $\eta_2$ that hits $\partial B_n$ before $\partial B(z,3r/2)$ (so that it does not hit $\gamma_1$, $\gamma_2$ and $\eta_1$ by definition) with total mass $(\log(n/r))^{d-3}$ given by the gambler's ruin estimate.
\end{itemize}
With this construction the concatenated path $\eta_1\oplus [\gamma_1]^R\oplus \gamma_2\oplus \eta_2\oplus\eta_3$ indeed contains a cut point at $z$. We obtain the lower bound by multiplying all these masses. The upper bound follows immediately by using the same decomposition (but omitting the requirement that $\eta_1$ and $\eta_2$ need to stay in a well-chosen tube).
Thus, we conclude the proof. 
\end{proof}

Next, we give an upper bound in the two-point case. Recall the definition of $E_n$ in \eqref{eq:E_ndef}.
\begin{lemma}\label{lem:two-p}
	Let $z_1,z_2\in B_n$ with $r_1:=|z_1|$ and $r_2:=|z_1-z_2|$. Then,
	\[	\Pb( E_n((z_1,z_2)) ) =		O(r_1^{-\eta} r_2^{-\eta}).\]
\end{lemma}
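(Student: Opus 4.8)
The plan is to imitate the one-point construction from Lemma~\ref{lem:biop}, but now decomposing the path according to how it visits $z_1$ and then $z_2$. On the event $E_n((z_1,z_2))$, the walk $S[0,\iota_n]$ visits $z_1$ and $z_2$ successively, and each is a cut point; this forces the existence of non-intersecting pairs of ``local'' excursions near each of $z_1$ and $z_2$. Concretely, I would bound $\Pb(E_n((z_1,z_2)))$ by the product of total masses of the following pieces: a pair of NIRW's near $z_1$ from $z_1$ to $\partial B(z_1,\rho_1/4)$ (with $\rho_1$ a suitable local scale), a pair of NIRW's near $z_2$ from $z_2$ to $\partial B(z_2,\rho_2/4)$, a connecting random walk bridge $\omega_0$ from $0$ to the first local ball, a connecting bridge $\omega_1$ between the $z_1$-cluster and the $z_2$-cluster, and a final random walk $\omega_2$ from the $z_2$-cluster to $\partial B_n$ (mass $\asymp 1$). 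This is exactly the $k=2$ instance of the decomposition already carried out in the proof of Proposition~\ref{prop:eb}, so \eqref{eq:eb-1} gives
\[
\Pb(E_n((z_1,z_2))) \le C\, G_{B_n}(0,z_1)\,(r^{\vec z}_1)^{-\xi}\, G_{B_n}(z_1,z_2)\,(r^{\vec z}_2)^{-\xi},
\]
where $r^{\vec z}_1 = d(z_1)\wedge |z_1| \wedge |z_1-z_2|$ and $r^{\vec z}_2 = d(z_2) \wedge |z_1-z_2| \wedge d(z_2,\partial B_n)$; note $r^{\vec z}_1 \le r_1 \wedge r_2$ and $r^{\vec z}_2 \le r_2$ by definition.

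The remaining work is purely deterministic: bounding the right-hand side by $O(r_1^{-\eta} r_2^{-\eta})$. Here I would use the standard Green's function estimates. For $d=3$ one has $G_{B_n}(x,y) \asymp |x-y|^{-1}$ as long as $|x-y|$ is not comparable to $n$, so $G_{B_n}(0,z_1) \lesssim r_1^{-1}$ and $G_{B_n}(z_1,z_2) \lesssim r_2^{-1}$, and since $\eta = \xi+1$ the bound $r_1^{-1}(r^{\vec z}_1)^{-\xi} r_2^{-1}(r^{\vec z}_2)^{-\xi} \le r_1^{-1} r_1^{-\xi}\cdot r_2^{-1} r_2^{-\xi}$ follows once one checks $(r^{\vec z}_1)^{-\xi}\le r_1^{-\xi}$, which is immediate since $r^{\vec z}_1 \le r_1$ — wait, that gives the wrong direction; one actually needs to be a bit more careful. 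The point is that when, say, $r_2 < r_1$ (i.e. $z_2$ is close to $z_1$), one has $r^{\vec z}_1 \asymp r_2$ and $G_{B_n}(0,z_1) \asymp r_1^{-1}$, so the product is $\lesssim r_1^{-1} r_2^{-\xi} \cdot r_2^{-1} r_2^{-\xi} = r_1^{-1} r_2^{-2\xi-1} \le r_1^{-\eta} r_2^{-\eta}$ precisely because $-1 \ge -\eta$ (as $\eta < 1$... no, $\eta = \xi+1 > 1$): so in fact $r_1^{-1} \le r_1^{-\eta}$ fails. I therefore need to redistribute powers: write $r_2^{-2\xi-1} = r_2^{-\eta}\, r_2^{-(\xi)} = r_2^{-\eta} r_2^{-\xi}$ with $\xi \le 1$... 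Let me instead simply observe $r_1^{-1} r_2^{-2\xi-1} = (r_1^{-1} r_2^{-\xi-1})\cdot r_2^{-\xi} \le (r_1 \wedge r_2)^{-\xi-1}\cdot r_2^{-\xi} \cdot$ (rebalance) — the cleanest route is to split into the two regimes $r_2 \le r_1/2$ and $r_2 > r_1/2$ and verify the inequality by hand in each, using $\eta \le 2$ and $G_{B_n}(0,z_1) \lesssim r_1^{2-d} \le r_1^{-\eta}\cdot r_1^{\,2-d+\eta} = r_1^{-\eta} r_1^{\xi} \le r_1^{-\eta} r_2^{\xi}$ in the regime $r_2 \le r_1$. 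For $d=2$ the analogous input is $G_{B_n}(x,y) \lesssim \log(n/|x-y|)$ together with the observation from Lemma~\ref{lem:biop} that the logarithmic factor is absorbed by the polynomial gain coming from non-intersection (here $\eta = \xi = 5/4$, and $\log(n/r) \lesssim r^{-0^+}$ is too lossy, so one instead keeps the log and argues as in the one-point bound that $\log(n/r_i)$ times the non-intersection mass at the larger scale still yields $r_i^{-\eta}$, because the outermost random walk piece only picks up the log once — exactly the mechanism in the $|z|<n/2$ case of Lemma~\ref{lem:biop}).

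The main obstacle is this bookkeeping of scales and the correct placement of the logarithmic corrections in $d=2$: one must choose the right ``local radii'' $\rho_1,\rho_2$ around $z_1,z_2$ (roughly $\rho_1 \asymp r_2 \wedge r_1$ and $\rho_2 \asymp r_2$, truncated at the distance to $\partial B_n$ and at $n$) so that the three Green's-function factors, two non-intersection factors $(r_i^{\vec z})^{-\xi}$, and the final mass-$1$ piece multiply to at most $C r_1^{-\eta} r_2^{-\eta}$ uniformly over all configurations, including the degenerate ones where $z_1,z_2$ are close to each other, close to $0$, or close to $\partial B_n$. All of this is routine given \eqref{eq:eb-1} and the Green's function asymptotics of \cite{RWintro}; I would organize it as a short case analysis on the ordering of $r_1$, $r_2$, $d(z_1)$, $d(z_2)$, mirroring the summing argument already invoked for \eqref{eq:eb-2} in the proof of Proposition~\ref{prop:eb}.
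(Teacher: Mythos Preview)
Your approach has a genuine gap: the bound \eqref{eq:eb-1} from Proposition~\ref{prop:eb} is \emph{not} strong enough to yield $O(r_1^{-\eta}r_2^{-\eta})$, and the issue is not mere bookkeeping. In the regime $r_2 \ll r_1 \le n/3$ (with both points well inside $B_n$), \eqref{eq:eb-1} gives
\[
\Pb(E_n((z_1,z_2))) \;\lesssim\; G_{B_n}(0,z_1)\,(r_1^{\vec z})^{-\xi}\,G_{B_n}(z_1,z_2)\,(r_2^{\vec z})^{-\xi}
\;\asymp\; r_1^{2-d}\, r_2^{-\xi}\, r_2^{2-d}\, r_2^{-\xi}
\;=\; r_1^{2-d}\, r_2^{-2\eta + (d-2)},
\]
and the ratio of this to the target $r_1^{-\eta}r_2^{-\eta}$ equals $(r_1/r_2)^{\xi}$, which blows up as $r_2/r_1\to 0$. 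Your attempted redistribution ``$r_1^{2-d}=r_1^{-\eta}r_1^{\xi}\le r_1^{-\eta}r_2^{\xi}$ in the regime $r_2\le r_1$'' is an algebra error: since $\xi>0$, $r_1^{\xi}\le r_2^{\xi}$ would require $r_1\le r_2$, not $r_2\le r_1$. No case split or rearrangement will rescue \eqref{eq:eb-1} here, because that inequality simply does not use all of the non-intersection constraint.

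The paper's proof supplies the missing piece: on $E_n((z_1,z_2))$, after extracting the two local NI pairs at scale $r_2/4$ around $z_1$ and $z_2$, the path entering the $\{z_1,z_2\}$-cluster (heading back towards $0$) and the path leaving it (heading towards $\partial B_n$) form a \emph{third} non-intersecting pair $(\zeta_1,\zeta_2)$ that must cross the annulus from scale $r_2$ to scale $r_1/2$ around the midpoint $\frac{z_1+z_2}{2}$. This contributes an extra factor $(r_1/r_2)^{-\xi}$, and then
\[
r_2^{-2\xi}\,(r_1/r_2)^{-\xi}\, r_2^{2-d}\, r_1^{2-d} \;=\; r_1^{-\eta}\, r_2^{-\eta},
\]
exactly as desired. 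For the boundary cases (either $z_i$ close to $\partial B_n$), the paper replaces the whole-space exponent $\xi$ by the half-space exponent $\widetilde\xi$ and uses $\widetilde\xi>\eta$ to absorb the boundary correction; this is also absent from your plan.
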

\begin{proof}
We start with the ``bulk'' cases. We only illustrate in the  case of $r_2 \leq r_1\le \frac{n}{3}$ how to obtain the order of the mass of all the pieces that are used to construct $S[0,\iota_n]$ such that $\{ z_1,z_2\} \subseteq \Af_n$ and $\tau_{z_1}<\tau_{z_2}$ occur, imitating the proof of the one-point cases, as all ther ``bulk'' cases can be dealt with in a similar fashion.
	\begin{itemize}
		\item Let $(\gamma_1,\gamma_2)$ be a pair of non-intersecting random walks from $z_1$ to $\partial B(z_1,r_2/4)$ and  $(\gamma_3,\gamma_4)$ be a pair of non-intersecting random walks from $z_2$ to $\partial B(z_2,r_2/4)$. Both of them have total mass $\asymp r_2^{-\xi}$.
		\item Let $(\zeta_1,\zeta_2)$ be a pair of non-intersecting random walks started from the endpoints of $\gamma_1$ and $\gamma_4$ respectively and stopped upon hitting $\partial B(z,r_1/2)$ with $z:=\frac{z_1+z_2}{2}$, which has total mass $\asymp (r_1/r_2)^{-\xi}$.
		\item Let $\omega$ be the path connecting the endpoints of $\gamma_2$ and $\gamma_3$ with total mass $\asymp r_2^{2-d}$.
		\item Let $\eta_1$ be the path connecting $0$ and the endpoint of $\zeta_1$ with total mass $\asymp r_1^{2-d}$.
		\item Let $\eta_2$ be the simple random walk from the endpoint of $\zeta_2$ to $\partial B_n$, which has total mass $1$.
		\item Concatenate $\eta_1 \oplus [\zeta_1\oplus\gamma_1]^R\oplus \gamma_2\oplus \omega\oplus [\gamma_3]^R\oplus \gamma_4\oplus\zeta_2\oplus \eta_2$ to recover $S$.
	\end{itemize}
	See Figure \ref{fig:concatenate} for an illustration.
	\begin{figure}[!h]
		\begin{center}
			\includegraphics[scale=0.6]{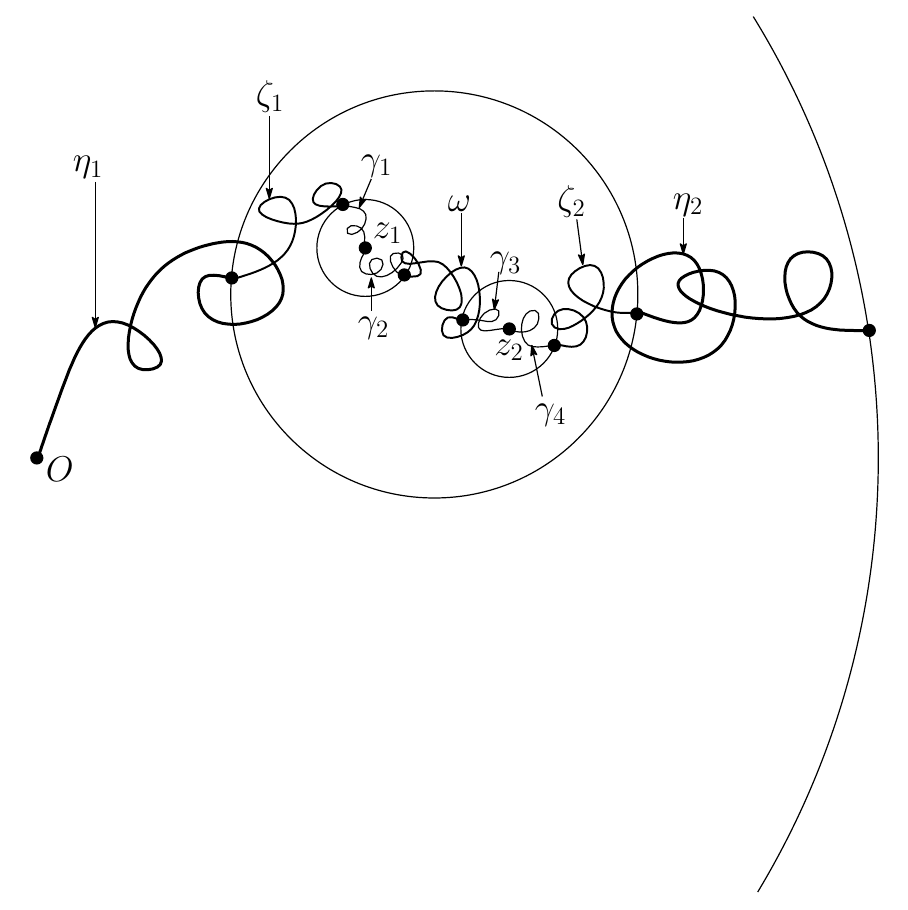}
			\caption{Proof of Lemma \ref{lem:two-p}.}
			\label{fig:concatenate}
		\end{center}
	\end{figure}
	Compute the multiplication of the above total mass:
	\[
	r_2^{-2\xi} (r_1/r_2)^{-\xi} r_2^{2-d} r_1^{2-d} = r_1^{-\eta} r_2^{-\eta},
	\]
	which gives the desired order. 

We now turn to the ``boundary'' cases. Note that when $z_1$ or $z_2$ gets close to $\partial B_n$, we need to calculate half-plane/space non-intersection probability when performing path decomposition.  Nevertheless we still obtain an upper bound at least as good as $O(r_1^{-\eta} r_2^{-\eta})$, as we now explain. Let $\widetilde\xi=\widetilde\xi_d$ stand for the half-plane intersection exponent defined in \cite{MR1879850}; we know that 
	\begin{equation}\label{eq:tildexi}
	 \widetilde\xi>\eta\quad \mbox{ for both $d=2,3$}:
	\end{equation}
as calculated in \cite{MR1879850}, $\widetilde\xi_2=\frac{10}{3}>\frac54=\xi_2=\eta_2$, and $\widetilde\xi_3\ge 2>1+\xi_3=\eta_3$ by the gambler's ruin estimate and the fact that $\xi_3<1$. 
We now take the case where $n\asymp r_1\asymp r_2 \asymp n-|z_2| \gg n-r_1$ (i.e., $z_1$ is close to $\partial B_n$ but $z_2$ is in the bulk of $B_n$) as an example. In this case, the total mass of all the pieces that are used to construct $S[0,\iota_n]$ are of order 
$$
\big(r_1/(n-r_1)\big)^{-\eta} (n-r_1)^{-\widetilde\xi} r_2^{-\eta} 
$$
which is $O(r_1^{-\eta} r_2^{-\eta})$ by \eqref{eq:tildexi}. In all other ``boundary'' cases, we obtain $O(r_1^{-\eta} r_2^{-\eta})$ for the same reason.
\end{proof}

Let $A_n(\eps):=B_n\setminus B_{n-\eps n}$. Define the sets 
\[
U^1_n(\eps)=A_{n}(\eps)\times A_{n}(\eps), 
\quad\quad  U^2_n(\eps)=\{ (z_1,z_2)\in B_n\times B_n: |z_1-z_2|\le \eps n \}.
\]
Then, as a quick corollary of Lemma~\ref{lem:two-p},  we have the following estimate, which shows that cut points in $U^1_n(\eps)\cup U^2_n(\eps)$ comprise a small portion of $\Eb[M_n^2]$ when $\eps$ is small. 
\begin{lemma}\label{lem:boundary}
	For all $0<\eps< 1/2$,
	\[
	 \sum_{(z_1,z_2)\in U^1_n(\eps)\cup U^2_n(\eps)} \Pb( \{ z_1,z_2\} \subseteq \Af_n )= O(\eps^{2-\xi}n^{2(2-\xi)}).
	\]
\end{lemma}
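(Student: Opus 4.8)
The plan is to reduce the sum over $U^1_n(\eps)\cup U^2_n(\eps)$ to a pair of integral estimates by invoking Lemma~\ref{lem:two-p}. First I would note that
\[
\Pb(\{z_1,z_2\}\subseteq\Af_n)\le \sum_{\pi\in\Pi_2}\Pb(E_n(\pi(z_1,z_2)))=\Pb(E_n((z_1,z_2)))+\Pb(E_n((z_2,z_1))),
\]
so by the symmetry of the two summand bounds it suffices to control
\[
\Sigma^i:=\sum_{(z_1,z_2)\in U^i_n(\eps)}\Pb(E_n((z_1,z_2)))\lesssim \sum_{(z_1,z_2)\in U^i_n(\eps)} |z_1|^{-\eta}\,|z_1-z_2|^{-\eta}
\]
for $i=1,2$, where I used Lemma~\ref{lem:two-p} with $r_1=|z_1|$, $r_2=|z_1-z_2|$. (There is a minor caveat when $z_1=0$ or $z_1=z_2$; these contribute a negligible number of terms, which one handles separately using the one-point bound of Lemma~\ref{lem:biop} together with the trivial estimate $\Eb[M_n]=O(n^{2-\xi})$, or simply by noting $|z_1|$ and $|z_1-z_2|$ are then of order at least $1$ so the bound $r^{-\eta}$ is still valid for the relevant exponents.)

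For $\Sigma^2$, I would fix $z_1\in B_n$ and sum over $z_2$ with $|z_1-z_2|\le\eps n$:
\[
\sum_{z_2:\,|z_1-z_2|\le \eps n} |z_1-z_2|^{-\eta}\lesssim \sum_{j=0}^{\log(\eps n)}\sum_{z_2:\,|z_1-z_2|\asymp e^j} e^{-j\eta}\lesssim \sum_{j=0}^{\log(\eps n)} e^{j(d-\eta)}=\sum_{j}e^{j\delta}\lesssim (\eps n)^{\delta}=(\eps n)^{d-\eta},
\]
using $\delta=d-\eta>0$. Then summing the remaining factor $|z_1|^{-\eta}$ over $z_1\in B_n$ gives, by the same dyadic argument, $\sum_{z_1\in B_n}|z_1|^{-\eta}\lesssim n^{d-\eta}=n^{2-\xi}\cdot n^{d-2}$, but more precisely $\lesssim n^{\delta}$, so in total $\Sigma^2\lesssim (\eps n)^{\delta}\, n^{\delta}=\eps^{\delta} n^{2\delta}$. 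Recalling $\delta=d-\eta=2-\xi$, this is exactly $O(\eps^{2-\xi} n^{2(2-\xi)})$.

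For $\Sigma^1$, both $z_1,z_2\in A_n(\eps)=B_n\setminus B_{n-\eps n}$, so $|z_1|\asymp n$ and we only gain from the annulus having $|A_n(\eps)|\asymp \eps n\cdot n^{d-1}=\eps n^d$ points. Crudely bounding $\sum_{z_2\in A_n(\eps)}|z_1-z_2|^{-\eta}$: splitting dyadically in $|z_1-z_2|$, the shell at distance $\asymp e^j$ intersected with the annulus $A_n(\eps)$ has at most $\min(e^{jd}, e^{j(d-1)}\eps n)$ points; summing $e^{-j\eta}$ against this over $j\le \log n$ yields $\lesssim (\eps n)^{\delta}$ when $\eps n\ge 1$ — the same bound as before, since the annular constraint only helps. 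Then $\sum_{z_1\in A_n(\eps)}|z_1|^{-\eta}\lesssim (\eps n^d)\cdot n^{-\eta}=\eps n^{d-\eta}=\eps n^{\delta}$, so $\Sigma^1\lesssim \eps n^{\delta}\cdot(\eps n)^{\delta}=\eps^{1+\delta}n^{2\delta}\le \eps^{\delta}n^{2\delta}$, again $O(\eps^{2-\xi}n^{2(2-\xi)})$ since $\eps<1$. Combining the two estimates and the permutation symmetry completes the proof. The only point requiring mild care is the bookkeeping in the dyadic sums near the boundary of $B_n$ and the degenerate terms $z_1\in\{0,z_2\}$; I expect the dyadic shell count in the annular case ($\Sigma^1$) to be the fiddliest step, though it is routine.
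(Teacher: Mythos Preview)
Your approach is exactly what the paper intends (it merely calls the lemma ``a quick corollary of Lemma~\ref{lem:two-p}''), and your $\Sigma^2$ computation is clean and correct.

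There is one small slip in your $\Sigma^1$ estimate, precisely at the step you flagged as fiddly. The dyadic shell count gives
\[
\sum_{z_2\in A_n(\eps)}|z_1-z_2|^{-\eta}\lesssim (\eps n)^{\delta}+\eps n\sum_{\log(\eps n)<j\le\log n}e^{j(d-1-\eta)}.
\]
For $d=2$ one has $d-1-\eta=-1/4<0$ and the second sum indeed collapses to $(\eps n)^{\delta}$, matching your claim. For $d=3$, however, $d-1-\eta=1-\xi>0$, so the second sum is $\asymp \eps n\cdot n^{1-\xi}=\eps\, n^{\delta}$, which for $\delta>1$ dominates $(\eps n)^{\delta}=\eps^{\delta}n^{\delta}$. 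Thus the correct inner-sum bound for $d=3$ is $\eps\, n^{\delta}$, not $(\eps n)^{\delta}$. Fortunately this is harmless: combined with $\sum_{z_1\in A_n(\eps)}|z_1|^{-\eta}\lesssim \eps\, n^{\delta}$ you still obtain $\Sigma^1\lesssim \eps^2 n^{2\delta}\le \eps^{\delta}n^{2\delta}$ (since $\delta<2$), and the lemma follows.
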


\section{Description of cut-point Green's function via cut balls}\label{sec:gcb}
We now turn to cut points of Brownian motion. The main goal of this section is to introduce cut ball, which in some sense approximates cut points in mesoscopic scales and show that the cut-point Green's function can also be expressed through the asymptotic probability of cut ball events. We first review the known results in \cite{mink_cont} about the one-point and two-point Green's functions in Section~\ref{subsec:review}, and provide some alternative ways to describe the Green's function of cut points. Then, in Section~\ref{subsec:cbd}, we give the definition of cut balls and relate to Green's function to asymptotic probability of cut ball events.

\subsection{Green's function of cut points: review}\label{subsec:review}
We recall some facts about Brownian cut points from \cite{mink_cont}. For this, we need to introduce some notation from \cite{mink_cont}. 
Recall $\delta$ and $\eta$ from \eqref{eq:eta}. Recall in the beginning of the introduction that $\Ac_{\gamma}$ is the set of cut points of a curve $\gamma$.
Let $\gamma:=W[0,T_0]$ be the standard Brownian motion stopped upon reaching the unit sphere $\partial \Dc$.
Define 
\begin{equation}\label{eq:Hsz}
H_s(z):=\{ \dist(z,\Ac_{\gamma})\le e^{-s} \} \text{ and } J_s(z):=e^{\eta s} 1_{H_s(z)}.
\end{equation}
Let $\Vc$
be the set of dyadic cubes $V\subset\Dc$ of the form
\begin{equation}\label{eq:nice-box}
V=\Big[\frac{k_{1}}{2^{n}},\frac{k_{1}+1}{2^{n}}\Big]\times\ldots\times\Big[\frac{k_{d}}{2^{n}},\frac{k_{d}+1}{2^{n}}\Big]
\end{equation}
for integers $n,k_{1},\ldots,k_{d}$ such that $\dist(0, V) \wedge \dist(\partial\Dc, V) \geq 2 \operatorname{diam}(V)=\sqrt{d} 2^{-n+1}$.
{The following two theorems, one establishing asymptotics for cut-point Green's function and the other essentially proving the existence of  Minkowski content for  Brownian cut points, are variants of Theorems 1.1 through 1.3 in \cite{mink_cont}, where the authors construct the natural fractal measure on Brownian cut points via Minkowski content, and be proved in a fashion very similar to the methods in \cite{mink_cont}. See Remark \ref{rem:setup} for more discussions.}

\begin{theorem}\label{thm:gr}
	For all $z, w, \partial \Dc$ such that  $\dist(0,z,w,\partial\Dc)>0$, the following limits exist
	\begin{equation}\label{eq:H-one-point}
		G^{\cut}_{\Dc}(z)=\lim_{s\rightarrow\infty} \Eb[ J_s(z) ],
	\end{equation}
	and 
	\[
	G^{\cut}_{\Dc}(z,w)=\lim_{s\rightarrow\infty}\Eb[ J_s(z) J_s(w) ].
	\]
	Moreover, there exists $u>0$ such that if $e^{-b}=\dist( 0, z, \partial \Dc )>0$ and $s\ge b+1$, then
	\begin{equation}\label{eq:S-one-point}
		\Eb[ J_s(z) ]= G^{\cut}_{\Dc}(z) [ 1+O(e^{(b-s)u}) ].
	\end{equation}
	Moreover, there exists $u>0$ such that if $V\in \Vc$, $z,w\in V$ with $e^{-b}=\dist( 0, z, w, \partial \Dc )>0$, $s\ge b+1$ and $0\le \rho\le 1$, then 
	\begin{equation}\label{eq:S-two-point}
		\Eb[ J_s(z)J_{s+\rho}(w) ]
		= G^{\cut}_{\Dc}(z,w)  \, 
		[1+O( e^{(b-s)u})],
	\end{equation}  
	and there exist $0<c(V)<C(V) < \infty$ such that  
	\begin{equation}\label{eq:Gcut-zw}
		c(V) \, |z-w|^{-\eta}\leq G^{\cut}_{\Dc}(z,w) \leq C(V) \, |z-w|^{-\eta}. 
	\end{equation}
\end{theorem}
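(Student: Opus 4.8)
The plan is to follow the scheme of \cite{mink_cont} closely, so I will only indicate the modifications forced by our setup (a Brownian motion stopped on exiting $\Dc$ rather than a two-sided or infinite path). Throughout, the main engine is the quasi-invariance of the NIBM measures $\Qf$ and $\Qf^*$ (Sections \ref{subsec:qi} and \ref{subsec:qi-infty}) together with the sharp non-intersection asymptotics \eqref{eq:An}. First I would set up the one-point estimate. Fix $z$ with $e^{-b}=\dist(0,z,\partial\Dc)$. On the event $H_s(z)$, the path $\gamma=W[0,T_0]$ comes within $e^{-s}$ of $z$; decompose $\gamma$ at its first and last visits to $\partial\Dc_{-b+k}(z)$ for intermediate scales $k$, and observe that the piece inside the small ball, viewed from scale $-b$ down to scale $-s$, consists of two arms that must avoid each other (since $z$ is a near-cut-point), i.e.\ it is governed by a pair of NIBM's from the inside. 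Using \eqref{eq:An} for the inner non-intersection probability (mass $\asymp e^{-\xi(s-b)}$, and after multiplying by the volume factor $e^{\eta s}$ of the sausage one gets an $O(1)$ quantity), together with the quasi-invariance $\eqref{eq:qi_bm_definition}$ to decouple the inner configuration from the macroscopic path, I would show that $\Eb[J_s(z)]$ is Cauchy in $s$ with geometric rate: the difference $\Eb[J_{s+\rho}(z)]-\Eb[J_s(z)]$ is $O(e^{(b-s)u})$ uniformly in $\rho\in[0,1]$. Summing the geometric bound gives both the existence of $G^{\cut}_\Dc(z)$ and the quantitative rate \eqref{eq:S-one-point}. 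The uniform positivity/finiteness of $G^{\cut}_\Dc(z)$ follows from the lower bound produced by the separation lemma (Lemma \ref{lem:sep-BM}) and the upper bound from Beurling/freezing (Proposition \ref{p:Beurling}, Lemma \ref{l:freezing}), exactly as in the discrete Lemma \ref{lem:biop}.

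Next I would treat the two-point function for $z,w\in V$ with $|z-w|=e^{-a}$ and $e^{-b}=\dist(0,z,w,\partial\Dc)$, say $s\ge b+1$, $0\le\rho\le 1$. The idea is a three-scale decomposition of $\gamma$: a macroscopic part, an intermediate NIBM configuration living between scale $-b$ and scale $-a$ (around the midpoint, separating the two localities from the bulk), and two independent small-scale NIBM configurations, one around $z$ at scales $-a\to-s$ and one around $w$ at scales $-a\to-(s+\rho)$. The key point is that conditionally on the path reaching a neighbourhood of scale $e^{-a}$ around each of $z$ and $w$ in a well-separated way, the two small-scale events are asymptotically independent, each contributing its own one-point factor; quasi-invariance lets one replace the conditional law of the arms near $z$ (resp.\ $w$) by $\Qf$ up to total-variation error $O(e^{-u(s-a)})$. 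This yields $\Eb[J_s(z)J_{s+\rho}(w)]= G^{\cut}_\Dc(z,w)[1+O(e^{(b-s)u})]$ with $G^{\cut}_\Dc(z,w)$ the limiting value, proving \eqref{eq:S-two-point}. For the bounds \eqref{eq:Gcut-zw}: the upper bound comes from the discrete/continuous two-point upper bound of Lemma \ref{lem:two-p} type (mass $\asymp |z-w|^{-\eta}$ from the product of a small-scale factor $|z-w|^{-\xi}$, a connecting factor $|z-w|^{2-d}$, and bounded macroscopic factors, the latter uniformly bounded for $z,w\in V$ since $V$ is a nice box bounded away from $0$ and $\partial\Dc$); the lower bound is again a direct construction using the separation lemmas, with the macroscopic pieces contributing a constant $c(V)$ depending only on $V$.

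The step I expect to be the genuine obstacle is the decoupling at the intermediate scale in the two-point estimate — i.e.\ making precise that, after conditioning on a well-separated passage near scale $e^{-a}$ about $z$ and $w$, the fine-scale NIBM configurations around $z$ and around $w$ become independent and each converges (with rate) to the law governed by $\Qf$, while the leftover macroscopic randomness is absorbed into a single well-defined constant $G^{\cut}_\Dc(z,w)$. This requires carefully iterating the quasi-invariance \eqref{eq:qi_bm_definition} along the dyadic scales between $-a$ and $-s$, controlling the accumulation of TV errors (a geometric series, hence summable), and invoking the separation lemma at \emph{each} scale to keep the conditioning non-degenerate; the bookkeeping is essentially identical to that in \cite{mink_cont} but must be redone with the exit-of-$\Dc$ boundary condition and with the error terms tracked to give the stated $O(e^{(b-s)u})$ rate rather than mere existence of the limit. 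Once this is in place, the remaining assertions — existence of the limits in \eqref{eq:H-one-point}, and \eqref{eq:S-one-point}–\eqref{eq:Gcut-zw} — follow by summation of geometric bounds and by the construction/estimate pairs already available in Sections \ref{sec:nonint} and \ref{sec:moment}. I would then refer the reader to \cite{mink_cont} for the parts of the argument that transfer verbatim, and record in Remark \ref{rem:setup} precisely which modifications are needed.
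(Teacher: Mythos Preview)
Your proposal is correct and matches the paper's own treatment: the paper does not give an independent proof of Theorem~\ref{thm:gr} but states (just before the theorem and in Remark~\ref{rem:setup}) that it is a variant of Theorems~1.1--1.3 of \cite{mink_cont}, proved by the same methods adapted to the interior-to-boundary setup by combining the techniques of Sections~4.6 and~4.7 there. Your sketch of the quasi-invariance/separation-lemma machinery and the three-scale decomposition for the two-point function is exactly the content of those sections, so you are in full agreement with the paper's intended argument.
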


\begin{theorem}\label{thm:mink}
	Suppose $V$ is a bounded Borel subset of $\Dc$ such that $\partial V$ has zero $(d-\eps)$-Minkowski content for some $\eps>0$. The following limit exist
	\[
	J_V=\lim_{s\to \infty} J_{s,V}  \text{ with } J_{s,V}:=\int_{V} J_s(z) dz,
	\]
	and there exist $c,u>0$ such that
	\begin{equation}\label{eq:jsv}
		\Eb [ ( J_{s,V} - J_V )^2 ] \le  c e^{-us}.
	\end{equation}
	Moreover, almost surely the Minkowski content $\nu(V):=\Cont_{\delta}(\Ac_{\gamma}\cap V)$ exists and equals $J_V$, and 
	\[
	\Eb [ \nu(V) ] = \Eb [ J_V ] = \int_V G^{\cut}_{\Dc}(z)\, dz,
	\]
	\[
	\Eb [ \nu(V)^2 ] =  \Eb [ J_V^2 ] = \int_V\int_V G^{\cut}_{\Dc}(z,w)\, dz\, dw.
	\]
\end{theorem}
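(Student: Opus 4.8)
The plan is to run the standard second-moment (``$L^2$-approximation'') scheme, taking as input Theorem~\ref{thm:gr} — the one- and two-point asymptotics for the cut-point Green's function with exponential error — and otherwise only Fubini, dominated convergence and the second-moment method. Write $\Theta_s:=\{z\in V:\dist(z,\Ac_\gamma)\le e^{-s}\}$, so $J_{s,V}=e^{\eta s}\Leb(\Theta_s)$, and treat first $V\in\Vc$ (the general $V$ in the statement is handled the same way, using that $G^{\cut}_\Dc$ is integrable on $\Dc$ — it is $\asymp|z|^{-\eta}$ near $0$ and $\asymp\dist(z,\partial\Dc)^{1-\xi}$ near $\partial\Dc$, both integrable since $\eta<d$ and $\xi<2$ — together with boundary one-point estimates as in \cite{mink_cont}). \textbf{First moments:} by Fubini $\Eb[J_{s,V}]=\int_V\Eb[J_s(z)]\,dz$; for $V\in\Vc$ the set $\overline V$ is a compact subset of $\Dc\setminus\{0\}$, so $b_V:=\sup_{z\in V}(-\log\dist(0,z,\partial\Dc))<\infty$, and \eqref{eq:S-one-point} gives $\Eb[J_s(z)]\le 2G^{\cut}_\Dc(z)$ for $s\ge b_V+1$ with $G^{\cut}_\Dc$ bounded on $V$, while $\Eb[J_s(z)]\le e^{\eta s}$ always, so $\Eb[J_s(z)]$ is bounded uniformly over $z\in V$, $s\ge0$, and converges pointwise to $G^{\cut}_\Dc(z)$; dominated convergence then gives $\Eb[J_{s,V}]\to\int_V G^{\cut}_\Dc(z)\,dz$.

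The heart of the matter is the $L^2$-Cauchy bound. Expanding $\Eb[(J_{s,V}-J_{t,V})^2]$, it suffices to show that for $s\le t$
\[
\Eb[J_{s,V}J_{t,V}]=\int_V\int_V\Eb[J_s(z)J_t(w)]\,dz\,dw=\int_V\int_V G^{\cut}_\Dc(z,w)\,dz\,dw+O_V(e^{-us}),
\]
since then the three terms of the expansion each equal $\int_V\int_V G^{\cut}_\Dc$ up to $O_V(e^{-us})$ and the leading parts cancel; this simultaneously yields \eqref{eq:jsv} and the limit of the second moments. I would fix the intermediate scale $m:=s/2$ and split the $(z,w)$-integral at $|z-w|=e^{-m}$. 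On the off-diagonal part $|z-w|>e^{-m}$: since $z,w\in V\in\Vc$ one has $\dist(0,z,w,\partial\Dc)=|z-w|$, so that \eqref{eq:S-two-point} applies at both scales $s\le t$ — in the form valid for well-separated distinct scales, which holds with the same rate and is proved identically — and gives $\Eb[J_s(z)J_t(w)]=G^{\cut}_\Dc(z,w)[1+O(e^{-u_0m})]$; integrating and using $G^{\cut}_\Dc(z,w)\le C(V)|z-w|^{-\eta}$ from \eqref{eq:Gcut-zw} (which also gives $\int_V\int_V G^{\cut}_\Dc<\infty$, as $\eta<d$) produces $\int_V\int_V G^{\cut}_\Dc+O_V(e^{-u_0m})$ up to the tail $\int\int_{|z-w|\le e^{-m}}G^{\cut}_\Dc\lesssim_V e^{-m(d-\eta)}$. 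On the near-diagonal part $|z-w|\le e^{-m}$: I would establish a uniform-in-scale two-point upper bound $\Eb[J_s(z)J_t(w)]\lesssim_V|z-w|^{-\eta}$ by elementary casework — for $|z-w|\ge e^{1-s}$ use the two-point bound of Theorem~\ref{thm:gr}, otherwise bound $\Pb(H_s(z)\cap H_t(w))\le\Pb(H_t(w))\lesssim_V e^{-\eta t}$ by the one-point bound, giving $\Eb[J_s(z)J_t(w)]\lesssim_V 1\le|z-w|^{-\eta}$ since $|z-w|\le\diam V<1$ — and then $\int\int_{|z-w|\le e^{-m}}|z-w|^{-\eta}\lesssim_V e^{-m(d-\eta)}$. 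Collecting the pieces gives the display with $u=\tfrac12\min(\delta,u_0)$ for a suitable $u_0>0$.

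From \eqref{eq:jsv} the family $(J_{s,V})$ is $L^2$-Cauchy, so $J_V:=\lim_s J_{s,V}$ (limit in $L^2$) exists; applying \eqref{eq:jsv} along $s=n\in\Nb$ with Chebyshev and Borel--Cantelli gives a.s.\ convergence along integers, and the continuum limit follows as in \cite{mink_cont}. Passing to the limit in the $L^2$-convergent (hence uniformly integrable) families $\Eb[J_{s,V}]$ and $\Eb[J_{s,V}^2]$ and combining with the two moment computations above gives $\Eb[J_V]=\int_V G^{\cut}_\Dc(z)\,dz$ and $\Eb[J_V^2]=\int_V\int_V G^{\cut}_\Dc(z,w)\,dz\,dw$. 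It remains to identify $J_V$ with the Minkowski content. Since $\Theta_s$ and $\Theta(\Ac_\gamma\cap V,e^{-s})$ differ only inside $\Theta(\partial V,e^{-s})$,
\[
\Eb\bigl[\,\bigl|J_{s,V}-e^{\eta s}\Leb(\Theta(\Ac_\gamma\cap V,e^{-s}))\bigr|\,\bigr]\le\int_{\Theta(\partial V,e^{-s})}\Eb[J_s(z)]\,dz\le C_V\,\Leb(\Theta(\partial V,e^{-s})),
\]
because $\partial V$ lies at positive distance from $\{0\}\cup\partial\Dc$, so $\Eb[J_s(z)]$ is bounded there; the hypothesis that $\partial V$ has zero $(d-\eps)$-Minkowski content forces $\Leb(\Theta(\partial V,e^{-s}))=o(e^{-\eps s})\to0$, so $e^{\eta s}\Leb(\Theta(\Ac_\gamma\cap V,e^{-s}))\to J_V$ in $L^1$. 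Together with the existence of $\nu(V)=\Cont_\delta(\Ac_\gamma\cap V)$ from \cite{mink_cont}, this gives $\nu(V)=J_V$ almost surely.

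The work is concentrated in the second step: controlling $\Eb[J_s(z)J_t(w)]$ uniformly in the scales and in $z,w\in V$. This requires (i) the extension of the two-point asymptotics \eqref{eq:S-two-point} to well-separated but distinct scales $s\le t$ with the same exponential rate, and (ii) the uniform two-point upper bound $\Eb[J_s(z)J_t(w)]\lesssim_V|z-w|^{-\eta}$ governing the near-diagonal region. Both are of the same nature as the estimates behind Theorem~\ref{thm:gr} and rest on the separation-lemma machinery for non-intersecting Brownian motions; granting them, the rest is routine measure theory and the second-moment method.
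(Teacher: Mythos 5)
Your proposal is correct and runs precisely the Lawler--Rezaei second-moment scheme that the paper defers to: Theorem~\ref{thm:mink} is stated without proof, with Remark~\ref{rem:setup} pointing to a routine combination of the techniques of Sections~4.6 and~4.7 of \cite{mink_cont}, and that machinery is exactly the $L^2$-approximation argument you lay out. Three small remarks, none fatal. First, your route of computing $\Eb[J_{s,V}J_{t,V}]$ directly for \emph{all} $s\le t$ forces you to extend \eqref{eq:S-two-point} from $\rho\in[0,1]$ to unbounded scale gaps; this is believable and provable by the same separation-lemma argument, but it is unstated in the paper and also unnecessary — telescoping single increments $\|J_{s,V}-J_{s+\rho,V}\|_{L^2}$ with $\rho\in[0,1]$, where \eqref{eq:S-two-point} applies verbatim, and summing the geometric series $\sum_k e^{-u(s+k)/2}$ yields the same Cauchy rate without touching distinct scales. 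Second, the near-diagonal casework has a genuine slip: when $|z-w|<e^{1-s}$ the chain $\Pb(H_s(z)\cap H_t(w))\le\Pb(H_t(w))\lesssim_V e^{-\eta t}$ gives $\Eb[J_s(z)J_t(w)]=e^{\eta s}e^{\eta t}\Pb(\cdot)\lesssim_V e^{\eta s}$, \emph{not} $\lesssim_V 1$; the conclusion $\lesssim|z-w|^{-\eta}$ still holds because $e^{\eta s}\le e^{\eta}|z-w|^{-\eta}$ in that regime, but the intermediate bound as written is wrong. Third, the closing appeal to ``existence of $\Cont_\delta(\Ac_\gamma\cap V)$ from \cite{mink_cont}'' is slightly circular — Remark~\ref{rem:setup} stresses that \cite{mink_cont} treats the interior-to-interior and half-plane cases, not this one; but your own $L^2$-Cauchy bound, together with the standard inter-scale monotonicity sandwich along a dense grid of $s$, already produces a.s.\ existence of the limit in the present setup, so the cleanest statement is self-contained.
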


\begin{remark}\label{rem:setup}
As introduced at the beginning of this subsection, the work \cite{mink_cont} establishes results of the same flavor as Theorems \ref{thm:gr} and \ref{thm:mink} above but in the cases of interior-to-interior Brownian path measures and half-plane excursions, neither of which can yield Theorems \ref{thm:gr} and \ref{thm:mink} directly. However, the setup in this section can be regarded as a ``mixed'' case of interior-to-boundary path measure in the terminology of the said paper, and this setup can be treated by combining the techniques of Sections 4.6 and 4.7, ibid.
\end{remark}

In the following, we give some alternative ways to describe the Green's function of cut points, which will be useful later.
The first one has been given in the proof of Theorem 1.1 in \cite{mink_cont} {(in the setup of Brownian path measures). In the lemma below we present a version in our setup} and refer the reader to the said paper for details.
Let $c_*$ be the universal constant defined in  (3.10) of \cite{mink_cont} and $S=\partial \Dc_{-s}(z)$, $D=\Dc \setminus\overline{\Dc_{-s}(z)}$ and define: 
	\begin{equation}\label{eq:gkdefinition1}
		{\gf_{s}=\gf_{s}(z)}:=\int_{S } \int_{S } \mu^D_{0,x} \otimes \mu_ {\partial\Dc,y}^D \left[ \gamma_1\cap \gamma_2=\emptyset\right]\sigma(dx,dy),
	\end{equation}
{where we recall from Section~\ref{subsec:pm} that $\sigma$ denotes the surface measure on spheres (area if $d = 3$ and length if $d = 2$).}
\begin{lemma}\label{lem:gs}
	There {exist} universal positive constants $c_*$ and $u$ such that the following holds for all $z\in\Dc$ with $e^{-b}:=\dist( 0, z, \partial \Dc )>0$ and $s\ge b+1$
	\begin{equation}\label{eq:Gcut1}
		c_* e^{2s(d-2)} e^{\xi s} \gf_{s}=G_{\Dc}^{\cut}(z) [1+O(e^{-u(s-b)})].
	\end{equation}
\end{lemma}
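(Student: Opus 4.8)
The plan is to derive the identity from Theorem~\ref{thm:gr}, which already provides $G^{\cut}_{\Dc}(z)=\lim_{s\to\infty}\Eb[J_s(z)]$ together with the rate $\Eb[J_s(z)]=G^{\cut}_{\Dc}(z)\,[1+O(e^{(b-s)u})]$. So it is enough to establish the single identity
\[
c_* \, e^{2s(d-2)}\,e^{\xi s}\,\gf_{s}=\Eb[J_s(z)]\,[1+O(e^{-u(s-b)})],\qquad s\ge b+1.
\]
Recalling that $\Eb[J_s(z)]=e^{\eta s}\,\Pb(\dist(z,\Ac_{\gamma})\le e^{-s})$ with $\gamma=W[0,T_0]$ and $\eta=\xi+d-2$, I would interpose a mesoscopic \emph{cut-ball event}: writing $B=\Dc_{-s}(z)$, $S=\partial B$, $D=\Dc\setminus\overline B$, let $\sigma_1$ be the first time $\gamma$ enters $\overline B$ and $\sigma_2$ the last time $\gamma$ lies in $\overline B$ before $T_0$, and set
\[
\wh H_s(z):=\{\gamma\cap B\neq\emptyset\}\cap\{\gamma[0,\sigma_1]\cap\gamma[\sigma_2,T_0]=\emptyset\}.
\]
The identity will follow by comparing \emph{both} $\Eb[J_s(z)]$ and $c_* e^{2s(d-2)}e^{\xi s}\gf_{s}$ to $\Pb(\wh H_s(z))$.

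First, the probabilistic step: I claim $\Pb(\dist(z,\Ac_{\gamma})\le e^{-s})=\kappa\,\Pb(\wh H_s(z))\,[1+O(e^{-u(s-b)})]$ for a universal $\kappa>0$. The inclusion $\{\dist(z,\Ac_{\gamma})\le e^{-s}\}\subseteq\wh H_s(z)$ is immediate: a cut point $x\in B$ is visited at a unique time $\tau_x\in[\sigma_1,\sigma_2]$ and separates $\gamma[0,\sigma_1]\subseteq\gamma[0,\tau_x]$ from $\gamma[\sigma_2,T_0]\subseteq\gamma[\tau_x,T_0]$, whose intersection is $\{x\}$ and meets neither arc (as $x$ lies in the open ball). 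For the reverse comparison I would decompose $\gamma$ at $\sigma_1$ and $\sigma_2$; conditionally on $\wh H_s(z)$, the first-entrance arc $\gamma[0,\sigma_1]$ and the reversal of the last-exit arc $\gamma[\sigma_2,T_0]$ form a pair of non-intersecting Brownian paths across the annulus $\Dc_{-b}(z)\setminus\Dc_{-s}(z)$ (macroscopically $0$ and $\partial\Dc$ sit at distance $\asymp e^{-b}$ from $z$), so by the separation lemmas (Lemmas~\ref{lem:sep-BM} and~\ref{lem:rev-sep-BM}) and the quasi-invariance estimates \eqref{eq:qi_bm_definition} and \eqref{eq:qi_bm_definition-inv} their restrictions near $S$ are, up to total variation $O(e^{-u(s-b)})$, distributed as the quasi-invariant configurations governed by $\Qf$ and $\Qf^*$; conditionally on such a separated configuration, the probability that the middle excursion of $\gamma$ inside $B$ promotes a point near $z$ to an honest cut point of $\gamma$ converges, at exponential rate in the number of available scales $s-b$, to a universal constant. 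This is the one step carrying real weight; it is precisely the heart of the proof of Theorem~1.1 of \cite{mink_cont}, and by Remark~\ref{rem:setup} the interior-to-boundary geometry here is covered by combining the interior-to-interior and half-plane arguments of Sections~4.6 and~4.7 there.

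Second, the bookkeeping step: applying the Brownian first-entry/last-exit decomposition \eqref{eq:dec2} to $\mu^{\Dc}_{0,\partial\Dc}$ (the law of $W[0,T_0]$, of total mass $1$) at the ball $B$, and then imposing only non-intersection of the first and last pieces, gives
\[
\Pb(\wh H_s(z))=\int_{S}\int_{S}\big(\mu^D_{0,x}\otimes\mu^D_{\partial\Dc,y}\big)\big[\gamma_1\cap\gamma_2=\emptyset\big]\,\Gt_{\Dc}(x,y)\,\sigma(dx)\,\sigma(dy),
\]
since the middle excursion is free apart from its endpoints $x,y\in S$ and contributes its total mass $\|\mu^{\Dc}_{x,y}\|=\Gt_{\Dc}(x,y)$. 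Comparing with the definition of $\gf_s$, the only discrepancy is the kernel $\Gt_{\Dc}(x,y)$; by Harnack and comparison with the free Green's function (using that $0$ and $\partial\Dc$ lie at distance $\asymp e^{-b}$ from $z$, hence from $x,y$) this kernel equals its leading-order value — of order $e^{(d-2)s}$ for $d=3$, of order $s-b$ for $d=2$ — up to relative error $O(e^{-(s-b)})$ once integrated against the remaining, scale-homogeneous integrand. Pulling this factor out, collecting the exact powers $e^{2s(d-2)}e^{\xi s}$, and checking that the polynomial-in-$(s-b)$ factors occurring when $d=2$ cancel among $\Gt_{\Dc}$, the normalization of the boundary-to-boundary excursion measure (the computation preceding Lemma~\ref{lem:ex-ni}) and the excursion non-intersection probability (Lemma~\ref{lem:excursion}), one obtains $\Eb[J_s(z)]=c_* e^{2s(d-2)}e^{\xi s}\gf_s\,[1+O(e^{-u(s-b)})]$, the constant $c_*$ of (3.10) in \cite{mink_cont} being defined exactly so that the two comparisons match.

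The main obstacle is the quantitative assertion in the probabilistic step that, conditioned on the cut-ball event, an actual cut point forms near $z$ with probability tending to a universal constant at rate $e^{-u(s-b)}$; only the separation lemmas together with the strong quasi-invariance of non-intersecting Brownian paths from the origin and from infinity make this effective, and this is where we inherit the machinery of \cite{mink_cont}. A secondary point is the $d=2$ case, where several polynomial corrections in $s-b$ enter and must be shown to cancel so that the error is genuinely exponential (for $d=3$ they are absent). Granting this, the lemma follows by combining the displayed identity with the rate in Theorem~\ref{thm:gr}.
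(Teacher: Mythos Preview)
Your overall strategy is the paper's: both arguments reduce to the key relation from \cite{mink_cont} (equation (4.2) there) connecting $\Pb\{H_{s+a}(z)\}$ to $\gf_s$, and then combine with the rate in Theorem~\ref{thm:gr}. The paper simply cites that relation; you attempt to unpack it via the intermediate cut-ball event $\wh H_s(z)$.

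Your sketch has a real gap in the ``bookkeeping step.'' The kernel $\Gt_{\Dc}(x,y)$ for $x,y\in S=\partial\Dc_{-s}(z)$ is \emph{not} approximately a constant of order $e^{(d-2)s}$: it depends on $|x-y|$, which ranges over $(0,2e^{-s}]$, and for $d=3$ it blows up as $x\to y$. Harnack only lets you replace $\Gt_{\Dc}$ by the free Green's function up to relative error $O(e^{-(s-b)})$; it does not make the resulting function of $|x-y|$ constant. What one actually needs is that the conditional law of the rescaled pair $(e^{s}(x-z),e^{s}(y-z))$ under the non-intersection conditioning converges, at rate $O(e^{-u(s-b)})$, to a universal probability measure on $\partial\Dc\times\partial\Dc$ (this is another application of the quasi-invariance \eqref{eq:qi_bm_definition-inv}), against which the rescaled free Green's function integrates to a finite universal constant. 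That is the content hidden in your phrase ``once integrated against the remaining, scale-homogeneous integrand,'' but it is a second use of quasi-invariance, not Harnack, and it belongs squarely in the analysis rather than the bookkeeping. Also, for $d=2$ the Green's function $\Gt_{\Dc}(x,y)$ with $|x-y|\asymp e^{-s}$ is of order $s$, not $s-b$; the cancellation you allude to does occur, but it involves matching this factor against the normalization and against how $c_*$ is defined in \cite{mink_cont}, and your account does not pin this down.

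In short, your probabilistic step correctly identifies the hard content and correctly attributes it to \cite{mink_cont}, but your bookkeeping step is not bookkeeping: it requires a second invocation of quasi-invariance that you have not stated. The paper sidesteps all of this by citing \eqref{eq:gz} directly.
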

\begin{proof}
	{Analogous to} (4.2) in \cite{mink_cont}\footnote{We note that in fact there is a typo in (4.2) of \cite{mink_cont}, where {a factor $e^{-s(d-2)}$ is missing.}
	 This factor is due to the scaling relation for Brownian path measures in three dimensions.},
	we know that for some constant $1\le a\le 2$ and $u>0$, 
	\begin{equation}\label{eq:gz}
		e^{-s(d-2)} \Pb \{H_{s+a}(z)\} \gf_s^{-1} = c_* e^{-a\eta} [1+O(e^{-u(s-b)})].
	\end{equation}By \eqref{eq:S-one-point}, we have 
	\[
	e^{\eta (s+a)} \Pb \{H_{s+a}(z)\}=G^{\cut}_{\Dc}(z) [ 1+O(e^{-u(s+a-b)}) ].
	\] 
	This combined with \eqref{eq:gz} completes the proof.
\end{proof}

{We will need a continuity result for $G_{\Dc}^{\cut}(z)$ later, which follows from the above result immediately. Let us state it below. 
\begin{lemma}\label{lem:continuous-G}
	There exists a universal constant $u>0$ such that the following holds for all $z\in\Dc$ with $e^{-b}:=\dist( 0, z, \partial \Dc )>0$ and $|z-w|<e^{-b-2k}$ with $k\ge 1$,
	\[
	G_{\Dc}^{\cut}(z)=G_{\Dc}^{\cut}(w)[1+O(e^{-uk})].
	\]
\end{lemma}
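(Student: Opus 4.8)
The plan is to deduce Lemma~\ref{lem:continuous-G} from Lemma~\ref{lem:gs} by running the same identity \eqref{eq:Gcut1} at the two points $z$ and $w$ and comparing the resulting quantities $\gf_s(z)$ and $\gf_s(w)$ at a well-chosen scale $s$. Concretely, write $e^{-b}=\dist(0,z,\partial\Dc)$ and suppose $|z-w|<e^{-b-2k}$ with $k\ge 1$; note that then $\dist(0,w,\partial\Dc)=e^{-b}[1+O(e^{-2k})]$, so $w$ has essentially the same parameter $b$, and in particular both $z$ and $w$ satisfy the hypotheses of Lemma~\ref{lem:gs} with (say) $b'=b+1$. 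I would pick the intermediate scale $s:=b+k$ and apply \eqref{eq:Gcut1} at this $s$ to both centers:
\[
c_* e^{2s(d-2)}e^{\xi s}\gf_s(z)=G^{\cut}_\Dc(z)[1+O(e^{-u(s-b)})]=G^{\cut}_\Dc(z)[1+O(e^{-uk})],
\]
and likewise for $w$. Dividing, the lemma reduces to showing $\gf_s(z)=\gf_s(w)[1+O(e^{-u'k})]$ for some universal $u'>0$.

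The heart of the argument is therefore the comparison of the two excursion-type quantities $\gf_s(z)$ and $\gf_s(w)$ defined in \eqref{eq:gkdefinition1}, where the only difference is that the inner sphere is centered at $z$ versus at $w$, with $|z-w|<e^{-s-k}$, i.e. the two inner balls $\Dc_{-s}(z)$ and $\Dc_{-s}(w)$ differ by a relative amount $O(e^{-k})$ and are both deep inside $\Dc$ (distance $\ge e^{-b}$ from $0$ and from $\partial\Dc$, with $s=b+k$). The strategy here is a coupling/absolute-continuity argument for the interior-to-boundary and boundary-to-interior Brownian path measures: outside the ball $\Dc_{-s+1}(z)$ (which contains both inner balls comfortably, since the centers are $O(e^{-s-k})$ apart and the radii are $e^{-s}$), the two setups are literally identical, so one decomposes $\mu^D_{0,x}$ and $\mu^{D}_{\partial\Dc,y}$ using the last-exit/first-entrance formulas \eqref{eq:dec1}, \eqref{eq:dec-last} at the sphere $\partial\Dc_{-s+1}(z)$, and is left with comparing, on the small annulus $\Dc_{-s+1}(z)\setminus\overline{\Dc_{-s}(z)}$ versus $\Dc_{-s+1}(z)\setminus\overline{\Dc_{-s}(w)}$, the boundary-to-boundary excursion measures and the non-intersection probability between the two excursions. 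By Brownian scaling one may rescale so that the inner radius is $1$ and the outer radius is $e$; then $z$ and $w$ become two centers at distance $O(e^{-k})$, and I want to say that the non-intersection probability and the normalizing Green's function change by a factor $1+O(e^{-u'k})$. This last comparison I would obtain from the quantitative quasi-invariance \eqref{eq:qi_bm_definition}/\eqref{eq:qi_bm_definition-inv} together with the separation lemmas (Lemmas~\ref{lem:sep-BM}, \ref{lem:rev-sep-BM}): once the non-intersecting excursion pair is well-separated on a sphere of radius $O(1)$ around the common center, shifting the inner boundary by $O(e^{-k})$ is an $O(e^{-u'k})$-small perturbation in total variation, by the same kind of estimate that gives the exponential error in quasi-invariance; alternatively one can invoke the analyticity/strong-convergence inputs of \cite{analyticity,BMinvar} cited around \eqref{eq:qi_bm_definition}.

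An even cleaner route — which I would present as the main line and which avoids re-opening the excursion estimates — is to bypass $\gf_s$ entirely: apply \eqref{eq:S-one-point} of Theorem~\ref{thm:gr} at scale $s=b+k$ to both $z$ and $w$ (legitimate since $s\ge b+1$), obtaining $\Eb[J_s(z)]=G^{\cut}_\Dc(z)[1+O(e^{-uk})]$ and $\Eb[J_s(w)]=G^{\cut}_\Dc(w)[1+O(e^{-uk})]$, and then compare $\Eb[J_s(z)]$ with $\Eb[J_s(w)]$ directly. Since $J_s(z)=e^{\eta s}1\{\dist(z,\Ac_\gamma)\le e^{-s}\}$ and $|z-w|\le e^{-s-k}\ll e^{-s}$, the indicator events $H_s(z)$ and $H_s(w)$ differ only on configurations where $\dist(z,\Ac_\gamma)$ lies within $e^{-s-k}$ of the threshold $e^{-s}$; the probability of this ``near-threshold'' event is $O(e^{-u'k})\Pb(H_{s-1}(z))$ by the two-point-style / regularity estimates underlying Theorem~\ref{thm:gr} (concretely, $\Pb\{e^{-s}<\dist(z,\Ac_\gamma)\le e^{-s}(1+e^{-k})\}$ is bounded using the one-point asymptotics \eqref{eq:S-one-point} at the nearby scales $s$ and $s+c e^{-k}$, whose ratio is $1+O(e^{-uk})$ after interpolation, together with the sausage/Minkowski-content bound). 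Hence $\Eb[J_s(z)]=\Eb[J_s(w)][1+O(e^{-u'k})]$, and combining with the two applications of \eqref{eq:S-one-point} finishes the proof with a suitable universal $u>0$.

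I expect the main obstacle to be making the ``near-threshold'' comparison quantitative with a clean exponential rate: one needs that perturbing $z$ by $e^{-s-k}$ changes $\Eb[J_s(z)]$ by a factor $1+O(e^{-u'k})$, and the honest way to see this is either the continuity of $G^{\cut}_\Dc$ in the $\gf_s$-formulation above (hence a small circularity to untangle — but it is only apparent, since \eqref{eq:gz} is proved in \cite{mink_cont} uniformly and does not presuppose continuity of $G^{\cut}_\Dc$) or an extra one-point estimate at the shifted center, which is exactly what \eqref{eq:S-one-point} supplies once one checks the shifted center still has distance $\asymp e^{-b}$ from $0$ and $\partial\Dc$. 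So in the write-up I would be careful to state precisely why \eqref{eq:Gcut1} (equivalently \eqref{eq:gz}) holds for $\gf_s(w)$ with the \emph{same} constant $c_*$ and the error $O(e^{-u(s-b)})$ depending only on $b$ and not on the fine location of $w$, and then the division argument is immediate.
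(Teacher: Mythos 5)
Your Route 2 (the one you present as the ``main line'') is sound and does prove the lemma, but it takes a slightly different path from the paper. The paper's proof does not go through $\Eb[J_s(\cdot)]$ and a near-threshold/symmetric-difference argument at all; instead it exploits the monotonicity of $\gf_s$ in the \emph{inner ball} of the annulus. Setting $\eps_\pm(k)=\log(1\pm e^{-k})$, the hypothesis $|z-w|<e^{-b-2k}$ gives the nesting
\[
\Dc_{-b-k+\eps_-(k)}(w)\;\subset\;\Dc_{-b-k}(z)\;\subset\;\Dc_{-b-k+\eps_+(k)}(w),
\]
and since enlarging the inner ball only increases $\gf$ (the two excursions are stopped earlier, so non-intersection is easier; and the hitting event is more likely), one gets a clean sandwich
\[
\gf_{b+k-\eps_-(k)}(w)\le\gf_{b+k}(z)\le\gf_{b+k-\eps_+(k)}(w).
\]
Applying Lemma~\ref{lem:gs} at $s=b+k$ for $z$ and at $s=b+k-\eps_\pm(k)$ for $w$, the prefactors $(1\pm e^{-k})^{2(d-2)+\xi}=1+O(e^{-k})$ absorb the shift and the lemma follows. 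Your Route~2 is morally the same idea (comparing the one-point quantity at two nearby scales, where the error bound is stable in $s$), but instead of a monotone sandwich you estimate $\Pb\{H_s(z)\triangle H_s(w)\}$; this works, though it is marginally less tight and needs a little more care in the ``near-threshold'' step. (The inclusion $H_{s-\eps_-(k)}(w)\subset H_s(z)\subset H_{s-\eps_+(k)}(w)$ gives a sandwich identical in spirit to the paper's and would streamline your write-up.) Your Route~1, the same-scale comparison of $\gf_s(z)$ and $\gf_s(w)$ via coupling and quasi-invariance, is a much more expensive detour than is needed: the entire point of the paper's (and your Route~2's) strategy is to avoid comparing the two setups at the same $s$ by shifting $s$ to absorb the geometric difference, so you should drop Route~1 entirely.

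One small hole in your presentation to tighten: you should note explicitly that the implicit error-constant in Lemma~\ref{lem:gs} (or in \eqref{eq:S-one-point}) is uniform over the center point, which is precisely what lets you apply it at $w$ with essentially the same $b$ and $s-b\ge k+O(e^{-2k})$. You observe this at the very end of your proposal (``the error $O(e^{-u(s-b)})$ depending only on $b$ and not on the fine location of $w$''), and that observation is indeed the linchpin; without it there would be genuine circularity, since the lemma being proved is a continuity statement for $G^{\cut}_\Dc$ itself.
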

\begin{proof}
	Note that 
	\[
	\Dc_{-b-k+\eps_-(k)}(w)\subset \Dc_{-b-k}(z) \subset \Dc_{-b-k+\eps_+(k)}(w) \subset \Dc,
	\]
    where $\eps_{\pm}(k)=\log(1\pm e^{-k})$. Then, by monotonicity, we have 
    \[
    \gf_{b+k-\eps_-(k)}(w)\le \gf_{-b-k}(z) \le \gf_{b+k-\eps_+(k)}(w).
    \]
    Therefore, by Lemma~\ref{lem:gs}, we obtain that
    \[
    (1-e^{-k})^{2(d-2)+\xi} G_{\Dc}^{\cut}(w) [1+O(e^{-u'k})]
    \le G_{\Dc}^{\cut}(z)\le (1+e^{-k})^{2(d-2)+\xi} G_{\Dc}^{\cut}(w) [1+O(e^{-u''k})].
    \]
    This implies the result.
\end{proof}
}

As a consequence of {Lemma~\ref{lem:gs}}, we have the following estimate for the Green's function $G_{\Dc}^{\cut}(z)$. 
\begin{lemma}\label{lem:Gcut-z}
	Let $d_z=\dist( 0, z, \partial \Dc )$. Then,
	\begin{equation}\label{eq:g-z}
	G_{\Dc}^{\cut}(z)\asymp a(z), \quad \text{ with } \  a(z):=d_z^{1-\xi} 1_{|z|\ge 1/2} + d_z^{-\eta} [ \log(d_z^{-1}) ]^{d-3} 1_{|z|<1/2}.
	\end{equation}
\end{lemma}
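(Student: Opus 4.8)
The plan is to derive \eqref{eq:g-z} directly from Lemma~\ref{lem:gs}, reducing the problem to estimating the double excursion-measure quantity $\gf_s(z)$ from \eqref{eq:gkdefinition1} for a well-chosen value of $s$ depending on $z$. Concretely, set $e^{-b}=d_z=\dist(0,z,\partial\Dc)$ and take $s=b+1$ so that \eqref{eq:Gcut1} gives $G_{\Dc}^{\cut}(z)\asymp e^{2s(d-2)}e^{\xi s}\gf_s(z)$ with universal implied constants. Thus it suffices to show
\[
e^{2s(d-2)}e^{\xi s}\gf_s(z)\asymp a(z)
\]
for $s=b+1$, i.e.\ for the mesoscopic ball $\Dc_{-s}(z)$ of radius $\asymp d_z$ around $z$ that is still at distance $\asymp d_z$ from both $0$ and $\partial\Dc$.

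First I would rewrite $\gf_s(z)$ via the decomposition formulas of Section~\ref{subsec:pm}. Recall $S=\partial\Dc_{-s}(z)$ and $D=\Dc\setminus\overline{\Dc_{-s}(z)}$. I would split the interior-to-boundary excursion $\mu^D_{0,x}$, $x\in S$, according to its last visit to an intermediate sphere $\partial\Dc_{-s-1}(z)$ using \eqref{eq:dec-last}, and similarly for $\mu^D_{\partial\Dc,y}$; this exhibits the non-intersection event as (roughly) a pair of boundary-to-boundary excursions across the annulus $\Dc_{-s}(z)\setminus\overline{\Dc_{-s-1}(z)}$ not intersecting, which by Lemma~\ref{lem:ex-ni} (applied after translating/scaling so the small ball is centered at $0$, with $r-s'$ of order $1$) contributes a factor of order $e^{2(d-2)\cdot(\text{inner radius exponent})}$, together with the masses of the two outer pieces: $\widetilde G_D(0,\cdot)$ on $S$ and the Poisson kernel mass of $\mu^D_{\partial\Dc,\cdot}$ on $S$. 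By the standard Green's function / Poisson kernel estimates on annular regions (Lemma~\ref{l:trans_recur} and its Brownian analogues, plus Harnack), these outer masses are, up to constants, $e^{-2(d-2)s}$ times a factor reflecting how $0$ ``sees'' the small ball, which is $\asymp 1$ when $|z|\ge 1/2$ and $\asymp e^{-(d-2)(s-b)}\asymp 1$ (with a logarithmic correction in $d=2$) when $z$ is near the origin. Assembling all pieces gives $\gf_s(z)\asymp e^{-2s(d-2)}e^{-\xi s}\, a(z)$, which is exactly what is needed after multiplying by $e^{2s(d-2)}e^{\xi s}$.

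An alternative and cleaner route, which I would actually prefer to present, avoids recomputing $\gf_s$ from scratch: one can instead bootstrap from the discrete one-point estimate already established in Lemma~\ref{lem:biop} together with the one-point convergence in \eqref{eq:S-one-point} and the known strong-approximation machinery, OR — simplest of all — observe that $G_{\Dc}^{\cut}(z)=\lim_{s\to\infty}e^{\eta s}\Pb\{H_s(z)\}$ by \eqref{eq:H-one-point}, and estimate $\Pb\{H_s(z)\}=\Pb\{\dist(z,\Ac_\gamma)\le e^{-s}\}$ for $s=b+1$ directly by the same path-decomposition construction used in the proof of Lemma~\ref{lem:biop}: a pair of well-separated NIBM's emanating from near $z$ across scale $d_z$ (mass $\asymp d_z^{\xi}$ after the $e^{\eta s}$ normalization it becomes $d_z^{-\eta}$ times correction), one excursion from $0$ reaching the mesoscopic ball, a local connector of mass $\asymp d_z^{2-d}$, and a final piece from the far end to $\partial\Dc$ whose mass is $\asymp 1$ when $|z|\ge 1/2$ and $\asymp [\log(d_z^{-1})]^{d-3}$ (gambler's ruin / transience estimate, Lemma~\ref{l:trans_recur}) when $z$ is near $0$. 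Multiplying, $d_z^{-\eta}\cdot d_z^{\,?}\cdots$ yields $a(z)$; the upper bound comes from the same decomposition dropping the tube constraints, exactly as in Lemma~\ref{lem:biop}.

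The main obstacle is bookkeeping the exponents correctly across the two cases $|z|\ge 1/2$ (``boundary'' case, where the dominant cost is the Beurling/escape factor $d_z^{1-\xi}$ coming from the walk from $0$ having to travel distance $\asymp 1$ to even reach a neighborhood of $z$) and $|z|<1/2$ (``bulk'' case, where the relevant cost is $d_z^{-\eta}$ with the dimension-dependent logarithmic correction $[\log(d_z^{-1})]^{d-3}$ in $d=2$, absent in $d=3$), and making sure the scaling factor $e^{2s(d-2)}$ in \eqref{eq:Gcut1} is tracked (it is vacuous in $d=2$ but genuine in $d=3$, and is precisely what the typo-footnote in Lemma~\ref{lem:gs} is warning about). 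Beyond that, all estimates are standard consequences of the separation lemmas, Harnack, and Lemmas~\ref{l:trans_recur} and~\ref{lem:ex-ni}, so the argument is essentially a careful repetition of the proof of Lemma~\ref{lem:biop} transported to the continuum — which is why I would write it in a few lines referencing that proof rather than redoing it in full.
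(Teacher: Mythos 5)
Your first route is essentially the paper's proof: use Lemma~\ref{lem:gs} to reduce the problem to an up-to-constants estimate for $\gf_s(z)$, then estimate $\gf_s$ by transplanting the path-decomposition of Lemma~\ref{lem:biop} to the continuum, with Lemma~\ref{lem:ex-ni} supplying the mass of the non-intersecting excursion pair across the annulus. The paper does exactly this and then concludes via \eqref{eq:Gcut1}. Two bookkeeping points you should fix. First, the intermediate sphere should be at scale $\asymp d_z$ (the paper decomposes across $D(z,d_z/2)\setminus\overline{D(z,e^{-s})}$), not $\partial\Dc_{-s-1}(z)$; the NI excursion pair then spans the full range of scales from $e^{-s}$ to $d_z$, which is precisely where Lemma~\ref{lem:ex-ni} gives the factor $e^{-2(d-2)s}e^{-\xi s}d_z^{-\xi}$; the remaining outer pieces (BM analogues of $\eta_1,\omega,\eta_2$ of Lemma~\ref{lem:biop}) contribute $\asymp a(z)d_z^{\xi}$, yielding $\gf_s\asymp e^{-2(d-2)s}e^{-\xi s}a(z)$. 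Second, fixing $s=b+1$ is unnecessary and a little dangerous, because the error term $O(e^{-u(s-b)})$ in \eqref{eq:Gcut1} need not be small at $s=b+1$; since the estimate for $\gf_s$ holds uniformly in $s\ge b+1$, you should either let $s\to\infty$ or take $s=b+C$ with $C$ large and universal.

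Your ``alternative cleaner route'' via $\Pb\{H_s(z)\}$ is equivalent in spirit but your exponent accounting there is off: at $s=b+1$ the inner scale $e^{-s}$ of the would-be NIBM pair coincides (up to constants) with the outer scale $d_z$, so the non-intersection cost across the annulus is $\asymp 1$, not $\asymp d_z^{-\xi}$; the factor $d_z^{-\xi}$ would instead have to be extracted from the requirement that the path actually produces a cut point inside $D(z,e^{-b-1})$, which reintroduces the microscopic scale and is exactly what the $\gf_s$ formulation of Lemma~\ref{lem:gs} packages for you. I would stick with the first route and drop the second.
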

\begin{proof}
	This can be proved in a similar way as Lemma~\ref{lem:biop}. To get the up-to-constants estimate for $\gf_s$ in \eqref{eq:gkdefinition1}, we only need to replace the pair of NIRW's $(\gamma_1,\gamma_2)$ from $z$ to $\partial B(z,r/2)$ in the proof of Lemma~\ref{lem:biop}
	by a pair of NIBM's $(\gamma^1,\gamma^2)$. Here the pair $(\gamma^1,\gamma^2)$ is sampled from the boundary-to-boundary excursion measures in the annulus $D(z,d_z/2)\setminus\overline D(z,e^{-s})$,
	i.e. $$\mu^{D(z,d_z/2)\setminus\overline D(z,e^{-s})}_{\partial D(z,e^{-s}),\partial D(z,d_z/2)}\otimes \mu^{D(z,d_z/2)\setminus\overline D(z,e^{-s})}_{\partial D(z,e^{-s}),\partial D(z,d_z/2)}.$$ and then restricted to be non-intersecting.
	The total mass of non-intersecting $(\gamma^1,\gamma^2)$ has been estimated in Lemma~\ref{lem:ex-ni}, which is {of order} $e^{-2(d-2)s} e^{-\xi s} d_z^{-\xi}$.  Therefore, we have 
	\begin{equation}\label{eq:gs}
	\gf_s\asymp e^{-2s(d-2)} e^{-\xi s}\, a(z).
	\end{equation}
	This combined with \eqref{eq:Gcut1} concludes the proof.
\end{proof}

The above lemma will be used to derive Proposition~\ref{prop:one-point}, which provides another description for $G^{\cut}_{\Dc}(z)$ via cut-ball events.
Next, we present a version that is tailored for our use in the next section where we obtain the sharp one-point estimate for the simple random walk.

\begin{figure}[h!]
	\centering
	\includegraphics[width=.5\textwidth]{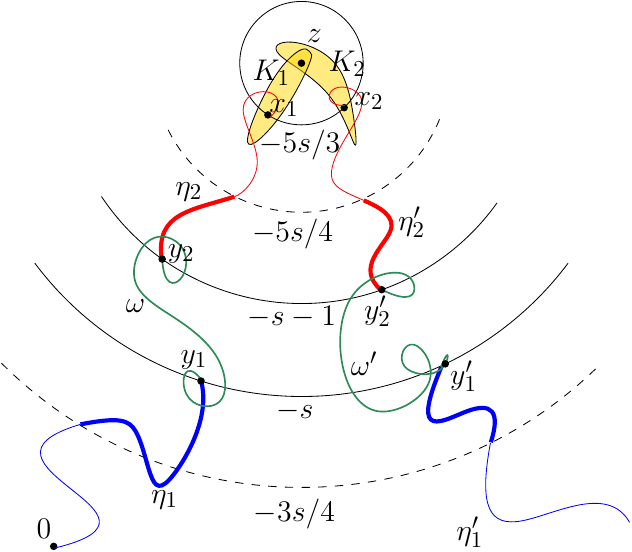}
	\caption{Proof of Proposition~\ref{prop:gbar-gs}. $K_1$ and $K_2$ are the yellow parts. $\eta_1$ ($\eta_1'$), $\eta_2$ ($\eta_2'$) and $\omega$ ($\omega'$) are in blue, red and green, respectively. The heavy red pair and the heavy blue pair are close to quasi-invariant measures.}
	\label{img:one-point}
\end{figure}

Suppose $K_1$ and $K_2$ are two compact sets in $\Dc_{-5s/3+1}(z)$ and $x_i\in K_i\cap\partial \Dc_{-5s/3}(z)$ for $i=1,2$. Let 
	\begin{equation}\label{eq:bar A}
	\overline A(\overline K,\overline x):=\{ (\gamma_1,\gamma_2)\in \widetilde\Gamma_{0,x_1}\times \widetilde\Gamma_{\partial\Dc,x_2}: \gamma_1\cap(\gamma_2\cup K_{2})=(\gamma_1\cup K_{1})\cap \gamma_2=\emptyset \},
	\end{equation}
	\[
	\overline g(\overline K,\overline x):=\mu^{\Dc}_{0,x_1} \otimes \mu_ {\partial\Dc,x_2}^{\Dc} \left\{ \overline A(\overline K,\overline x)\right\}.
	\]
	Moreover, recall the definition of {non-intersection event} $\widetilde A_{z,-s}(\overline K,\overline x)$ in \eqref{eq:Ar} and define 
	\[
	g_s(\overline K,\overline x):=\Pb\{\widetilde A_{z,-s}(\overline K,\overline x)\}.
	\]
\begin{proposition}\label{prop:gbar-gs}
	There exist universal constants $c_2, u>0$ such that the following holds for all $\dist( 0, z, \partial \Dc )\ge e^{-2s/3}$,
	\[
	e^{\xi s}\, \overline g(\overline K,\overline x)=c_2\, g_s(\overline K,\overline x) \, G_{\Dc}^{\cut}(z) [1+O(e^{-us})].
	\]
\end{proposition}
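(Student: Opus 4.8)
The plan is to factor the event $\widetilde A_{z,-s}(\overline K,\overline x)$ into three scales centered at $z$ --- an inner annulus $\Dc_{-s}(z)$ down to a mesoscopic ball, a middle annulus of quasi-invariant size, and the outer part up to $\Dc$ --- and to compare each scale with its continuum counterpart appearing in the definition of $\overline g(\overline K,\overline x)$. Concretely, fix intermediate radii so that the initial configuration $(\overline K,\overline x)$ sits well inside $\Dc_{-5s/3}(z)$, and decompose the two Brownian paths $W^1$ (from $0$) and $W^2$ (from $\partial\Dc$) using the first-entry/last-exit machinery of \eqref{eq:dec1}--\eqref{eq:dec2}: $W^i$ consists of an excursion from its far endpoint ($0$ or $\partial\Dc$) into $\partial\Dc_{-s/3}(z)$, then a crossing of the annulus $\Dc_{-s/3}(z)\setminus\overline\Dc_{-4s/3}(z)$, then an excursion from $\partial\Dc_{-4s/3}(z)$ down to $x_i$. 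I would first handle the innermost piece: by the quasi-invariance \eqref{eq:qi_bm_definition} (applied at $z$, after scaling), the law of the pair of NIBM's run from $\partial\Dc_{-4s/3}(z)$ toward the initial configuration, \emph{conditioned to avoid each other and avoid $(\overline K,\overline x)$}, is within $O(e^{-us})$ in total variation of the quasi-invariant measure $\Qf$ (appropriately placed), \emph{independently} of $(\overline K,\overline x)$; this is exactly what isolates the factor $g_s(\overline K,\overline x)=\Pb\{\widetilde A_{z,-s}(\overline K,\overline x)\}$ from the rest.

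Next I would control the outer piece. Once the paths have reached $\partial\Dc_{-s/3}(z)$, the event that they avoid each other on their way out to $0$ and $\partial\Dc$ respectively, together with the non-disconnection constraints forced by cut-point geometry, is --- up to $O(e^{-us})$ relative error --- governed by the interior-to-boundary path measures $\mu^{\Dc}_{0,x_1}$ and $\mu^{\Dc}_{\partial\Dc,x_2}$ restricted to $\overline A(\overline K,\overline x)$ as in \eqref{eq:bar A}; this uses that $\dist(0,z,\partial\Dc)\ge e^{-2s/3}$ so that $\Dc_{-s/3}(z)$ is genuinely interior, plus the Beurling/non-disconnection estimates of Lemma~\ref{l:paths_discon_beurling} (for $d=2$) and the freezing lemma (Lemma~\ref{l:freezing}, for $d=3$) to say that the small mesoscopic ball near $z$ does not affect the outer excursions beyond a multiplicative $1+O(e^{-us})$. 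The middle annulus crossing of non-intersecting pairs contributes a factor of order $e^{-2s(d-2)}e^{-\xi s}$ times its starting/ending densities, exactly as computed in Lemma~\ref{lem:ex-ni}; combining these three scales and using Lemma~\ref{lem:gs} (equivalently \eqref{eq:Gcut1}, which says $c_* e^{2s(d-2)}e^{\xi s}\gf_s = G^{\cut}_\Dc(z)[1+O(e^{-u(s-b)})]$, with $\gf_s$ itself the two-excursion non-intersection mass of \eqref{eq:gkdefinition1}), the $z$-dependence collapses into $G^{\cut}_\Dc(z)$, the universal annulus constant becomes $c_2$, and the initial-configuration dependence is precisely $g_s(\overline K,\overline x)$. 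Matching powers of $e^s$: the inner QI piece and the two-excursion-in-the-annulus piece supply $e^{-\xi s}$ (after the explicit $e^{2s(d-2)}$ cancels against the scaling of $\gf_s$), leaving $e^{\xi s}\overline g(\overline K,\overline x)=c_2\, g_s(\overline K,\overline x)\, G^{\cut}_\Dc(z)[1+O(e^{-us})]$ as claimed; the factor $c_2$ absorbs $c_*$, the $d$-dependent constant from Lemma~\ref{lem:ex-ni}, and the surface-measure normalizations in \eqref{eq:gkdefinition1} and \eqref{eq:bar A}.

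The main obstacle I anticipate is the \textbf{uniformity in $(\overline K,\overline x)$}: the quasi-invariance estimate \eqref{eq:qi_bm_definition} is stated uniformly over initial configurations, but I need to patch together the QI convergence at the inner scale with the outer path-measure comparison without the error constants degenerating as $(\overline K,\overline x)$ varies, and in particular I must keep track that $g_s(\overline K,\overline x)$ can be small (of order $e^{-\xi s}$ itself, by \eqref{eq:An}) so that ``$O(e^{-us})$ relative'' genuinely means multiplicative against $g_s$ and not merely additive. This is handled by always conditioning on the NI event \emph{before} applying total-variation bounds --- i.e.\ comparing conditional laws, as \eqref{eq:qi_bm_definition} does --- and by invoking the separation lemmas (Lemma~\ref{lem:sep-BM} and its reverse, Lemma~\ref{lem:rev-sep-BM}) to guarantee the well-separated endpoint configurations at each interface that make the Harnack-type gluing errors multiplicative. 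A secondary technical point is the $d=3$ case, where the mesoscopic ball around $z$ near the outer excursions is not avoided deterministically; here the freezing lemma gives that the probability of the outer excursion coming within $e^{(1-\eps)\cdot(s/3)}$ of the inner trace and mattering is super-polynomially small, which is absorbed into the $O(e^{-us})$ term.
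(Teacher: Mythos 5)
Your overall strategy---a three-scale decomposition of each path into a far excursion, a middle piece, and a near excursion, with the near piece controlled by the quasi-invariant measure $\Qf$ (to extract $g_s(\overline K,\overline x)$), the far piece controlled by $\gf_s$ together with Lemma~\ref{lem:gs} (to extract $G^{\cut}_\Dc(z)$), and the middle piece supplying a universal constant, with separation lemmas making the gluing errors multiplicative---is exactly the route the paper takes. However, the intermediate radii you chose do not work, and fixing them is not cosmetic.

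You split the far piece at $\partial\Dc_{-s/3}(z)$ and claim ``$\dist(0,z,\partial\Dc)\ge e^{-2s/3}$ so that $\Dc_{-s/3}(z)$ is genuinely interior.'' This is false: since $e^{-s/3}>e^{-2s/3}$, the ball $\Dc_{-s/3}(z)$ (radius $e^{-s/3}$) can contain $0$ or intersect $\partial\Dc$ whenever $e^{-2s/3}\le\dist(0,z,\partial\Dc)<e^{-s/3}$, so the first-entry decomposition of $\mu^{\Dc}_{0,x_1}$ at that sphere is ill-posed, and in any case the resulting far-pair mass would be $\gf_{s/3}$ rather than the $\gf_s$ of \eqref{eq:gkdefinition1} to which Lemma~\ref{lem:gs} applies. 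Similarly, your near piece runs from $\partial\Dc_{-4s/3}(z)$ down to the initial configuration at $\partial\Dc_{-5s/3}(z)$, so its mass is $\Pb\{\widetilde A_{z,-4s/3}(\overline K,\overline x)\}\simeq g_s(\overline K,\overline x)\,e^{-\xi s/3}$ by \eqref{eq:An}; this differs from the target $g_s(\overline K,\overline x)$ by an exponentially small factor in $s$, not a constant. The paper avoids both problems by decomposing at the adjacent radii $e^{-s}$ and $e^{-s-1}$, so that the ball $\Dc_{-s}(z)$ is automatically interior under the hypothesis $\dist(0,z,\partial\Dc)\ge e^{-2s/3}>e^{-s}$, the near-piece mass satisfies $g_{s+1}\simeq g_s\,e^\xi$ with a universal constant, and the quasi-invariance comparisons via \eqref{eq:qi_bm_definition} and \eqref{eq:qi_bm_definition-inv} are then run inside the annulus $\Dc_{-3s/4}(z)\setminus\Dc_{-5s/4}(z)$ after first localizing the middle piece there by Proposition~\ref{p:Beurling} / Lemma~\ref{l:trans_recur}. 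With those radii substituted, the rest of your gluing argument goes through as you describe.
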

\begin{proof}
	We decompose the $(\gamma_1,\gamma_2)$ in the following way:
	\[
	\gamma_1=\eta_1\oplus \omega \oplus [\eta_2]^R, \quad  \gamma_2=\eta'_1\oplus \omega' \oplus [\eta'_2]^R,
	\]
	where $\eta_1$ is $\gamma_1$ from $0$ to its first visit of $\partial \Dc_{-s}(z)=S$, $\eta_2$ is $\gamma_1^{R}$ from $x_1$ to its first visit of $\partial\Dc_{-s-1}(z)$, and $\omega$ is the rest part of $\gamma_1$, and $\gamma_2$ is decomposed in the same way (with $0$ replaced by $\partial\Dc$). See {Figure}~\ref{img:one-point} for an illustration.
	
	Denote $\overline\eta_i:=(\eta_i,\eta'_i)$ for $i=1,2$, and $(y_i,y'_i):=(\eta_i\cap\partial \Dc_{-s-i+1}(z),\eta'_i\cap\partial \Dc_{-s-i+1}(z))$. 
	We now sample $\overline\eta_1$ by the following law
	\[
	 \frac{1}{\gf_{s}}\int_{S } \int_{S } \mu^D_{0,y_1} \otimes \mu_ {\partial\Dc,y'_1}^D \left[ 1_{\eta_1\cap \eta'_1=\emptyset} (\eta_1,\eta'_1)\in \cdot \right]\sigma(dy_1,dy'_1),
	\]
	where $D$ and $\gf_s$ are defined above Lemma~\ref{lem:gs}. By \eqref{eq:qi_bm_definition-inv}, the law of $\overline\eta_1[T_{\partial\Dc_{-3s/4}(z)},T_{\partial\Dc_{-s}(z)}]$ has total variation distance $O(e^{-us})$ to the probability measure $\Qf^*_{-s}(z)[T_{\partial\Dc_{-3s/4}(z)},T_{\partial\Dc_{-s}(z)}]$, where $\Qf^*_{-s}(z)$ is the pushforward of the quasi-invariant measure $\Qf^*$ on NIBM's from infinity under the map $f_1(x)=e^{-s}(x+z)$. 
	
	Now we sample $\overline\eta_2$ from the following law:
	\[
	 e^{-s-1}\circ \left(Q_{2s/3-1}(e^{5s/3}\baK,e^{5s/3}\overline x)\right),
	\]
	where $Q_{2s/3-1}(e^{5s/3}\baK,e^{5s/3}\overline x)$ is defined above \eqref{eq:qi_bm_definition}. By \eqref{eq:qi_bm_definition}, the law of $\overline\eta_2[T_{\partial\Dc_{-5s/4}(z)},T_{\partial\Dc_{-s-1}(z)}]$ has total variation distance $O(e^{-us})$ to the probability measure $\Qf_{-s-1}(z)[T_{\partial\Dc_{-5s/4}(z)},T_{\partial\Dc_{-s-1}(z)}]$, where $\Qf_{-s-1}(z)$ is the pushforward of the quasi-invariant measure $\Qf$ on NIBM's from origin under the map $f_2(x)=e^{-s-1}(x+z)$. 
	
	Write $\overline\Eb$ for the product measure of those of $\overline\eta_1$ and $\overline\eta_2$.
	Given $\overline\eta_1, \overline\eta_2$, we define the set
	\[
	\Wc(\overline\eta_1, \overline\eta_2):=\left\{ (\omega,\omega')\in \widetilde\Gamma^{\Dc}_{y_1,y_2}\times \widetilde\Gamma^{\Dc}_{y'_1,y'_2} : (\eta_1\oplus \omega \oplus [\eta_2]^R,\eta'_1\oplus \omega' \oplus [\eta'_2]^R)\in \overline A(\overline K,\overline x) \right\},
	\]
	and write
	\[
	\Upsilon(\overline\eta_1, \overline\eta_2):= \mu^{\Dc}_{y_1,y_2} \otimes \mu_ {y'_1,y'_2}^{\Dc} \left\{ \Wc(\overline\eta_1, \overline\eta_2) \right\}.
	\]
	Then, we have
	\begin{equation}\label{eq:bargs}
		\overline g(\overline K,\overline x)=g_{s+1}(\overline K,\overline x)\, \gf_s\, \overline \Eb[ \Upsilon(\overline\eta_1, \overline\eta_2)].
	\end{equation}
	
    Furthermore, we consider a subset of $\Wc(\overline\eta_1, \overline\eta_2)$:
    \[
    \Wc'(\overline\eta_1, \overline\eta_2):=\left\{ (\omega,\omega')\in \Wc(\overline\eta_1, \overline\eta_2) : (\omega\cup\omega') \subset \Dc_{-3s/4}(z)\setminus\Dc_{-5s/4}(z) \right\},
    \]
    and write
    \[
    \Upsilon'(\overline\eta_1, \overline\eta_2):= \mu^{\Dc}_{y_1,y_2} \otimes \mu_ {y'_1,y'_2}^{\Dc} \left\{ \Wc'(\overline\eta_1, \overline\eta_2) \right\}.
    \]
	Then, by applying the Beurling estimate (Proposition~\ref{p:Beurling}) when $d=2$; and Lemma~\ref{l:trans_recur} when $d=3$ to control the probability that $\omega$ or $\omega'$ leaves the annulus $\Dc_{-3s/4}(z)\setminus\Dc_{-5s/4}(z)$, we have 
	\begin{equation}\label{eq:Upsi}
		\overline\Eb[ \Upsilon(\overline\eta_1, \overline\eta_2)] \simeq \overline\Eb[ \Upsilon'(\overline\eta_1, \overline\eta_2)].
	\end{equation}

    The advantage to work with $\Upsilon'(\overline\eta_1, \overline\eta_2)$ rather than $\Upsilon(\overline\eta_1, \overline\eta_2)$ is that the previous one only depends on $\overline\eta_1, \overline\eta_2$ inside the annulus $\Dc_{-3s/4}(z)\setminus\Dc_{-5s/4}(z)$. According to our previous observations, inside this annulus, the distributions of $\overline\eta_1$ and $\overline\eta_2$ are close to quasi-invariant measures respectively. Therefore, 
	\begin{equation}\label{eq:UQ}
		\overline\Eb[ \Upsilon'(\overline\eta_1, \overline\eta_2)] \simeq
		\Qf^*_{-s}(z)\otimes \Qf_{-s-1}(z) [ \Upsilon''(\overline\beta_1, \overline\beta_2)],
	\end{equation}
where $\Upsilon''(\overline\beta_1, \overline\beta_2)$ is defined in a similar manner, that is, the total mass of $(\omega,\omega')$ such that the concatenation of $\omega$ with $\beta_1$ and $\beta_2$ does not intersect the concatenation of $\omega'$ with $\beta'_1$ and $\beta'_2$.  Then, by translation invariance ($-z$) and scaling invariance ($\times e^{s}$), we have
	\begin{equation}\label{eq:UQ'}
	\Qf^*_{-s}(z)\otimes \Qf_{-s-1}(z) [ \Upsilon''(\overline\beta_1, \overline\beta_2)]=e^{2(d-2)s}\cdot  \Qf^*\otimes \Qf_{-1} [ \Upsilon''(\overline\beta'_1, \overline\beta'_2)].
\end{equation}
One should be careful that the total mass of the path measure under the scaling multiplied by $e^s$ for the intermediate parts $\omega$ and $\omega'$ will decrease by $e^{-(d-2)s}$  each (this explains why we have an extra term $e^{2(d-2)s}$ on the right hand side). 
Letting $\mathfrak{q}:=\Qf^*\otimes \Qf_{-1} [ \Upsilon''(\overline\beta'_1, \overline\beta'_2)]$, we see that $\mathfrak{q}$ is bounded away from $0$ by using the separation lemma (for both $\overline\beta'_1$ and $\overline\beta'_2$), and bounded away from infinity by using Lemma~\ref{l:paths_discon_beurling} when $d=2$; and by using a bound of Green's function $\sup_{w_1\in\partial\Dc, w_2\in \partial\Dc_{-1}} \wt G_{\Rb^3}(w_1,w_2)<\infty$ when $d=3$. Thus, $\mathfrak{q}$ is a universal constant in $(0,\infty)$. Combining \eqref{eq:Upsi}, \eqref{eq:UQ} and \eqref{eq:UQ'}, we obtain that 
	\begin{equation}\label{eq:Upsi'}
	\overline\Eb[ \Upsilon(\overline\eta_1, \overline\eta_2)] \simeq \mathfrak{q}\, e^{2(d-2)s}.
\end{equation}
Moreover, it follows from \eqref{eq:An} that 
	\begin{equation}\label{eq:gs+1}
		g_{s+1}(\overline K,\overline x)\simeq g_{s}(\overline K,\overline x)\, e^{\xi}.
	\end{equation}
	Plugging \eqref{eq:Gcut1}, \eqref{eq:Upsi'} and  \eqref{eq:gs+1} into \eqref{eq:bargs}, we obtain that 
	\[
	\overline g(\overline K,\overline x)\simeq g_{s}(\overline K,\overline x) \, e^{\xi} \, c_*^{-1} \, e^{-2s(d-2)} \, e^{-\xi s} \, G_{\Dc}^{\cut}(z)\, \mathfrak{q}\, e^{2(d-2)s}.
	\]
We finish the proof of Proposition~\ref{prop:gbar-gs} by setting $c_2=\mathfrak{q}\, c_*^{-1} \, e^{\xi}$.
\end{proof}

In the rest of this section, we will present the two-point counterparts of Lemma~\ref{lem:gs} and Proposition~\ref{prop:gbar-gs}. To this end, we introduce some notation first.
Let $V\in\Vc$, $z,w\in V$ with $\dist(0,z,w,\partial\Dc)=e^{-b}>0$ and $s\ge b+1$. Let $S=\partial \Dc_{-s}(z)$, $S'=\partial \Dc_{-s}(w)$.
We construct a measure $\Uc$ on the triple $\overline\gamma:=(\gamma_1,\gamma_2,\gamma_*)$ as follows:
\begin{itemize}
	\item sample $(\gamma_1,\gamma_2)$ from the measure $\int_{S }  \int_{S' } \mu^{\Dc\setminus \overline{\Dc_{-s}(z)}}_{0,x} \otimes \mu_ {\partial\Dc,y}^{\Dc\setminus \overline{\Dc_{-s}(w)}} \sigma(dx,dy)$,
	\item sample $\gamma_*$ from the excursion measure in $U:=\Dc\setminus (\overline{\Dc_{-s}(z)}\cup \overline{\Dc_{-s}(w)})$ from $S$ to $S'$, i.e.,
	\[
	\int_S \int_{S^{\prime}} \mu_{x^{\prime}, y^{\prime}}^{U} \sigma\left(d x^{\prime}\right) \sigma\left(d y^{\prime}\right),
	\]
\end{itemize}
and then restricted to the event $N_0$
\[
N_0:=\left\{\gamma_1 \cap \gamma_*=\gamma_1 \cap \gamma_2=\gamma_* \cap \gamma_2=\emptyset\right\}.
\]
Let $\Uc'$ be the measure as $\Uc$ defined above with $z,w$ interchanged.

The following lemma is a consequence of Theorem 1.2 in \cite{mink_cont} (in the setup of Brownian path measures), which is the two-point counterpart of Lemma~\ref{lem:gs}.

\begin{lemma}
	There exists $u>0$ such that the following holds for all $V\in\Dc$, $z,w\in V$ with $\dist(0,z,w,\partial\Dc)=e^{-b}>0$  and $s\ge b+1$.
	\begin{equation}\label{eq:Gcut2}
		c_*^2 e^{2s(d-2)} e^{2s\eta} (\| \Uc \|+\| \Uc' \|)= G_{\Dc}^{\cut}(z,w)[1+O_V(e^{-u(s-b)})].
	\end{equation}
\end{lemma}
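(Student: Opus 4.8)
The plan is to reproduce the proof of Lemma~\ref{lem:gs} (and of its refinement Proposition~\ref{prop:gbar-gs}) in the two-point setting, with $\|\Uc\|+\|\Uc'\|$ playing the role of $\gf_s$, the two-point hitting event $H_s(z)\cap H_s(w)$ that of $H_s(z)$, and $G^{\cut}_{\Dc}(z,w)$ that of $G^{\cut}_{\Dc}(z)$. Two ingredients are needed. The first is \eqref{eq:S-two-point}: writing $s+a_1$ for $s$ and $\rho=a_2-a_1$ there (swapping the roles of $z$ and $w$ if $\rho<0$, which is allowed since $G^{\cut}_{\Dc}$ is symmetric), it gives, for any $a_1,a_2\in[1,2]$,
\begin{equation}\label{eq:two-pt-hit}
\Pb\{H_{s+a_1}(z)\cap H_{s+a_2}(w)\}=e^{-2\eta s}\,e^{-(a_1+a_2)\eta}\,G^{\cut}_{\Dc}(z,w)\,[1+O_V(e^{(b-s)u})].
\end{equation}
The second, which is the actual content, is the two-point analogue of formula (4.2) of \cite{mink_cont} in the mixed interior-to-boundary setup of this section (this is what underlies Theorem~1.2 of \cite{mink_cont}; see Remark~\ref{rem:setup}): there exist $a_1,a_2\in[1,2]$ and a universal $u>0$ such that, for all $V\in\Vc$, $z,w\in V$ with $\dist(0,z,w,\partial\Dc)=e^{-b}$ and $s\ge b+1$,
\begin{equation}\label{eq:42-two-pt}
e^{-2s(d-2)}\,\Pb\{H_{s+a_1}(z)\cap H_{s+a_2}(w)\}=c_*^2\,e^{-(a_1+a_2)\eta}\,\bigl(\|\Uc\|+\|\Uc'\|\bigr)\,[1+O_V(e^{-u(s-b)})].
\end{equation}
Granting \eqref{eq:42-two-pt}, inserting \eqref{eq:two-pt-hit} and cancelling the factor $e^{-(a_1+a_2)\eta}$ gives $\|\Uc\|+\|\Uc'\|=c_*^{-2}e^{-2s(d-2)}e^{-2\eta s}G^{\cut}_{\Dc}(z,w)[1+O_V(e^{-u(s-b)})]$, i.e.\ exactly \eqref{eq:Gcut2}; note that, since $\eta=\xi+d-2$, the normalization $c_*^2e^{2s(d-2)}e^{2\eta s}$ is the square of the one-point normalization $c_*e^{2s(d-2)}e^{\xi s}$ of \eqref{eq:Gcut1}, as one would expect from there being two cut regions.

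To prove \eqref{eq:42-two-pt} one decomposes the Brownian path $\gamma=W[0,T_0]$ around its visits to $z$ and $w$. Since $\dist(0,z,w,\partial\Dc)=e^{-b}$ and $s\ge b+1$, the balls $\Dc_{-s}(z)$ and $\Dc_{-s}(w)$ are disjoint and macroscopically far from $0$ and $\partial\Dc$. On $H_{s+a_1}(z)\cap H_{s+a_2}(w)$ the path $\gamma$ has a cut point in $\Dc_{-s-a_1}(z)$ and one in $\Dc_{-s-a_2}(w)$, and along $\gamma$ these occur in one of two orders; fix the order ``cut near $z$ before cut near $w$'' (the other order produces $\|\Uc'\|$). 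Cutting $\gamma$ at the first and last visits of $\partial\Dc_{-s}(z)$ straddling the first cut point, and likewise at $\partial\Dc_{-s}(w)$, writes $\gamma$ as $\gamma_1\oplus\theta_z\oplus\gamma_*\oplus\theta_w\oplus\gamma_2^R$, where $\gamma_1$ runs from $0$ to $S$ in $\Dc\setminus\overline{\Dc_{-s}(z)}$, $\theta_z$ lies in $\Dc_{-s}(z)$ (and carries the cut point near $z$), $\gamma_*$ runs from $S$ to $S'$ in $U$, $\theta_w$ lies in $\Dc_{-s}(w)$, and $\gamma_2^R$ runs from $S'$ to $\partial\Dc$ in $\Dc\setminus\overline{\Dc_{-s}(w)}$. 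The two cut-point conditions translate exactly into $\gamma_1\cap(\gamma_*\cup\gamma_2)=(\gamma_1\cup\gamma_*)\cap\gamma_2=\emptyset$, i.e.\ the event $N_0$, so the macroscopic skeleton $(\gamma_1,\gamma_2,\gamma_*)$ sweeps out precisely the configuration space of $\Uc$, of total mass $\|\Uc\|$. It remains to handle the mesoscopic pieces $\theta_z,\theta_w$: by the quasi-invariant estimates \eqref{eq:qi_bm_definition} and \eqref{eq:qi_bm_definition-inv} applied at scale $e^{-s}$ around $z$ and around $w$ — exactly as $\Qf$ and $\Qf^*$ are used in the proof of Proposition~\ref{prop:gbar-gs} — the conditional contribution of each $\theta_\bullet$ decouples from the skeleton up to $[1+O(e^{-u(s-b)})]$ and equals a universal quantity of the form $c_*\,e^{-a_i\eta}\,e^{s(d-2)}$, namely the ``$c_*e^{-a\eta}e^{s(d-2)}$'' of the one-point identity (4.2); multiplying the two contributions gives the factor $c_*^2e^{-(a_1+a_2)\eta}e^{2s(d-2)}$. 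The error from first confining $\theta_z,\theta_w$ and the two ends of $\gamma_*$ to suitable annuli around $z$ and $w$ is absorbed using the Beurling estimate (Proposition~\ref{p:Beurling}) when $d=2$ and Lemma~\ref{l:trans_recur} when $d=3$, exactly as in Proposition~\ref{prop:gbar-gs}.

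The main obstacle, as against the one-point case, is that the two cut regions must be disentangled simultaneously while respecting the three-path event $N_0$: the constraint near $w$ involves $\gamma_*$, which emanates from near $z$, and conversely, so the two quasi-invariant approximations must be set up on disjoint annuli inside $\Dc_{-s}(z)$ and $\Dc_{-s}(w)$ chosen so that the skeleton $(\gamma_1,\gamma_2,\gamma_*)$ is measurable outside them and so that an excursion arriving at $\partial\Dc_{-s}(z)$ is seen by the cut-ball structure near $z$ in the same way whether it is the tail of $\gamma_1$ or the head of $\gamma_*$; this is what makes the factorization into $c_*^2$ and $\|\Uc\|+\|\Uc'\|$ clean. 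This bookkeeping is of exactly the kind carried out in the interior and half-plane settings in Sections~4.6 and~4.7 of \cite{mink_cont}; as there, one must in addition treat separately the cases of $z$ (or $w$) near $0$ versus near $\partial\Dc$ (cf.\ the dichotomy in Lemma~\ref{lem:Gcut-z}), and the case $|z-w|\asymp e^{-b}$ in which the scales separating $e^{-s}$ from $|z-w|$ and $|z-w|$ from $\dist(0,z,w,\partial\Dc)$ are both nondegenerate. Once \eqref{eq:42-two-pt} is in hand, the computation of the first paragraph finishes the proof.
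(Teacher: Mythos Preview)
Your approach is correct and coincides with the paper's: both combine the two-point analogue of the one-point identity \eqref{eq:gz} (the paper cites it as (4.31) of \cite{mink_cont}) with \eqref{eq:S-two-point} and cancel the common factors. The paper simply invokes the cited identity with a single $a\in[1,2]$ (i.e.\ $a_1=a_2$) and does not reprove it, whereas you sketch its derivation via the path decomposition and the quasi-invariant measures $\Qf,\Qf^*$; your sketch is along the right lines and matches the strategy of Sections~4.6--4.7 of \cite{mink_cont}, though your description of $\theta_z,\theta_w$ as lying ``in $\Dc_{-s}(z)$'' should be read loosely (they need only be confined to a mesoscopic annulus up to the error you later absorb).
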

\begin{proof}
	Analogous to (4.31) of \cite{mink_cont}\footnote{We note that the factor $e^{-2s(d-2)}$ comes from the scaling of Green's function in three dimensions (compared with \eqref{eq:gz}, here we have two of them), and this factor is also missing in (4.31) of \cite{mink_cont}.}, for some constant $1\le a\le 2$ and $u>0$,
	\[
	e^{-2s(d-2)} \Pb\left\{H_{s+a}(z) \cap H_{s +a}(w) \right\}  = c_*^2 e^{-2a\eta} (\| \Uc \|+\| \Uc' \|) [1+O_{V}( e^{(b-s)u})].
	\]
	By \eqref{eq:S-two-point},
	\[
	e^{2(s+a)\eta}\Pb\left\{H_{s+a}(z) \cap H_{s +a}(w) \right\} 
	= G^{\cut}_{\Dc}(z,w)  \, 
	[1+O_{V}( e^{(b-s)u})].
	\]
	We finish the proof by combining these two estimates.
\end{proof}

Next, we present the counterpart of Proposition~\ref{prop:gbar-gs}.

\begin{proposition}\label{prop:gbar-gs2}
	There exists $u>0$ such that the following holds for all $V\in\Vc$ and $z,w\in V$  with $\dist(0,z,w,\partial\Dc)\ge e^{-2s/3}$. Suppose $K_1$ and $K_2$ are two closed sets in $\Dc_{-5s/3+1}(z)$ and $x_i\in K_i\cap\partial \Dc_{-5s/3}(z)$ for $i=1,2$. 
	Suppose $K'_1$ and $K'_2$ are two closed sets in $\Dc_{-5s/3+1}(w)$ and $x'_i\in K'_i\cap\partial \Dc_{-5s/3}(w)$ for $i=1,2$.
	Let $\widehat K=(\overline K,\overline K')$ with $\overline K=(K_1,K_2)$ and $\overline K'=(K'_1,K'_2)$. Also, let $\widehat x=(\overline x,\overline x')$ with $\overline x=(x_1,x_2)$ and $\overline x'=(x'_1,x'_2)$. Define
	\[
	\widehat A(\widehat K,\widehat x)= \left\{\begin{array}{l}
		(\gamma_1,\gamma_2,\gamma_*)\in \widetilde\Gamma_{0,x_1}\times \widetilde\Gamma_{\partial\Dc,x'_1}\times \widetilde\Gamma_{x_2,x'_2}: \\
		\gamma_1\cap(\gamma_2\cup\gamma_*\cup K_2\cup \Dc_{-5 s/3+1}(w))=\emptyset, \\
		\gamma_2\cap(\gamma_1\cup\gamma_*\cup K'_2\cup \Dc_{-5 s/3+1}(z))=\emptyset, \\
		\gamma_*\cap (K_1\cup K'_1)=\emptyset
	\end{array} \right\}.
	\]
	Let $\widehat A'(\widehat K,\widehat x)$ be defined as $\widehat A(\widehat K,\widehat x)$ with $z$ and $w$ interchanged.  Define
	\[
	\widehat g(\widehat K,\widehat x)=\mu^{\Dc}_{0,x_1} \otimes \mu_ {\partial\Dc,x'_1}^{\Dc} \otimes \mu_ {x_2,x'_2}^{\Dc} \left[ \widehat A(\widehat K,\widehat x)\cup \widehat A'(\widehat K,\widehat x)\right],
	\]
	and
	\[
	g_s(\widehat K,\widehat x)=\Pb(\widetilde A_{z,-s}(\overline K,\overline x))\, \Pb(\widetilde A_{w,-s}(\overline K',\overline x')).
	\]
	Then, we have 
	\begin{equation}\label{eq:hatg}
		e^{2\xi s}\, \widehat g(\widehat K,\widehat x)=c_2^2\, g_s(\widehat K,\widehat x) \, G_{\Dc}^{\cut}(z,w) [1+O_V(e^{-us})],
	\end{equation}
	where $c_2$ is the same constant in Proposition~\ref{prop:gbar-gs}.
\end{proposition}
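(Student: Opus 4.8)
The plan is to follow the proof of Proposition~\ref{prop:gbar-gs} almost verbatim, now localizing simultaneously around $z$ and around $w$, with the common path $\gamma_*$ playing the role of the ``second'' non-intersecting path in \emph{each} locality, and with the two-point estimate \eqref{eq:Gcut2} in place of the one-point estimate \eqref{eq:Gcut1}. Throughout write $S=\partial\Dc_{-s}(z)$, $S'=\partial\Dc_{-s}(w)$, $U=\Dc\setminus(\overline{\Dc_{-s}(z)}\cup\overline{\Dc_{-s}(w)})$. First I would decompose $\overline\gamma=(\gamma_1,\gamma_2,\gamma_*)$ along the scales $e^{-s}$ and $e^{-s-1}$ around both centers: split $\gamma_1=\eta_1\oplus\omega\oplus[\eta_2]^R$ with $\eta_1$ the initial part up to the first visit of $S$, $[\eta_2]^R$ the final approach from the last visit of $\partial\Dc_{-s-1}(z)$ to $x_1$, and $\omega$ the middle piece; split $\gamma_2=\eta_1'\oplus\omega'\oplus[\eta_2']^R$ around $w$ in the same way; and split $\gamma_*$ at both ends into an inner piece $\zeta_2$ near $z$ (from $x_2$ out to the first visit of $\partial\Dc_{-s-1}(z)$), a middle annulus piece near $z$, a bulk piece from $S$ to $S'$ staying in $U$, a middle annulus piece near $w$, and an inner piece $\zeta_2'$ near $w$ (ending at $x_2'$). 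The three bulk pieces --- a path from $0$ to $S$ outside $\Dc_{-s}(z)$, a path from $\partial\Dc$ to $S'$ outside $\Dc_{-s}(w)$, and an excursion from $S$ to $S'$ in $U$, subject to mutual avoidance --- are exactly governed by $\Uc$ (respectively $\Uc'$, according to whether the tail of $\gamma_1$ lands near $z$ or near $w$, i.e.\ according to whether we are on $\widehat A$ or on $\widehat A'$). The non-intersection constraints match up correctly: near $z$, $\gamma_1$ (ending at $x_1\in K_1$) avoids $K_2$ while $\gamma_*$ (starting at $x_2\in K_2$) avoids $K_1$, so $(\eta_2,\zeta_2)$ is a pair with initial configuration $(\overline K,\overline x)$, and likewise $(\eta_2',\zeta_2')$ near $w$ with $(\overline K',\overline x')$.

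Next I would replay the quasi-invariance steps at both centers. Sampling the bulk configuration from $\Uc/\|\Uc\|$ (resp.\ $\Uc'/\|\Uc'\|$), the quasi-invariance from infinity \eqref{eq:qi_bm_definition-inv}, applied inside $\Dc_{-3s/4}(z)$, shows that the restriction of the pair formed by $\eta_1$ and the bulk part of $\gamma_*$ to the annulus $\Dc_{-3s/4}(z)\setminus\Dc_{-5s/4}(z)$ lies within total-variation distance $O(e^{-us})$ of $\Qf^*_{-s}(z)$; since $\dist(z,w)\ge e^{-2s/3}$ is far larger than the radius $e^{-3s/4}$ of this annulus, the same applies independently inside $\Dc_{-3s/4}(w)$ with $\Qf^*_{-s}(w)$. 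The inner pairs $(\eta_2,\zeta_2)$ near $z$ and $(\eta_2',\zeta_2')$ near $w$ are sampled from the rescaled $Q_{2s/3-1}(e^{5s/3}\overline K,e^{5s/3}\overline x)$ and $Q_{2s/3-1}(e^{5s/3}\overline K',e^{5s/3}\overline x')$, and by \eqref{eq:qi_bm_definition} their restrictions to the appropriate annuli are $O(e^{-us})$-close in total variation to $\Qf_{-s-1}(z)$ and $\Qf_{-s-1}(w)$. Then, exactly as in \eqref{eq:Upsi}--\eqref{eq:Upsi'}, I would restrict the middle pieces near $z$ (resp.\ near $w$) to lie in $\Dc_{-3s/4}(z)\setminus\Dc_{-5s/4}(z)$ (resp.\ its $w$-analog), paying a factor $1+O(e^{-us})$ via Proposition~\ref{p:Beurling} for $d=2$ and Lemma~\ref{l:trans_recur} for $d=3$; the conditional mass of gluing the inner pieces onto the bulk, call it $\overline\Eb^{(z)}[\Upsilon^{(z)}]$ and $\overline\Eb^{(w)}[\Upsilon^{(w)}]$, then depends on the bulk and inner pieces only inside those annuli, where all laws are $O(e^{-us})$-close to the quasi-invariant ones, so each equals $\mathfrak q\,e^{2(d-2)s}[1+O(e^{-us})]$ with the very same universal constant $\mathfrak q=\Qf^*\otimes\Qf_{-1}[\Upsilon'']$ from Proposition~\ref{prop:gbar-gs}.

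Collecting the factors in analogy with \eqref{eq:bargs} gives
\[
\widehat g(\widehat K,\widehat x)=g_{s+1}(\widehat K,\widehat x)\,\big(\|\Uc\|+\|\Uc'\|\big)\,\overline\Eb^{(z)}[\Upsilon^{(z)}]\,\overline\Eb^{(w)}[\Upsilon^{(w)}]\,[1+O_V(e^{-us})],
\]
where $g_{s+1}(\widehat K,\widehat x)=\Pb(\widetilde A_{z,-s-1}(\overline K,\overline x))\,\Pb(\widetilde A_{w,-s-1}(\overline K',\overline x'))$. By \eqref{eq:An} (after rescaling by $e^{5s/3}$), $g_{s+1}(\widehat K,\widehat x)\simeq e^{2\xi}\,g_s(\widehat K,\widehat x)$; by the previous paragraph the two gluing factors contribute $\mathfrak q^2 e^{4(d-2)s}$; and by \eqref{eq:Gcut2}, using $2s\eta=2s\xi+2s(d-2)$, one has $\|\Uc\|+\|\Uc'\|=c_*^{-2}e^{-4s(d-2)}e^{-2s\xi}\,G^{\cut}_{\Dc}(z,w)\,[1+O_V(e^{-us})]$. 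Multiplying these relations and recalling $c_2=\mathfrak q\,c_*^{-1}e^{\xi}$ from Proposition~\ref{prop:gbar-gs} yields $e^{2\xi s}\widehat g(\widehat K,\widehat x)=c_2^2\,g_s(\widehat K,\widehat x)\,G^{\cut}_{\Dc}(z,w)[1+O_V(e^{-us})]$, which is \eqref{eq:hatg}.

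I expect the main obstacle to be the bookkeeping needed to decouple the two localizations despite $\gamma_*$ being shared, together with one genuinely new estimate: the deep ball-avoidance constraints in $\widehat A$ (that $\gamma_1$ avoids $\Dc_{-5s/3+1}(w)$ and $\gamma_2$ avoids $\Dc_{-5s/3+1}(z)$), which have no literal counterpart in the definition of $\Uc$, must be shown to cost only $1+O(e^{-us})$ under $\Uc$ and $\Uc'$. For $d=3$ this is immediate, since a macroscopic path hits a fixed ball of radius $e^{-5s/3+1}$ with probability $O(e^{-5s/3})$; for $d=2$ one instead uses that, on $N_0$ together with the separation of the two non-intersecting bulk paths reaching $\partial\Dc_{-s}(w)$, it is exponentially costly for $\gamma_1$ to also reach $\Dc_{-5s/3+1}(w)$, in the spirit of Lemma~\ref{l:paths_discon_beurling}. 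Apart from this point, and the routine but lengthy verification that all intermediate error events are exponentially small, the proof is a faithful transcription of the one-point argument with the pair $(\gamma_1,\gamma_*)$ near $z$ and the pair $(\gamma_2,\gamma_*)$ near $w$ in place of $(\gamma_1,\gamma_2)$, and with \eqref{eq:Gcut2} in place of \eqref{eq:Gcut1}.
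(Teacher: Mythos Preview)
Your proposal is correct and takes precisely the approach the paper intends. In fact the paper gives no proof at all for this proposition, stating only that it ``is merely a duo of Proposition~\ref{prop:gbar-gs}'' and that the proof is ``quite similar'' and therefore omitted. Your sketch is a faithful and accurate fleshing-out of that analogy: you localize at both $z$ and $w$, recognize that the bulk triple $(\eta_1,\eta_1',\gamma_*^{\mathrm{bulk}})$ is governed by $\|\Uc\|+\|\Uc'\|$ and hence by \eqref{eq:Gcut2}, identify the two independent inner non-intersection events producing $g_{s+1}(\widehat K,\widehat x)$, and obtain two copies of the gluing constant $\mathfrak q\,e^{2(d-2)s}$, so that $c_2^2=(\mathfrak q\,c_*^{-1}e^{\xi})^2$ emerges.

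Your flagging of the extra ball-avoidance constraints $\gamma_1\cap\Dc_{-5s/3+1}(w)=\emptyset$ and $\gamma_2\cap\Dc_{-5s/3+1}(z)=\emptyset$ as a genuinely new wrinkle is apt, and your proposed treatment (transience in $d=3$; Beurling/non-disconnection against the pair of bulk paths landing on $S'$ in $d=2$) is the right one. One further cross-constraint worth naming under your ``routine but lengthy verification'': the middle piece $\omega$ of $\gamma_1$ near $z$ must also avoid the bulk piece $\eta_1'$ of $\gamma_2$, and symmetrically; this is again $O(e^{-us})$ since under $N_0$ the third bulk path enters the annulus $\Dc_{-3s/4}(z)\setminus\Dc_{-5s/4}(z)$ only with exponentially small probability (it must avoid the two paths $\eta_1,\gamma_*^{\mathrm{bulk}}$ landing on $S$). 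With this noted, the two gluing factors decouple as you claim.
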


As one can observe from \eqref{eq:hatg}, the constant $2$ in the exponent explains it is merely a duo of Proposition~\ref{prop:gbar-gs}.
We omit the proof 
of this proposition, as it is quite similar to that of Proposition~\ref{prop:gbar-gs}.

\subsection{Description via cut balls}\label{subsec:cbd}
{The event $H_s(z)$ defined in \eqref{eq:Hsz} is hard to relate to the event that $z^{(n)}$ is a cut point of the simple random walk in $\Dc\cap\Zc_n$, since we need to handle the microscopic scale.} To this end, we introduce the alternative cut-ball event for the Brownian motion and establish {results of the same flavor} as in the previous subsection. It turns out that such cut-ball events have a nice counterpart in the discrete side.

Abbreviate $\gamma:=W[0,T_0]$.
For any set $B$ in the unit disk $\Dc$ with $0\notin B$, if $\gamma\cap B\neq\emptyset$, we can decompose $\gamma$ by first-entry and last-exit of $B$ as follows:
\begin{equation}\label{eq:decomp2}
	\gamma=\gamma_1\oplus\omega\oplus[\gamma_2]^R,
\end{equation}
where $\gamma_1$ (resp.\ $\gamma_2$) is the part of $\gamma$ (resp.\ $[\gamma]^R$) from its starting point to its first entry of $B$ and $\omega$ is the rest part of $\gamma$ from the ending point of $\gamma_1$ to that of $\gamma_2$.
\begin{definition}[Cut ball for BM]\label{def:cut-bm}
	For $s>0$, 
	we say that the ball $B:=\Dc_{-s}(z)\subseteq \Dc$ with $0\notin B$ is a cut ball for $\gamma:=W[0,T_0]$ if $\gamma\cap B\neq\emptyset$, $\gamma_1\cap\gamma_2=\emptyset$ and $\omega\in \Dc_{-2s/3}(z)$ where $\gamma=\gamma_1\oplus\omega\oplus[\gamma_2]^R$ is decomposed as in \eqref{eq:decomp2} with $B=\Dc_{-s}(z)$. 
	Denote by $\Kt_s(z)$ the event that $\Dc_{-s}(z)$ is a cut ball for $W[0,T_0]$.
	See Figure~\ref{img:cut-ball} for an illustration.
\end{definition}

\begin{figure}[h!]
	\centering
	\includegraphics[width=.3\textwidth]{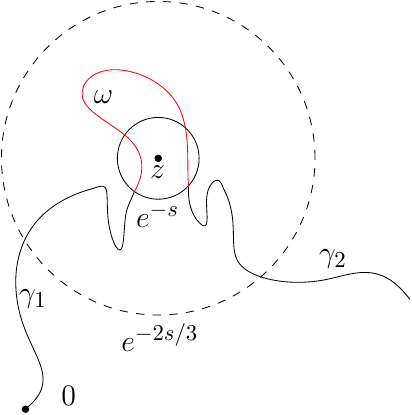}
	\caption{The event $\Kt_s(z)$. The ball (with a solid boundary) of radius $e^{-s}$ about $z$ is a cut ball. The ball (with a dashed boundary) of radius $e^{-2s/3}$ about $z$ contains the intermediate part $\omega$ (in red).}
	\label{img:cut-ball}
\end{figure}

\begin{remark}
	We choose to use the mesoscopic ball of radius $e^{-2s/3}$ in order to use the naive bound $3\eta/2<d$ later; see the last paragraph in the proof of Theorem~\ref{thm:tvv}.
\end{remark}

Let $\Qf^*$ be the quasi-invariant measure on NIBM's from infinity introduced in Section~\ref{subsec:qi-infty}. 
Denote by $\Qf^*(dx,dy)$ the distribution of endpoints of the pair of Brownian motions on $\partial \Dc$ induced from $\Qf^*$.
We will approximate the Minkowski content of Brownian cut points $\nu$ provided in Theorem~\ref{thm:mink} by the following measure
\begin{equation}\label{eq:tnus}
\wt \nu_{s}(V) =  \int_V L_s(z) \, dz, \quad \text{ with } \ 
L_s(z):=c_* \Qf^*[\Psi_{s}]^{-1} e^{\eta s} \,1_{\Kt_s(z)},
\end{equation}
where $\Psi_{s}$ is defined in \eqref{eq:Phidef} later, and $\Qf^*[\Psi_{s}]$ is a compensating factor (coming from the definition of cut ball on the intermediate part $\omega$) that only depends on $s$ and is of order $s^{3-d}$ by Lemma~\ref{lem:QPA}.

\begin{proposition}\label{prop:one-point}
	There exists $u>0$ such that if $\dist( 0, z, \partial \Dc )\ge e^{-2s/3}$, then 
	\begin{equation*}
		\Eb[ L_s(z) ]= G^{\cut}_{\Dc}(z) [ 1+O(e^{-us}) ].
	\end{equation*}
\end{proposition}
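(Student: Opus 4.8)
The plan is to relate $\Eb[L_s(z)]$ to the quantity $G^{\cut}_{\Dc}(z)$ via the cut-ball analogue of the decomposition used in Lemma~\ref{lem:gs} and Proposition~\ref{prop:gbar-gs}. First I would unfold the definition: $\Eb[L_s(z)] = c_*\, \Qf^*[\Psi_s]^{-1} e^{\eta s}\, \Pb\{\Kt_s(z)\}$, so the claim is equivalent to showing
\[
\Pb\{\Kt_s(z)\} = c_*^{-1}\, \Qf^*[\Psi_s]\, e^{-\eta s}\, G^{\cut}_{\Dc}(z)\, [1+O(e^{-us})].
\]
To estimate $\Pb\{\Kt_s(z)\}$, decompose $\gamma = W[0,T_0]$ by first entry and last exit of $B = \Dc_{-s}(z)$ as in \eqref{eq:decomp2}, $\gamma = \gamma_1 \oplus \omega \oplus [\gamma_2]^R$. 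The event $\Kt_s(z)$ asks that $\gamma_1 \cap \gamma_2 = \emptyset$ and that $\omega$ stays inside $\Dc_{-2s/3}(z)$. Using the strong Markov property at the first-entry and last-exit times (as in the path-measure decomposition \eqref{eq:dec2} in the Brownian setting), $\Pb\{\Kt_s(z)\}$ factorizes into: the mass of the pair $(\gamma_1,\gamma_2)$ of non-intersecting excursions from $0$ and $\partial\Dc$ into $\partial\Dc_{-s}(z)$, times the mass of the intermediate excursion $\omega$ from $\partial\Dc_{-s}(z)$ back to $\partial\Dc_{-s}(z)$ constrained to stay in $\Dc_{-2s/3}(z)$, times the complementary non-intersection constraint between $\omega$ and $\gamma_1,\gamma_2$.

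The key step is then to show that, up to $1+O(e^{-us})$ errors, the pair $(\gamma_1,\gamma_2)$ restricted to be non-intersecting has mass $\simeq \gf_s(z)$ (the quantity in \eqref{eq:gkdefinition1}), and that the conditional law of their restrictions to the mesoscopic annulus near $\partial\Dc_{-s}(z)$ is, by \eqref{eq:qi_bm_definition-inv} and \eqref{eq:qi_bm_definition}, close in total variation to the quasi-invariant measures $\Qf^*$ and $\Qf$ (suitably scaled and translated). This is exactly the mechanism of Proposition~\ref{prop:gbar-gs}, and I would essentially mimic that proof: introduce the mesoscopic annulus $\Dc_{-3s/4}(z) \setminus \Dc_{-5s/4}(z)$, show via the Beurling estimate (Proposition~\ref{p:Beurling}) for $d=2$ and Lemma~\ref{l:trans_recur} for $d=3$ that $\omega$ can be forced to stay in this annulus at negligible cost, and observe that inside the annulus the relevant law depends only on the quasi-invariant data. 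The compensating factor $\Qf^*[\Psi_s]$ is precisely $\Qf^*$-expectation of the mass of such a constrained intermediate excursion $\omega$ (the functional $\Psi_s$ to be defined in \eqref{eq:Phidef}); by Lemma~\ref{lem:QPA} it is of order $s^{3-d}$, finite and positive. Separation lemmas (Lemma~\ref{lem:rev-sep-BM} for the from-infinity piece, Lemma~\ref{lem:sep-BM} for the from-origin piece) guarantee the pieces glue together with only a constant-order loss, and Lemma~\ref{l:paths_discon_beurling} (for $d=2$) or a uniform Green's-function bound (for $d=3$) bounds the gluing mass from above.

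Assembling: $\Pb\{\Kt_s(z)\} \simeq \gf_s(z)\cdot (\text{gluing constants})\cdot \Qf^*[\Psi_s]$, and then \eqref{eq:Gcut1} of Lemma~\ref{lem:gs} converts $\gf_s(z)$ into $c_*^{-1} e^{-2s(d-2)}e^{-\xi s} G^{\cut}_{\Dc}(z)[1+O(e^{-us})]$; combined with $\eta = \xi + d - 2$ this produces exactly the stated identity once the constant $c_*$ and the factor $\Qf^*[\Psi_s]$ are matched — which is how $L_s(z)$ was normalized in \eqref{eq:tnus} in the first place, so no spurious universal constant survives. The main obstacle I anticipate is the bookkeeping near $\partial\Dc_{-s}(z)$: one must be careful that the non-intersection constraint between $\omega$ and $(\gamma_1,\gamma_2)$ is correctly absorbed into $\Psi_s$ and that the "close to quasi-invariant" replacements are done on the right sub-annuli so that $\Psi_s$ genuinely depends only on $s$ (not on $z$ or on the macroscopic shape of $\gamma_1,\gamma_2$); this is the same subtlety that makes Proposition~\ref{prop:gbar-gs} delicate, and I would handle it identically, taking care that the error in \eqref{eq:Upsi} (confining $\omega$) and the two total-variation errors stack to a single $O(e^{-us})$.
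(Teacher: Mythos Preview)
Your high-level strategy --- decompose $\Pb\{\Kt_s(z)\}$ by first entry and last exit, approximate the conditional law of the non-intersecting outer pair by a quasi-invariant measure, and convert via \eqref{eq:Gcut1} --- is correct, and you correctly identify $\Psi_s$ as the functional capturing the intermediate-excursion mass. But the template you propose to mimic is the wrong one, and this causes a genuine gap. The cut-ball event is \emph{one-sided}: the pair $(\gamma_1,\gamma_2)$ approaches $\partial\Dc_{-s}(z)$ purely from outside, and there is no inner initial configuration. Proposition~\ref{prop:gbar-gs}, by contrast, treats paths running all the way to prescribed endpoints on $\partial\Dc_{-5s/3}(z)$; its proof therefore needs \emph{two} quasi-invariant approximations --- $\Qf^*$ for the outward-in piece and $\Qf$ for the inward-out piece --- glued by connectors $(\omega,\omega')$ confined to the annulus $\Dc_{-3s/4}(z)\setminus\Dc_{-5s/4}(z)$. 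None of that inner structure is present for $\Kt_s(z)$: your invocation of $\Qf$ and \eqref{eq:qi_bm_definition} is spurious, and the cut-ball middle piece $\omega$ cannot be confined to that annulus (it begins and ends on $\partial\Dc_{-s}(z)$ and may travel arbitrarily deep inward). Mimicking Proposition~\ref{prop:gbar-gs} here does not produce a proof.

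The paper's argument is in fact simpler than what you outline. One decomposes at $\partial\Dc_{-s+1}(z)$ (one extra scale, cf.\ Remark~\ref{rmk:additional-scale}, precisely so that Lemma~\ref{lem:sup-Psi} applies uniformly), so the outer non-intersecting pair has mass $\gf_{s-1}$ and, after the rescaling $\phi(w)=e^{s-1}(w-z)$, normalized law $Q$ on $\wh\Xc_k$ with $k=s-b-1$. By construction $e^{-(s-1)(d-2)}\Pb\{\Kt_s(z)\}/\gf_{s-1}$ equals $Q$ applied to a functional differing from $\Psi_s$ only in the domain constraint on $\beta_1,\beta_2$; Lemma~\ref{lem:wi-large} removes that constraint at cost $O(s^{3-d}e^{-k/2})$, and Lemma~\ref{lem:TV} replaces $Q[\Psi_s]$ by $\Qf^*[\Psi_s]$ via the single total-variation bound \eqref{eq:qi_bm_definition-inv}. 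Everything inside $\Dc_{-s+1}(z)$ is already packaged into $\Psi_s$ by design, so no second quasi-invariant step and no annulus confinement of the middle piece are needed. Then \eqref{eq:Gcut1} converts $\gf_{s-1}$ to $G^{\cut}_{\Dc}(z)$, and Lemma~\ref{lem:QPA} ensures the factor $\Qf^*[\Psi_s]^{-1}$ is harmless.
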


\begin{proposition}\label{prop:two-point}
	There exists $u>0$ such that for all $V\in\Vc$, $z,w\in V$ with $|z-w|\ge e^{-2s/3}$,
	\begin{equation*}
		\Eb[ L_s(z) L_s(w) ]= G^{\cut}_{\Dc}(z,w) [ 1+O_V(e^{-us}) ],
	\end{equation*}
	and 
	\begin{equation*}
		\Eb[ L_s(z) J_s(w) ]= G^{\cut}_{\Dc}(z,w) [ 1+O_V(e^{-us}) ].
	\end{equation*}
\end{proposition}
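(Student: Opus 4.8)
\textbf{Proof proposal for Proposition~\ref{prop:two-point}.}

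The plan is to reduce the two-point estimates to the one-point machinery (Lemma~\ref{lem:gs}, Proposition~\ref{prop:gbar-gs}, and their two-point analogues \eqref{eq:Gcut2} and Proposition~\ref{prop:gbar-gs2}) by conditioning on the ``inner'' pieces of the Brownian path near $z$ and near $w$ and then recognizing the remaining non-intersection probabilities as (asymptotic versions of) cut-ball and sausage events at the two centers. First I would treat $\Eb[L_s(z)L_s(w)]$, i.e.\ the probability of the joint cut-ball event $\Kt_s(z)\cap\Kt_s(w)$ weighted by $c_*^2\,\Qf^*[\Psi_s]^{-2}e^{2\eta s}$. Since $z,w\in V$ with $V\in\Vc$ and $|z-w|\ge e^{-2s/3}$, the balls $\Dc_{-2s/3}(z)$ and $\Dc_{-2s/3}(w)$ are disjoint, so on $\Kt_s(z)\cap\Kt_s(w)$ the Brownian path $\gamma=W[0,T_0]$ must enter both $\Dc_{-s}(z)$ and $\Dc_{-s}(w)$, with the two ``excursion-into-the-small-ball'' portions confined to the two disjoint mesoscopic balls. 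Decompose $\gamma$ by the first-entry and last-exit of $\Dc_{-s}(z)$ and of $\Dc_{-s}(w)$: this produces (up to ordering of which center is visited first) a triple $(\gamma_1,\gamma_2,\gamma_*)$ exactly of the type described just before Proposition~\ref{prop:gbar-gs2}, namely a strand from $0$, a strand from $\partial\Dc$, and a connecting excursion between the two small spheres, all mutually non-intersecting, together with the two ``tiny loops'' $\omega,\omega'$ inside $\Dc_{-s}(z),\Dc_{-s}(w)$ whose non-intersection with everything else produces the compensating factors $\Qf^*[\Psi_s]$.

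The key steps, in order: (i) perform the joint first-entry/last-exit decomposition at both centers and identify the resulting configuration event with $\widehat A(\widehat K,\widehat x)\cup \widehat A'(\widehat K,\widehat x)$ from Proposition~\ref{prop:gbar-gs2}, where $\overline K,\overline K'$ encode the outer part of the path near the two centers; (ii) sample the inner excursions $\overline\eta$ near $z$ and near $w$ from the appropriate conditioned measures (the $\Qf^*$-type measure ``from infinity'' for the strand coming from $0$ or from $\partial\Dc$, and the $\Qf$-type measure ``from the origin'' for the excursion that terminates at the tiny sphere), exactly as in the proof of Proposition~\ref{prop:gbar-gs}, so that inside the annuli $\Dc_{-3s/4}(z)\setminus\Dc_{-5s/4}(z)$ and $\Dc_{-3s/4}(w)\setminus\Dc_{-5s/4}(w)$ these laws are within total variation $O(e^{-us})$ of quasi-invariant measures; (iii) use the Beurling estimate ($d=2$) or Lemma~\ref{l:trans_recur} ($d=3$) to confine the tiny loops $\omega,\omega'$ to the small mesoscopic annuli, producing the compensator $\Qf^*[\Psi_s]^2$ and no further error; (iv) apply \eqref{eq:hatg} of Proposition~\ref{prop:gbar-gs2} and \eqref{eq:Gcut2} to convert the asymptotic non-intersection probability into $G^{\cut}_{\Dc}(z,w)$, tracking the explicit powers $e^{2\eta s}$, $e^{2s(d-2)}$, $c_*$, $c_2$, and $\Qf^*[\Psi_s]$ and checking (using $c_2=\mathfrak q\,c_*^{-1}e^\xi$ from Proposition~\ref{prop:gbar-gs} and $\Qf^*[\Psi_s]\asymp s^{3-d}$ from Lemma~\ref{lem:QPA}) that all constants cancel to leave $G^{\cut}_{\Dc}(z,w)[1+O_V(e^{-us})]$; the dependence of the error constant on $V$ comes from the lower bound $\dist(0,z,w,\partial\Dc)\gtrsim_V 1$ forced by $z,w\in V\in\Vc$.

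For the mixed moment $\Eb[L_s(z)J_s(w)]$ the strategy is the same at the center $z$ (replace the cut-ball event $\Kt_s(z)$ by its strands/excursion decomposition as above) but at the center $w$ we must instead handle the sausage event $H_s(w)=\{\dist(w,\Ac_\gamma)\le e^{-s}\}$ rather than a cut-ball event. Here I would invoke the argument already embedded in the proof of the two-point statement of Theorem~\ref{thm:gr} (specifically the analogue of (4.31) of \cite{mink_cont}, cited in the proof of \eqref{eq:Gcut2}): conditioned on there being a cut point of $\gamma$ within $e^{-s}$ of $w$, the law of $\gamma$ near $w$, after a further $O(1)$ scales, is again close to the relevant quasi-invariant measures, so the non-intersection configuration near $w$ contributes the same $G^{\cut}$-compatible factor as a genuine cut-ball event would, up to $O(e^{-us})$; this is precisely the content linking $J_s$ and $L_s$ at a single point (Proposition~\ref{prop:one-point} together with \eqref{eq:S-one-point}), now applied jointly with the $z$-decomposition. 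Combining the $z$-side cut-ball asymptotics with the $w$-side sausage asymptotics and invoking \eqref{eq:S-two-point} (which already gives $\Eb[J_s(z)J_{s+\rho}(w)]\to G^{\cut}_{\Dc}(z,w)$ with exponential rate) yields the claim.

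\textbf{Main obstacle.} I expect the delicate point to be step (ii)--(iii) executed \emph{jointly} at both centers: one must verify that conditioning to build a cut ball at $z$ does not destroy the ``quasi-invariant in the annulus'' property of the path near $w$, and vice versa. Because $|z-w|\ge e^{-2s/3}$ the two mesoscopic regions are separated by a macroscopic (in $z,w$) but only $e^{-2s/3}$-scale gap, so the two conditionings interact through the connecting excursion $\gamma_*$; controlling this interaction requires the separation lemmas (Lemmas~\ref{lem:sep-BM}, \ref{lem:rev-sep-BM}) applied to the middle excursion and a two-sided use of the quasi-invariance estimates \eqref{eq:qi_bm_definition} and \eqref{eq:qi_bm_definition-inv}. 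This is exactly the bookkeeping that Proposition~\ref{prop:gbar-gs2} is designed to package, so the real work is confirming that the decomposition of $\Kt_s(z)\cap\Kt_s(w)$ (respectively $\Kt_s(z)\cap H_s(w)$) matches the hypotheses of that proposition, and that the geometric constraints ``$\omega\subset\Dc_{-2s/3}(z)$'' and ``$\omega'\subset\Dc_{-2s/3}(w)$'' in the definition of the cut ball translate cleanly into the compensating factor $\Qf^*[\Psi_s]$ without introducing cross-terms between the two centers — which they do not, precisely because the two mesoscopic balls are disjoint.
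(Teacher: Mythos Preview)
Your proposal is essentially correct and follows the same route the paper takes (the paper omits the proof, pointing to Proposition~\ref{prop:one-point} as the template). The decomposition into the non-intersecting triple $(\gamma_1,\gamma_2,\gamma_*)$, the quasi-invariance in the two annuli, the emergence of the compensator $\Qf^*[\Psi_s]^2$, and the final appeal to \eqref{eq:Gcut2} are all the right ingredients.

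One correction: do not route through Proposition~\ref{prop:gbar-gs2} and \eqref{eq:hatg}. That proposition carries initial configurations $\overline K,\overline K'$ living at scale $e^{-5s/3}$ and is designed for the random-walk comparison in Section~\ref{sec:pt_estim}; in the purely Brownian cut-ball event $\Kt_s(z)\cap\Kt_s(w)$ there is no $e^{-5s/3}$ scale and no $\widehat K$ to feed in, so your step~(i) as written does not type-check. The clean path, mirroring Section~\ref{subsec:BM-op} exactly, is to decompose at $\partial\Dc_{-s+1}(z)$ and $\partial\Dc_{-s+1}(w)$, recognize the \emph{outer} non-intersecting triple directly as a sample from the normalized measures $\Uc,\Uc'$ defined just above \eqref{eq:Gcut2}, apply Lemma~\ref{lem:TV} once at each center to peel off a factor of $\Qf^*[\Psi_s]$ (disjointness of the two balls $\Dc_{-2s/3}(z),\Dc_{-2s/3}(w)$, guaranteed by $|z-w|\ge e^{-2s/3}$, is what makes the two applications decouple), and then invoke \eqref{eq:Gcut2} directly. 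The constants cancel exactly as in the final display of the proof of Proposition~\ref{prop:one-point}, now squared. Your treatment of the mixed moment $\Eb[L_s(z)J_s(w)]$ is fine as stated.
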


The above two propositions provide us with another way to describe the one-point and the  two-point Green's functions of the cut points respectively. {Their proof is very similar to that of} Theorem~\ref{thm:gr}. Therefore, we will only present a detailed proof of Proposition~\ref{prop:one-point} in the following subsection and leave the details of the proof of Proposition~\ref{prop:two-point} to the reader.

{In the next theorem,  we will show that the Minkowski content of Brownian cut points is well approximated by $\widetilde{\nu}_{s}$ defined in \eqref{eq:tnus}, assuming Propositions~\ref{prop:one-point} and~\ref{prop:two-point}.}

\begin{theorem}\label{thm:tvv}
	There exists $u>0$ such that if $V\in \Vc$ and $\dist(0,V,\partial \Dc)\ge e^{-2s/3}$, then
	\begin{equation}\label{eq:tv-v}
		\Eb[(\wt \nu_{s}(V)-\nu(V))^2]=O_V(e^{-us}).
	\end{equation}
\end{theorem}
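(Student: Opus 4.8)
The plan is to decompose $\Eb[(\wt\nu_s(V)-\nu(V))^2]$ in the standard way as
\[
\Eb[\wt\nu_s(V)^2] - 2\,\Eb[\wt\nu_s(V)\,\nu(V)] + \Eb[\nu(V)^2],
\]
and control each term using the one- and two-point estimates. Recall from Theorem~\ref{thm:mink} that $\Eb[\nu(V)] = \int_V G^{\cut}_{\Dc}(z)\,dz$ and $\Eb[\nu(V)^2] = \int_V\int_V G^{\cut}_{\Dc}(z,w)\,dz\,dw$; moreover $\nu(V) = \lim_s J_{s,V}$ in $L^2$ with $\Eb[(J_{s,V}-\nu(V))^2]\le c\,e^{-us}$. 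By Fubini, $\Eb[\wt\nu_s(V)^2] = \int_V\int_V \Eb[L_s(z)L_s(w)]\,dz\,dw$ and $\Eb[\wt\nu_s(V)\,\nu(V)] = \lim_{s'\to\infty}\int_V\int_V \Eb[L_s(z)J_{s'}(w)]\,dz\,dw$; using Proposition~\ref{prop:two-point} on the diagonal-avoiding part and the moment bounds to discard a neighborhood of the diagonal, both should converge to $\int_V\int_V G^{\cut}_{\Dc}(z,w)\,dz\,dw$, so that the leading terms cancel and only error terms survive.

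First I would handle the off-diagonal region. Fix a small $\epsilon>0$ and split $V\times V$ into the ``near-diagonal'' set $\{|z-w|< e^{-2s/3}\}$ (more precisely, the region where Proposition~\ref{prop:two-point} does not apply) and its complement. On the complement, Proposition~\ref{prop:two-point} gives $\Eb[L_s(z)L_s(w)] = G^{\cut}_{\Dc}(z,w)[1+O_V(e^{-us})]$ and $\Eb[L_s(z)J_s(w)] = G^{\cut}_{\Dc}(z,w)[1+O_V(e^{-us})]$, and since by \eqref{eq:Gcut-zw} the function $G^{\cut}_{\Dc}(z,w)\asymp_V |z-w|^{-\eta}$ is integrable over $V\times V$ (as $\eta<d$), the contribution of the multiplicative error is $O_V(e^{-us})$. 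Thus the off-diagonal parts of $\Eb[\wt\nu_s(V)^2]$, $\Eb[\wt\nu_s(V)\,\nu(V)]$ and $\Eb[\nu(V)^2]$ all agree up to $O_V(e^{-us})$, and they cancel in the alternating sum.

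Next I would bound the near-diagonal region. For $\Eb[\nu(V)^2]$ this is immediate from integrability of $G^{\cut}_{\Dc}(z,w)$: $\int\int_{|z-w|<e^{-2s/3}} |z-w|^{-\eta}\,dz\,dw = O_V(e^{-2s(d-\eta)/3})$, which is $O_V(e^{-us})$ since $d-\eta=\delta>0$. For $\Eb[\wt\nu_s(V)^2]$ near the diagonal I would bound $\Eb[L_s(z)L_s(w)]$ crudely: $L_s(z)L_s(w) \le c\, s^{2(d-3)} e^{2\eta s}\,1_{\Kt_s(z)\cap\Kt_s(w)}$, and $\Pb(\Kt_s(z)\cap\Kt_s(w))$ is at most a constant times the probability that a single mesoscopic ball of radius $e^{-2s/3}$ around $z$ is ``cut'' (since $\Kt_s(z)$ forces both cut balls' intermediate parts into $\Dc_{-2s/3}(z)$), which by the one-point machinery is $O(e^{-\eta\cdot 2s/3}\cdot(\text{poly }s))$ after accounting for the constraint; combined with the $e^{2\eta s}$ prefactor and integration over the near-diagonal set of area $O(e^{-2sd/3})$, one gets a total of order $e^{2\eta s}\cdot e^{-2\eta s/3}\cdot e^{-2sd/3}\cdot\mathrm{poly}(s) = e^{-2s(d-2\eta)/3}\cdot\mathrm{poly}(s)$, which is $O_V(e^{-us})$ precisely because $3\eta/2 < d$ — this is the ``naive bound'' flagged in the remark after Definition~\ref{def:cut-bm}. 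The cross-term $\Eb[\wt\nu_s(V)\,\nu(V)]$ near the diagonal is bounded by Cauchy--Schwarz from the two estimates just obtained. Finally, I would need a short argument that $\Eb[\wt\nu_s(V)\,\nu(V)]$ is well-defined and equals $\lim_{s'}\Eb[\wt\nu_s(V)J_{s',V}]$, which follows from $J_{s',V}\to\nu(V)$ in $L^2$ together with $\Eb[\wt\nu_s(V)^2]<\infty$.

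The main obstacle is the near-diagonal bound for $\Eb[\wt\nu_s(V)^2]$: one must show $\Pb(\Kt_s(z)\cap\Kt_s(w))$ is genuinely as small as a single mesoscopic-ball cut event when $|z-w|<e^{-2s/3}$, i.e. that forcing two cut balls close together does not gain extra probability beyond confining everything to a ball of radius $e^{-2s/3}$. This requires unwinding the definition of the cut-ball event (the intermediate parts $\omega$, $\omega'$ both living in the mesoscopic ball) and combining a one-point-type estimate at scale $2s/3$ with the separation lemmas; the key numerical input is $3\eta/2 < d$, valid for $d=2$ ($\eta=5/4<4/3$) and $d=3$ ($\eta=1+\xi_3<2$ since $\xi_3<1$). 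The rest is routine integration against $|z-w|^{-\eta}$ and bookkeeping of the $O_V(e^{-us})$ errors.
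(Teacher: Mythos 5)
Your overall structure matches the paper's: expand the square, match the off-diagonal contribution of $\Eb[\wt\nu_s(V)^2]$, $\Eb[\wt\nu_s(V)\nu(V)]$, $\Eb[\nu(V)^2]$ via Propositions~\ref{prop:one-point}--\ref{prop:two-point} (with the paper passing through $J_{s,V}$ via \eqref{eq:jsv} first, a cosmetic difference), and bound the near-diagonal contribution separately. The off-diagonal part of your argument is correct.

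However, your near-diagonal bound for $\Eb[\wt\nu_s(V)^2]$ is wrong, and this is precisely the step the remark after Definition~\ref{def:cut-bm} warns about. You bound $L_s(z)L_s(w)\le c\,s^{2(d-3)}e^{2\eta s}\,1_{\Kt_s(z)\cap\Kt_s(w)}$ and then estimate $\Pb\{\Kt_s(z)\cap\Kt_s(w)\}\lesssim e^{-2\eta s/3}\cdot\mathrm{poly}(s)$. Plugging in and integrating over the near-diagonal set of volume $O(e^{-2ds/3})$ gives exponent $e^{(2\eta-2\eta/3-2d/3)s}=e^{(4\eta-2d)s/3}$, which requires $2\eta<d$ to decay; but $2\eta>d$ for both $d=2$ (where $2\eta=5/2>2$) and $d=3$ (where $2\eta=2+2\xi_3>3$). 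So your exponent \emph{grows}. The condition you cite, $3\eta/2<d$, is indeed the right one, but it corresponds to a different (and simpler) bound: the claimed estimate $\Pb\{\Kt_s(z)\cap\Kt_s(w)\}\lesssim e^{-2\eta s/3}\cdot\mathrm{poly}(s)$ is actually \emph{weaker} than the trivial bound $\Pb\{\Kt_s(z)\cap\Kt_s(w)\}\le\Pb\{\Kt_s(z)\}\asymp s^{3-d}e^{-\eta s}$, and using the weaker one is what ruins the exponent. The correct move, as in the paper, is to drop the indicator $1_{\Kt_s(w)}$ entirely: $L_s(w)\le c_*\Qf^*[\Psi_s]^{-1}e^{\eta s}$ gives $\Eb[L_s(z)L_s(w)]\le c_*\Qf^*[\Psi_s]^{-1}e^{\eta s}\,\Eb[L_s(z)]\lesssim s^{d-3}e^{\eta s}\,G^{\cut}_{\Dc}(z)$ by Proposition~\ref{prop:one-point}, and integrating then yields $O_V(s^{d-3}e^{(\eta-2d/3)s})=O_V(e^{-us})$ precisely because $\eta<2d/3$. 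In particular, there is no need for a genuine two-point estimate on $\Pb\{\Kt_s(z)\cap\Kt_s(w)\}$ near the diagonal, so the ``main obstacle'' you flag does not arise.
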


\begin{proof}
	By \eqref{eq:jsv}, it suffices to prove
	\begin{equation}
		\Eb[|\wt \nu_{s}(V)-J_{s,V}|^2]= O_V(e^{-us}).
	\end{equation}
	Note that 
	\begin{align*}
		\Eb[|\wt \nu_{s}(V)-J_{s,V}|^2]=\Eb [ \wt \nu_{s}(V)^2 ]-2\Eb[ \wt \nu_{s}(V)J_{s,V} ]+ \Eb[ J_{s,V}^2 ].
	\end{align*}
	By \eqref{eq:jsv}, we have $\Eb[ J_{s,V}^2 ]=\Eb [ \nu(V)^2 ] [1+O_V(e^{-us})]$. It remains to show that {similar estimates also hold} with the left side replaced by $\Eb [ \wt \nu_{s}(V)^2 ]$ or $\Eb[ \wt \nu_{s}(V)J_{s,V} ]$. Since these {two cases are almost} the same, we will only deal with $\Eb [ \wt \nu_{s}(V)^2 ]$. 
	By splitting the integral according to the distance $|z-w|$ in which regime Proposition~\ref{prop:two-point} is valid, we obtain
	\begin{align*}
		\Eb [ \wt \nu_{s}(V)^2 ]\simeq\int_V\int_V G^{\cut}_{\Dc}(z,w) 1_{\{|z-w|\ge e^{-2s/3}\}} \, dz \, dw +
		\int_V\int_V \Eb[ L_s(z) L_s(w)] 1_{\{|z-w|\le e^{-2s/3}\}} \, dz \, dw .
	\end{align*}
	By \eqref{eq:Gcut-zw}, $G^{\cut}_{\Dc}(z,w)\asymp |z-w|^{-\eta}$. This implies that 
	\[
	\int_V\int_V G^{\cut}_{\Dc}(z,w) 1_{\{|z-w|\ge e^{-2s/3}\}} \, dz \, dw=
	\Eb [ \nu(V)^2 ] [1+O(e^{-us})].
	\]
	For the second term, By Proposition~\ref{prop:one-point} and Lemma~\ref{lem:Gcut-z}, we deduce that
	\begin{align*}
		\int_V\int_V \Eb[ L_s(z) L_s(w)] 1_{\{|z-w|\le e^{-2s/3}\}} \, dz \, dw 
		&\lesssim \int_V\int_V G^{\cut}_{\Dc}(z) c_* \Qf^* [\Psi_{s}]^{-1} e^{\eta s} 1_{\{|z-w|\le e^{-2s/3}\}} \, dz \, dw \\
		&\lesssim_V C c_* \Qf^* [\Psi_{s}]^{-1} e^{\eta s} |V| e^{-2sd/3} = O_V(e^{-us}),
	\end{align*}
	where we used $\Qf^* [\Psi_{s}]^{-1}=O(1)$ by Lemma~\ref{lem:QPA} and {$\eta=\eta_d<2d/3$ for both $d=2,3$} in the last equality. This finishes the proof.
\end{proof}

\subsection{Proof of Proposition~\ref{prop:one-point}}\label{subsec:BM-op}
{In this subsection, we give a proof of Proposition~\ref{prop:one-point}, the cut-ball approximation of the one-point Green's function for cut points, whose crucial ingredients are the scaling invariance of Brownian motion and the probability measure on NIBM's.}

Let $\wh \Xc_k$ denote the set of
ordered disjoint pairs of curves  $\overline \eta
= (\eta_1,\eta_2)$ starting in $\Dc_k^c$ and ending at their first
visit to $\partial \Dc$; let $\widetilde\state_k$ be the set of such
ordered pairs of  curves that start on $\partial \Dc_k$.  For each $\overline \eta \in \wh \state_k$,
there is a unique $\overline \eta^{(k)} \in \widetilde\state_k$ obtained by starting the
curves at their first visits to $\partial \Dc_k$.
For a probability measure $Q$ on $\wh \state_k$, let $Q^{(k)}$ denote the measure induced on $\widetilde\state_k$ by $Q$.

Suppose $k,s\ge 1$.
If $\overline \eta = (\eta_1,\eta_2)  \in \wh \state_k$ with terminal points $ x_1,x_2
\in \partial \Dc$, let $\mu^{\overline \eta}_s$
denote $\mu_{x_1,x_2}$ restricted to those curves $\omega_*$  such that 
\begin{itemize}
	\item $\omega_*\cap \Dc_{-1}\neq\emptyset$, and it {can be decomposed as} $\omega_*=\beta_1\oplus\omega\oplus [\beta_2]^R$ according to its first and last visits to $\Dc_{-1}$,
	\item under the first condition, we further require $(\eta_1\cup \beta_1)\cap (\eta_2\cup \beta_2)=\emptyset$ and $\omega\subset \Dc_{s/3-1}$.
\end{itemize}
\begin{remark}\label{rmk:additional-scale}
	One should view the above decomposition as the configuration of $\Kt_s(z)$ under the map $\phi(w)=e^{s-1}(w-z)$, where $\gamma_i$ in Definition~\ref{def:cut-bm} is decomposed into $\eta_i\oplus \beta_i$. See Figure~\ref{img:cut-ball-dec} for an illustration. We also remark that the reason why we spare an additional scale $e^{-s+1}$ (or $1$ on the right picture) here, compared with Figure~\ref{img:cut-ball}, is to make Lemma~\ref{lem:sup-Psi} more accessible (the total mass of $\mu_{x_1,x_2}$ will blow up as $|x_1-x_2|$ tends to $0$ if we do not restrict $\omega_*$ to go deep inside, see the first condition on $\omega_*$ above).
\end{remark}
Define the following functions on $\ol \eta$ that will be used later:
\begin{equation}\label{eq:Phidef}
	\Psi_s(\ol \eta) :=\|\mu^{\ol \eta}_s\|\quad\mbox{ and }\quad  
	\wh\Psi_s(\ol \eta) := \mu^{\ol \eta}_s[1_{\beta_1,\beta_2\subset \Dc_{k/2}}].
\end{equation}

\begin{figure}[h!]
	\centering
	\includegraphics[width=.8\textwidth]{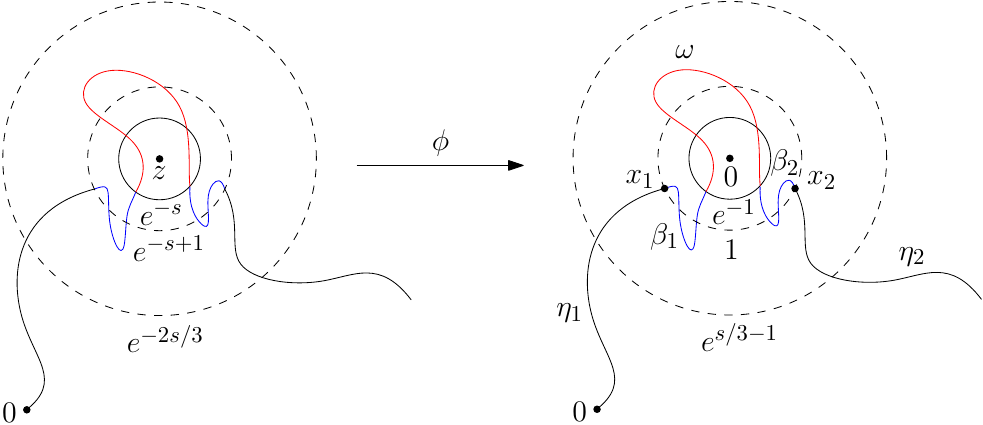}
	\caption{The picture of cut ball under the map $\phi(w)=e^{s-1}(w-z)$.}
	\label{img:cut-ball-dec}
\end{figure}

\begin{lemma}\label{lem:sup-Psi}
	There exists a constant $c>0$ such that for all $k,s\ge 1$,
	$$\sup_{\ol \eta\in\wh\state_{k/2}} \Psi_s(\ol \eta)\le cs^{3-d}.$$
\end{lemma}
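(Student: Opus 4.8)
The plan is to decompose each admissible curve $\omega_*$ in the definition of $\mu^{\ol\eta}_s$ along its first and last visits to $\Dc_{-1}$, to throw away the non-intersection constraint (which only enlarges the measure and carries all the dependence on $\ol\eta$), and to estimate the three resulting pieces separately; the factor $s^{3-d}$ will come entirely from the Green's function of the ball $\Dc_{s/3-1}$ that confines the middle piece.

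Concretely, first I would fix $\ol\eta=(\eta_1,\eta_2)\in\wh\state_{k/2}$ with terminal points $x_1,x_2\in\partial\Dc$. Recall that $\mu^{\ol\eta}_s$ is $\mu_{x_1,x_2}$ restricted to curves $\omega_*$ meeting $\Dc_{-1}$ and whose decomposition $\omega_*=\beta_1\oplus\omega\oplus[\beta_2]^R$ along the first entry to and last exit from $\ol{\Dc_{-1}}$ satisfies $\omega\subset\Dc_{s/3-1}$ and $(\eta_1\cup\beta_1)\cap(\eta_2\cup\beta_2)=\emptyset$ (here $\beta_1,\beta_2$ stay outside $\ol{\Dc_{-1}}$ off their endpoints). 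Dropping the non-intersection condition and applying the first-entry/last-exit decomposition of the Brownian path measure (the point-to-point analogue of \eqref{eq:dec2}, an identity by the strong Markov property) gives
\[
\Psi_s(\ol\eta)=\|\mu^{\ol\eta}_s\|\le\int_{\partial\Dc_{-1}}\int_{\partial\Dc_{-1}}\bigl\|\mu^{\Rb^d\setminus\ol{\Dc_{-1}}}_{x_1,u}\bigr\|\,\Gt_{\Dc_{s/3-1}}(u,v)\,\bigl\|\mu^{\Rb^d\setminus\ol{\Dc_{-1}}}_{v,x_2}\bigr\|\,\sigma(du)\,\sigma(dv),
\]
where the two outer factors are Poisson kernels of the exterior of $\ol{\Dc_{-1}}$ and the middle factor is the total mass of the interior path measure $\mu^{\Dc_{s/3-1}}_{u,v}$.

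The next step is to bound the two outer factors by an absolute constant. Since $x_1\in\partial\Dc$ lies at distance $1-e^{-1}$ from $\partial\Dc_{-1}$, standard estimates for the Poisson kernel of the exterior of a ball (via its explicit formula, or via Harnack together with $\Pb^{x_1}\{T_{-1}(W)<\infty\}\le 1$) give $\|\mu^{\Rb^d\setminus\ol{\Dc_{-1}}}_{x_1,u}\|\le c$ uniformly in $x_1\in\partial\Dc$ and $u\in\partial\Dc_{-1}$, and likewise for the $(v,x_2)$ factor. It then remains to show $\int_{\partial\Dc_{-1}}\int_{\partial\Dc_{-1}}\Gt_{\Dc_{s/3-1}}(u,v)\,\sigma(du)\,\sigma(dv)=O(s^{3-d})$. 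For $d=3$ this follows from $\Gt_{\Dc_{s/3-1}}(u,v)\le\Gt_{\Rb^3}(u,v)\asymp|u-v|^{-1}$ and the integrability of $|u-v|^{-1}$ over $(\partial\Dc_{-1})^2$, giving an $s$-independent constant; for $d=2$ I would use the explicit disk Green's function, which for $R=e^{s/3-1}$ and $|u|=|v|=e^{-1}$ satisfies $\Gt_{\Dc_R}(u,v)\le c\log\bigl|\tfrac{R^2-\bar uv}{R(u-v)}\bigr|\le c'(s+\log|u-v|^{-1})$, and then integrate over $(\partial\Dc_{-1})^2$ using $\sigma(\partial\Dc_{-1})<\infty$ and the integrability of the logarithmic singularity to obtain $O(s)$. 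Combining the three bounds gives $\Psi_s(\ol\eta)\le cs^{3-d}$, and since nothing in the argument depends on $\ol\eta$ beyond its terminal points (over which the estimates are uniform) or on $k$, the stated supremum bound follows.

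The computation is essentially routine; the only parts that need some care are setting up the path-measure decomposition over the unbounded domain $\Rb^d\setminus\ol{\Dc_{-1}}$ so that the outer pieces are genuine Poisson kernels and the decomposition is an exact identity before the non-intersection constraint is discarded, and verifying that the logarithmic growth of the planar disk Green's function contributes exactly a linear-in-$s$ factor (and not a larger power), which is what matches $s^{3-d}$ for $d=2$.
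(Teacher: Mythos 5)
Your proof is correct and is built on the same mechanism as the paper's: decompose $\omega_*$ at $\Dc_{-1}$, discard the non-intersection constraint (which is what makes the bound uniform in $\ol\eta$ and in $k$), and bound the remaining pieces via boundary Poisson kernels and a confined Green's function, with the $s^{3-d}$ factor coming entirely from $\Gt_{\Dc_{s/3-1}}$. The only structural difference worth flagging is that for $d=3$ the paper uses a single first-entry decomposition at $\Dc_{-1}$, bounding the tail piece by $\sup_{y\in\partial\Dc_{-1},\,x\in\partial\Dc}\Gt_{\Rb^3}(y,x)$, which sidesteps the near-diagonal Green's-function singularity entirely because $y$ and $x$ are separated by $1-e^{-1}$; you instead decompose at both the first entry and the last exit and therefore have to invoke the integrability of $|u-v|^{-1}$ over $(\partial\Dc_{-1})^2$. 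Both work, and your double decomposition has the virtue of being uniform across $d=2$ and $d=3$ and of mirroring exactly how $\mu^{\ol\eta}_s$ is defined via the triple $(\beta_1,\omega,\beta_2)$. For $d=2$ the paper's argument is quite terse; your explicit computation with the disk Green's function and the integrable $\log|u-v|^{-1}$ singularity supplies precisely the details left implicit there and confirms the linear-in-$s$ growth that matches $s^{3-d}$.
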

\begin{proof}
	When $d=3$, observing that
	\[
	\sup_{\ol \eta\in\wh\state_{k/2}} \Psi_{s}(\ol \eta) \le \sup_{x_1,x_2\in \partial \Dc}\mu_{x_1,x_2} [ 1_{\omega_*\cap \Dc_{-1}\neq \emptyset} ]\le
	\sup_{x_1\in \partial \Dc, y\in \partial \Dc_{-1}} \wt G_{\Rb^3}(x_1,y) \le c.
	\]
	
	However, if $d=2$, the above argument does not work since the Green's function blows up in two dimensions. In this case, we use the fact that the total mass of $\beta_1$ is $1$, and there is a positive portion of $\beta_2$ will stay inside $\Dc_2$ by Lemma~\ref{l:trans_recur}. Therefore, we have 
	\[
	\sup_{\ol\eta\in\wh\state_{k/2}} \Psi_{s}(\ol \gamma) \lesssim \sup_{x_1\in \partial \Dc, y\in \partial \Dc_{-1}} \wt G_{s/3}(x_1,y) \le c\, s.
	\]
	This finishes the proof.
\end{proof}

\begin{lemma}\label{lem:wi-large}
	There exists a constant $c>0$ such that for all $k,s\ge 1$,
	\[
	\sup_{\ol \eta\in \widetilde\Xc_{k/2}} \mu^{\ol \eta}_s[1_{\beta_1\nsubseteq \Dc_{k/2}}] \le cs^{3-d}e^{-k/2}.
	\]
	It is also true if $\beta_1$ is replaced by $\beta_2$.
\end{lemma}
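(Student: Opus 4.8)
The plan is to mimic the proof of Lemma~\ref{lem:sup-Psi}, but to extract an additional factor $e^{-k/2}$ from the fact that, on the event $\{\beta_1\nsubseteq\Dc_{k/2}\}$, the excursion $\beta_1$ must leave $\Dc_{k/2}$ (where $\eta_1,\eta_2$ start) and then return all the way to $\partial\Dc_{-1}$, while remaining disjoint from $\gamma^*:=\eta_2\cup\beta_2$; in particular $\beta_1$ must avoid $\eta_2$, which is a crossing of the annulus $A:=\Dc_{k/2}\setminus\overline\Dc$ (it joins $\partial\Dc_{k/2}$ to $\partial\Dc$). First I would decompose each admissible $\omega_*$, using the Markov property of the Brownian path measure (as in \eqref{eq:dec1}--\eqref{eq:dec2}), at the first visit of $\beta_1$ to $\partial\Dc_{k/2}$, and then again at the endpoint of $\beta_1$ (its first entrance to $\Dc_{-1}$): writing $\beta_1=\alpha_1\oplus\alpha_2$ with $\alpha_1$ the part up to the first hit of $\partial\Dc_{k/2}$ and $\alpha_2$ the remainder, this bounds $\mu^{\ol\eta}_s|_{\{\beta_1\nsubseteq\Dc_{k/2}\}}$ by the product of (i) the total mass of $\alpha_1$, (ii) the supremum over $q_1\in\partial\Dc_{k/2}$ of the total mass of $\alpha_2$ started from $q_1$, and (iii) the supremum over $p\in\partial\Dc_{-1}$ of the total mass of the tail $\omega\oplus[\beta_2]^R$ started from $p$ with $\omega\subseteq\Dc_{s/3-1}$. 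The point of stopping $\alpha_2$ at the first entrance to $\Dc_{-1}$, rather than at some later return to $\partial\Dc_{k/2}$, is that its mass is then simply a hitting probability, so no excursion counting is needed.

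For (iii), the tail $\omega\oplus[\beta_2]^R$ is exactly the quantity bounded in the proof of Lemma~\ref{lem:sup-Psi}, so (iii) $\le cs^{3-d}$ (via $\sup_{x\in\partial\Dc,\,y\in\partial\Dc_{-1}}\wt G_{\Rb^3}(x,y)<\infty$ when $d=3$, and via $\wt G_{s/3-1}\lesssim s$ together with the fact that a positive fraction of $[\beta_2]^R$ stays in $\Dc_2$ when $d=2$). For (i) and (ii) I would distinguish the two dimensions. When $d=3$ the non-intersection plays no role: (i) $\le\sup_{x_1\in\partial\Dc}\Pb^{x_1}\{T_{k/2}<T_{\overline\Dc_{-1}}\}\le1$, and (ii) $\le\sup_{q_1\in\partial\Dc_{k/2}}\Pb^{q_1}\{T_{\overline\Dc_{-1}}<\infty\}\asymp e^{-k/2}$ by transience (Lemma~\ref{l:trans_recur}), so the product is $\lesssim e^{-k/2}=s^{3-d}e^{-k/2}$. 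When $d=2$, I would instead use disjointness from $\eta_2$: by the Beurling estimate (Proposition~\ref{p:Beurling}), a Brownian path crossing $A$ while avoiding the crossing $\eta_2$ does so with mass $\lesssim e^{-k/4}$; applied from $\partial\Dc$ outward this gives (i) $\lesssim e^{-k/4}$, and applied (after reversal) from $\partial\Dc_{k/2}$ inward to $\partial\Dc$, followed by the $O(1)$ cost of then reaching $\partial\Dc_{-1}$, this gives (ii) $\lesssim e^{-k/4}$. Multiplying, $\mu^{\ol\eta}_s[1_{\beta_1\nsubseteq\Dc_{k/2}}]\lesssim e^{-k/4}\cdot e^{-k/4}\cdot s=s^{3-d}e^{-k/2}$. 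The statement with $\beta_1$ replaced by $\beta_2$ is entirely symmetric, now using that $\beta_2$ must avoid $\eta_1$ (again a crossing of $A$), together with the fact that on $\{\beta_2\nsubseteq\Dc_{k/2}\}$ the two excursions play interchangeable roles.

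The point requiring the most care is the two-dimensional Beurling bound for a crossing of $A$ that avoids a prescribed crossing $\eta_2$: one must argue that such a crossing has mass at most a constant times $e^{-k/4}$ (and not some other power of $e^{-k}$), which is precisely what makes the two crossings combine to the desired $e^{-k/2}$. This is a standard consequence of the Beurling projection theorem once one notes that $\eta_2$ shades a radial segment joining the two boundary circles of $A$, but it should be stated carefully since Proposition~\ref{p:Beurling} is phrased for paths anchored at the origin rather than at $\partial\Dc$. A secondary bookkeeping issue is to organize the decomposition so that every intermediate mass is finite — this is why $\alpha_2$ is cut at the first entrance to $\Dc_{-1}$ and estimated by a hitting probability, thereby avoiding the (infinite‑mass) boundary‑to‑boundary excursion measure of $\partial\Dc_{k/2}$.
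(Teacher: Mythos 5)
Your proof is correct and follows the same strategy as the paper: extract a factor $e^{-k/2}$ from the excursion of $\beta_1$ out to $\partial\Dc_{k/2}$ and back (transience gives $e^{-k/2}$ for the return when $d=3$; Beurling applied once outward and once inward, each contributing $e^{-k/4}$, when $d=2$, using that $\eta_2$ crosses the annulus), and combine with the $s^{3-d}$ tail bound already established in Lemma~\ref{lem:sup-Psi}. The paper cuts the excursion at the first return to $\partial\Dc$ rather than at the first entrance to $\Dc_{-1}$ as you do, but this is an inessential difference.
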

\begin{proof}
	Let $p$ be the probability that a Brownian motion $W$ that starts form $x_1\in \partial\Dc$ (the ending point of $\eta_1$), then hits $\partial \Dc_{k/2}$ before hitting $\Dc_{-1}$, and then returns to $\partial \Dc$. By the gambler's ruin estimate (Lemma~\ref{l:trans_recur}), we know that $p=O(e^{-k/2})$ when $d=3$. This combined with Lemma~\ref{lem:sup-Psi} implies the result when $d=3$.
	As for $d=2$, the probability that $W$ from $x_1$ to $\partial \Dc_{k/2}$ and then returns to $\partial \Dc$ such that it does not intersect $\eta_2$ along the whole way is $O(e^{-k/2})$ by using the Beurling estimate (Proposition~\ref{p:Beurling}) twice (once on the way out and once on the way in). This combined with Lemma~\ref{lem:sup-Psi} finishes the proof of the case when $d=2$.
\end{proof}

\begin{lemma}\label{lem:QPA}
	There exist universal constants $c,C>0$ such that for all $s$,
	$$cs^{3-d}\le \Qf^*[\Psi_{s}]\le Cs^{3-d}.$$
\end{lemma}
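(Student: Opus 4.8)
The plan is to prove the two bounds separately. The upper bound is immediate: $\Qf^*$ is a probability measure, and every pair of disjoint curves from infinity to $\partial\Dc$ belongs to $\wh\state_{k/2}$ for all $k$, so Lemma~\ref{lem:sup-Psi} gives $\Psi_s(\ol\eta)\le cs^{3-d}$ for $\Qf^*$-a.e.\ $\ol\eta$, whence $\Qf^*[\Psi_s]\le cs^{3-d}$ after integration. For the lower bound I would first pass to a positive-probability sub-event of $\Qf^*$. Write $x_1,x_2\in\partial\Dc$ for the terminal points of $\eta_1,\eta_2$. The reverse separation lemma (Lemma~\ref{lem:rev-sep-BM}) gives $\Pb\{\wt\Delta_0\ge 1/10\mid\wt A^*_r(\baK,\overline x)\}\ge c$ uniformly in the initial configuration; since $\wt\Delta_0\ge 1/10$ forces $|x_1-x_2|\ge 1/10$ and the event $\{|x_1-x_2|\ge 1/10\}$ is measurable with respect to the recorded portion $\overline W[T_{r/2},T_0]$, the defining property \eqref{eq:qi_bm_definition-inv} of $\Qf^*$ transfers this, in the limit $r\to\infty$, to $\Qf^*\{|x_1-x_2|\ge 1/10\}\ge c_0>0$.

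Next comes the construction, on a fixed $\ol\eta$ with $|x_1-x_2|\ge 1/10$. The structural point is that $\eta_1$ and $\eta_2$ both lie in the closed exterior $\Rd\setminus\Dc^\circ$ (each stops at its first visit to $\partial\Dc$) and $x_1\notin\eta_2$, $x_2\notin\eta_1$ since $\eta_1\cap\eta_2=\emptyset$. Consequently, if I lower-bound $\|\mu^{\ol\eta}_s\|$ by restricting $\mu_{x_1,x_2}$ to those $\omega_*$ hitting $\Dc_{-1}$ whose initial arc $\beta_1$ (up to the first visit of $\Dc_{-1}$) stays in a thin tube $T_1\subset\Dc^\circ\setminus\overline\Dc_{-1}$ abutting $\partial\Dc$ only at $x_1$ and $\partial\Dc_{-1}$ only along a small cap $C_1$, whose terminal arc $\beta_2$ stays in a disjoint thin tube $T_2$ with the analogous properties at $x_2$ and $C_2$, and whose middle arc $\omega$ stays in $\Dc_{s/3-1}$, then the non-intersection requirement $(\eta_1\cup\beta_1)\cap(\eta_2\cup\beta_2)=\emptyset$ in the definition of $\mu^{\ol\eta}_s$ is \emph{automatically} satisfied: $\beta_i\subset\Dc^\circ\cup\{x_i\}$ meets $\eta_{3-i}\subset\Rd\setminus\Dc^\circ$ at most at $x_i\notin\eta_{3-i}$, and $\beta_1\cap\beta_2=\emptyset$ because the tubes are disjoint with distinct endpoints. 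For $|x_1-x_2|\ge 1/10$ one can choose such disjoint tubes, routed roughly radially, with uniformly bounded geometry and with $\dist(C_1,C_2)\ge c_1>0$.

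It then remains to estimate this restricted mass. Decomposing $\mu_{x_1,x_2}$ by the first and last visits $y_1,y_2$ of $\omega_*$ to $\overline\Dc_{-1}$ (the first-entry/last-exit decomposition, cf.\ \eqref{eq:dec2}), the restricted mass equals
\[
\int_{C_1}\!\int_{C_2}\mu^{\Rd\setminus\overline\Dc_{-1}}_{x_1,y_1}\{\gamma\subset T_1\}\;\wt G_{\Dc_{s/3-1}}(y_1,y_2)\;\mu^{\Rd\setminus\overline\Dc_{-1}}_{y_2,x_2}\{\gamma\subset T_2\}\,\sigma(dy_1)\,\sigma(dy_2).
\]
Each outer factor, integrated over its cap, is the probability that a Brownian motion from $x_i$ stays in $T_i$ until hitting $\partial\Dc_{-1}$, which is bounded below by a universal constant (bounded-geometry tube; for $d=3$ one also uses Lemma~\ref{l:trans_recur} to see that $\Dc_{-1}$ is reached from $\partial\Dc$ with probability of order $1$). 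For $y_1\in C_1$, $y_2\in C_2$, monotonicity gives $\wt G_{\Dc_{s/3-1}}(y_1,y_2)\ge\wt G_{D_R}(y_1,y_2)$ with $D_R=D(0,e^{s/3-1})$, and since $|y_1-y_2|\ge c_1$ while $|y_i|=e^{-1}$, the explicit disk Green's function for $d=2$ (resp.\ the Newtonian potential for $d=3$) yields $\wt G_{D_R}(y_1,y_2)\ge c\,s$ for $d=2$ and $\ge c$ for $d=3$, i.e.\ $\ge c\,s^{3-d}$ for all $s\ge 1$. Multiplying the three factors gives $\Psi_s(\ol\eta)\ge c_2\,s^{3-d}$ for every $\ol\eta$ with $|x_1-x_2|\ge 1/10$, and integrating against $\Qf^*$ over the event from the first paragraph gives $\Qf^*[\Psi_s]\ge c_0c_2\,s^{3-d}$.

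The main obstacle is the lower bound, and within it the two delicate points are: (i) checking that the non-intersection constraint built into $\mu^{\ol\eta}_s$ is automatically met by the tube construction — this is precisely where one exploits that $\eta_1,\eta_2$ avoid the open unit ball, so that no actual avoidance of the $\eta_i$ by the connector $\omega_*$ needs to be engineered; and (ii) extracting the correct power $s^{3-d}$ uniformly in $s\ge1$ and uniformly over well-separated endpoint configurations — a genuine dimensional dichotomy, reflecting the logarithmic growth of the planar disk Green's function against the boundedness of its spatial analogue.
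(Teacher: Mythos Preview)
Your overall strategy matches the paper's: upper bound via Lemma~\ref{lem:sup-Psi}, lower bound via the reverse separation lemma followed by an explicit construction. But the tube construction for the lower bound has a genuine gap.

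You force $\beta_1\subset T_1$ with $T_1\subset\Dc\setminus\overline\Dc_{-1}$ abutting $\partial\Dc$ only at $x_1$. Since $\beta_1(0)=x_1\in\partial\Dc$ and a Brownian motion started on $\partial\Dc$ almost surely exits $\overline\Dc$ immediately (the radial displacement $|W_t|-1$ behaves like a one-dimensional Brownian motion and changes sign infinitely often near $t=0$), the event $\{\beta_1\subset\overline\Dc\}$ has $\mu_{x_1,x_2}$-mass zero, and your displayed integral vanishes. More fundamentally, the deterministic implication ``$|x_1-x_2|\ge1/10\Rightarrow\Psi_s(\ol\eta)\ge cs^{3-d}$'' that you are aiming for is \emph{false} in $d=2$: if $\dist(x_1,\eta_2)=\epsilon$, the Beurling estimate (Proposition~\ref{p:Beurling}) forces Brownian motion from $x_1$ to avoid $\eta_2$ over a unit distance only with probability $O(\sqrt\epsilon)$, so $\Psi_s(\ol\eta)$ can be made arbitrarily small while keeping $|x_1-x_2|\ge1/10$.

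The fix is to keep the full separation event $E=\{\dist(x_i,\eta_{3-i})\ge1/10\text{ for }i=1,2\}$ rather than its endpoint consequence. Since $\eta_{3-i}\setminus\eta_{3-i}[T_2,T_0]$ lies outside $\Dc_2$ and hence at distance $\ge e^2-1>1/10$ from $x_i$, one has $\dist(x_i,\eta_{3-i})=\dist(x_i,\eta_{3-i}[T_2,T_0])$; thus $E$ is measurable with respect to $\ol\eta[T_{r/2},T_0]$ for $r\ge4$, and your transfer via \eqref{eq:qi_bm_definition-inv} still yields $\Qf^*(E)\ge c_0>0$. On $E$, enlarge each tube to $T_i'=B(x_i,1/20)\cup\{\text{radial corridor in }\Dc\text{ to }\partial\Dc_{-1}\}$: the ball part misses $\eta_{3-i}$ by separation, the corridor part because $\eta_{3-i}\subset\Rd\setminus\Dc$, and the two enlarged tubes are disjoint since $|x_1-x_2|\ge1/10$. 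Now Brownian motion from $x_i$ stays in $T_i'$ until hitting $\partial\Dc_{-1}$ with uniformly positive probability, and the rest of your Green's-function computation goes through unchanged.
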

\begin{proof}
	The upper bound is a direct consequence of Lemma~\ref{lem:sup-Psi}.
	As for the lower bound, it can be proved by using the reverse separation lemma (see Lemma~\ref{lem:rev-sep-BM}). More precisely, by \eqref{eq:qi_bm_definition-inv} and Lemma~\ref{lem:rev-sep-BM}, we know that there is a positive portion of NIBM's $\overline\eta$ from infinity that have endpoints at distance at least $1/10$ from each other on $\partial\Dc$, for which we have $\Psi_{s}(\overline\eta)\ge cs^{3-d}$. This implies the result. 
\end{proof}

\begin{lemma}\label{lem:TV}
	There exists $c > 0$ such that for any probability measure $Q$ on $\wh \state_k$, and $k,s\ge 1$,
	\begin{equation}\label{eq101}
		\left|Q[\Psi_s]- \Qf^*[\Psi_{s}] \right| \leq  
		c s^{3-d} \left[ e^{-k/2} + d_{\op{TV} }\left(Q^{(k/2)} , \Qf^{*,(k/2)}\right)\right].
	\end{equation}
\end{lemma}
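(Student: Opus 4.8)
The plan is to split $\Psi_s$ into the localized piece $\wh\Psi_s$ and the remainder $\Psi_s-\wh\Psi_s$, exactly as in \eqref{eq:Phidef}, control the remainder by Lemma~\ref{lem:wi-large}, and handle $\wh\Psi_s$ by a total-variation argument after observing that $\wh\Psi_s$ factors through $\ol\eta^{(k/2)}$. The crucial structural point is that, for $\ol\eta=(\eta_1,\eta_2)\in\wh\state_k$, the quantity $\wh\Psi_s(\ol\eta)=\mu^{\ol\eta}_s[1_{\beta_1,\beta_2\subset\Dc_{k/2}}]$ depends on $\ol\eta$ only through $\ol\eta^{(k/2)}\in\widetilde\state_{k/2}$. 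Indeed, $\ol\eta$ enters the definition of $\mu^{\ol\eta}_s$ only through its terminal points on $\partial\Dc$ (which coincide with those of $\ol\eta^{(k/2)}$) and through the non-intersection constraint $(\eta_1\cup\beta_1)\cap(\eta_2\cup\beta_2)=\emptyset$; and when additionally $\beta_1,\beta_2\subset\Dc_{k/2}$ one has $\beta_i\cap\eta_{3-i}=\beta_i\cap\eta_{3-i}^{(k/2)}$, because any point of $\eta_{3-i}$ lying in $\Dc_{k/2}$ is visited only after the first hit of $\partial\Dc_{k/2}$ (the curve starts in $\Dc_k^c$), hence lies on $\eta_{3-i}^{(k/2)}$, while $\eta_{3-i}^{(k/2)}\subset\eta_{3-i}$ always. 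Therefore $\wh\Psi_s(\ol\eta)=\wh\Psi_s(\ol\eta^{(k/2)})$, and consequently
$$
Q[\wh\Psi_s]=Q^{(k/2)}[\wh\Psi_s],\qquad \Qf^*[\wh\Psi_s]=\Qf^{*,(k/2)}[\wh\Psi_s].
$$

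Next I would bound the remainder uniformly. For $\ol\eta\in\wh\state_k$, replacing $\eta_i$ by $\eta_i^{(k/2)}\subset\eta_i$ only weakens the non-intersection constraint in the definition of $\mu^{\ol\eta}_s$, so $\mu^{\ol\eta}_s(\cdot)\le\mu^{\ol\eta^{(k/2)}}_s(\cdot)$ as measures on the common space of curves $\omega_*$, with $\ol\eta^{(k/2)}\in\widetilde\Xc_{k/2}$. Hence, by a union bound and Lemma~\ref{lem:wi-large},
$$
0\le \Psi_s(\ol\eta)-\wh\Psi_s(\ol\eta)=\mu^{\ol\eta}_s\bigl[1_{\{\beta_1\nsubseteq\Dc_{k/2}\}\cup\{\beta_2\nsubseteq\Dc_{k/2}\}}\bigr]\le \mu^{\ol\eta^{(k/2)}}_s[1_{\beta_1\nsubseteq\Dc_{k/2}}]+\mu^{\ol\eta^{(k/2)}}_s[1_{\beta_2\nsubseteq\Dc_{k/2}}]\le 2c\,s^{3-d}e^{-k/2}.
$$
This bound is uniform in $\ol\eta$, so integrating against $Q$ gives $|Q[\Psi_s]-Q[\wh\Psi_s]|\le 2c\,s^{3-d}e^{-k/2}$, and integrating against $\Qf^*$ (using that $\Qf^*$-a.e.\ curve, coming from infinity, does reach $\partial\Dc_{k/2}$, so $\ol\eta^{(k/2)}$ is defined) gives $|\Qf^*[\Psi_s]-\Qf^*[\wh\Psi_s]|\le 2c\,s^{3-d}e^{-k/2}$.

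Finally I would combine via the triangle inequality:
$$
\bigl|Q[\Psi_s]-\Qf^*[\Psi_s]\bigr|\le \bigl|Q[\Psi_s]-Q[\wh\Psi_s]\bigr|+\bigl|Q^{(k/2)}[\wh\Psi_s]-\Qf^{*,(k/2)}[\wh\Psi_s]\bigr|+\bigl|\Qf^*[\wh\Psi_s]-\Qf^*[\Psi_s]\bigr|.
$$
The two outer terms are each $\le 2c\,s^{3-d}e^{-k/2}$ by the previous step, while the middle term is at most $\|\wh\Psi_s\|_\infty\, d_{\op{TV}}\!\bigl(Q^{(k/2)},\Qf^{*,(k/2)}\bigr)$, and $\|\wh\Psi_s\|_\infty\le\sup_{\ol\eta\in\wh\state_{k/2}}\Psi_s(\ol\eta)\le c\,s^{3-d}$ by Lemma~\ref{lem:sup-Psi} (noting $\wh\Psi_s\le\Psi_s$ and $\widetilde\state_{k/2}\subset\wh\state_{k/2}$). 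Absorbing constants gives \eqref{eq101}. The only delicate point is the localization claim of the first paragraph — that $\wh\Psi_s$ genuinely factors through $\ol\eta^{(k/2)}$ and that $\mu^{\ol\eta}_s$ is dominated by $\mu^{\ol\eta^{(k/2)}}_s$ — which needs care because $\eta_i$ may leave and re-enter $\Dc_{k/2}$; once this is in place, everything else is bookkeeping with Lemmas~\ref{lem:sup-Psi} and~\ref{lem:wi-large}.
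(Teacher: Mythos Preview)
Your proposal is correct and follows essentially the same route as the paper: the paper also begins with the identity $\wh\Psi_s(\ol\eta)=\wh\Psi_s(\ol\eta^{(k/2)})$, uses the domination $\Psi_s(\ol\eta)\le\Psi_s(\ol\eta^{(k/2)})$ together with Lemma~\ref{lem:wi-large} to control the non-localized remainder, and closes with Lemma~\ref{lem:sup-Psi} and the total-variation bound. The only cosmetic difference is that the paper passes through the intermediary $\Psi_s(\ol\eta^{(k/2)})$ (bounding $|\Psi_s(\ol\eta)-\Psi_s(\ol\eta^{(k/2)})|$) whereas you pass through $\wh\Psi_s(\ol\eta)$; the two are interchangeable and your justification of the localization claim is, if anything, more explicit than the paper's.
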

\begin{proof}
	Noting that $\wh\Psi_{s}(\ol \eta^{(k/2)})=\wh\Psi_{s}(\ol \eta)$, we have
	\[
	\Psi_{s}(\ol \eta)\le \Psi_{s}(\ol \eta^{(k/2)})
	\le \wh\Psi_{s}(\ol \eta) + \sum_{i=1}^2 \mu^{\ol \eta^{(k/2)}}_s[1_{\beta_i\notin \ball_{k/2}}].
	\]
	Since $\wh\Psi_{s}(\ol \eta)\le \Psi_{s}(\ol \eta)$, by Lemma~\ref{lem:wi-large}, 
	\[
	| \Psi_{s}(\ol \eta)- \Psi_{s}(\ol \eta^{(k/2)})|
	\le c s^{3-d} e^{-k/2}.
	\]
	By definition, $Q^{(k/2)}[\Psi_{s}]=Q[\Psi_{s}(\ol \eta^{(k/2)})]$. It follows that 
	\[
	|Q[\Psi_{s}]- \Qf^*[\Psi_{s}]|\le c s^{3-d} e^{-k/2}+
	d_{\op{TV} }\left(Q^{(k/2)} , \Qf^{*,(k/2)}\right) \sup_{\ol \eta\in \widetilde\Xc_{k/2}} \Psi_{s}(\ol \eta).
	\]
	This combined with Lemma~\ref{lem:sup-Psi} finishes the proof of the lemma.
\end{proof}

\begin{proof}[Proof of Proposition~\ref{prop:one-point}]
	Let $k=s-b-1$. 
	We let $S=\partial \Dc_{-s+1}(z)$, $D=\Dc \setminus(\Dc_{-s+1}(z)\cup S)$ and $\gf_{s-1}$ be defined as in \eqref{eq:gkdefinition1}.
	Let $Q_D$ be the probability measure on a pair of paths $(\eta_1,\eta_2)$ given by
	\begin{equation}
		Q_D((\eta_1,\eta_2)\in \cdot) =  \frac{1}{\gf_{s-1}}\int_{S } \int_{S } \mu^D_{0,x} \otimes \mu_ {y,\partial \Dc}^D \left[ 1_{\eta_1\cap \eta_2=\emptyset} (\eta_1,\eta_2)\in \cdot \right]\sigma(dx,dy).
	\end{equation}
	We let $Q$ be the pushforward of $Q_D$ under the map $\phi(w)=e^{s-1}(w-z)$ so that $Q$ is a probability measure on $\wh \state_k$. Since the Brownian motion is scaling and translation invariant, we have (See Figure~\ref{img:cut-ball-dec} and note the scaling covariance of path measures in $\Rb^3$)
	\[
	e^{-(s-1)(d-2)} \Pb \{ \widetilde K_{s} \} / \gf_{s-1} =Q [\mu^{\ol \eta}_s[1_{\beta_1,\beta_2\subset \phi(D)}]].
	\]
	By Lemma~\ref{lem:wi-large}, 
	\begin{equation*}
		| Q[\Psi_{s}]-Q [\mu^{\ol \eta}_s[1_{\beta_1,\beta_2\subset \phi(D)}]] |=O(s^{3-d}e^{-k/2}).
	\end{equation*}
	This combined with Lemma~\ref{lem:TV} shows 
	\begin{equation}\label{eq:muQ}
		| e^{-(s-1)(d-2)} \Pb \{ \widetilde K_{s} \} / \gf_{s-1} - \Qf^* [\Psi_{s}] | = O(e^{-us}).
	\end{equation}
	It follows that
	\begin{align*}
		\Eb[ L_{s}(z) ]&\;\;= \;c_* \Qf^* [\Psi_{s}]^{-1} e^{\eta (s-1)} \Pb \{ \widetilde K_{s} \}\\
		&\stackrel{\eqref{eq:muQ}}{=} c_* \Qf^* [\Psi_{s}]^{-1} e^{\eta (s-1)} e^{(s-1)(d-2)} \gf_{s-1} (\Qf^* [\Psi_{s}]+O(e^{-us}))\\
		&\,\stackrel{\eqref{eq:Gcut1}}{=} G_{\Dc}^{\cut}(z) [1+O(\Qf^* [\Psi_{s}]^{-1} e^{-us})] [1+O(e^{-u's})].
	\end{align*}
	By Lemma~\ref{lem:QPA}, we conclude the proof.
\end{proof}

\section{Sharp asymptotics for random walk cut-point Green's function \label{sec:pt_estim}}
{In this section, we will give in Theorems \ref{t:one_pt} and \ref{t:two_pt} sharp asymptotics for one- and two-point Green's function for the cut points of the simple random walk.}

Fix $z\in \Dc\setminus\{0\}$. Write
\begin{equation}\label{eq:dz-zn}
d_z=\dist(0,z,\partial\Dc)\mbox{ and }z_n=\floo{e^{n}z}
\end{equation}
and let
$$
\Ac_n(z):=\Big\{\mbox{$z_n$ is a cut point of the random walk path $S[0,\tau_n]$}\Big\}.
$$

The following theorem gives the sharp asymptotics on the one-point function.
\begin{theorem}
	\label{t:one_pt} There exists a universal constant $c_1>0$ such that the following holds for all $z\in \Dc$ with $d_z=\dist(0,z,\partial\Dc)\ge e^{-n/6}$,
	\[
	c_{1}\, e^{\eta n}\, \Pb\{\Ac_n(z)\} \simeq G^{\cut}_{\Dc}(z).
	\]
\end{theorem}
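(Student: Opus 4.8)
The plan is to compare $\Pb\{\Ac_n(z)\}$ with the probability of the Brownian cut-ball event of Definition~\ref{def:cut-bm} at a mesoscopic scale proportional to $n$, and then to invoke Proposition~\ref{prop:one-point}, which says that $c_*\,\Qf^*[\Psi_s]^{-1}\,e^{\eta s}\,\Pb\{\Kt_s(z)\}=G^{\cut}_{\Dc}(z)\,[1+O(e^{-us})]$. All estimates take place in the blown-up picture: the walk is $S[0,\tau_n]$ in $B_n=B(0,e^{n})$, the target is $z_n=\floo{e^{n}z}$, and we fix $s=n/4$ and the mesoscopic radius $e^{m}$, $m=n-s=3n/4$. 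The hypothesis $d_z=\dist(0,z,\partial\Dc)\ge e^{-n/6}$ is exactly what makes $d_z\ge e^{-2s/3}$, so that Proposition~\ref{prop:one-point} (and Proposition~\ref{prop:two-point}, for the two-point Theorem~\ref{t:two_pt}) applies to $\Kt_s(z)$ and the balls $B(z_n,e^{m})$, $B(z_n,e^{n-2s/3})$ — the latter containing the ``middle'' of the cut ball — lie well inside $B_n$ about $z_n$. Since $s$, $m$, $n-m$ are all of order $n$, every approximation below carries a multiplicative error $1+O(e^{-un})$, which is exactly what ``$\simeq$'' encodes.

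\emph{Step 1 (local decomposition).} On $\Ac_n(z)$ let $\sigma$ be the unique visit of $S$ to $z_n$ and write $S[0,\tau_n]=\omega_1\oplus\beta_1\oplus\beta_2\oplus\omega_2$, where $\beta_1$ runs from the last visit of $\partial B(z_n,e^{m})$ before $\sigma$ to $z_n$ (hence staying in $B(z_n,e^{m})$), $\beta_2$ from $z_n$ to the first return to $\partial B(z_n,e^{m})$, and $\omega_1,\omega_2$ are the outer legs — excursions in $B_n$ from $0$ (resp.\ $\partial B_n$) to $\partial B(z_n,e^{m})$. Then $\Ac_n(z)$ requires that $(\beta_1^{R},\beta_2^{R})$ be a pair of non-intersecting walks from $z_n$ run up to $\partial B(z_n,e^{m})$ and that $\omega_1\cup\omega_2$ avoid itself and $\beta_2\cup\beta_1$ away from $z_n$. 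Using the separation lemma for NIRW (Lemma~\ref{lem:sep-SRW}) together with the Beurling estimate (Proposition~\ref{p:Beurling}) for $d=2$ and the freezing lemma (Lemma~\ref{l:freezing}) for $d=3$, one shows that up to $1+O(e^{-un})$ one may further impose that the inner pair be well separated at scale $e^{m}$, that $\omega_1,\omega_2$ avoid $B(z_n,e^{m/2})$, and that the cut-ball ``middle'' confinement hold; the residual cross-constraints then decouple. Hence $\Pb\{\Ac_n(z)\}$ factors, up to $1+O(e^{-un})$, as the mass of the inner non-intersecting pair from $z_n$ to $\partial B(z_n,e^{m})$ — which by Proposition~\ref{prop:sharp-rw} and Lemma~\ref{lem:sep-SRW} is a $z$-independent mesoscopic quantity whose conditional law, seen from outside, lies within total variation $O(e^{-un})$ of the rescaled quasi-invariant measure — times the mass of the outer configuration: a pair of excursions from $0$ and $\partial B_n$ avoiding each other and the mesoscopic inner blob.

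\emph{Steps 2--3 (comparison with Brownian motion and assembly).} Couple $S$ and $W$ by the KMT coupling of Corollary~\ref{cor:sa}: after dividing by $e^{n}$, $S^{(n)}$ and a standard Brownian motion $W^{(n)}$, run until they leave $\Dc$, stay within $O(ne^{-n})$ off an event of probability $O(e^{-10n})$. The outer configuration, and the inner blob regarded as an obstacle for it, live at scales between $e^{-s}=e^{-n/4}$ and $1$, all far above $ne^{-n}$; moreover, by Beurling/freezing and the (reverse) separation lemmas of Section~\ref{sec:nonint}, the two outer non-intersecting macroscopic paths stay a distance $\gg ne^{-n}$ apart at every scale off a conditional event of probability $O(e^{-un})$. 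Since in addition the inner blob is, on both the random-walk and the Brownian side, within $O(e^{-un})$ of the rescaled quasi-invariant measure, the (quasi-invariant-averaged) outer mass matches its Brownian counterpart up to $1+O(e^{-un})$, lattice effects at the spheres contributing only $O(e^{-un})$. By the Brownian cut-ball Green's-function identities of Section~\ref{sec:gcb} — Proposition~\ref{prop:gbar-gs} and its one-point consequence Proposition~\ref{prop:one-point}, with $\Qf^*[\Psi_s]\asymp s^{3-d}$ by Lemma~\ref{lem:QPA} — this Brownian quantity equals a universal constant times $e^{-\eta s}$ times a mesoscopic factor times $G^{\cut}_{\Dc}(z)$, with $\eta=\xi+d-2$ as in \eqref{eq:eta}. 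Multiplying by the inner contribution of Step~1 and tracking the lattice Green's-function factors (which carry the extra $e^{-(d-2)n}$ and the $(\log(1/d_z))^{d-3}$ already present in Lemmas~\ref{lem:biop} and~\ref{lem:Gcut-z}), one finds that all dependence on $m=n-s$ cancels — as it must, the left-hand side being $s$-free — leaving $\Pb\{\Ac_n(z)\}=c_1^{-1}\,e^{-\eta n}\,G^{\cut}_{\Dc}(z)\,[1+O(e^{-un})]$, where $c_1\in(0,\infty)$ is built only from the universal constants $q$, $c_*$, $c_2$, $\mathfrak{q}$ and lattice Green's-function constants, hence depends only on $d$. (Equivalently, one sets $c_1^{-1}=\lim_{n\to\infty}e^{\eta n}\Pb\{\Ac_n(z)\}/G^{\cut}_{\Dc}(z)$, the estimates showing this limit exists, is finite and positive, and is independent of $z$.) The two-point Theorem~\ref{t:two_pt} is proved identically, with Propositions~\ref{prop:two-point} and~\ref{prop:gbar-gs2} in place of Propositions~\ref{prop:one-point} and~\ref{prop:gbar-gs}.

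The step I expect to be hardest is the decoupling in Step~1: the relation $(\omega_1\cup\beta_1)\cap(\beta_2\cup\omega_2)=\{z_n\}$ ties the microscopic behaviour at $z_n$ to the macroscopic excursions, and one must combine the separation lemmas with Beurling-type (or, for $d=3$, freezing) hittability bounds to split it into an inner and an outer event at the cost of only a $1+O(e^{-un})$ factor, while simultaneously checking that the conditional law of the inner configuration, as seen by the outer legs, is exponentially close to the rescaled quasi-invariant measure — exactly the input that lines the random-walk count up with the Brownian one. A secondary, more routine, difficulty is transferring the macroscopic non-intersection event across the strong coupling in Step~2, which is legitimate only because, off an $O(e^{-un})$ event, the two macroscopic non-intersecting paths stay a distance $\gg n$ apart at every scale.
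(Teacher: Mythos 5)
Your plan --- decompose at a mesoscopic scale around $z_n$, match the outer legs with their Brownian counterparts via a strong coupling, use quasi-invariance and sharp non-intersection asymptotics for the inner pair, and assemble through the Brownian cut-point Green's function identities of Section~\ref{sec:gcb} --- is the right skeleton, and the pieces you name (KMT, separation, freezing/Beurling, Proposition~\ref{prop:sharp-rw}, Proposition~\ref{prop:gbar-gs}) are indeed the ones the paper uses. But the proposal has two substantive problems.

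First, you frame the argument around the Brownian cut-ball event (Definition~\ref{def:cut-bm}) and Proposition~\ref{prop:one-point}, but the paper's proof of Theorem~\ref{t:one_pt} never passes through cut balls: the input is Proposition~\ref{prop:gbar-gs} applied (after rescaling by $e^n$) with $s=n/2$ to the outer excursion mass given the sausaged inner configuration, followed by summing over NICE configurations via Proposition~\ref{prop:An4} and Proposition~\ref{prop:sharp-rw}. A route that goes via the cut ball $\Kt_s(z)$ would need a \emph{sharp} comparison of $\Pb\{\Ac_n(z)\}$ with a cut-ball probability, but in the paper's logical ordering the sharp form of the cut-point/cut-ball ratio $f(n)$ (Corollary~\ref{cor:sharp-f}) is a \emph{consequence} of Theorem~\ref{t:one_pt}; the inward coupling in Section~\ref{sec:coupling} only delivers $f(n)$ up to constants (Proposition~\ref{prop:f2}) before Theorem~\ref{t:one_pt} is available. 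So as stated the cut-ball detour is circular, or at best forces you to redo the direct route. You should drop Proposition~\ref{prop:one-point} entirely here and make Proposition~\ref{prop:gbar-gs} the explicit engine (which, to be fair, you also invoke), together with Proposition~\ref{prop:An4}.

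Second, you treat the two outer legs $\omega_1$ (to $0$) and $\omega_2$ (to $\partial B_n$) symmetrically and assert that KMT makes both match their Brownian counterparts up to $1+O(e^{-un})$. These two legs are \emph{not} of the same type: $\omega_2$ is a SRW started from $\partial B(z_n,e^{m})$ stopped on $\partial B_n$, for which Corollary~\ref{cor:sa} applies directly, but $\omega_1$ is a random-walk path-measure (Green's-function weighted) bridge to the single point $0$, and KMT does not couple a SRW bridge to a Brownian bridge in our setting. This asymmetry is exactly what makes Proposition~\ref{prop:bridge-nice}(2) nontrivial: the paper decomposes the bridge at the scale of $\Bc_{n/6}$ near the origin, couples the excursion part via KMT, and then compares the leftover Green's-function masses in ``pricked'' balls via Lemmas~\ref{lem:logS} and~\ref{lem:Gset} (especially delicate when $d=2$, where one must match $\Eb[\log|S_\sigma|]$ against $\Eb[\log|W_{\wt\sigma}|]$). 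A generic appeal to ``KMT plus separation at every scale'' glosses over this step, and it is precisely here that much of the work in Section~\ref{sec:pt_estim} lives.

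Finally, the claim in your Step~1 that ``the conditional law of the inner configuration, as seen by the outer legs, lies within total variation $O(e^{-un})$ of the rescaled quasi-invariant measure'' is the correct intuition, but to get the stated exponential error on the \emph{sum} over configurations you need the precise mechanism of Proposition~\ref{prop:An4} (imported from~\cite{2sidedwalk}): restricting to $\mathrm{NICE}_{n/6}$, the random-walk non-intersection probability $\Pb\{A_{n/2}(\overline\beta)\}$ and its Brownian surrogate $\Pb\{\wt A^{\Delta}_{n/2}(\overline\beta)\}$ are $\simeq$, and the NICE restriction itself is negligible. Your appeal to Lemma~\ref{lem:sep-SRW} and Proposition~\ref{prop:sharp-rw} alone does not produce this, since the configuration-dependent constant $q(\overline\gamma)$ in Proposition~\ref{prop:sharp-rw} must still be matched across the two models.
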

The following theorem gives the sharp asymptotics on two-point function.
\begin{theorem}
	\label{t:two_pt} With the same constant $c_1$ as in Theorem~\ref{t:one_pt}, for all $V\in\Dc$, $z,w\in V$ with $|z-w|\ge e^{-n/6}$,
	\[
	c_{1}^{2}\, e^{2\eta n}\, \Pb\{\Ac_n(z)\cap \Ac_n(w)\}\simeq_V G_{\Dc}^{\cut}(z,w).
	\]
\end{theorem}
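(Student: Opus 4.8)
The plan is to run the two-point analogue of the argument establishing Theorem~\ref{t:one_pt}: pass from the discrete cut-\emph{point} event to a mesoscopic cut-\emph{ball} event, transfer that event to the Brownian side by the strong coupling, invoke the Brownian two-point estimate of Proposition~\ref{prop:two-point}, and read off the constant from the one-point identity of Theorem~\ref{t:one_pt}. Fix the mesoscopic scale $m=\floo{n/2}$. Since $z,w\in V\in\Vc$ forces $d_z\wedge d_w\ge c(V)>0$ and the hypothesis gives $|z-w|\ge e^{-n/6}$, we have $e^{-m}\ll d_z\wedge d_w\wedge|z-w|$, $|z-w|\ge e^{-2m/3}$, and $|z-w|/e^{-m}\ge e^{n/3}$; hence the mesoscopic balls $\Dc_{-m}(z),\Dc_{-m}(w)$ (equivalently $\Bc_{n-m}(z_n),\Bc_{n-m}(w_n)$ after blow-up) are disjoint, stay well inside $\Dc$ and far from $0$, are separated by a large power of $e^{n}$, and both Propositions~\ref{prop:one-point} and~\ref{prop:two-point} apply at scale $m$.

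\emph{Step 1: reduction to a two-point cut-ball event and factorization.} On $\Ac_n(z)\cap\Ac_n(w)$ the walk visits $z_n$ and $w_n$ each exactly once; by symmetry and $G^{\cut}_{\Dc}(z,w)=G^{\cut}_{\Dc}(w,z)$ I would restrict to the event that $\Bc_{n-m}(z_n)$ is entered before $\Bc_{n-m}(w_n)$, and decompose $S[0,\tau_n]$ by its first and last visits to $\Bc_{n-m}(z_n)$ and to $\Bc_{n-m}(w_n)$. The cut-point constraints force that around each mesoscopic ball the two ``outer'' pieces of the walk avoid each other and the ``middle'' excursion stays inside $\Bc_{n-2m/3}(z_n)$ resp.\ $\Bc_{n-2m/3}(w_n)$ (the configurations violating this --- middle excursions escaping the inner ball, deep re-entries of one mesoscopic ball after the other has been visited, degenerate visit orderings --- contribute a factor $1+O(e^{-un})$, controlled via the Beurling and freezing estimates of Proposition~\ref{p:Beurling} and Lemma~\ref{l:freezing}, the separation lemma Lemma~\ref{lem:sep-SRW}, and the a priori two-point bound Lemma~\ref{lem:two-p}). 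Conditioning on the resulting mesoscopic configuration (the outer pieces and their landing data on the mesoscopic spheres), the interiors of the two balls become conditionally independent of each other and of the macroscopic picture by the Markov property and disjointness, so
\[
\Pb\{\Ac_n(z)\cap\Ac_n(w)\}=\ell(n,m)^2\,\Pb\{\wh K_m(z)\cap\wh K_m(w)\}\,[1+O_V(e^{-un})],
\]
where $\wh K_m(\cdot)$ denotes the discrete cut-ball event in $\eZn$ (the natural discretization of $\Kt_m(\cdot)$ of Definition~\ref{def:cut-bm}) and $\ell(n,m)$ is the universal ``inside'' factor obtained by blowing up a mesoscopic ball to unit scale; the two inside factors agree with one another and with the one appearing in the one-point version of this reduction, up to $1+O(e^{-un})$, by scale- and translation-invariance together with a convergence argument for the non-intersection probabilities of NIRW (Lemma~\ref{lem:sep-SRW}, Proposition~\ref{prop:sharp-rw}).

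\emph{Steps 2 and 3: strong coupling and the Brownian input.} Coupling $S$ and $W$ via the strong approximation of Corollary~\ref{cor:sa} --- or via the Skorokhod embedding and the joint Markov property of Lemma~\ref{lem:sem} where that property is needed --- and using that the coupling error is of order $n$, far below the mesoscopic radius $e^{n-m}=e^{n/2}$, I would transfer the discrete cut-ball event to its Brownian counterpart, $\Pb\{\wh K_m(z)\cap\wh K_m(w)\}=\Pb\{\Kt_m(z)\cap\Kt_m(w)\}\,[1+O_V(e^{-un})]$; the constraint in Definition~\ref{def:cut-bm} forcing each middle excursion into the inner ball is precisely what makes the mesoscopic pieces on the two sides matchable under the coupling. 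Finally, since by \eqref{eq:tnus} and Proposition~\ref{prop:two-point} one has $\Eb[L_m(z)L_m(w)]=c_*^2\,\Qf^*[\Psi_m]^{-2}\,e^{2\eta m}\,\Pb\{\Kt_m(z)\cap\Kt_m(w)\}=G^{\cut}_{\Dc}(z,w)\,[1+O_V(e^{-um})]$, we obtain
\[
\Pb\{\Kt_m(z)\cap\Kt_m(w)\}=c_*^{-2}\,\Qf^*[\Psi_m]^2\,e^{-2\eta m}\,G^{\cut}_{\Dc}(z,w)\,[1+O_V(e^{-um})]
\]
(using Lemma~\ref{lem:QPA} that $\Qf^*[\Psi_m]\asymp m^{3-d}$ is finite and positive).

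\emph{Matching the constant, and the main obstacle.} The same three steps for a single point give $\Pb\{\Ac_n(z)\}=\ell(n,m)\,c_*^{-1}\,\Qf^*[\Psi_m]\,e^{-\eta m}\,G^{\cut}_{\Dc}(z)\,[1+O(e^{-un})]$, which combined with Theorem~\ref{t:one_pt} forces $c_1\,e^{\eta(n-m)}\,\ell(n,m)\,c_*^{-1}\,\Qf^*[\Psi_m]=1+O(e^{-un})$. Squaring this and feeding it into the displays above,
\begin{align*}
c_1^2\,e^{2\eta n}\,\Pb\{\Ac_n(z)\cap\Ac_n(w)\}
&=\big(c_1\,e^{\eta(n-m)}\,\ell(n,m)\,c_*^{-1}\,\Qf^*[\Psi_m]\big)^2\,G^{\cut}_{\Dc}(z,w)\,[1+O_V(e^{-un})]\\
&=G^{\cut}_{\Dc}(z,w)\,[1+O_V(e^{-un})],
\end{align*}
which is the assertion. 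The hard part is Step~1: showing that the two mesoscopic contributions genuinely factorize and that each reduces to the one-point inside factor, \emph{uniformly over all} admissible $z,w$ --- in particular over pairs at the smallest allowed distance $|z-w|\asymp e^{-n/6}$, where only the moderate mesoscopic separation $e^{n/3}$ is available. This forces one to use the sharp, exponential-rate versions of the separation and quasi-invariance estimates (Proposition~\ref{prop:sharp-rw} and \eqref{eq:qi_bm_definition}, \eqref{eq:qi_bm_definition-inv}) and a careful accounting, via Lemma~\ref{lem:two-p}, of the negligible configurations; by comparison the strong-coupling transfer in Step~2 is a routine two-point adaptation of the one-point argument.
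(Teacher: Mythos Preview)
Your route is viable but differs from the paper's. The paper never passes through cut balls to prove Theorem~\ref{t:two_pt}. Instead it runs the exact two-point analogue of Theorem~\ref{t:one_pt}: decompose $S[0,\tau_n]$ around $z_n$ and $w_n$ at scale $e^{n/6}$ into a pair of \emph{nice} configurations $(\overline\zeta,\overline\zeta')\in\mathrm{NICE}_{n/6}(z,w)$ and an outer triple $(\lambda_1,\lambda_2,\lambda_*)$, restrict to nice configurations (Proposition~\ref{prop:bridge-nice2}(1)), transfer the discrete outer triple event to its Brownian counterpart via the strong approximation (Proposition~\ref{prop:bridge-nice2}(2)), and then invoke Proposition~\ref{prop:gbar-gs2} (not Proposition~\ref{prop:two-point}) to relate the Brownian triple event directly to $G^{\cut}_{\Dc}(z,w)$. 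The summation over nice configurations and the constant-matching run exactly as in the one-point proof, squaring the factor $c_2\,q$ and yielding $c_1^2$.

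Your detour through cut balls at scale $m=n/2$ would work, but Step~1 as written undersells the key input. The claim that the two ``inside factors'' are universal and equal --- independently of the outer configuration --- is precisely the content of the inward coupling (Theorem~\ref{coupling}) and its consequences Propositions~\ref{prop:f2}--\ref{prop:AKAA}, not of Lemma~\ref{lem:sep-SRW} and Proposition~\ref{prop:sharp-rw}, which concern outward-growing NIRW's. What the paper's direct route buys is that it stays entirely within Section~\ref{sec:pt_estim} and needs none of the Section~\ref{sec:coupling}--\ref{sec:cpcb} machinery; what your route buys is that the Brownian input reduces to the cleaner Proposition~\ref{prop:two-point} rather than Proposition~\ref{prop:gbar-gs2}.
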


\subsection{One-point estimate}
This section is devoted to proving Theorem~\ref{t:one_pt}, the sharp one-point estimate. 
To begin with, we present an up-to-constants estimate for $\Pb\{\Ac_n(z)\}$, which follows from Lemma~\ref{lem:biop} directly.
\begin{lemma}\label{lem:utc}
	For any $z\in \Dc\setminus \{0\}$, we have 
	\[
	\Pb\{\Ac_n(z)\} \asymp a(z) e^{-\eta n},
	\]
	where $a(z)$ is defined in \eqref{eq:g-z} and of the same order as $G^{\cut}_{\Dc}(z)$.
\end{lemma}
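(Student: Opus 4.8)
The plan is to read Lemma~\ref{lem:utc} off from the up-to-constants one-point bound of Lemma~\ref{lem:biop}, combined with the comparison $a(z)\asymp G^{\cut}_{\Dc}(z)$ furnished by Lemma~\ref{lem:Gcut-z}; the only genuine task is a change of scale and some bookkeeping of exponents. The event $\Ac_n(z)$ concerns the walk run until it exits $\Bc_n=B_{e^n}(0)$, so it coincides with the event ``$z_n$ is a cut point of $S[0,\iota]$'' where $\iota$ is the exit time of the ball of radius $e^n$; this is exactly the situation covered by Lemma~\ref{lem:biop} once one takes the radius parameter there to be $e^n$ (its proof works verbatim at any radius) and the lattice point to be $z_n=\floo{e^nz}$.

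First I would record the elementary estimate $r:=\dist(0,z_n,\partial B_{e^n})=e^n d_z+O(1)$, a consequence of $|z_n-e^nz|\le\sqrt d$ and the definitions of $d_z$ and $z_n$. Under the standing hypothesis $d_z\ge e^{-n/6}$ one then has $r\asymp e^n d_z$ with $e^n d_z\ge e^{5n/6}$; in particular $r\to\infty$, so Lemma~\ref{lem:biop} genuinely applies and its conclusion is insensitive to the $O(1)$ ambiguity in $r$.

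Next I would substitute $r\asymp e^n d_z$ into the two regimes of Lemma~\ref{lem:biop} and simplify with $\eta=\xi+d-2$. On $|z|\ge 1/2$ (equivalently $|z_n|\ge e^n/2$, up to an $O(e^{-n})$-thin band of $|z|$ on which $d_z\asymp 1$ and every quantity in sight is of constant order times $e^{-\eta n}$, so that the choice of regime is immaterial) the bound reads $r^{1-\xi}(e^n)^{1-d}\asymp d_z^{1-\xi}e^{n(2-\xi-d)}=d_z^{1-\xi}e^{-\eta n}$. On $|z|<1/2$, where $d_z=|z|<1/2$, the bound reads $r^{-\eta}\big(\log(e^n/r)\big)^{d-3}\asymp d_z^{-\eta}e^{-\eta n}\big(\log(d_z^{-1})\big)^{d-3}$, using that $\log(e^n/r)=\log(d_z^{-1})+O(e^{-5n/6})$ while $\log(d_z^{-1})\ge\log 2$, so the two logarithmic factors are comparable. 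Matching these two expressions against the definition of $a(z)$ in \eqref{eq:g-z} yields $\Pb\{\Ac_n(z)\}\asymp a(z)e^{-\eta n}$, and the remaining assertion $a(z)\asymp G^{\cut}_{\Dc}(z)$ is precisely Lemma~\ref{lem:Gcut-z}.

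There is essentially no obstacle here: the statement is a bookkeeping corollary of Lemma~\ref{lem:biop} and Lemma~\ref{lem:Gcut-z}, and the only step requiring even a line of care is the passage from the continuum quantities $d_z$ and $\log(d_z^{-1})$ to their discrete blow-ups $r$ and $\log(e^n/r)$ — which is exactly what the hypothesis $d_z\ge e^{-n/6}$ is in place to make trivial.
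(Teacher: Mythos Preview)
Your proposal is correct and matches the paper's approach exactly: the paper's entire proof is the one-line remark that the lemma ``follows from Lemma~\ref{lem:biop} directly,'' and what you have written is precisely the scale change and exponent bookkeeping that makes this direct deduction explicit. One small caveat: the lemma as stated is for arbitrary $z\in\Dc\setminus\{0\}$, not only for $d_z\ge e^{-n/6}$; your argument still goes through once you observe that the $\asymp$ is understood for $n$ large (so that $e^nd_z\to\infty$ for any fixed $z$), but you should not present $d_z\ge e^{-n/6}$ as a hypothesis of this particular lemma.
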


For any $\overline\beta=(\beta_1,\beta_2)\in \Xc_{n/6}$ (see \eqref{eq:tXr} for the definition) with ending points $y_i:=\beta_i(t_{\beta_i})$ for $i=1,2$, we use the following notation to denote the non-intersecting (abbreviated as ``NI'' below) event for BM's with initial configuration given by $(\Kc n)$-sausage of $\ol\beta$ (recall the definition of sausage in \eqref{eq:sausage}): 
\[
\wt A^\Delta_{n/2}(\overline\beta):= \wt A_{n/2}(\Theta(\overline\beta,\Kc n), \ol y ),
\]
where ${\cal K}$ is the  constant from the strong approximation (see Corollary~\ref{cor:sa}), $\ol y=(y_1,y_2)$, and the NI event $\wt A_{n/2}(\cdot,\cdot)$ is defined in \eqref{eq:Ar}.

Recall the definition of NI events for RW's $A_{m}$ in \eqref{eq:tilde-A'} without initial configuration as well as  $A_{m}(\overline\beta)$ in \eqref{eq:tilde-A} with initial configuration $\overline\beta$. 
The following proposition has been proved in \cite{2sidedwalk}, see (3.7), (3.9), (3.12) and Proposition 3.19 therein. 

\begin{proposition}\label{prop:An4}
	There is a set of ``nice'' configurations in $\Xc_{n/6}$, denoted by $\mathrm{NICE}_{n/6}$, such that if we let 
	$$\Nc_{n/6}:=\big\{\baS[0,\tau_{n/6}]\in\mathrm{NICE}_{n/6}\big\},$$ then 
	\begin{equation}\label{eq:nice34}
		\Pb\{ A_{n/2}\}\simeq \Pb\{ A_{n/2}\,\cap\, \Nc_{n/6} \}.
	\end{equation}
	Moreover, for any $\overline\beta\in \mathrm{NICE}_{n/6}$, we have 
	\begin{equation}\label{eq:A-tA}
	\Pb\{  A_{n/2}(\overline\beta) \} \simeq
	\Pb\{ \wt A^\Delta_{n/2}(\overline\beta) \}.
	\end{equation}
\end{proposition}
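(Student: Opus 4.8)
The plan is to assemble both displays from the separation lemmas for NIRW's together with the strong approximation, following the arguments of \cite{2sidedwalk}. For \eqref{eq:nice34}, one decomposes the non-intersection event by the configuration at the mesoscopic scale $n/6$: since $A_{n/2}\subseteq A_{n/6}$, the strong Markov property of $\baS$ gives
\[
\Pb\{A_{n/2}\}=\sum_{\overline\beta\in \Xc_{n/6}}\Pb\{\baS[0,\tau_{n/6}]=\overline\beta\}\,\Pb\{A_{n/2}(\overline\beta)\}.
\]
One takes $\mathrm{NICE}_{n/6}$ to consist of those $\overline\beta=(\beta^1,\beta^2)$ that are well-separated at the terminal scale in the sense $\Delta_{n/6}(\overline\beta)\ge 1/10$ --- and, iterating, at a bounded number of coarser dyadic sub-scales --- and that are mildly regular (e.g.\ each $\beta^i$ makes only boundedly many crossings of each dyadic annulus, with endpoint not too deep). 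The separation lemma for NIRW's (Lemma~\ref{lem:sep-SRW}), applied at scale $n/6$ and iterated over scales, shows that a configuration failing any of these properties carries an extra factor $O(e^{-un})$ relative to the total, so that $\sum_{\overline\beta\notin \mathrm{NICE}_{n/6}}\Pb\{\baS[0,\tau_{n/6}]=\overline\beta\}\Pb\{A_{n/2}(\overline\beta)\}=O(e^{-un})\Pb\{A_{n/2}\}$; this is precisely the content of (3.7), (3.9), (3.12) of \cite{2sidedwalk}, and it yields \eqref{eq:nice34}.

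For \eqref{eq:A-tA}, fix $\overline\beta\in\mathrm{NICE}_{n/6}$ with terminal points $\ol y=(y_1,y_2)$ and, for each $i=1,2$, couple $S^i$ started at $y_i$ with a Brownian motion $W^i$ started at $y_i$ via Corollary~\ref{cor:sa}, so that on an event $G$ with $\Pb\{G^c\}=O(e^{-10n})$ one has $\max_{0\le t\le \tau_{n+1}\vee T_{n+1}}|S^i_t-W^i_t|\le \Kc n$ for both $i$. On $G$ the trace $S^i[0,\tau_{n/2}]$ lies inside the $\Kc n$-sausage of $W^i[0,T_{n/2}]$ and conversely, the discrepancy between $\tau_{n/2}$ and $T_{n/2}$ being absorbed into the sausage since $\Kc n\ll e^{n/2}$. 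Consequently $\wt A^\Delta_{n/2}(\overline\beta)\cap G\subseteq A_{n/2}(\overline\beta)$: if the Brownian traces together with the fattened initial pieces $\Theta(\overline\beta,\Kc n)$ are mutually avoiding, then so are the random-walk traces with $\overline\beta$. As $\Pb\{A_{n/2}(\overline\beta)\}\asymp e^{-\xi(n/2-n/6)}=e^{-\xi n/3}$ by Proposition~\ref{prop:sharp-rw}, which dominates $\Pb\{G^c\}$, this gives $\Pb\{A_{n/2}(\overline\beta)\}\gtrsim \Pb\{\wt A^\Delta_{n/2}(\overline\beta)\}$ (indeed with multiplicative error $1+O(e^{-un})$).

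The reverse inequality is the delicate point, and is where I expect the main obstacle: fattening the initial configuration can only decrease a non-intersection probability, so the coupling alone bounds things in one direction only. The remedy is to run the NIRW's conditioned on $A_{n/2}(\overline\beta)$ down to a scale slightly below $n/2$ and invoke the separation lemma once more: with uniformly positive conditional probability the two walks are well-separated at the terminal scale, and on that sub-event a mesoscopic thinning by $\Kc n$ cannot create intersections, so $A_{n/2}(\overline\beta)\cap G$ intersected with the separation event is contained in the Brownian non-intersection event with a $\Kc n$-shrunk configuration. That latter probability is $\simeq\Pb\{\wt A^\Delta_{n/2}(\overline\beta)\}$ because, by the quasi-invariance estimate of Section~\ref{subsec:qi}, perturbing the initial configuration of NIBM's by a mesoscopic amount changes the non-intersection probability only by a factor $1+O(e^{-un})$; this is Proposition 3.19 of \cite{2sidedwalk}. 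Combining the two directions gives \eqref{eq:A-tA}. The subtlety one must be careful about throughout is that $\mathrm{NICE}_{n/6}$ has to encode separation across a range of scales (not merely at the terminal scale), since this robustness is exactly what makes the mesoscopic perturbation by $\Kc n$ harmless in all of the separation estimates invoked above.
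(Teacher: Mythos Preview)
The paper does not prove this proposition at all: it is stated as a direct citation of (3.7), (3.9), (3.12) and Proposition~3.19 of \cite{2sidedwalk}, and Remark~\ref{rem:nice} records that $\mathrm{NICE}_{n/6}$ is delineated by specific events $F_m,G_m,H_m$ from Section~3.2 of that reference (not merely terminal-scale separation). Your sketch is a reasonable reconstruction of the argument one finds in \cite{2sidedwalk}, and you correctly name the three ingredients: the separation lemma for \eqref{eq:nice34}, the strong approximation, and quasi-invariance for \eqref{eq:A-tA}.

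There is one genuine gap in your treatment of \eqref{eq:A-tA}. The inclusion $\wt A^\Delta_{n/2}(\overline\beta)\cap G\subseteq A_{n/2}(\overline\beta)$ is not true as written: in $\wt A^\Delta_{n/2}(\overline\beta)$ only the \emph{initial} configuration $\overline\beta$ is fattened by $\Kc n$, not the running Brownian pieces. On $G$ one may deduce $S^i\cap\beta^{3-i}=\emptyset$ from $W^i\cap\Theta(\beta^{3-i},\Kc n)=\emptyset$, but $W^1\cap W^2=\emptyset$ alone does not yield $S^1\cap S^2=\emptyset$, since each walk can drift by $\Kc n$ toward the other. The repair---and this is exactly what \cite{2sidedwalk} does---is to show that the ``close-but-disjoint'' event $\{0<\dist(W^1[0,T_{n/2}],W^2[0,T_{n/2}])\le 2\Kc n\}$ (and its random-walk analogue) has probability $O(e^{-un})\,\Pb\{A_{n/2}(\overline\beta)\}$. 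This is where the full strength of the NICE hypothesis across multiple scales is used, via the freezing lemma (Lemma~\ref{l:freezing}) for $d=3$ or the Beurling estimate (Proposition~\ref{p:Beurling}) for $d=2$; compare the handling of $F_4$ in the proof of Proposition~\ref{prop:sharp-E}. Once these near-intersection events are controlled symmetrically, the two probabilities coincide up to a factor $1+O(e^{-un})$, and your outline for the reverse inequality then goes through.
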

\begin{remark}\label{rem:nice}
	In fact, the set $\mathrm{NICE}_{n/6}$ is exactly delineated by events $F_m, G_m$ and $H_m$ defined in Section 3.2 of \cite{2sidedwalk} with $m=n/2$. A crucial property of such nice configuration is that they are well-separated (see the paragraph just before Lemma~\ref{lem:sep-SRW}). Since we do not need the exact definition of such events at this stage, we omit it for simplicity.
\end{remark}

In this section, we always assume that $z\in \Dc$ with $d_z=\dist(0,z,\partial\Dc)\ge e^{-n/6}$. 
Recall that $z_n=\floo{e^{n}z}$ in \eqref{eq:dz-zn} so that $\Bc_{5n/6}(z_n)\subset \Bc_n$.
For $\overline\zeta=(\zeta_1,\zeta_2)\in  \Xc_{n/6}(z_n)$ (recall this is the set of NI paths with center $z_n$) with starting point $z_n$ and ending points $(x_1,x_2)\in \partial\Bc_{n/6}(z_n)\times\partial\Bc_{n/6}(z_n)$, define the set of NI discrete paths with initial configuration $\ol\zeta$ that end at $0$ and $\partial\Bc_n$ respectively by
\begin{equation}\label{eq:Af'}
	A_{0,\partial\Bc_n}(\overline\zeta):=\{ (\lambda_1,\lambda_2)\in \Gamma^{\Bc_n}_{x_1,0}\times\Gamma^{\Bc_n}_{x_2,\partial\Bc_n}: \lambda_1\cap(\zeta_2\cup\lambda_2)=\lambda_2\cap(\zeta_1\cup\lambda_1)=\emptyset  \},
\end{equation} 
and its continuous analogue by
\begin{equation}\label{eq:Af}
\wt A^{\Delta}_{0,\partial\Dc_n}(\overline\zeta):=\left\{
	(\gamma_1,\gamma_2)\in \wt\Gamma^{\Dc_n}_{x_1,0}\times\wt\Gamma^{\Dc_n}_{x_2,\partial\Dc_n}: \gamma_1\cap(\wt\zeta_2\cup\gamma_2)=\gamma_2\cap(\wt\zeta_1\cup\gamma_1)=\emptyset \right\},
\end{equation}
where we write $\wt\zeta_i:=\Theta(\zeta_i,\Kc n)$ for the $(\Kc n)$-sausage of $\zeta_i$ for $i=1,2$ in the rest of this section. We also use the notation $\wt\zeta:=(\wt\zeta_1,\wt\zeta_2)$.

\begin{remark}\label{rmk:Ablow-up}
	Note that $\wt A^{\Delta}_{0,\partial\Dc_n}(\overline\zeta)$ can be viewed as the blow-up of the event $\ol A (e^{-n}\wt\zeta,e^{-n}\ol x)$ defined in \eqref{eq:bar A} with $s=n/2$.
\end{remark}

Let $\text{NICE}_{n/6}(z)=z_n+\text{NICE}_{n/6}$. Suppose $z_n$ is a cut point of $\lambda$ with $\lambda:=S[0,\tau_n]$, then we can extract a pair of paths $\overline\zeta\in  \Xc_{n/6}(z_n)$ from $\lambda$ by starting from $z_n$ and tracing along $\lambda$ chronologically and reverse-chronologically respectively at the first visits to $\partial\Bc_{n/6}(z_n)$, and $\mathcal{N}_{n/6}(z)$ is defined as the event that $\overline\zeta\in \text{NICE}_{n/6}(z)$. 
In other words, we use the following (unique) decomposition on the event $z_n\in \Ac_{\lambda}$:
\begin{equation}\label{eq:lbd-dec}
	\lambda=[\lambda_1]^R\oplus [\zeta_1]^R\oplus \zeta_2\oplus \lambda_2
\end{equation}
such that $(\zeta_1,\zeta_2)\in\Xc_{n/6}(z_n)$ and $(\lambda_1,\lambda_2)$ is a pair in $A_{0,\partial\Bc_n}(\overline\zeta)$.
Recall that $\nu^{\Bc_n}_{\cdot,\cdot}$ and $\mu^{\Dc_n}_{\cdot,\cdot}$ are the random walk path measure and the Brownian path measure introduced in Section~\ref{subsec:pm}, respectively. 

\begin{proposition}\label{prop:bridge-nice}
	Let $z\in\Dc$ with $d_z\ge e^{-n/6}$.
	\begin{enumerate}
		\item[(1)] With the definition given above, we have 
		\begin{equation}\label{eq:Anz}
		\Pb\{\Ac_n(z)\}\simeq\Pb\{\Ac_n(z)\,\cap\, \mathcal{N}_{n/6}(z)\}.
		\end{equation}
		\item[(2)] For all $\overline\zeta\in \mathrm{NICE}_{n/6}(z)$, we have 
		\begin{equation}\label{eq:A0p}
		\nu^{\Bc_n}_{x_1,0}\otimes\nu^{\Bc_n}_{x_2,\partial\Bc_n}\{ A_{0,\partial\Bc_n}(\overline\zeta) \} \simeq \mu^{\Dc_n}_{x_1,0}\otimes\mu^{\Dc_n}_{x_2,\partial\Dc_n}\{ \wt A^{\Delta}_{0,\partial\Dc_n}(\overline\zeta) \}.
		\end{equation}
	\end{enumerate}
	
\end{proposition}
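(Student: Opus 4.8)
The plan is to reduce Part (1) to the analogue for non-intersecting random walks, \eqref{eq:nice34} in Proposition~\ref{prop:An4}, and Part (2) to its companion \eqref{eq:A-tA}, the one new feature being that in $A_{0,\partial\Bc_n}(\overline\zeta)$ one walk terminates at the interior point $0$ and the other at $\partial\Bc_n$, rather than both at a common sphere. For Part (1), I would first use the unique decomposition \eqref{eq:lbd-dec} of $\lambda=S[0,\tau_n]$ on the event $\Ac_n(z)$, together with the strong Markov property, to write
\[
\Pb\{\Ac_n(z)\}=\sum_{\overline\zeta\in\Xc_{n/6}(z_n)} w(\overline\zeta)\,\nu^{\Bc_n}_{x_1,0}\otimes\nu^{\Bc_n}_{x_2,\partial\Bc_n}\{A_{0,\partial\Bc_n}(\overline\zeta)\},
\]
where $\overline\zeta=(\zeta_1,\zeta_2)$ has endpoints $x_1,x_2\in\partial\Bc_{n/6}(z_n)$ and $w(\overline\zeta)=(2d)^{-\mathrm{len}(\zeta_1)-\mathrm{len}(\zeta_2)}$; the identity is exact because $\lambda\mapsto(\lambda_1,\zeta_1,\zeta_2,\lambda_2)$ is a bijection onto $\{\overline\zeta\in\Xc_{n/6}(z_n)\}\times\{(\lambda_1,\lambda_2)\in A_{0,\partial\Bc_n}(\overline\zeta)\}$. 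Restricting this decomposition to $\mathcal{N}_{n/6}(z)$ yields the same sum over $\overline\zeta\in\mathrm{NICE}_{n/6}(z)$, so \eqref{eq:Anz} is equivalent to the statement that the contribution of the configurations $\overline\zeta\notin\mathrm{NICE}_{n/6}(z)$ is $O(e^{-un})$ relative to the whole.

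To get this, I would continue the middle pair out to scale $n/2$ around $z_n$ --- this is legitimate because $d_z\ge e^{-n/6}$ forces $|z_n|\wedge\dist(z_n,\partial\Bc_n)\ge e^{5n/6}-O(1)$, so $\Bc_{n/2}(z_n)\subset\Bc_n$ and $0\notin\overline{\Bc_{n/2}(z_n)}$ --- and rewrite both sums over the scale-$n/2$ configuration $\overline\zeta'\in\Xc_{n/2}(z_n)$, which is the natural scale since by Remark~\ref{rem:nice} ``niceness'' is really a property of the pair up to scale $n/2$. The key structural input, which is part of the standard separation-lemma machinery, is that the associated completion cost $m'(\overline\zeta')$ satisfies $m'(\overline\zeta')\lesssim M_0$ for \emph{every} $\overline\zeta'$, and $m'(\overline\zeta')\asymp M_0$ when $\overline\zeta'$ is well-separated (hence when $\overline\zeta'\in\mathrm{NICE}$), where $M_0=M_0(z,n)$ is a reference value: the upper bound follows from the standard non-intersection estimate (cf.\ Proposition~\ref{prop:sharp-rw}) applied to the two completions together with the Harnack principle for $G_{\Bc_n}(\cdot,0)$, and the matching lower bound on the nice side from Lemma~\ref{lem:sep-SRW}. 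Then the relative weight in question is bounded by a constant times $\Pb\{A_{n/2},\,\mathcal{N}_{n/6}^c\}/\Pb\{A_{n/2},\,\mathcal{N}_{n/6}\}$ (centred at $z_n$ by translation invariance), which is $O(e^{-un})$ by \eqref{eq:nice34}, the exponential rate being exactly the ``$\simeq$'' recorded there.

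For Part (2), fix $\overline\zeta\in\mathrm{NICE}_{n/6}(z)$ and imitate the proof of \eqref{eq:A-tA}. Using Corollary~\ref{cor:sa}, couple the random walk from $x_1$ (run until it hits $0$) and the random walk from $x_2$ (run until it exits $\Bc_n$) with Brownian motions from the same points so that each discrete piece stays within $\mathcal{K}n$ of its continuous counterpart up to the relevant exit time, with failure probability $O(e^{-10n})$; by construction $\widetilde A^{\Delta}_{0,\partial\Dc_n}(\overline\zeta)$ is precisely the blow-up of the continuous configuration (cf.\ Remark~\ref{rmk:Ablow-up}). On the coupling event, non-intersection of the discrete pieces with the $(\mathcal{K}n)$-sausage $\widetilde\zeta$ differs from non-intersection of the Brownian pieces with $\widetilde\zeta$ only through ``near-miss'' configurations, where two pieces come within $O(\mathcal{K}n)$ of each other (or of $\widetilde\zeta$) while being macroscopically non-intersecting. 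Since $\overline\zeta$ is nice it is well-separated at scale $e^{n/6}$, and by the separation lemmas (Lemmas~\ref{lem:sep-SRW} and~\ref{lem:sep-BM}) the completions become and remain macroscopically separated at every dyadic scale between $e^{n/6}$ and $e^n$; hence the probability of a near-miss at scale $R\ge e^{n/6}$ is at most a positive power of $\mathcal{K}n/R\le \mathcal{K}n\,e^{-n/6}$, by the Beurling estimate (Proposition~\ref{p:Beurling}) when $d=2$ and by the freezing lemmas (Lemma~\ref{l:freezing}) when $d=3$, and summing over the $O(n)$ scales still leaves a factor $1+O(e^{-un})$. The one genuinely new point relative to \eqref{eq:A-tA} is the endpoint $0$: I would perform a last-exit decomposition of the walk (and of the Brownian path measure $\mu^{\Dc_n}_{x_1,0}$) at a mesoscopic sphere $\partial\Bc_j(0)$ around $0$ with, say, $j=n/4$; the portion inside $\Bc_j(0)$ does not meet $\widetilde\zeta$, whether the other piece enters $\Bc_j(0)$ is governed by the same discrete and continuous estimates, and the resulting comparison of the (possibly perturbed) Green's functions $G_{\Bc_j(0)}(\cdot,0)$ and $\widetilde G_{\Dc_j(0)}(\cdot,0)$ is classical and stable under the $O(\mathcal{K}n)$ discrepancy since $\mathcal{K}n\ll e^{j}$. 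Together with the analogous (and easier) treatment of the endpoint $\partial\Bc_n$, this gives \eqref{eq:A0p}.

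The main work lies in the quantitative part of Part (2): upgrading the $O(\mathcal{K}n)$-closeness provided by the strong approximation into an error that is exponentially small in $n$ on the non-intersection event. This forces one to run the separation lemmas together with Beurling-type near-miss bounds uniformly across all dyadic scales, and to control carefully the endpoint behaviour near $0$ (and the matching of the point-anchored normalizations $\|\nu^{\Bc_n}_{x_1,0}\|$ and $\|\mu^{\Dc_n}_{x_1,0}\|$). The analogous subtlety in Part (1) is the need to propagate the exponential rate hidden in ``$\simeq$'' through the decomposition, which is exactly why \eqref{eq:nice34} and Proposition~\ref{prop:sharp-rw} are quoted with exponential error bounds rather than merely up-to-constants.
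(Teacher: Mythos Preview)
Your treatment of Part (1) is fine and matches the paper's: both reduce to the machinery behind \eqref{eq:nice34} (the paper simply points to the argument in \cite{2sidedwalk}), and your sketch of how the ``nice'' event carries the exponential rate through the decomposition is correct.

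For Part (2) there is a genuine gap in the coupling step. You propose to ``couple the random walk from $x_1$ (run until it hits $0$)'' with a Brownian motion via Corollary~\ref{cor:sa}, but $\nu^{\Bc_n}_{x_1,0}$ is a bridge-type path measure (total mass $G_{\Bc_n}(x_1,0)$), not the law of a stopped SRW, and the strong approximation does not apply to it; the paper explicitly flags this as the main obstruction (see the discussion following the statement of Proposition~\ref{prop:bridge-nice}). Your proposed fix, a \emph{last-exit} decomposition at $\partial\Bc_j(0)$, does not resolve it: the outer piece (from $x_1$ to the last visit of $\partial\Bc_j(0)$) is still a point-to-point path measure, not a stopped walk, so you still cannot couple it.

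The paper instead uses a \emph{first-entrance} decomposition at $\partial\Bc_{n/6}$: the outer piece $\xi_1=S_1[0,\tau_{n/6}]$ is a genuine stopped SRW (so KMT applies), and the inner piece contributes the Green's function $G_{\Bc_n\setminus(\zeta_2\cup\xi_2)}(y_1,0)$ in the \emph{pricked} domain, not in the plain ball $\Bc_j(0)$ as you wrote. In $d=3$ the pricking is harmless since $G_{n/3}(y_1)\simeq G_n(y_1)$, but in $d=2$ the comparison $G_{\Bc_n\setminus\gamma}(y_1)\simeq\widetilde G_{\Dc_n\setminus\widetilde\gamma}(\widetilde y_1)$ is a real step (Lemmas~\ref{lem:logS}--\ref{lem:Gset}), requiring a separate coupling argument for the exit point of the pricked domain. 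Your phrase ``classical and stable'' undersells this; it is precisely where the extra work sits. The near-miss analysis you describe (separation plus Beurling/freezing across scales) is indeed what the paper does for the outer pieces, packaged as Proposition~\ref{prop:sharp-E}, but it must be run on $(\xi_1,\xi_2)$ rather than on the full bridge.
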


We first show how to prove Theorem~\ref{t:one_pt} by using the above two propositions, together with Proposition~\ref{prop:gbar-gs} by setting $s=n/2$ (see Remark~\ref{rmk:Ablow-up}). 
\begin{proof}[Proof of Theorem~\ref{t:one_pt} assuming Proposition  \ref{prop:bridge-nice}]
    From \eqref{eq:lbd-dec},
	we obtain that
	\begin{equation}\label{eq:one}
	\Pb\{\Ac_n(z)\}=\sum_{\overline\zeta\in   \Xc_{n/6}(z_n)} \nu^{\Bc_n}_{x_1,0}\otimes\nu^{\Bc_n}_{x_2,\partial\Bc_n}\{ A_{0,\partial\Bc_n}(\overline\zeta) \} p(\overline\zeta),
	\end{equation}
	where 
	\begin{equation}\label{eq:p-zeta}
	p(\overline\zeta):=\Pb^{z_n,z_n}\{ \baS[0,\tau_{n/6}]=\overline\zeta \}.
	\end{equation}
	By Proposition~\ref{prop:bridge-nice}, we obtain that 
	\begin{align*}
		  \Pb\{\Ac_n(z)\}\ 		\overset{\eqref{eq:Anz}}{\simeq} \ & \sum_{\overline\zeta\in \text{NICE}_{n/6}(z)}  \nu^{\Bc_n}_{x_1,0}\otimes\nu^{\Bc_n}_{x_2,\partial\Bc_n}\{ A_{0,\partial\Bc_n}(\overline\zeta) \}  \, p(\overline\zeta)\\
		\overset{\eqref{eq:A0p}}{\simeq} \ &\sum_{\overline\zeta\in \text{NICE}_{n/6}(z)}  \mu^{\Dc_n}_{x_1,0}\otimes\mu^{\Dc_n}_{x_2,\partial\Dc_n}\{ \wt A^{\Delta}_{0,\partial\Dc_n}(\overline\zeta) \} \, p(\overline\zeta).
	\end{align*}
	
	Plugging $s=n/2$ in Proposition~\ref{prop:gbar-gs} and rescaling by $e^n$ (see Remark~\ref{rmk:Ablow-up}), we obtain that for all $\overline\zeta\in \text{NICE}_{n/6}(z)$,
	\begin{equation}\label{eq:sim}
		e^{\xi n/2} e^{(d-2)n} \mu^{\Dc_n}_{x_1,0}\otimes\mu^{\Dc_n}_{x_2,\partial\Dc_n}\{ \wt A^{\Delta}_{0,\partial\Dc_n}(\overline\zeta) \}
		\simeq c_2\, G_{\Dc}^{\cut}(z)\, \Pb\{ \wt A^{\Delta}_{n/2}(\overline\zeta) \}.
	\end{equation} 
By Proposition~\ref{prop:An4}, 
	\begin{align}\label{eq:sum1}
		\sum_{\overline\zeta\in \text{NICE}_{n/6}(z)} \Pb\{ \wt A^{\Delta}_{n/2}(\overline\zeta) \} \, p(\overline\zeta)
		 \overset{\eqref{eq:A-tA}}{\simeq} \sum_{\overline\zeta\in \text{NICE}_{n/6}(z)} \Pb\{ A_{n/2}(\overline\zeta) \} \, p(\overline\zeta) 
		 \overset{\eqref{eq:nice34}}{\simeq} \Pb\{ A_{n/2} \}
		 \simeq q\, e^{-\xi n/2},
	\end{align}
    where we used Proposition \ref{prop:sharp-rw} in the last inequality.	
    Finally, we get 
	\[
	e^{(d-2)n} \Pb\{\Ac_n(z)\}
	\simeq c_2 \, G_{\Dc}^{\cut}(z)  \, q\, e^{-\xi n}.
	\]
	This concludes the proof of the theorem by choosing $c_1=(c_2\, q)^{-1}$.
\end{proof}

\medskip 

The rest of this section is devoted to proving Proposition~\ref{prop:bridge-nice}. 
We want to use the strong approximation (see Corollary~\ref{cor:sa}) to prove it. 
We can do it for the second marginal $\nu^{\Bc_n}_{x_2,\partial\Bc_n}$ since this is just the law of a SRW started from $x_2$ and stopped upon reaching $\partial\Bc_n$, see \eqref{eq:S-stop}. 
However, we do not have a nice coupling for the first marginal, the path measure $\nu^{\Bc_n}_{x_1,0}$ (in fact, the KMT coupling is still in force for two dimensional bridges, see Corollary 3.2 in \cite{RWLS}; however, the analogue for bridges in three dimensions have not been proved yet). This issue will make our argument more involved. 

To overcome the issue, we decompose the path sampled from $\nu^{\Bc_n}_{x_1,0}$ into two parts: the first part is a SRW started from $x_1$ and stopped upon reaching the mesoscopic ball $\Bc_{n/6}$ about the origin; and the second part is a path sample from the path measure between the endpoint of the first part and $0$ restricting to a certain NI event. On the one hand, the first part can be coupled with the counterpart of BM by the strong approximation. On the other hand, we can compare the total mass of the second part between the discrete and the continuous, which are just the Green's function in a {``pricked''} ball, i.e. (say, in the discrete case), $\Bc_n$  with the union of $\zeta_2$ and the trace of SRW started from $x_2$ stopped on reaching $\partial\Bc_n$ subtracted (recall that $\zeta_2$ is the second marginal of the initial configuration and $x_2$ is its endpoint). Since with high probability SRW from $x_2$ will not get close the the ball $\Bc_{n/6}$ on the NI event, we can show that the Green's functions are close to each other (see Lemma~\ref{lem:Gset} for the more involved case $d=2$). 

\medskip

We give some notation first, and one can see {Figure}~\ref{img:Ec} for an illustration. The following notion applies to $i=1,2$ respectively.
\begin{itemize}
	\item Let $\overline\zeta=(\zeta_1,\zeta_2)$ be a pair of paths in $ \text{NICE}_{n/6}(z)$ with ending points $(x_1,x_2)$. Write $\wt\zeta:=(\wt\zeta_1,\wt\zeta_2)$ for the $(\Kc n)$-sausage of $\ol\zeta$ as before.
	\item Let $S_i$ and $W_i$ be the simple random walk and the Brownian motion started from $x_i$, respectively.
	\item Let $\xi_1=S_1[0,\tau_{n/6}]$, $\xi_2=S_2[0,\tau_{n}]$ and 
	$\widetilde\xi_1=W_1[0,T_{n/6}]$, $\widetilde\xi_2=W_2[0,T_{n}]$.
	\item Write $\xi_i=\eta_i\oplus \eta_{i+2}$ where $\eta_i$ is the part of $\xi_i$ that started from $x_i$ stopped upon reaching $B(z_n,d_z e^{n-1})$.
	\item Let $\widetilde \eta_i$ and $\widetilde \eta_{i+2}$ be defined as above with $\xi_i$ replaced by $\wt\xi_i$ and $B(z_n,d_z e^{n-1})$ replaced by $D(z_n,d_z e^{n-1})$.
    \item Let $\Ec_n(\overline\zeta)$ be the event that $\xi_1\cap(\zeta_2\cup\xi_2)=\xi_2\cap(\zeta_1\cup\xi_1)=\emptyset$ and $\eta_3\subseteq \Bc_n$. 
    \item Let $\widetilde \Ec_n(\overline\zeta)$ be the event that $\wt\xi_1\cap(\wt\zeta_2\cup\wt\xi_2)=\wt\xi_2\cap(\wt\zeta_1\cup\wt\xi_1)=\emptyset$ and $\wt\eta_3\subseteq \Dc_n$. 
\end{itemize}

\begin{figure}[h!]
	\centering
	\includegraphics[width=.5\textwidth]{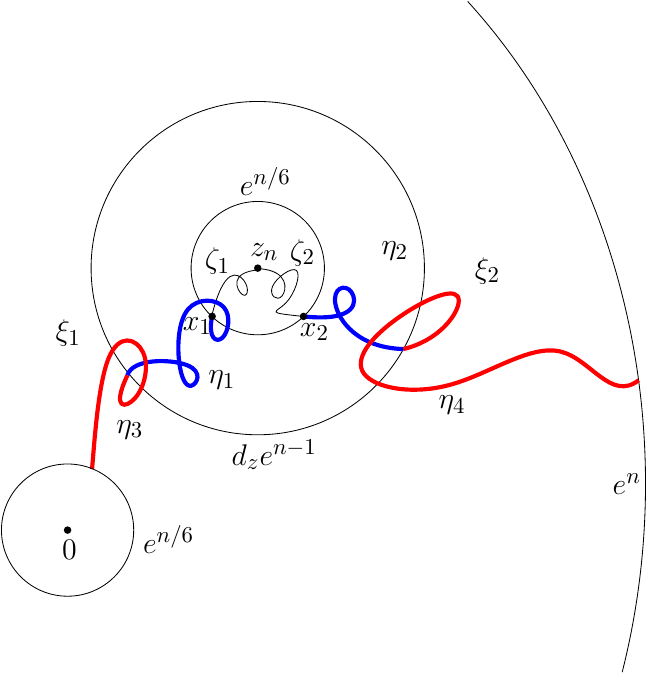}
	\caption{$(\zeta_1,\zeta_2)$ is in black, $(\eta_1,\eta_2)$ is in blue,  $(\eta_3,\eta_4)$ is in red, and $(\xi_1,\xi_2)$ is the pair of concatenated thick curves that satisfies $\xi_i=\eta_i\oplus \eta_{i+2}$.}
	\label{img:Ec}
\end{figure}

\begin{lemma}\label{lem:utc-E}
	For all $\overline\zeta\in \mathrm{NICE}_{n/6}(z)$ with $d_z\ge e^{-n/6}$, we have 
	\[
	\Pb\{ \Ec_n(\overline\zeta) \} \asymp a(z) n^{d-3} e^{-5\eta n/6}, \quad\quad 
	\Pb\{ \widetilde\Ec_n(\overline\zeta) \} \asymp a(z) n^{d-3} e^{-5\eta n/6},
	\]
	where $a(z)$ is defined in \eqref{eq:g-z}.
\end{lemma}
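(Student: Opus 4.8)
The plan is to \emph{reduce} the statement to the up-to-constants one-point bound $\Pb\{\Ac_n(z)\}\asymp a(z)e^{-\eta n}$ already provided by Lemma~\ref{lem:utc}, rather than re-run a direct path-decomposition argument. The key observation is that $\Ec_n(\overline\zeta)$ differs from a genuine cut-point configuration at $z_n$ only in two mesoscopic places: the innermost strand around $z_n$ is frozen to the nice configuration $\overline\zeta$, and the inward strand is cut at $\partial\Bc_{n/6}$ instead of being continued to the origin. Each modification changes the order by an \emph{explicit} factor, and removing them converts $\Pb\{\Ac_n(z)\}$ into $\Pb\{\Ec_n(\overline\zeta)\}$.

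Concretely, I would first decompose $\lambda_1$ in $A_{0,\partial\Bc_n}(\overline\zeta)$ (see \eqref{eq:Af'}) at its first visit to $\partial\Bc_{n/6}$ and use the strong Markov property as in \eqref{eq:decom1}--\eqref{eq:decom2}. On $A_{0,\partial\Bc_n}(\overline\zeta)$ the sets $\zeta_1\cup\zeta_2$ and $\lambda_2$ stay at distance $\asymp d_z e^n\gg e^{n/6}$ from the origin-ball $\Bc_{n/6}$ (here is where $d_z\ge e^{-n/6}$ is used), so the tail of $\lambda_1$ past that visit avoids them with probability bounded below, and this yields
\[
\nu^{\Bc_n}_{x_1,0}\otimes\nu^{\Bc_n}_{x_2,\partial\Bc_n}\{ A_{0,\partial\Bc_n}(\overline\zeta) \}\ \asymp\ \Pb\{\Ec_n(\overline\zeta)\}\cdot G_{\Bc_n}(0,e^{n/6}e_1),
\]
where the removed factor is the Green's-function mass of a path from $\partial\Bc_{n/6}$ to $0$, equal to $\asymp n$ for $d=2$ and $\asymp e^{-n/6}$ for $d=3$; the constraint $\eta_3\subseteq\Bc_n$ in the definition of $\Ec_n(\overline\zeta)$ is precisely what matches the requirement $\lambda_1\in\Gamma^{\Bc_n}_{x_1,0}$. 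Next, from \eqref{eq:one}, the separation lemma (Lemma~\ref{lem:sep-SRW}) and Proposition~\ref{prop:An4}, the mass $\nu^{\Bc_n}_{x_1,0}\otimes\nu^{\Bc_n}_{x_2,\partial\Bc_n}\{ A_{0,\partial\Bc_n}(\overline\zeta) \}$ is comparable uniformly over $\overline\zeta\in\mathrm{NICE}_{n/6}(z)$, and a nice $\overline\zeta$ also dominates (up to a constant) an arbitrary $\overline\zeta'\in\Xc_{n/6}(z_n)$; summing \eqref{eq:one} over the nice configurations --- which carry a fixed fraction of $\Pb\{A_{n/6}\}$ --- I would get
\[
\Pb\{\Ac_n(z)\}\ \asymp\ \Pb\{A_{n/6}\}\cdot \nu^{\Bc_n}_{x_1,0}\otimes\nu^{\Bc_n}_{x_2,\partial\Bc_n}\{ A_{0,\partial\Bc_n}(\overline\zeta) \},
\]
with $\Pb\{A_{n/6}\}\asymp e^{-\xi n/6}$ by Proposition~\ref{prop:sharp-rw}. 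Combining the two displays with Lemma~\ref{lem:utc} and $\eta=\xi+d-2$ gives $\Pb\{\Ec_n(\overline\zeta)\}\asymp a(z)\,e^{-\eta n}\,e^{\xi n/6}/G_{\Bc_n}(0,e^{n/6}e_1)\asymp a(z)\,n^{d-3}e^{-5\eta n/6}$: the factor $n^{d-3}$ is $1$ in $d=3$ (using $e^{\xi n/6}/e^{-n/6}=e^{\eta n/6}$) and equals $n^{-1}$ in $d=2$, where it comes exactly from the logarithmic Green's function. The bound for $\widetilde\Ec_n(\overline\zeta)$ is obtained in the same way, now using the Brownian one-point estimate $G^{\cut}_{\Dc}(z)\asymp a(z)$ (Lemma~\ref{lem:Gcut-z}), the Brownian separation lemma (Lemma~\ref{lem:sep-BM}), the interior-to-boundary decompositions \eqref{eq:dec1}--\eqref{eq:dec2} and the asymptotics of $\widetilde{G}_{\Dc_n}$, with the sausaged pair $\wt\zeta$ in place of $\overline\zeta$.

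The hard part will be the two ``standard'' separation inputs: the uniform comparability of $\nu^{\Bc_n}_{x_1,0}\otimes\nu^{\Bc_n}_{x_2,\partial\Bc_n}\{ A_{0,\partial\Bc_n}(\overline\zeta) \}$ over $\overline\zeta\in\mathrm{NICE}_{n/6}(z)$, and the fact that $\Pb\{\Nc_{n/6}(z)\mid\Ac_n(z)\}\ge c$ (equivalently, the nice configurations at scale $e^{n/6}$ support a constant fraction of $\Pb\{A_{n/6}\}$). These are where the well-separation built into $\mathrm{NICE}_{n/6}$ (Remark~\ref{rem:nice}) and the separation-lemma machinery of Section~\ref{sec:nonint} (cf.\ \cite{2sidedwalk, MR3787377}) enter; one has to verify that freezing the innermost strand to a macroscopically prescribed nice $\overline\zeta$ and truncating the inward strand at $\partial\Bc_{n/6}$ do not perturb the global non-intersection constraint beyond the stated constants, the only quantitative input being that, since $d_z\ge e^{-n/6}$, the frozen and the far-away strands remain at distance $\asymp d_z e^n$ from the origin-ball. (Alternatively one could give a more computational direct proof imitating Lemma~\ref{lem:biop}, decomposing a realization of $\Ec_n(\overline\zeta)$ into the extending non-intersecting pair, the inward excursion $\eta_3$, the outward excursion $\eta_4$ and two short bridges and multiplying the corresponding masses; the reduction above is just a shorter route to the same $a(z)n^{d-3}e^{-5\eta n/6}$.)
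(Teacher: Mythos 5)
Your reduction to Lemma~\ref{lem:utc} is correct, and in fact the paper itself acknowledges exactly this route in the remark immediately following its proof: ``In fact, we can use Lemma~\ref{lem:utc} to show Lemma~\ref{lem:utc-E} directly, however, in the above proof, we choose to make a detour and use the path decomposition idea.'' The paper instead constructs the configurations piece by piece (the nice innermost NI pair, the outward excursion $\eta_4$, the two-stage inward excursion $\eta_3^1\oplus\eta_3^2$, etc.) for two reasons: it makes transparent where each factor of $a(z)$, $n^{d-3}$ and $e^{-5\eta n/6}$ comes from, and — more importantly for the paper's purposes — that decomposition is reused verbatim in the proof of Proposition~\ref{prop:sharp-E}, so establishing it here is not wasted work. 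Your arithmetic $e^{-\eta n}e^{\xi n/6}/G_{\Bc_n}(0,e^{n/6}e_1)\asymp n^{d-3}e^{-5\eta n/6}$ is right, and your starting identity (obtained by first-entry decomposition of $\lambda_1$ at $\partial\Bc_{n/6}$) is the very formula the paper later writes in the proof of Proposition~\ref{prop:bridge-nice}, namely $\nu^{\Bc_n}_{x_1,0}\otimes\nu^{\Bc_n}_{x_2,\partial\Bc_n}\{A_{0,\partial\Bc_n}(\overline\zeta)\}=\Eb[1_{\Ec_n(\overline\zeta)}\,G_{\Bc_n\setminus(\zeta_2\cup\xi_2)}(y_1)]$. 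Two points worth being explicit about in a complete write-up: the lower bound $G_{\Bc_n\setminus(\zeta_2\cup\xi_2)}(y_1)\gtrsim G_{\Bc_n}(0,e^{n/6}e_1)$ is not pointwise on $\Ec_n(\overline\zeta)$ — you need that $\Pb\{\Ec_n(\overline\zeta)\cap\{\xi_2\cap\Bc_{n/3}=\emptyset\}\}\gtrsim\Pb\{\Ec_n(\overline\zeta)\}$, which follows from Beurling (for $d=2$) or transience (for $d=3$) but should be stated; and the uniform comparability of the bridge masses over $\overline\zeta\in\mathrm{NICE}_{n/6}(z)$ ultimately rests on $q(\overline\zeta)\asymp 1$ for well-separated configurations, a consequence of the coupling machinery in Section~\ref{sec:coupling} rather than of Lemma~\ref{lem:sep-SRW} or Proposition~\ref{prop:An4} alone as cited. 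With those filled in, your proof is a valid and genuinely shorter alternative to the paper's.
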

\begin{proof}

	We will only illustrate the lower bound. In fact, the upper bound is much easier to obtain. 
	We first deal with the case when $|z|>1/2$, that is to say, $B(z_n,d_z e^{n-1})$ is close to the boundary of $\partial\Dc$. 
	We use the same proof strategy as Lemma~\ref{lem:biop}. 
	\begin{itemize}
		\item The total mass of $(\eta_1,\eta_2)$ restricted to the event $\eta_1\cap(\zeta_2\cup\eta_2)=\eta_2\cap(\zeta_1\cup\eta_1)=\emptyset$ and they are well-separated at $\partial B(z_n,d_z e^{n-1})$ is of order $d_z^{-\xi} e^{-5\xi n/6}$, noting that the nice configuration $\overline\zeta$ is well-separated (see Remark~\ref{rem:nice}).
		\item The total mass of $\eta_4$ staying in a well-chosen tube (of order $d_z e^{n-1}$) is of order $1$. 
		\item Let $\eta_3^1$ be the simple random walk started from the endpoint of $\eta_1$ that hits $\Bc_{n-3}$ before exiting $\Bc_n\setminus (\zeta_2\cup\eta_2\cup\eta_4)$, which occurs with probability of order $d_z$ by a version of the gambler's ruin estimate (see (7.24) in \cite{RWintro}).
		\item let $\eta_3^2$ be the simple random walk started from the endpoint of $\eta_3^1$ and hits $\Bc_{n/6}$ before exiting $\Bc_{n-2}$, which occurs with probability of order $n^{d-3} e^{-5(d-2)n/6}$.
		\item The concatenation $(\eta_1\oplus\eta_3^1\oplus \eta_3^2, \eta_2\oplus\eta_4)$ satisfies the event $\Ec_n(\overline\zeta)$.
	\end{itemize}
    We multiple the above total masses to obtain the desired lower bound $d_z^{1-\xi} n^{d-3} e^{-5\eta n/6}$.
    Next, we deal with the case $|z|\le 1/2$.
    \begin{itemize}
    	\item Let $(\lambda_1,\lambda_2)$ be a pair of NIRW's from $(x_1,x_2)$ to $\partial B(z_n,19d_z e^n/20)$ such that the quality (see \eqref{eq:RW-quality} for the definition where we should translate the whole configuration here by $-z_n$) is greater than $1/9$ and the endpoint of $\lambda_1$ is in $B(0,d_z e^n/10)\cap\partial B(z_n,19d_z e^n/20)$. The total mass of such pairs is of order $d_z^{-\xi} e^{-5\xi n/6}$.
    	\item Let $\lambda_3$ be the simple random walk started from the endpoint of $\lambda_1$ and hits $\Bc_{n/6}$ before exiting $B(0,e^n/9)$, which occurs with probability of order $[\log(d_z e^n/9)-n/6]^{-1}\asymp n^{-1}$ when $d=2$, $|z|\ge e^{-n/6}$ and of order $d_z^{-1} e^{-5n/6}$ when $d=3$.
    	\item Let $\lambda_3$ be the simple random walk started from the endpoint of $\lambda_2$ and hits $\partial\Bc_n$ before hitting $\zeta_1\cup\lambda_1\cup\lambda_3$ which occurs with probability of order $[\log(d_z^{-1})]^{d-3}$.
    	\item The concatenation $(\lambda_1\oplus\lambda_3, \lambda_2\oplus\lambda_4)$ satisfies the event $\Ec_n(\overline\zeta)$.
    \end{itemize}
	The multiplication gives the order $d_z^{-\eta} [\log(d_z^{-1})]^{d-3} n^{d-3} e^{-5\eta n/6}$. This finishes the proof.
\end{proof}

	In fact, we can use Lemma~\ref{lem:utc} to show Lemma~\ref{lem:utc-E} directly, however, in the above proof, we choose to make a detour and use the path decomposition idea to prove it. In this way, one can have a clear picture in mind how this order comes from and why $a(z)$ takes such a form. More importantly, we can analyze the path piece by piece thanks to the strong Markov property, which will become essential for us to prove the next proposition.

\begin{proposition}\label{prop:sharp-E}
	For all $\overline\zeta\in \mathrm{NICE}_{n/6}(z)$ with $d_z\ge e^{-n/6}$, we have 
	\[
		\Pb\{ \Ec_n(\overline\zeta) \} \simeq \Pb\{ \widetilde \Ec_n(\overline\zeta) \}.
	\]
\end{proposition}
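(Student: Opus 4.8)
The plan is to deduce Proposition~\ref{prop:sharp-E} from the strong approximation of Corollary~\ref{cor:sa}, exploiting that the sausage width $\Kc n$ is already built into the definitions of $\wt\zeta$ and $\widetilde\Ec_n(\overline\zeta)$; the only genuine work will be two ``stability'' estimates, one for the walk and one for Brownian motion, asserting that neither $\Ec_n(\overline\zeta)$ nor $\widetilde\Ec_n(\overline\zeta)$ is likely to occur while one of its defining constraints holds with only $O(\Kc n)$ slack. First I would run two independent copies of Corollary~\ref{cor:sa}, coupling $(S_i,W_i)$ for $i=1,2$ with $S_i(0)=W_i(0)=x_i$, and set $E:=\{\,\exists i:\ \max_{0\le t\le \tau_{n+1}(S_i)\vee T_{n+1}(W_i)}|S_i(t)-W_i(t)|>\Kc n\,\}$, so $\Pb(E)=O(e^{-10n})$. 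On both $\Ec_n(\overline\zeta)$ and $\widetilde\Ec_n(\overline\zeta)$ every process involved is stopped before $\tau_{n+1}\vee T_{n+1}$ (on $\widetilde\Ec_n(\overline\zeta)$ the constraint $\wt\eta_3\subseteq\Dc_n$ forces $W_1$ to reach $\partial\Dc_{n/6}$ inside $\Dc_n$, and $W_2$ is stopped on $\partial\Dc_n$; the discrete side is immediate), so the coupling bound is in force on these events. By Lemma~\ref{lem:utc-E}, $\Pb\{\Ec_n(\overline\zeta)\}\asymp\Pb\{\widetilde\Ec_n(\overline\zeta)\}\asymp a(z)n^{d-3}e^{-5\eta n/6}$; since $\eta=\eta_d<2$ and $d_z\ge e^{-n/6}$, this exceeds $e^{-10n}$ by a fixed exponential factor, hence $\Pb(E)=O(e^{-un})\,\Pb\{\Ec_n(\overline\zeta)\}=O(e^{-un})\,\Pb\{\widetilde\Ec_n(\overline\zeta)\}$ for some $u>0$, and $E$ may be discarded.

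Next I would reduce to the stability estimates. On $E^c$ one has $\xi_i\subseteq\Theta(\wt\xi_i,\Kc n)$, $\wt\xi_i\subseteq\Theta(\xi_i,\Kc n)$ and $\zeta_i\subseteq\wt\zeta_i=\Theta(\zeta_i,\Kc n)$, while $\wt\eta_1$ is contained in $D(z_n,d_z e^{n-1})$, which sits at distance $\gtrsim d_z e^n\gg\Kc n$ from $\partial\Dc_n$, so within $\widetilde\Ec_n(\overline\zeta)$ only the constraint $\wt\eta_3\subseteq\Dc_n$ can feel the outer sphere. Let $\mathrm{Crit}$ be the event that $\dist(\xi_1,\zeta_2\cup\xi_2)\wedge\dist(\xi_2,\zeta_1\cup\xi_1)\le 5\Kc n$, or $\eta_3$ comes within $5\Kc n$ of $\partial\Bc_n$, or $S_1$ (after exiting $B(z_n,d_z e^{n-1})$) travels from within $5\Kc n$ of $\partial\Bc_{n/6}$ out to within $5\Kc n$ of $\partial\Bc_n$ before hitting $\partial\Bc_{n/6}$; let $\widetilde{\mathrm{Crit}}$ be the analogous event with $\xi,\zeta,\Bc$ replaced by $\wt\xi,\wt\zeta,\Dc$. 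A routine (if slightly tedious) deterministic inspection of the coupled paths shows that on $E^c$,
\[
\Ec_n(\overline\zeta)\ \triangle\ \widetilde\Ec_n(\overline\zeta)\ \subseteq\ \big(\Ec_n(\overline\zeta)\cap\mathrm{Crit}\big)\ \cup\ \big(\widetilde\Ec_n(\overline\zeta)\cap\widetilde{\mathrm{Crit}}\big).
\]
Hence it suffices to prove $\Pb\{\Ec_n(\overline\zeta)\cap\mathrm{Crit}\}=O(e^{-un})\,\Pb\{\Ec_n(\overline\zeta)\}$ and its Brownian counterpart; combined with $\Pb\{\Ec_n(\overline\zeta)\}\asymp\Pb\{\widetilde\Ec_n(\overline\zeta)\}$ and the first paragraph this yields $\Pb\{\Ec_n(\overline\zeta)\}=\Pb\{\widetilde\Ec_n(\overline\zeta)\}\,[1+O(e^{-un})]$, i.e.\ the asserted $\simeq$.

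The stability estimates are of two kinds, and the near-intersection one is the main obstacle. The near-boundary contributions (the last two clauses of $\mathrm{Crit}$) are easy: by the gambler's ruin estimate (Lemma~\ref{l:trans_recur}), conditionally on the rest of the configuration, the event that $\eta_3$ comes within $5\Kc n$ of $\partial\Bc_n$ and then reaches $\partial\Bc_{n/6}$ without leaving $\Bc_n$ has probability $O\big(\Kc n\,d_z^{-1}e^{-n}\big)=O(e^{-un})$ (using $d_z\ge e^{-n/6}$), the excursion back out from near $\partial\Dc_{n/6}$ to near $\partial\Dc_n$ is even rarer, and the same bounds apply verbatim to $\wt\eta_3$. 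For the near-intersection contribution one decomposes the relevant pair of non-intersecting paths over the $O(n)$ dyadic annuli it crosses — the smallest scale present being $e^{n/6}\gg\Kc n$, both near $z_n$ (where the well-separated configuration $\overline\zeta$, cf.\ Remark~\ref{rem:nice}, takes over) and near the origin (where $\xi_1$ stops on $\partial\Bc_{n/6}$) — and combines quasi-multiplicativity with the separation lemmas (Lemma~\ref{lem:sep-SRW} for the walk, Lemma~\ref{lem:sep-BM} for Brownian motion) and the Beurling/freezing bounds (Proposition~\ref{p:Beurling} for $d=2$, Lemma~\ref{l:freezing} for $d=3$): the event that two mutually non-intersecting paths come within $\delta$ of one another at a scale $L\ge e^{n/6}$ without meeting costs a multiplicative factor $O((\delta/L)^{u})$ over the bare non-intersection event at that scale, and summing over the $O(n)$ scales with $\delta=5\Kc n$ produces the required $O(e^{-un})$.

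Thus the crux is this last estimate: morally the coupling error $\Kc n$ is negligible against every scale the paths visit, but making this quantitative needs the full separation/quasi-multiplicativity machinery for non-intersecting paths — precisely the technology of \cite{2sidedwalk} (refined via \cite{analyticity}) on the discrete side, where much of it is already packaged into the $\mathrm{NICE}$ configurations, together with its Brownian analogue built from the results around Lemma~\ref{lem:sep-BM}. The domain constraint $\wt\eta_3\subseteq\Dc_n$ has no counterpart in the classical non-intersection estimates, but, as indicated, it is disposed of by an elementary gambler's-ruin computation.
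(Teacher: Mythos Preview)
Your approach is essentially the paper's: couple $(S_i,W_i)$ via Corollary~\ref{cor:sa}, discard the coupling-failure event against the lower bound from Lemma~\ref{lem:utc-E}, and then show that on the good coupling event the symmetric difference $\Ec_n(\overline\zeta)\triangle\widetilde\Ec_n(\overline\zeta)$ forces a ``near-miss'' which costs an extra $e^{-un}$ via Beurling/freezing (Proposition~\ref{p:Beurling}, Lemma~\ref{l:freezing}) for the non-intersection constraints and gambler's ruin (Lemma~\ref{l:trans_recur}) for the domain constraint. The paper implements this via an explicit six-event decomposition $F_1,\dots,F_6$ rather than your abstract $\mathrm{Crit}/\widetilde{\mathrm{Crit}}$, and invokes Proposition~\ref{prop:An4} directly for the inner pair $(\eta_1,\eta_2)$ (your ``\cite{2sidedwalk} machinery packaged into $\mathrm{NICE}$'').

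One place where your ``routine deterministic inspection'' is looser than the paper and worth flagging: the stopping times $\tau_{n/6}(S_1),T_{n/6}(W_1)$ and $\tau_n(S_2),T_n(W_2)$ do not coincide, so $\wt\xi_i$ is not contained in $\Theta(\xi_i,\Kc n)$ in general, and your three clauses for $\mathrm{Crit}$ do not literally cover all overshoot scenarios (e.g.\ $S_1$ wandering after $\tau_{n/6}$ while $W_1$ has not yet reached $\partial\Dc_{n/6}$, or $S_2$ after $\tau_n$). The paper handles this by introducing short additional pieces $\xi_1^+,\xi_2^+$ and bounding them separately by elementary gambler's-ruin estimates; once these are added to your $\mathrm{Crit}$ the containment goes through and your argument is complete.
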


\begin{proof}
	By Corollary~\ref{cor:sa}, there is a constant ${\cal K}>0$ such that we can couple $S_i$ and $W_i$ via a common probability $\Pb$ under which the following event $F_0$ holds with probability at least $1-e^{-10n}$:
	\begin{equation}\label{eq:coup}
	F_0:=\left\{|S_i(t)-W_i(t)|\le {\cal K}n, \quad 0\le t\le \tau_{n+1}(S_i)\vee  T_{n+1}(W_i), \quad i=1,2.\right\}
	\end{equation}
	Recall various definitions above Lemma \ref{lem:utc-E}. We write $\Ec$ and $\widetilde\Ec$ for $\Ec_n(\overline\zeta)$ and $\widetilde \Ec_n(\overline\zeta)$, respectively.
	Under this coupling, we will estimate the probability of the event $ \Ec\setminus \widetilde\Ec$. Let $\Ec^1$ be the corresponding event of $\Ec$ with $(\xi_1,\xi_2)$ replaced by $(\eta_1,\eta_2)$. Let $\widetilde\Ec^1$ be the corresponding event of $\widetilde\Ec$ with $(\widetilde\xi_1,\widetilde\xi_2)$ replaced by $(\widetilde\eta_1,\widetilde\eta_2)$. We refer readers to Figure~\ref{img:Ec} for an illustration of various paths. On the event $\Ec\setminus \widetilde \Ec$, one of the following six events will happen: 
	\begin{itemize}
		\item $F_1$: $(\Ec^1\setminus \widetilde \Ec^1)\cap \{\eta_3\subseteq \Bc_n\}$.
		\item $F_2$: $\Ec^1\cap \{\eta_3\subseteq \Bc_n\}\cap\{(\eta_3\cup\eta_4)\cap B(z_n, e^{n/6}+\Kc n)\neq \emptyset\}\cap\{\eta_3\cap\eta_2=\emptyset, \eta_4\cap\eta_1=\emptyset\}$.
		\item $F_3$: $\Ec^1\cap \{\eta_3\subseteq \Bc_n\}\cap \{\eta_4\cap \Bc_{n/3}\neq\emptyset\}\cap\{\eta_4\cap \eta_3=\emptyset\}$.
		\item $F_4$: $\Ec\cap\{\eta_4\cap\Bc_{n/3}=\emptyset\}\cap\{ \widetilde\eta_3\cap\widetilde\xi_2\neq\emptyset \}$.
		\item $F_5$: $\Ec\cap\{ \widetilde\eta_4\cap\widetilde\eta_1\neq\emptyset \}$.
		\item $F_6$: $\Ec\setminus \{\widetilde\eta_3\subseteq \Dc_n\}$.
	\end{itemize}
We will only prove the case when $|z|\ge 1/2$ and the case $|z|<1/2$ can be shown in a similar way.
We will show that there exists $u>0$ such that 
\begin{equation}\label{eq:Fi}
	\Pb\{ F_i \} \lesssim a(z) e^{-un} e^{-5\eta n/6} \quad \text{ for all } 1\le i\le 6.
\end{equation}
The goal is to obtain the extra cost $O(e^{-un})$, for which the path decomposition idea (or the Markov property) plays a key role as illustrated in the proof of Lemma \ref{lem:utc-E} before.

It is easy to see that \eqref{eq:Fi} holds for $F_1$ by Proposition~\ref{prop:An4}, and also for $F_2$ and $F_3$ by the strong Markov property of SRW and the Beurling estimate for $d=2$ (Proposition~\ref{p:Beurling}) or the gambler's ruin estimate for $d=3$ (Lemma~\ref{l:trans_recur}), noting that $d_z e^{n} \ge e^{5n/6}$ since $d_z\ge e^{-n/6}$. 

We now focus on $F_4$. We decompose $S_i$ in a different way to incorporate the coupling error in \eqref{eq:coup}.
\begin{itemize}
	\item Let $\xi_1^1$ be the part of $S_1$ started from $x_1$ stopped at the first hitting of $\partial B(z_n,d_z e^{n-1}-{\cal K}n)$.
	\item Let $\xi_1^2$ be the part of  $S_1$ from the endpoint of $\xi_1^1$ to the first hitting of $\Bc_{n/6}$.
	\item Let $\xi_1^+$ be the part of  $S_1$ from the endpoint of $\xi_1^2$ to the first hitting of $B(0,e^{n/6}-{\cal K}n)$.
	\item Let $\xi_2^1$ be the part of $S_2$ started from $x_2$ stopped at the first hitting of $\partial B(z_n,d_z e^{n-1}-{\cal K}n)$.
	\item Let $\xi_2^2$ be the part of  $S_2$ from the endpoint of $\xi_2^1$ to the first hitting of $\Bc_{n}$.
	\item Let $\xi_{2}^+$ be the part of $S_2$ started from the endpoint of $\xi_2^2$ to the exit of $B(0,e^n+{\cal K}n)$.
\end{itemize} 
In the above decomposition, we have $\xi_i=\xi_i^1\oplus \xi_i^2$, and $\xi_i^+$ is a small additional part to $\xi_i$.
On the event $F_0\cap F_4$, noting that $\widetilde\eta_3\subseteq \Theta(\xi_1^2\cup \xi_1^+, \Kc n)$ and $\widetilde\xi_2\subseteq \Theta(\xi_2\cup\xi_{2}^+, \Kc n)$ and $\widetilde\eta_3\cap\widetilde\xi_2\neq\emptyset$, we have 
\begin{equation}\label{eq:+}
	\dist (\xi_1^2\cup \xi_1^+,\xi_2\cup\xi_{2}^+)\le 2{\cal K}n.
\end{equation}
Using the gambler's ruin estimate (see (7.24) in \cite{RWintro}), we know that 
\begin{equation*}
	\Pb\{ \xi_{2}^+\subset \Bc_{n/10}(S_2(\tau_n)) \} \ge 1-e^{-n/8}, \quad 
	\Pb\{ \xi_{1}^+\subset \Bc_{n/10}(S_1(\tau_{n/6})) \} \ge 1-e^{-n/8}.
\end{equation*}
Therefore, conditioning on $F_0\cap F_4$, with probability greater than $1-2e^{-n/8}$, 
\begin{equation*}
	\dist (\xi_1^2,\xi_2)\le e^{n/9}.
\end{equation*}
Letting $\Ec^2$ be the corresponding event of $\Ec$ with $(\xi_1,\xi_2)$ replaced by $(\xi_1^1,\xi_2^1)$ which has probability $O(d_z^{-\xi}e^{-5\xi n/6})$, and using the strong Markov property together with {the freezing lemma (Lemma \ref{l:freezing}) when $d=3$ or the Beurling estimate (Proposition~\ref{p:Beurling}) when $d=2$}, we obtain that 
\begin{align*}
	\Pb\{ F_4 \} &\lesssim 
	\Pb\{ F_0\cap F_4 \} \\
	&\lesssim 
	\Pb\{ F_0\cap F_4\cap \{\dist (\xi_1^2,\xi_2)\le e^{n/9}\} \} \\
	 &\le
	\Pb\{ \Ec^2\cap \{\xi_2\cap\Bc_{n/3}=\emptyset\} \cap \{0<\dist (\xi_1^2,\xi_2)\le e^{n/9}\}\cap \{\xi_1^2\subseteq \Bc_n\} \} \\
	&\lesssim 
	a(z) e^{-un} e^{-5\eta n/6}.
\end{align*}
More precisely, we decompose $\xi_1^2$ according to its distance to $0$ as $\gamma_0\oplus\gamma_1\oplus\cdots\oplus\gamma_k$, where $k=\lfloor\log(d_z e^{n-1})\rfloor-n/6$, $\gamma_0$ is the part from the starting point of $\xi_1^2$ to its first hitting of $\partial \Bc_{n/6+k}$, and $\gamma_i$ is the part from $\partial \Bc_{n/6+k-i+2}$ to first hitting of $\partial \Bc_{n/6+k-i+1}$ for any $1\le i\le k$. By the freezing lemma (Lemma \ref{l:freezing}), except for an event of probability $O(e^{-10n})$, $\xi_2$ satisfies the property that
\begin{gather*}
\Pb\{ \gamma_{i+1}\cap\xi_2=\emptyset, \dist (\gamma_i,\xi_2)\le e^{n/9} \mid \xi_2 \} \lesssim e^{-un} \quad \text{ for all } 0\le i\le k-1, \\
\quad \text{ and }\quad  \Pb\{ 0<\dist (\gamma_k,\xi_2)\le e^{n/9} \mid \xi_2 \}\lesssim e^{-un}.
\end{gather*}
In a word, at each scale when $\dist (\xi_1^2,\xi_2)\le e^{n/9}$, we get an extra cost. 
Therefore, noting that $k\le 5n/6$, by the union bound, we have
\begin{align*}
&\Pb\{ \{0<\dist (\xi_1^2,\xi_2)\le e^{n/9}\}\cap \{\xi_1^2\subseteq \Bc_n\}  \}\\
\lesssim \;& \sum_{i=0}^k d_z n^{d-3} e^{-5(d-2)n/6} e^{-un} + e^{-10n} \lesssim d_z e^{-5(d-2)n/6} e^{-u'n}.
\end{align*}
We also refer the reader to Lemma 3.1 of \cite{lawler2000intersection} for a similar derivation. 

The probability of $F_5$ can also be bounded in a similar fashion by using the freezing lemma (Lemma \ref{l:freezing}) as above. We omit the details.

As for $F_6$, note that $F_0\cap \{\widetilde\eta_3\subseteq \Dc_n\}^c$ implies that $\dist(\xi_1^2\cup\xi_1^+,\partial\Bc_n)\le {\cal K}n$. Using the gambler's ruin estimate,
\[
\Pb\{ 0<\dist(\xi_1^2,\partial\Bc_n)\le {\cal K}n \} \lesssim e^{-un} \Pb\{ \eta_3\subseteq \Bc_n \}, 
\]
and also 
\[
\Pb\{ \dist(\xi_1^+,\partial\Bc_n)\le {\cal K}n \} \lesssim e^{-un}.
\]
This combined with the strong Markov property shows that \eqref{eq:Fi} holds for $F_6$.
Now, we have finished the proof of \eqref{eq:Fi}, which implies that 
\[
\Pb\{ \Ec\setminus \widetilde \Ec \} \lesssim a(z) e^{-un} e^{-5\eta n/6}.
\]
Using a similar argument, we can also conclude 
\[
\Pb\{ \widetilde \Ec\setminus \Ec \} \lesssim a(z) e^{-un} e^{-5\eta n/6}.
\]
This combined with Lemma~\ref{lem:utc-E} concludes the proof of the proposition.
\end{proof}

As discussed between Proposition \ref{prop:bridge-nice} and Lemma \ref{lem:utc-E}, we need to compare the Green's function for both the SRW and BM in a ``pricked'' ball. The following lemmas deal with the more involved case of $d=2$.

\begin{lemma}\label{lem:logS}
	Let $d=2$. Let $\gamma$ (resp.\ $\widetilde\gamma$) be a discrete (resp.\ continuous) path in $\Bc_n$ (resp.\ $\Dc_n$) from the interior to the boundary such that $\dist(\gamma,0)\ge e^{n/3}$ (resp.\ $\dist(\widetilde\gamma,0)\ge e^{n/3}$). Suppose $d_H(\gamma,\widetilde\gamma)\le e^{n/90}$ (see \eqref{eq:d-H} for the definition of Hausdorff distance). Let $\sigma$ be the exit time of $\Bc_n\setminus \gamma$ by the simple random walk $S$. Let $\widetilde\sigma$ be the exit time of $\Dc_n\setminus \widetilde\gamma$ by the Brownian motion $W$. Then, for any $y\in\Bc_{n/3}$,
	\[
	| \Eb^y[ \log|S_{\sigma}| ] - \Eb^y[ \log|W_{\widetilde\sigma}| ] | \lesssim e^{-n/10}.
	\]
\end{lemma}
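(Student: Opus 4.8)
\textbf{Proof proposal for Lemma~\ref{lem:logS}.}

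The plan is to use the KMT/strong approximation coupling (Corollary~\ref{cor:sa}, appropriately rescaled to the ball $\Bc_n$) to run $S$ and $W$ together from the common starting point $y \in \Bc_{n/3}$, and then argue that the harmonic functions $u(y) := \Eb^y[\log|S_\sigma|]$ and $\widetilde u(y) := \Eb^y[\log|W_{\widetilde\sigma}|]$ are close because (i) $\log|\cdot|$ is the relevant harmonic function for the annulus-type domain in $d=2$, (ii) the domains $\Bc_n \setminus \gamma$ and $\Dc_n \setminus \widetilde\gamma$ are Hausdorff-close, and (iii) the two processes stay within $\Kc n$ of each other up to the relevant exit time with overwhelming probability. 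The key point is that $\log|x|$ is bounded (between roughly $n/3$ and $n$) and Lipschitz with constant $e^{-n/3}$ on the region where the walk lives, so a spatial discrepancy of size $\Kc n$ between $S_\sigma$ and the (rescaled) $W_{\widetilde\sigma}$ only costs $O(n e^{-n/3})$ in the value of $\log|\cdot|$ — provided the two processes exit ``at comparable places''.

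First I would set up the coupling: by Corollary~\ref{cor:sa} (rescaled), on an event $F_0$ of probability $1 - O(e^{-10n})$ we have $\sup_{t \le \tau_{n+1} \vee T_{n+1}} |S_t - W_t| \le \Kc n$. Let $\sigma$ be the exit time of $\Bc_n \setminus \gamma$ by $S$ and $\widetilde\sigma$ the exit time of $\Dc_n \setminus \widetilde\gamma$ by $W$. I would then show that, on $F_0$ intersected with a further high-probability event, $|S_{\sigma \wedge \widetilde\sigma}| $ and $|W_{\sigma\wedge\widetilde\sigma}|$ are both at distance $O(\Kc n)$ from $\partial(\Bc_n \setminus \gamma)$, using the Hausdorff bound $d_H(\gamma,\widetilde\gamma) \le e^{n/90}$ together with the coupling error $\Kc n \ll e^{n/90}$: if $S$ is within $\Kc n$ of $\gamma$ or of $\partial \Bc_n$ then $W$ is within $\Kc n + e^{n/90}$ of $\widetilde\gamma$ or of $\partial\Dc_n$, hence close to $\widetilde\sigma$, and vice versa. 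Thus $|\sigma - \widetilde\sigma|$ is small and, more importantly, $W_{\widetilde\sigma}$ and $S_\sigma$ are both within $O(e^{n/90})$ of the boundary of (essentially) the same domain, so that $\big|\log|S_\sigma| - \log|W_{\widetilde\sigma}|\big| \lesssim e^{n/90}/e^{n/3} = e^{-n/3 + n/90}$ on this good event. (Here one needs a short lemma that a point within distance $\rho$ of $\partial(\Bc_n\setminus\gamma)$, where $\gamma$ keeps distance $e^{n/3}$ from $0$, has modulus within $O(\rho)$ of the modulus of the corresponding boundary point — again using Lipschitzness of $\log|\cdot|$ away from $0$.)

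Next I would control the contribution of the bad event. On $F_0^c$ and on the (small) event where the modulus estimate above fails, we still have $\log|S_\sigma|, \log|W_{\widetilde\sigma}| \in [\tfrac{n}{3} - O(1), n + O(1)]$ deterministically (since both processes exit in the annulus $e^{n/3} \le |\cdot| \le e^{n+1}$, using $\dist(\gamma,0), \dist(\widetilde\gamma,0) \ge e^{n/3}$), so each such event contributes at most $O(n)$ times its probability, which is $O(n e^{-10n})$ — far smaller than $e^{-n/10}$. Combining: $|u(y) - \widetilde u(y)| \le O(e^{-n/3+n/90}) + O(n\,e^{-10n}) \lesssim e^{-n/10}$, uniformly in $y \in \Bc_{n/3}$, which is the claim.

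The main obstacle I expect is the geometric bookkeeping in the second step: making precise that when one of the two processes is near the (pricked) boundary, the other is too, and that their exit \emph{locations} (not just exit times) are $O(e^{n/90})$-close in modulus. The subtlety is that $\gamma$ could be a thin set that $W$ slips past without $S$ hitting (or conversely), so one cannot simply say ``the domains are the same''; instead one argues that $\{|S_t - W_t| \le \Kc n \text{ for all } t \le \tau_{n+1}\vee T_{n+1}\}$ forces the first time $S$ gets within $\Kc n$ of $\gamma\cup\partial\Bc_n$ and the first time $W$ gets within $\Kc n + e^{n/90}$ of $\widetilde\gamma\cup\partial\Dc_n$ to be within the coupling window of each other, and at that moment both $|S|$ and $|W|$ are within $O(e^{n/90})$ of the modulus of a common point of $\gamma$ (or of $n$). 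This is routine but must be done carefully; the Beurling estimate (Proposition~\ref{p:Beurling}) could be invoked if one needs that $S$ actually \emph{does} get close to $\gamma$ shortly after $W$ does, to avoid a long excursion mismatch, though the crude bound $\log|\cdot| \le n + O(1)$ makes even a total mismatch harmless.
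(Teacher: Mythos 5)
Your approach and the paper's share the same skeleton — couple $S$ and $W$ from the common start $y$, then exploit the Lipschitzness of $\log|\cdot|$ at distance $\ge e^{n/3}$ from the origin — but you hit, and then mishandle, the key technical point. You correctly observe that KMT proximity of the \emph{processes} does not by itself give proximity of the \emph{exit points} $S_\sigma$ and $W_{\widetilde\sigma}$: one process can exit while the other, though nearby, slips past the thin obstacle and wanders before exiting. Your response, that ``the crude bound $\log|\cdot|\le n+O(1)$ makes even a total mismatch harmless,'' is incorrect, because it implicitly assigns this mismatch event the KMT failure probability $O(e^{-10n})$. In fact the mismatch is a Beurling-type escape event: even on the KMT event $F_0$, when (say) $S$ exits first, $W_\sigma$ is only guaranteed to be within $O(e^{n/90})$ of $\widetilde\gamma\cup\partial\Dc_n$, and Proposition~\ref{p:Beurling} gives that $W$ then exits within distance $e^{an}$ with failure probability $\lesssim e^{(1/90-a)n/2}$, which is much larger than $e^{-10n}$. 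That failure probability trades off against the good-event discrepancy $\lesssim e^{(a-1/3)n}$ in $\log|\cdot|$; both are $\lesssim e^{-n/10}$ only for $a$ in a narrow window near $2/9$. Your stated good-event estimate $e^{-n/3+n/90}$ (i.e.\ exit points within $O(e^{n/90})$ of each other) is itself unattainable with failure probability small enough for the lemma, so the balancing cannot be skipped.

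The paper's proof is shorter precisely because it black-boxes this balancing: it invokes Proposition 3.3 of \cite{MR2191635}, a coupling of $S$ and $W$ for which $|S_\sigma-W_{\widetilde\sigma}|\le e^{2n/9+n/90}$ holds off an event of probability $O(e^{-n/9})$. Then $|\log|S_\sigma|-\log|W_{\widetilde\sigma}||\lesssim e^{2n/9+n/90}/e^{n/3}=e^{-n/10}$ on the good event, and the bad event contributes at most $\lesssim n\,e^{-n/9}\lesssim e^{-n/10}$. Your route can be made to work, but only after you carry out the Beurling escape argument you allude to and verify that the exponents balance; merely noting that $\log|\cdot|$ is bounded on the annulus does not close the gap.
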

\begin{proof}
	Let $G$ be the event that $|S_{\sigma}-W_{\widetilde\sigma}|\le e^{2n/9+n/90}$.
	Using Proposition 3.3 of \cite{MR2191635}, we can couple $S$ and $W$ (both starting from $y$) under $\Pb$ such that $\Pb\{G^c\}\lesssim e^{-n/9}$. Since $\dist(\gamma,0)\ge e^{n/3}$, we have $|S_{\sigma}|\ge e^{n/3}$. Thus, on the event $G$,
	\[
	| \log|S_{\sigma}|- \log|W_{\widetilde\sigma}| | \lesssim \frac{e^{2n/9+n/90}}{e^{n/3}}=e^{-n/10}.
	\]
	Moreover, noting that ${\log}|S_{\sigma}|\le n, \log|W_{\widetilde\sigma}|\le n$, we have
	\[
	| \Eb^y[ \log|S_{\sigma}| 1_{G^c}] | \lesssim n e^{-n/9} \lesssim e^{-n/10},
	\]
	and a similar bound holds for $\log|W_{\widetilde\sigma}|$. This finishes the proof.
\end{proof}

\begin{lemma}\label{lem:Gset}
	Let $d=2$. Let $\gamma$ and $\widetilde\gamma$ be defined as in Lemma~\ref{lem:logS} with the same condition. For any $y_1\in\partial\Bc_{n/6}$ and $y_2\in \partial \Dc_{n/6}$ with $|y_1-y_2|\le e^{n/90}$, 
	\[
	G_{\Bc_n\setminus \gamma}(y_1) \simeq \widetilde G_{\Dc_n\setminus \widetilde\gamma}(y_2).
	\]
\end{lemma}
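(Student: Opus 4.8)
The plan is to express both Green's functions via a last-exit (or harmonic-measure) decomposition at radius $e^{n/3}$ and reduce everything to the comparison of $\log$-type harmonic functions handled in Lemma~\ref{lem:logS}. Recall that in two dimensions the free Green's function is, up to constants, $-\log|\cdot|$, and the Green's function in a domain $U$ containing $0$ but with the pricked set removed can be written through the usual formula $G_U(y,0)=a(\cdot)$-potential minus its harmonic extension; concretely, writing $B=\Bc_n\setminus\gamma$ and $\wt B=\Dc_n\setminus\wt\gamma$, and using that $\dist(\gamma,0)\wedge\dist(\wt\gamma,0)\ge e^{n/3}$, the ball $\Bc_{n/3}$ lies in both domains and is disjoint from the pricked sets. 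First I would condition on the first exit of $\Bc_{n/3}$ (for $S$) and of $\Dc_{n/3}$ (for $W$): by the strong Markov property
\[
G_{B}(y_1,0)=\sum_{w\in\partial\Bc_{n/3}}\Pb^{y_1}\{S_{\tau_{n/3}}=w\}\,G_{B}(w,0),
\]
and similarly for $\wt G_{\wt B}$. Since both walks start inside $\Bc_{n/6}$ at nearby points $|y_1-y_2|\le e^{n/90}$, the harmonic measures on $\partial\Bc_{n/3}$, resp.\ $\partial\Dc_{n/3}$, are comparable (in fact $\simeq$-close after the natural identification) by the KMT/strong approximation of Proposition~3.3 of \cite{MR2191635} exactly as in the proof of Lemma~\ref{lem:logS}; so it suffices to prove $G_{B}(w,0)\simeq\wt G_{\wt B}(w',0)$ for $w\in\partial\Bc_{n/3}$, $w'\in\partial\Dc_{n/3}$ nearby.

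For such $w$, both $G_{B}(\cdot,0)$ and $\wt G_{\wt B}(\cdot,0)$ are, near $0$, equal to the free potential $-\tfrac1{2\pi}\log|\cdot|+C$ minus the function which is harmonic in $B$ (resp.\ $\wt B$), equals that free potential on the outer boundary $\partial\Bc_n$ (resp.\ $\partial\Dc_n$) and equals it on the pricked set $\gamma$ (resp.\ $\wt\gamma$). Using the explicit value $-\tfrac1{2\pi}\log|y|+o(1)$ of the potential away from $0$, the harmonic correction evaluated at $w$ is, up to the universal constant $\tfrac1{2\pi}$ and negligible $O(e^{-un})$ errors, exactly $\Eb^{w}[\log|S_{\sigma}|]$ in the discrete case and $\Eb^{w}[\log|W_{\wt\sigma}|]$ in the continuous case, where $\sigma,\wt\sigma$ are the exit times from $\Bc_n\setminus\gamma$, resp.\ $\Dc_n\setminus\wt\gamma$. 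Lemma~\ref{lem:logS} gives $|\Eb^{w}[\log|S_{\sigma}|]-\Eb^{w'}[\log|W_{\wt\sigma}|]|\lesssim e^{-n/10}$. Since both Green's functions are of order $\asymp n$ (the distance from $0$ to the outer boundary is $e^{n/3}$ on a log scale), an additive error of order $e^{-n/10}$ translates into a multiplicative error of $1+O(e^{-un})$, which is precisely the content of $\simeq$. Feeding this back through the harmonic-measure sum over $\partial\Bc_{n/3}$ yields $G_{B}(y_1,0)\simeq\wt G_{\wt B}(y_2,0)$.

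Two technical points deserve care. First, the identification of the harmonic correction with $\Eb^{w}[\log|S_\sigma|]$ is clean only up to the lattice correction term $\mathfrak a(y)=\tfrac1{2\pi}\log|y|+\kappa+O(|y|^{-2})$ for the potential kernel (Theorem~4.4.4 of \cite{RWintro}); since all points involved have modulus $\ge e^{n/3}$, the $O(|y|^{-2})$ terms are $O(e^{-2n/3})$ and harmless, and the constant $\kappa$ cancels between the potential at $w$ and its harmonic extension. Second, one must check that $S$ (resp.\ $W$) started from $\partial\Bc_{n/3}$ does actually exit $\Bc_n\setminus\gamma$ rather than getting trapped — but this is automatic since $\gamma$ and $\wt\gamma$ reach the outer boundary and the exit time $\sigma$ (resp.\ $\wt\sigma$) is a.s.\ finite. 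The only genuine obstacle is making the coupling of exit positions from $\Bc_n\setminus\gamma$ and $\Dc_n\setminus\wt\gamma$ in Lemma~\ref{lem:logS} interact correctly with the Hausdorff closeness $d_H(\gamma,\wt\gamma)\le e^{n/90}$: on the good event of that coupling the two walks stay within $e^{2n/9+n/90}$ of each other, which is much smaller than $e^{n/3}$, so one exits its domain essentially when the other does, and the $\log$ of the (large) exit modulus is insensitive to this discrepancy — but verifying that the small-probability bad event contributes only $O(e^{-un})$, rather than something of order $n\cdot e^{-n/9}$ that could spoil the relative error, requires the crude bound $\log|S_\sigma|\le n$ used in Lemma~\ref{lem:logS} together with $G_B(w,0)\asymp n$, which is why the statement is phrased with $\simeq$ and not a sharper error.
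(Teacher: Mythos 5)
Your core mechanism is the right one and matches the paper's: express the Green's function through the potential kernel, use Lemma~\ref{lem:logS} to compare the $\Eb[\log|\cdot|]$ terms, and observe that an additive $O(e^{-n/10})$ error becomes a multiplicative $1+O(e^{-un})$ error because $G_{\Bc_n\setminus\gamma}(y_1)\gtrsim n$. But the detour you introduce at the start does not work. The identity
\[
G_{B}(y_1,0)=\sum_{w\in\partial\Bc_{n/3}}\Pb^{y_1}\{S_{\tau_{n/3}}=w\}\,G_{B}(w,0)
\]
is false, because $0\in\Bc_{n/3}$ and the walk from $y_1\in\partial\Bc_{n/6}$ accrues visits to $0$ before exiting $\Bc_{n/3}$. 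The correct strong Markov decomposition has the extra term $G_{\Bc_{n/3}}(y_1,0)$ on the right, and this term is of order $n/6$ --- the same order as the quantity you are trying to compute --- so it cannot be dropped. Worse, the Brownian analogue has no such term at all (Brownian motion does not hit points), so the discrete and continuous decompositions are not of the same form, and there is no clean way to match them term by term. Your subsequent step also applies Lemma~\ref{lem:logS} with two different starting points $w\in\partial\Bc_{n/3}$ and $w'\in\partial\Dc_{n/3}$, whereas the lemma is proved for a common starting point $y\in\Bc_{n/3}$.

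None of this detour is needed. Lemma~\ref{lem:logS} already allows any $y\in\Bc_{n/3}$, so you can feed $y_1\in\partial\Bc_{n/6}$ into the potential-kernel representation directly and obtain
\[
\left|G_{\Bc_n\setminus \gamma}(y_1) - \widetilde G_{\Dc_n\setminus \widetilde\gamma}(y_1)\right|
= \left|\tfrac{2}{\pi}\Eb^{y_1}[ \log|W_{\widetilde\sigma}| ] - \tfrac{2}{\pi} \Eb^{y_1}[ \log|S_{\sigma}| ] + O(|y_1|^{-2})\right|=O(e^{-n/10}),
\]
exactly as in the paper (note the coefficient is $\tfrac{2}{\pi}$, not $\tfrac{1}{2\pi}$). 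The remaining move from $y_1$ to $y_2$ is a purely continuous estimate: the paper bounds $|\wt G_{\Dc_n\setminus\wt\gamma}(y_1)-\wt G_{\Dc_n\setminus\wt\gamma}(y_2)|\lesssim e^{n/90}e^{-n/6}$ by the gradient/difference estimate for the Green's function, which is far simpler than trying to compare harmonic measures on $\partial\Bc_{n/3}$ and $\partial\Dc_{n/3}$. Combining these two bounds with $G_{\Bc_n\setminus\gamma}(y_1)\ge G_{n/3}(y_1)\gtrsim n$ completes the proof.
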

\begin{proof}
	Using the difference estimate for Green's function (see Lemma 6.3.3 in \cite{RWintro}), 
	\begin{equation}\label{eq:difference}
	| \widetilde G_{\Dc_n\setminus \widetilde\gamma}(y_1)-\widetilde G_{\Dc_n\setminus \widetilde\gamma}(y_2) | \lesssim e^{n/90} e^{-n/6}.
	\end{equation}
	Moreover (we refer readers to the proof of Theorem 1.2 of \cite{MR2191635} for details), by Lemma~\ref{lem:logS},
	\[
	\left|G_{\Bc_n\setminus \gamma}(y_1) - \widetilde G_{\Dc_n\setminus \widetilde\gamma}(y_1)\right|
	= \left|\frac{2}{\pi}\Eb^{y_1}[ \log|W_{\widetilde\sigma}| ] - 
	 \frac{2}{\pi} \Eb^y[ \log|S_{\sigma}| ] + O(|y_1|^{-2})\right|=O(e^{-n/10}).
	\]
	Therefore, 
	\[
	\left|G_{\Bc_n\setminus \gamma}(y_1) - \widetilde G_{\Dc_n\setminus \widetilde\gamma}(y_2)\right| = O(e^{-n/10}).
	\]
	This combined with the fact that $G_{\Bc_n\setminus \gamma}(y_1)\ge G_{n/3}(y_1)\gtrsim n$ concludes the proof.
\end{proof}

Finally we are ready to present the proof of Proposition~\ref{prop:bridge-nice}.
\begin{proof}[Proof of Proposition~\ref{prop:bridge-nice}]
	Item (1) can be deduced in the same way as \eqref{eq:nice34} by following routine modifications of Section 3.2 in \cite{2sidedwalk}. Thus, we omit the proof.
	
	We now prove item (2). We will use the notation given in Proposition~\ref{prop:sharp-E}. Let $y_1$ and $\wt y_1$ be the endpoints of $\xi_1$ and $\widetilde\xi_1$, respectively. Using the strong Markov property, we obtain the following equations: 
	\[
	\nu^{\Bc_n}_{x_1,0}\otimes\nu^{\Bc_n}_{x_2,\partial\Dc_n}\{ A_{0,\partial\Bc_n}(\overline\zeta)  \}
	=\Eb \Big[1_{(\xi_1,\xi_2) \in \Ec_n(\overline\zeta)} \Eb[ G_{\Bc_n\setminus (\zeta_2\cup\xi_2)}(y_1) \mid \xi_1,\xi_2 ] \Big],
	\]
	and 
	\[
	\mu^{\Dc_n}_{x_1,0}\otimes\mu^{\Dc_n}_{x_2,\partial\Dc_n}\{ \wt A^{\Delta}_{0,\partial\Dc_n}(\overline\zeta) \}
	=\Eb \Big[1_{(\widetilde\xi_1,\widetilde\xi_2) \in \widetilde\Ec_n(\overline\zeta)} \Eb[ \widetilde G_{\Dc_n\setminus (\widetilde\zeta_{2}\cup\widetilde\xi_2)}(\wt y_1) \mid \widetilde\xi_1,\widetilde\xi_2 ] \Big],
	\]
	where we recall that $\widetilde\zeta_2=\Theta(\zeta_2,\Kc n)$.
	From the estimates derived in the proof of Proposition~\ref{prop:sharp-E}, except for an event with negligible probability (i.e., have an upper bound same as \eqref{eq:Fi}), we have $\dist(\xi_2,0)\ge e^{n/3}$ (from $F_3$), $d_H(\xi_2,\widetilde\xi_2)\le e^{n/90}$ (from $F_0$) and $|y_1-\wt y_1|\le e^{n/90}$ (from $F_0$), and it remains to show under such conditions we have
	\begin{equation}\label{eq:GB}
	G_{\Bc_n\setminus (\zeta_2\cup\xi_2)}(y_1)\simeq 
	\widetilde G_{\Dc_n\setminus (\widetilde\zeta_{2}\cup\widetilde\xi_2)}(\wt y_1).
	\end{equation}
	This follows from Lemma~\ref{lem:Gset} when $d=2$ by setting $\gamma=\zeta_2\cup\xi_2$ and $\widetilde\gamma=\widetilde\zeta_{2}\cup\widetilde\xi_2$.
	
	We next consider $d=3$. This is much simpler since 
	\[
	G_{n/3}(y_1)\le G_{\Bc_n\setminus (\zeta_2\cup\xi_2)}(y_1) \le G_{n}(y_1)
	\]
	and
	\[
	G_{n/3}(y_1)\simeq c_g e^{-n/6}, \quad G_{n}(y_1)\simeq c_g e^{-n/6},
	\]
	where $c_g$ is some universal constant. And the same estimates also hold for $\widetilde G$. Therefore, 
	\[
	G_{\Bc_n\setminus (\zeta_2\cup\xi_2)}(y_1)\simeq 
	\widetilde G_{\Dc_n\setminus (\widetilde\zeta_{2}\cup\widetilde\xi_2)}(y_1)\simeq c_g e^{-n/6}.
	\]
	Using the difference estimate again (see \eqref{eq:difference}), we obtain that 
	\[
	\widetilde G_{\Dc_n\setminus (\widetilde\zeta_{2}\cup\widetilde\xi_2)}(y_1)-\widetilde G_{\Dc_n\setminus (\widetilde\zeta_{2}\cup\widetilde\xi_2)}(\wt y_1)\lesssim e^{n/90} e^{-n/3}.
	\]
	This implies \eqref{eq:GB} when $d=3$, and we complete the proof.
\end{proof}

\subsection{Two-point estimate}
The goal of this section is to deal with the proof of Theorem~\ref{t:two_pt}, the sharp two-point estimate. Since the proof is just the combination of two copies {of the coupling used in the one-point estimate}, we will not dive into the details and only sketch the ideas and ingredients that are used in the proof.

We begin with the counterpart of Proposition~\ref{prop:bridge-nice}. 
For $\overline\zeta=(\zeta_1,\zeta_2)\in\Xc_{n/6}(z_n)$ with endpoints $(x_1,x_2)$ and $\overline\zeta'=(\zeta'_1,\zeta'_2)\in\Xc_{n/6}(w_n)$ with endpoints $(x'_1,x'_2)$,  denote $\widehat\zeta=(\overline\zeta,\overline\zeta')$ and define the set of NI discrete triples:
\[
A_{z,w}(\widehat\zeta):=\left\{\begin{array}{l} (\lambda_1,\lambda_2,\lambda_*)\in \Gamma^{\Bc_n}_{x_1,0}\times\Gamma^{\Bc_n}_{x'_1,\partial\Bc_n} \times \Gamma^{\Bc_n}_{x_2,x'_2}: \\ \lambda_1\cap(\zeta_2\cup\lambda_*\cup\lambda_2\cup\Bc_{n/6}(w_n))=\lambda_2\cap(\zeta'_2\cup\Bc_{n/6}(z_n))=\lambda_*\cap(\zeta_1\cup\zeta'_1)=\emptyset
\end{array}	\right\},
\]
and its continuous analogue:
\[
\wt A^{\Delta}_{z,w}(\widehat\zeta):=\left\{\begin{array}{l} (\gamma_1,\gamma_2,\gamma_*)\in \wt\Gamma^{\Dc_n}_{x_1,0}\times\wt\Gamma^{\Dc_n}_{x'_1,\partial\Dc_n} \times \wt\Gamma^{\Dc_n}_{x_2,x'_2}: \\ \gamma_1  \cap (\wt\zeta_{2}\cup\gamma_*\cup\gamma_2\cup\Dc_{n/6}(w_n)) =
	\gamma_2  \cap (\wt\zeta'_{2}\cup\gamma_*\cup\Dc_{n/6}(z_n)) =
	\gamma_*\cap ( \wt\zeta_{1}\cup \wt\zeta'_{1} )=\emptyset
\end{array}	\right\},
\]
where we use the notation $\wt\zeta'_i:=\Theta(\zeta'_i,\Kc n)$ as before.
See {Figure} \ref{img:two_pt} for an illustration.
As one can see, $A_{z,w}(\widehat\zeta)$ and $\wt A^{\Delta}_{z,w}(\widehat\zeta)$ are just the two-point counterparts of \eqref{eq:Af'} and \eqref{eq:Af}, respectively.

\begin{figure}[h!]
	\centering
	\includegraphics{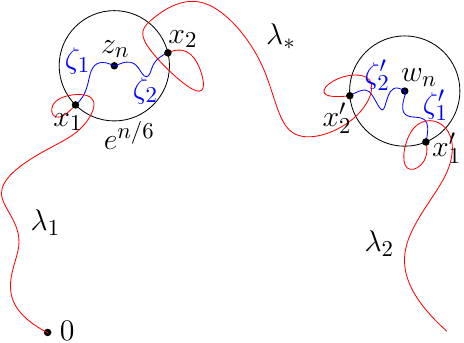}
	\caption{Illustration of the event $A_{z,w}(\widehat\zeta)$.}
	\label{img:two_pt}
\end{figure}

Recall the definition of $\text{NICE}_{r}(z)$ and $\Nc_r(z)$ just before  Proposition~\ref{prop:bridge-nice} and write
$$
\text{NICE}_{r}(z,w)=\text{NICE}_{r}(z)\times \text{NICE}_{r}(w).
$$
\begin{proposition}\label{prop:bridge-nice2}
	Let $V\in\Vc$ and $z,w\in V$ with $|z-w|\ge e^{-n/6}$.
	\begin{enumerate}
		\item[(1)] It holds that 
		\begin{equation}\label{eq:two-1}
		\Pb\{\Ac_n(z)\cap\Ac_n(w)\}\simeq\Pb\{\Ac_n(z)\,\cap\,\Ac_n(w)\cap\, \mathcal{N}_{n/6}(z)\cap\, \mathcal{N}_{n/6}(w)\}.
		\end{equation}
		\item[(2)] For all $\widehat\zeta=(\overline\zeta,\overline\zeta')\in {\rm NICE}_{n/6}(z,w)$, we have 
		\begin{equation}\label{eq:two-2}
		\nu^{\Bc_n}_{x_1,0}\otimes\nu^{\Bc_n}_{x'_1,\partial\Bc_n} \otimes \nu^{\Bc_n}_{x_2,x'_2}\{ A_{z,w}(\widehat\zeta) \} \simeq \mu^{\Dc_n}_{x_1,0}\otimes\mu^{\Dc_n}_{x'_1,\partial\Dc_n} \otimes \mu^{\Dc_n}_{x_2,x'_2}\{ \wt A^{\Delta}_{z,w}(\widehat\zeta) \}.
	\end{equation}
	\end{enumerate}
\end{proposition}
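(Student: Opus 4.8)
# Proof Proposal for Proposition~\ref{prop:bridge-nice2}

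\textbf{Overall strategy.} The plan is to mirror the proof of Proposition~\ref{prop:bridge-nice} essentially verbatim, but carrying two ``centers'' $z_n$ and $w_n$ (with the three path families $\lambda_1,\lambda_2,\lambda_*$ replacing the two families $\lambda_1,\lambda_2$) and using the hypothesis $|z-w|\ge e^{-n/6}$ to guarantee that the mesoscopic balls $\Bc_{n/6}(z_n)$ and $\Bc_{n/6}(w_n)$ are disjoint and far apart, so that the local non-intersection pictures around $z_n$ and around $w_n$ decouple. For item (1), I would simply invoke that the argument behind \eqref{eq:nice34} and \eqref{eq:Anz} localizes near each cut point separately; since the events $\Nc_{n/6}(z)$ and $\Nc_{n/6}(w)$ depend on the walk only inside the two disjoint balls $\Bc_{n/6}(z_n)$, $\Bc_{n/6}(w_n)$, the separation-lemma resampling that proves \eqref{eq:Anz} can be run inside each ball independently, giving \eqref{eq:two-1} after two applications. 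This is the routine part and I would state it as ``follows by routine modifications of Section 3.2 of \cite{2sidedwalk}'' as was done for Proposition~\ref{prop:bridge-nice}(1).

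\textbf{Item (2), the main work.} Here I would set up the analogue of the decomposition used in Proposition~\ref{prop:sharp-E}. Start the random walks $S_1$ from $x_1$, $S_1'$ from $x_1'$, and $S_*$ from $x_2$ (the path from $x_2$ to $x_2'$); couple each to a Brownian motion $W_1, W_1', W_*$ via the strong approximation Corollary~\ref{cor:sa}, so that on an event $F_0$ of probability $1-O(e^{-10n})$ all three pairs stay within $\Kc n$ of each other up to the relevant exit times. Decompose $S_1$ (resp.\ $W_1$) at its first entry to $B(z_n, d_z e^{n-1})$ (resp.\ to a slightly shrunk/enlarged ball to absorb the $\Kc n$ error), and $S_1'$ (resp.\ $W_1'$) at its first entry to $B(w_n, d_w e^{n-1})$; the middle path $S_*$ needs no such split since it just runs from $\partial\Bc_{n/6}(z_n)$ to $\partial\Bc_{n/6}(w_n)$. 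Then, conditioning on the initial segments, the total masses that remain are Green's functions in ``pricked'' domains: on the discrete side $G_{\Bc_n\setminus(\zeta_2\cup\xi_2\cup\xi_*\cup\Bc_{n/6}(w_n))}(y_1)$ where $\xi_2$ is the $x_1'$-to-$\partial\Bc_n$ walk and $\xi_*$ the $x_2$-to-$x_2'$ walk, and analogously $\widetilde G$ on the continuous side. The key point is that on the good event all these pricking sets have Hausdorff distance $O(e^{n/90})$ from their continuous counterparts and stay at distance $\ge e^{n/3}$ from $0$ (one proves $\xi_*$ and $\xi_2$ avoid $\Bc_{n/3}$ with overwhelming probability on the non-intersection event, exactly as $F_3$ was handled), so Lemma~\ref{lem:Gset} (for $d=2$) or the elementary two-sided bound $G_{n/3}\le G_{\cdot}\le G_n$ together with the difference estimate \eqref{eq:difference} (for $d=3$) gives the desired $\simeq$ comparison of Green's functions. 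Summing back the contributions and using that the ``bad'' events have probability bounded by $e^{-un}$ times the total (which is of order $a(z)a(w) n^{2(d-3)} e^{-5\eta n/3}$ times the appropriate Green's-function mass, by the two-center version of Lemma~\ref{lem:utc-E}) yields \eqref{eq:two-2}.

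\textbf{Bad-event bookkeeping.} The technical heart is re-running the six-event dissection $F_1,\dots,F_6$ from the proof of Proposition~\ref{prop:sharp-E} in the two-center setting. Most of these transfer directly because they are local statements near one center; the genuinely new feature is the presence of the connecting path $\gamma_*$ (resp.\ $\lambda_*$) and the requirement that $\lambda_1$ avoid $\Bc_{n/6}(w_n)$, $\lambda_2$ avoid $\Bc_{n/6}(z_n)$, etc. One must check that on the non-intersection event the middle path does not wander back into either mesoscopic ball; this is handled by the Beurling estimate ($d=2$) or the freezing lemma Lemma~\ref{l:freezing} ($d=3$) applied scale-by-scale as in the $F_4$ analysis, and since $|z-w|\ge e^{-n/6}$ there are $\gtrsim n$ scales between the two balls, each giving a multiplicative gain, so the total error is $O(e^{-un})$ relative to the main term.

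\textbf{Conclusion of Theorem~\ref{t:two_pt}.} Finally, granting Proposition~\ref{prop:bridge-nice2}, the proof of Theorem~\ref{t:two_pt} follows the template of the proof of Theorem~\ref{t:one_pt}: expand $\Pb\{\Ac_n(z)\cap\Ac_n(w)\}$ over pairs of nice initial configurations $\widehat\zeta$, replace discrete by continuous masses via \eqref{eq:two-2}, apply Proposition~\ref{prop:gbar-gs2} with $s=n/2$ and rescale by $e^n$ to extract the factor $c_2^2\, G^{\cut}_{\Dc}(z,w)$, and then sum the residual NIRW probabilities using the two-point version of Proposition~\ref{prop:An4} and Proposition~\ref{prop:sharp-rw}, which contributes $q^2 e^{-\xi n}$. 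Collecting constants gives $c_1^2 = (c_2 q)^{-2}$, consistent with Theorem~\ref{t:one_pt}. The main obstacle throughout is not any single estimate but the careful sausage-bookkeeping: making sure that the $(\Kc n)$-thickening of every discrete object is comparable to its continuous analogue simultaneously for all three path families, and that the two local pictures near $z_n$ and $w_n$ genuinely decouple so that the error terms multiply rather than merely add.
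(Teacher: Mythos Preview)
Your overall architecture is right, but there is a genuine gap in the treatment of the middle path $\lambda_*$. You claim that $S_*$ ``needs no such split'' and propose to couple it directly to a Brownian motion $W_*$ via Corollary~\ref{cor:sa}. This does not work: $\lambda_*$ is sampled from the point-to-point measure $\nu^{\Bc_n}_{x_2,x'_2}$, which is \emph{not} the law of a free simple random walk, and the strong approximation only applies to unconditioned walks. Recall that precisely this issue---the lack of a KMT coupling for bridges in $d=3$---is the whole reason the decomposition trick was introduced in the one-point case (see the discussion between Proposition~\ref{prop:bridge-nice} and Lemma~\ref{lem:utc-E}). The paper therefore treats $\lambda_*$ exactly like $\lambda_1$: it splits $\lambda_* = \xi_*\oplus\omega_*$ at the first entry into a mesoscopic ball $\Bc_{n/10}(x'_2)$ around the \emph{target} endpoint, so that $\xi_*$ is a genuine SRW (to which KMT applies) and the remaining mass of $\omega_*$ is a Green's function that can be compared with its continuous analogue via Lemma~\ref{lem:Gset}.

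Relatedly, your decomposition points are at balls around the \emph{starting} points $z_n,w_n$ rather than around the target points. In the one-point proof, the split at $\partial B(z_n,d_z e^{n-1})$ is an auxiliary device for the $F_i$ bookkeeping; the essential split that produces the Green's-function factor is at $\partial\Bc_{n/6}$ around the origin (the target of $\lambda_1$). In the two-point case you therefore need \emph{two} Green's-function comparisons: one at $\Bc_{n/6}$ around $0$ for the tail of $\lambda_1$, and one at $\Bc_{n/10}(x'_2)$ for the tail of $\lambda_*$. To make the second comparison go through you also need to arrange, on the good event, that $\dist(\xi_1\cup\lambda_2,\Bc_{n/10}(x'_2))\ge e^{n/8}$ (so the pricking set stays away from the ball where $\omega_*$ lives), in addition to the analogue of the $F_3$-type condition $\dist(\xi_*\cup\lambda_2,\Bc_{n/6})\ge e^{n/3}$. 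Once these two decompositions and separation conditions are in place, the rest of your sketch (bad-event analysis via Beurling/freezing, and the assembly for Theorem~\ref{t:two_pt}) is correct and matches the paper.
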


The above proposition can be proved in the same way as what we did for Proposition~\ref{prop:bridge-nice}. So we will only sketch necessary modifications. Now, we have three paths, namely $\lambda_1,\lambda_*$ from point-to-point path measures, and $\lambda_2$ a SRW started from the interior to the boundary. Therefore, we need to use the previous strategy to decompose $\lambda_1$ and $\lambda_*$ according to the first visit of some mesoscopic balls around the endpoints, i.e., $\Bc_{n/6}$ and $\Bc_{n/10}(x'_2)$, and obtain $\lambda_1=\xi_1\oplus\omega_1$, $\lambda_*=\xi_*\oplus\omega_*$. 
Then, we can use the argument developed in the proof of Proposition~\ref{prop:bridge-nice} to show that the probability that the truncated triple $(\xi_1,\xi_*,\lambda_2)$ satisfies a certain non-intersecting event, i.e., the counterpart of $\Ec_n(\overline\zeta)$, is close to that of the continuous truncated triple, in the $\simeq$ sense. Moreover, except for a negligible event, we can make   $\dist(\xi_1\cup\lambda_2,\Bc_{n/10}(x'_2))\ge e^{n/8}$, and $\dist(\xi_*\cup\lambda_2,\Bc_{n/6})\ge e^{n/3}$. Then, as before, one can show that the total masses of $\omega_1$ and $\omega_*$ are close to the continuous counterparts in the $\simeq$ sense by Lemma~\ref{lem:Gset}. This leads to the conclusion. We omit the details of the proof of Proposition~\ref{prop:bridge-nice2}, and complete the proof of Theorem \ref{t:two_pt} in the rest of this section.

\begin{proof}[Proof of Theorem \ref{t:two_pt}]
	The proof is similar to that of Theorem \ref{t:one_pt}. Plugging $s=n/2$ in Proposition~\ref{prop:gbar-gs2}, we obtain that for all $\widehat\zeta=(\overline\zeta,\overline\zeta')\in \text{NICE}_{n/6}(z,w)$,
	\begin{equation}\label{eq:sim2}
		e^{\xi n} e^{2(d-2)n} \mu^{\Dc_n}_{x_1,0}\otimes\mu^{\Dc_n}_{x'_1,\partial\Dc_n} \otimes \mu^{\Dc_n}_{x_2,x'_2}\{ \wt A^{\Delta}_{z,w}(\widehat\zeta) \}
		\simeq c_2^2\, G_{\Dc}^{\cut}(z,w)\, \Pb\{ \wt A^\Delta_{n/2}(\ol\zeta) \}\, \Pb\{ \wt A^\Delta_{n/2}(\ol\zeta') \},
	\end{equation}
	where $\wt A^\Delta_{n/2}(\ol\zeta)$ is defined in \eqref{eq:Af}.
	Analogous to the one-point case \eqref{eq:one}, we have the decomposition:
	\[
	\Pb\{\Ac_n(z)\cap \Ac_n(w)\}=\sum_{\widehat\zeta{\in \Xc_{n/6}(z_n)\times \Xc_{n/6}(w_n)}} \nu^{\Bc_n}_{x_1,0}\otimes\nu^{\Bc_n}_{x'_1,\partial\Bc_n} \otimes \nu^{\Bc_n}_{x_2,x'_2}\{ A_{z,w}(\widehat\zeta) \}\, p(\ol\zeta)\, p(\ol\zeta'),
	\]
	where $p(\ol\zeta)$ is defined in \eqref{eq:p-zeta}.
	By Proposition~\ref{prop:bridge-nice2}, we obtain
	\begin{align*}
		\ &\sum_{\widehat\zeta\in \text{NICE}_{n/6}(z,w)}  \mu^{\Dc_n}_{x_1,0}\otimes\mu^{\Dc_n}_{x'_1,\partial\Dc_n} \otimes \mu^{\Dc_n}_{x_2,x'_2}\{ \wt A^{\Delta}_{z,w}(\widehat\zeta) \} \, p(\ol\zeta)\, p(\ol\zeta')\\
		\overset{\eqref{eq:two-2}}{\simeq} \ & \sum_{\widehat\zeta\in \text{NICE}_{n/6}(z,w)}  \nu^{\Bc_n}_{x_1,0}\otimes\nu^{\Bc_n}_{x'_1,\partial\Bc_n} \otimes \nu^{\Bc_n}_{x_2,x'_2}\{ A_{z,w}(\widehat\zeta) \} \, p(\ol\zeta)\, p(\ol\zeta')\\
		\overset{\eqref{eq:two-1}}{\simeq} \ &  \Pb\{\Ac_n(z)\cap \Ac_n(w)\}.
	\end{align*}
	Moreover, by \eqref{eq:sum1},
	\begin{align*}
		\sum_{\widehat\zeta\in \text{NICE}_{n/6}(z,w)}  \left(\Pb\{ \wt A^\Delta_{n/2}(\ol\zeta) \} \, p(\ol\zeta) \right) \cdot \left(\Pb\{ \wt A^\Delta_{n/2}(\ol\zeta') \} \, p(\ol\zeta') \right)
		\simeq\ q^2 e^{-\xi n}.
	\end{align*}
    Combined, we get that 
	\[
	e^{\xi n} e^{2(d-2)n} \Pb\{\Ac_n(z)\cap \Ac_n(w)\}
	\simeq c_2^2\, G_{\Dc}^{\cut}(z,w)\,  q^2 \, e^{-\xi n}.
	\]
	The theorem follows immediately.
\end{proof}

\section{Inward coupling}\label{sec:coupling}
In this section, we prepare a coupling result {on two pairs of NIRW's with different ``initial configurations''} (see Theorem \ref{coupling} below) that is needed later. Since the proof of Theorem \ref{coupling} is quite similar to that of \cite[Theorem 3.7]{LERW3exp}, {we will only briefly sketch the proof at the end of this section. Some of the lemmas are also just stated without proof.} We refer the reader to \cite[Section 3.1]{LERW3exp} for details.

Suppose $0< l\le m\le n-1\le  K-2$. Set $V_0=\Bc_K^c$. Suppose that $V_1$ and $V_2$ are two paths lying in $\Bc_n^c$ starting from $V_0$ and ending at $\partial \Bc_n$ such that $V_1\cap V_2=\emptyset$ and for $j=1,2$, $V_j\cap \partial \Bc_n$ is a single point which is denoted by $x_j$. We refer to such quintuples $(V_0, V_1, V_2, x_1, x_2)$ colloquially as an initial configuration. (We often use  $(V_0, U_1, U_2, x'_{1}, x'_{2})$ to stand for another initial configuration.)

For $j=1,2$, let $S^j$ be SRW started from $x_j$, $\Lambda_l^j=V_j \cup S^j[0,\tau^j_l]$ with $\tau^j_l:=H_{\partial\Bc_l}(S^j)$ (cf. \eqref{eq:hitting-S}). Define the events
\begin{align*}
	&A_{l,n,K}^{1} = A_{l,n,K}^{1} (V_0, V_1, V_2, x_1, x_2) = \left\{(V_0\cup\Lambda^1_l)\cap \Lambda^2_l =\Lambda^1_l\cap (V_0\cup\Lambda^2_l) = \emptyset \right\}; \\ 
	&A_{l,n,K}^{2} =  \left\{ \tau^1_l<\tau^1_K, \tau^2_l<\tau^2_K \right\};  \  \ A_{l,n, K} =  A_{l,n,K} (V_0, V_1, V_2, x_1, x_2) = A_{l,n,K}^{1}  \cap A_{l,n,K}^{2} .
\end{align*}
See Figure \ref{fig:al} for an illustration the event $A_{l,n, K}$.

\begin{figure}[h]
	\begin{center}
		\includegraphics[scale=0.5]{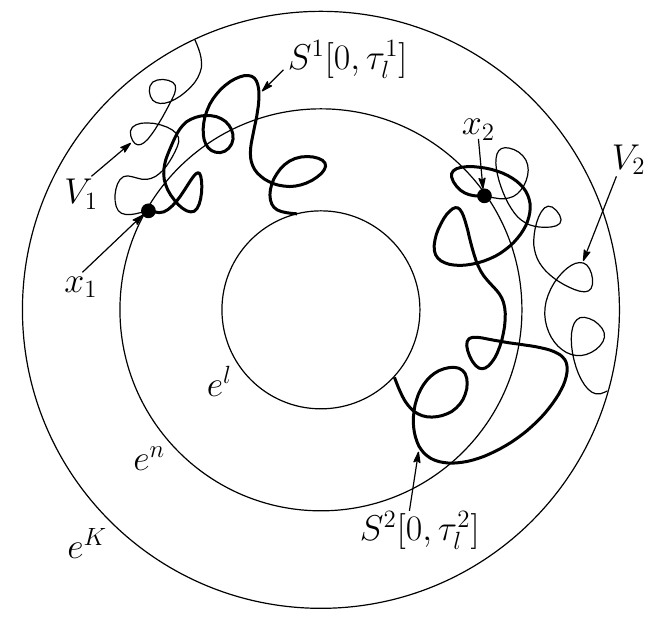}
		\caption{Illustration for the event $A_{l,n, K}$. Thick curves stand for SRW while the initial configuration is depicted by thin curves.}\label{fig:al}
	\end{center}
\end{figure}

Let  $\mu_{l,n,K} = \mu_{l, n, K} (V_0, V_1, V_2, x_1, x_2)$ be the probability measure induced by $(S^1[0,\tau_l^1],S^2[0,\tau_l^2])$ conditioned on the event $A_{l, n, K} (V_0, V_1, V_2, x_1, x_2)$. 
For two pairs of paths $\overline\lambda=(\lambda^1,\lambda^2)$ and $\overline\lambda^*=(\lambda^{1,*},\lambda^{2,*})$ in $\Gamma_{\Bc_b^c,\partial\Bc_a}\times\Gamma_{\Bc_b^c,\partial\Bc_a}$, we write $\overline\lambda=_{b}\overline\lambda^*$ if $\overline\lambda$ and $\overline\lambda^*$ merge before hitting $\partial \Bc_b$ for the first time, namely, $\lambda^{j} (H_b(\lambda^{j}) + t) = \lambda^{j,*} (H_b(\lambda^{j,*}) + t) $ for all $t \ge 0$ and $j=1,2$.

\begin{theorem}[Inward coupling of non-intersecting walks]\label{coupling}
	There exist $0 < c,\beta < \infty$ such that for all $0< l\le m\le n-1\le  K-2$ and for any pair of initial  configurations $(V_0, V_1, V_2, x_1, x_2)$, $(V_0, U_1, U_2, x'_{1}, x'_{2})$, we can define $\overline\lambda = (\lambda^{1}, \lambda^{2})$ and  $\overline\lambda^{\ast} = \big( \lambda^{1,\ast}, \lambda^{2,\ast})$ on the same probability space $(\Omega,\Fc,P)$ such that $\overline\lambda$ has the distribution $\mu_{l,n,K} (V_0, V_1, V_2, x_1, x_2)$ and $\overline\lambda^{\ast}$ has the distribution $\mu_{l,n,K} (V_0, U_1, U_2, x'_{1}, x'_{2})$, and that 
	\begin{equation*}
		P\{ \overline\lambda=_{m}\overline\lambda^{\ast} \} \ge 1-ce^{-\beta (n-m)}.
	\end{equation*}
\end{theorem}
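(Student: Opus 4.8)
\textbf{Proof proposal for Theorem~\ref{coupling}.}

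The plan is to build the coupling scale by scale, moving inward from $\partial\Bc_n$ toward $\partial\Bc_l$, and at each scale attempt to merge the two pairs of conditioned walks. The essential mechanism is the following: if at some intermediate scale $\partial\Bc_{k}$ the two conditioned pairs $\overline\lambda$ and $\overline\lambda^{\ast}$ happen to be ``well-separated'' in the sense of \eqref{eq:RW-quality} and their endpoint configurations are not too degenerate, then with probability bounded below by a universal constant $c_{0}>0$ the two pairs can be coupled to agree (merge) on the annulus between $\partial\Bc_{k}$ and $\partial\Bc_{k-1}$. First I would record the crucial reduction: because of the Markov property of conditioned non-intersecting walks, the distribution $\mu_{l,n,K}$ restricted to scales below $\partial\Bc_{k}$ depends on $(V_0,V_1,V_2,x_1,x_2)$ and on $S^j[0,\tau^j_k]$ only through the pair of stopped traces near $\partial\Bc_k$ (together with the requirement that $V_0$ is not hit). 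So once the two pairs agree on an annulus of width one, they can be coupled to agree all the way inward. This is exactly the strategy of \cite[Theorem 3.7]{LERW3exp}, and the separation lemma for NIRW's (Lemma~\ref{lem:sep-SRW}) together with Proposition~\ref{prop:sharp-rw} provide the quantitative inputs that make it run in the present setting.

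The key steps, in order, would be: (i) Decompose the interval $[m,n]$ into unit scales $k=n-1,n-2,\dots,m$. (ii) At each scale $k$, condition on the configuration of both pairs at $\partial\Bc_{k+1}$; use the separation lemma (Lemma~\ref{lem:sep-SRW}) to argue that, conditionally on the respective non-intersection events $A_{l,n,K}$, each pair is well-separated at scale $k$ with probability bounded below by a universal constant. (iii) On the event that both pairs are well-separated at $\partial\Bc_k$, perform a one-step coupling: run the two pairs of conditioned walks from $\partial\Bc_{k}$ to $\partial\Bc_{k-1}$ and show, using a local coupling estimate for conditioned non-intersecting walks (again as in \cite{LERW3exp}, relying on Proposition~\ref{prop:sharp-rw} to control the relevant Radon–Nikodym derivatives near $\partial\Bc_{k-1}$), that they can be made to merge, i.e.\ $\overline\lambda=_{k-1}\overline\lambda^{\ast}$, with probability at least some universal $c_{0}>0$; and once merged at one scale, they stay merged (so the event $\{\overline\lambda=_{m}\overline\lambda^{\ast}\}$ occurs). (iv) Since at each of the $\Theta(n-m)$ scales there is an independent-in-the-appropriate-sense chance $\ge c_{0}$ of merging, a standard geometric bound gives $P\{\overline\lambda=_m\overline\lambda^{\ast}\}\ge 1-ce^{-\beta(n-m)}$, with $\beta$ depending only on $c_{0}$. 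One should verify throughout that the presence of $V_0=\Bc_K^c$ as an obstacle (via the event $A^2_{l,n,K}=\{\tau^1_l<\tau^1_K,\tau^2_l<\tau^2_K\}$) does not interfere: since $K\ge n+1$, standard gambler's-ruin estimates (Lemma~\ref{l:trans_recur}) show conditioning on staying inside $\Bc_K$ affects probabilities only by universally bounded factors on each unit scale, so the separation lemma and the one-step coupling survive.

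I expect the main obstacle to be step (iii): the one-step coupling of two \emph{conditioned} non-intersecting pairs. Unlike the coupling of unconditioned walks, here one must couple the conditioned laws, and the conditioning (on non-intersection down to $\partial\Bc_l$, and on avoiding $V_0$) couples the two coordinates of each pair and reaches across many scales. The standard resolution — the one used in \cite[Section 3.1]{LERW3exp} — is to use the separation of the traces at scale $\partial\Bc_k$ to localize the conditioning: once the two arms are $\ge \tfrac1{10}e^{k}$ apart, the conditional law of the continuation on the next annulus, given everything outside $\Bc_k$, has a density with respect to a reference (well-separated, unconditioned) law that is bounded above and below by universal constants, uniformly in the outside configuration and in the target scales $l,n,K$; this uniformity is precisely what Proposition~\ref{prop:sharp-rw} (sharp asymptotics for $\Pb(A_m(\overline\gamma))$ with an explicit dependence on the initial configuration through $q(\overline\gamma)$) delivers. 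Granting this, one couples the two conditioned pairs by first coupling them to the common reference law on the annulus, so that with probability $\ge c_{0}$ they agree there. I would then import the remaining technical lemmas of \cite[Section 3.1]{LERW3exp} — which I would state without proof, as the excerpt's preamble indicates is acceptable — and assemble them into the geometric iteration described above to conclude.
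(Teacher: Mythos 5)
You have the right high-level picture (iterate scale by scale, use separation to get a positive-probability chance of merging, conclude by a geometric bound), but there is a genuine gap at the center of step (iii), and consequently step (iv) does not go through as stated.

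The gap: you assert that on a unit annulus one can couple the two conditioned pairs so that $\overline\lambda =_{k-1} \overline\lambda^{\ast}$ holds with probability $\ge c_0$, invoking a Radon--Nikodym bound. But $\overline\lambda =_{k-1}\overline\lambda^{\ast}$ requires the two trajectories to be \emph{literally identical} from their first visits to $\partial\Bc_{k-1}$ all the way down to $\partial\Bc_l$, not merely to agree on the annulus $\partial\Bc_{k-1}$--$\partial\Bc_k$. The conditional law of the continuation inward from $\partial\Bc_{k-1}$ is a function of the \emph{entire} already-traced configuration (and of $V_j$ versus $U_j$), because the conditioned walk can backtrack far out and hit the initial configuration. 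Since $V_j \neq U_j$, these two conditional laws are not equal, so no coupling can make the continuations agree with probability $1$. Your one-step coupling thus at best merges one annulus, which does not imply $\overline\lambda =_{k-1}\overline\lambda^{\ast}$, and the geometric iteration in step (iv) rests on the unwarranted claim ``once merged at one scale, they stay merged''.

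The paper's proof is built around exactly the machinery needed to close this gap, none of which appears in your proposal. It introduces the set $\text{GOOD}_{K,n,q}$ of configurations whose conditional non-intersection probability at the next scale is not too small, a ``no-travel'' lemma (Lemma~\ref{no-travel}) showing that, started from a GOOD configuration and conditioned on $A_{l,n,K}$, the walks stay within $\Bc_{n+q}$ with probability $1 - O(e^{-q/2})$, and an approximation lemma (Lemma~\ref{approx}) giving a \emph{total-variation} bound of order $e^{-q/2}$ between the two conditioned laws once the configurations agree to depth $q$ and one is GOOD. It then runs the iteration not annulus by annulus but in blocks of size $L$, tracking the ``merge depth'' $\xi_j$ and showing it behaves like a Markov chain with positive drift: if $\xi_j$ is already $\ge L-2$, it increases by $L$ with probability $1 - O(e^{-\xi_j/2})$ (the coupling \eqref{coup1}, which controls the full inward trajectories, not one annulus); if $\xi_j$ is small, a reverse-separation step (\eqref{coup2}) bumps it up to $\ge L-2$ with uniform positive probability. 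Comparison with a drifted one-dimensional chain yields the exponential bound. Finally, the separation lemma you should be invoking here is the \emph{reverse} one (Lemma~\ref{sep}), for walks started on $\partial\Bc_n$ running inward with the obstacle $V_0 = \Bc_K^c$ outside, not Lemma~\ref{lem:sep-SRW}, which is for pairs started at the origin and run outward.
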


In order to prove this theorem, we need several lemmas (Lemmas \ref{sep}-\ref{approx} below). We first begin with the reverse separation lemma as follows. Suppose that $l=n-1$ and condition that the event $A_{n-1, n, K}$ occurs. Similar to \eqref{eq:RW-quality}, we define the quality {of separation at scale $e^{n-1}$} by
$$
\Delta_{K, n} = e^{-n+1} \min_{i=1,2} \text{dist} \Big( S^{i} \big( \tau^{i}_{n-1} \big), \Lambda^{3-i}_{n-1} \Big) 
$$
as the rescaled distance between $S^{j} \big(\tau^{j}_{n-1} \big)$ and $\Lambda^{3-j}_{n-1}$. The reverse separation lemma is then stated as follows.

\begin{lemma}[Reverse Separation Lemma for NIRW's]\label{sep}
	There exists a constant $c > 0$ such that for all $1 \le n \le K-1$ and every initial configuration $(V_0, V_1, V_2, x_1, x_2)$
	$$
	\Pb \big\{ \Delta_{K,n} \ge 1/10 \ \big| \ A_{n-1, n, K}  \big\} \ge c.
	$$
\end{lemma}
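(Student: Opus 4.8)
The plan is to mirror the proof of the forward separation lemma for NIRW's (Lemma~\ref{lem:sep-SRW}), but run ``inward'' rather than ``outward'': we want to show that conditionally on two walks avoiding each other and the fixed initial configuration, with positive probability the two walks become $\tfrac{1}{10}$-separated (in the rescaled metric) at the innermost scale $e^{n-1}$. The standard mechanism for such statements is a two-step argument: (i) a crude ``a priori'' separation estimate saying that conditioned on $A_{n-1,n,K}$, the walks are at least $e^{-Cn}$-separated with probability bounded below by a power of $n$ (so the probability of being catastrophically close is small), and then (ii) a ``bootstrapping'' step that upgrades this polynomial lower bound to a constant by letting the walks run an extra fixed number of scales and using the Markov property plus the fact that two well-separated non-intersecting walks stay well-separated for a bounded number of steps with uniformly positive probability.

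Concretely, first I would introduce an intermediate scale, say consider the walks from $\partial\Bc_n$ down to $\partial\Bc_{n-1}$ and split this annular crossing into $O(1)$ dyadic sub-scales. At each sub-scale one shows, using the Beurling estimate (Proposition~\ref{p:Beurling}) when $d=2$ and the freezing lemma (Lemma~\ref{l:freezing}) when $d=3$, together with the gambler's ruin estimates (Lemma~\ref{l:trans_recur}), that a SRW started ``badly separated'' (distance $\le \epsilon e^{n-1}$ from the other path or from $V_0$) has probability at most $C\epsilon^u$ of completing the non-intersecting crossing to the next sub-scale; conversely, starting from a $\tfrac19$-separated configuration, the walks can traverse the next sub-scale while both staying in well-chosen tubes with uniformly positive probability, ending again well-separated. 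Chaining these over the $O(1)$ sub-scales yields the reverse separation. The role of the condition $n\le K-1$ (equivalently the presence of $V_0=\Bc_K^c$) is just that the walks must hit $\partial\Bc_{n-1}$ before $\partial\Bc_K$, which is handled by the same gambler's ruin input.

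The cleanest route, though, is to invoke the analogous statement already in the literature: the reverse separation lemma is essentially Proposition~2.1 (or its ``two-sided''/reverse variant) of \cite{MR3787377} and of \cite{2sidedwalk}, and in the Brownian setting it is Lemma~\ref{lem:rev-sep-BM} above; in fact one can deduce the RW reverse separation from the forward separation lemma by a time-reversal / last-exit decomposition argument, since the law of $(S^1[0,\tau^1_{n-1}],S^2[0,\tau^2_{n-1}])$ conditioned on $A_{n-1,n,K}$ is, after reversing each path, comparable (up to bounded Radon--Nikodym factors controlled by Harnack) to a pair of non-intersecting walks \emph{growing outward} from near $\partial\Bc_{n-1}$, to which Lemma~\ref{lem:sep-SRW} applies. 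So the proof I would write: reduce via path-reversal and the decomposition \eqref{eq:decom2} to an outward non-intersection event, apply the forward separation lemma together with the uniform comparability of hitting distributions on $\partial\Bc_n$ (Harnack principle, \cite[Theorem 6.3.9]{RWintro}), and note that ``separation quality $\ge 1/10$'' is preserved under the reversal since it only depends on the geometry of the endpoints and traces.

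The main obstacle is the bookkeeping around the fixed obstacle $V_0=\Bc_K^c$ and the initial configuration $(V_1,V_2)$: the conditioning event $A_{n-1,n,K}$ involves avoiding not only the other walk but also these fixed sets, and one must check that near scale $e^{n-1}$ (which is deep inside, far from $\partial\Bc_K$ and from where $V_1,V_2$ start) these extra constraints only affect probabilities by bounded multiplicative factors, so that they do not interfere with the positive-probability lower bound. This is where the hypotheses $l=n-1$, $n-1\le K-2$ matter, and where one uses that on the relevant event the walks are at distance of order $e^{n-1}$ from $\partial\Bc_n$, hence the local separation dynamics near $\partial\Bc_{n-1}$ decouples from the far-away pieces; making this decoupling precise (via the strong Markov property and Harnack) is the only genuinely technical point, and it is handled exactly as in \cite[Section 3.1]{LERW3exp}.
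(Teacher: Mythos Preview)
Your proposal is correct and aligns with the paper's treatment: the paper omits the proof entirely, stating only that it is similar to \cite[Proposition~3.1]{LERW3exp}, which is precisely the reference you identify as handling the technical decoupling of the initial configuration from the local separation dynamics. Your outline of the standard two-step mechanism (a priori polynomial separation plus bootstrapping across $O(1)$ sub-scales via Beurling/freezing and gambler's ruin) is exactly how that proof goes; the alternative time-reversal route you sketch is a legitimate variant but not what the paper has in mind.
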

The proof of this version of separation lemma is similar to that of \cite[Proposition 3.1]{LERW3exp}, so we will omit it here. Using the reverse separation lemma
and the same idea to derive \cite[Corollary 3.2]{LERW3exp}, one can also describe how the initial configuration affects the non-intersecting probability $\Pb \{A_{l,n,K}\}$, as described in the following lemma (for which the proof is omitted).

\begin{lemma}\label{decompose}
	There exists a constant $c > 0$ such that for all $0< l\le m\le n-1\le  K-2$ and every initial configuration $(V_0, V_1, V_2, x_1, x_2)$
	$$
	c \, e^{-\xi (n-l)}    \le \frac{\Pb \big\{ A_{l, n, K} \ \big| \ A^{2}_{l, n, K} \big\}}{ \Pb \big\{ A_{n-1, n, K} \ \big| \ A^{2}_{n-1, n, K} \big\} } \le c^{-1} e^{-\xi (n-l)}. 
	$$
\end{lemma}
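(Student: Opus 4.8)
The plan is to establish Lemma~\ref{decompose} by the method of \cite[Section~3.1]{LERW3exp}: combine quasi-multiplicativity of the inward non-intersection probabilities with the reverse separation lemma (Lemma~\ref{sep}), handling the conditioning on $A^{2}_{l,n,K}$ through the gambler's ruin estimates (Lemma~\ref{l:trans_recur}). Fix the initial configuration $(V_0,V_1,V_2,x_1,x_2)$ and abbreviate $P_{a}:=\Pb\{A_{a,n,K}\mid A^{2}_{a,n,K}\}$ for $l\le a\le n-1$; the goal is $P_{l}\asymp e^{-\xi(n-l)}P_{n-1}$, uniformly in the configuration and in $0<l\le n-1\le K-2$. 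Two preliminary facts are needed. First, writing $\pi_{a,b}$ for the probability that a single SRW started at scale $b$ reaches $\partial\Bc_{a}$ before $\partial\Bc_{K}$, the strong Markov property and the Harnack principle give $\Pb\{A^{2}_{a,b,K}\}\asymp\pi_{a,b}^{2}$ and $\pi_{a,c}\asymp\pi_{a,b}\,\pi_{b,c}$ for $a\le b\le c\le n$, with all the $\pi_{\cdot,\cdot}$ evaluated via Lemma~\ref{l:trans_recur}; consequently the $A^{2}$-normalisation factorises over scales and cancels when one forms the ratio $P_{l}/P_{n-1}$. Second, conditionally on two walks crossing one unit-width annulus inward (the event $A^{2}_{a,a+1,K}$), the probability that they do so without intersecting each other or a well-separated incoming configuration is $\asymp e^{-\xi}$ with two-sided constants uniform over such configurations; this is the one-scale building block behind quasi-multiplicativity for non-intersecting walks, obtainable from \eqref{eq:nonint_exp_RW} and the separation lemma (or, viewing the crossing after time-reversal as an outward one, from Proposition~\ref{prop:sharp-rw} and Lemma~\ref{lem:sep-SRW}). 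For an arbitrary incoming configuration only the upper bound $\lesssim e^{-\xi}$ is used.

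For the upper bound I would decompose each walk at its successive hitting times of $\partial\Bc_{n-1},\partial\Bc_{n-2},\dots,\partial\Bc_{l}$ and apply the strong Markov property: since $A^{1}_{l,n,K}$ forces non-intersection at every intermediate scale, $\Pb\{A_{l,n,K}\}\le\Pb\{A_{n-1,n,K}\}\cdot\sup_{\overline\zeta}\Pb\{A_{l,n-1,K}(\overline\zeta)\}$, and iterating the one-scale upper bound together with $\Pb\{A^{2}_{l,n-1,K}\}\asymp\prod_{j=l}^{n-2}\Pb\{A^{2}_{j,j+1,K}\}$ yields $\Pb\{A_{l,n,K}\}\lesssim\Pb\{A_{n-1,n,K}\}\,e^{-\xi(n-l)}\,\Pb\{A^{2}_{l,n-1,K}\}$; dividing by $\Pb\{A^{2}_{l,n,K}\}\asymp\Pb\{A^{2}_{n-1,n,K}\}\,\Pb\{A^{2}_{l,n-1,K}\}$ gives $P_{l}\lesssim e^{-\xi(n-l)}P_{n-1}$. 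For the matching lower bound I would first use Lemma~\ref{sep}: conditionally on $A_{n-1,n,K}$, the pair is well-separated at scale $n-1$ (i.e.\ $\Delta_{K,n}\ge 1/10$) with probability $\ge c$. Restricting the decomposition to this event reduces the problem to bounding $\Pb\{A_{l,n-1,K}(\overline\zeta)\}$ from below for a well-separated $\overline\zeta$, which is the standard quasi-multiplicativity statement: one chains the unit annuli in $\Bc_{n-1}\setminus\Bc_{l}$, at each intermediate scale invoking Lemma~\ref{sep} to restore separation with a uniformly positive probability and using the two-sided one-scale estimate, to get $\Pb\{A_{l,n-1,K}(\overline\zeta)\}\gtrsim e^{-\xi(n-l)}\Pb\{A^{2}_{l,n-1,K}\}$. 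Combining as above produces $P_{l}\gtrsim e^{-\xi(n-l)}P_{n-1}$.

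The hard part is the lower bound, and in particular running the quasi-multiplicativity chain without accumulating a constant factor per scale as $n-l$ grows: the correct mechanism is to prove $\Pb\{A_{a,b,K}(\cdot)\}\gtrsim\Pb\{A_{a,c,K}(\cdot)\}\,\Pb\{A_{c,b,K}(\cdot)\}$ with a constant independent of the split $c$ and then bootstrap, and this has to be reconciled both with the $A^{2}$-normalisation --- which for $d=2$ is only polynomial, not exponential, in $n-l$ (Lemma~\ref{l:trans_recur}) --- and with the fact that a well-separated configuration influences the inward non-intersection probability only through constants comparable to those of a point configuration, exactly as in the outward estimates Proposition~\ref{prop:sharp-rw} and Lemma~\ref{lem:sep-SRW}. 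All of this is carried out in \cite[Section~3.1]{LERW3exp} (in the LERW setting, with the escape exponent $\alpha$ in place of $\xi$), and only routine modifications are needed here, which is why the paper omits the details.
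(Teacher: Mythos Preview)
Your proposal is correct and follows exactly the approach the paper indicates: the paper omits the proof and simply points to the reverse separation lemma (Lemma~\ref{sep}) together with the derivation of \cite[Corollary~3.2]{LERW3exp}, which is precisely the quasi-multiplicativity argument you outline. Your identification of the main issue---avoiding the accumulation of a multiplicative constant per scale in the lower bound---and your treatment of the $A^{2}$ conditioning via Lemma~\ref{l:trans_recur} are both on target.
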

From this lemma, we see that the effect of the initial configuration {can be controlled through the conditional non-intersection probability $P \big\{ A_{n-1, n, K} \ \big| \ A^{2}_{n-1, n, K} \big\}$ up to scale $e^{n-1}$}. The reason for this is that loosely speaking, once two simple random walks are well-separated {at scale $e^{n-1}$} in the sense of Lemma  \ref{sep}, they can forget the initial configuration. 

We will next show that conditioned on the event $A_{l, n, K}$ both $S^{1} [0, \tau^{1}_{l}]$ and $S^{2} [0, \tau^{2}_{l}]$ are contained in $\Bc_{n+q}$ with high (conditional) probability if $q > 0$ is large (recall that $x_{j} = S^{j} (0) \in \partial \Bc_{n}$). This conditional probability depends on the initial configuration, and unfortunately we cannot derive a uniform estimate for all configurations. Instead, we will choose a suitable subset of configurations for which some uniform estimate can be given. More precisely, our choice is as follows:
$$
\text{GOOD}_{K, n, q} := \Big\{ (V_0, V_1, V_2, x_1, x_2) \ \Big| \ \Pb \big\{ A_{n-1, n, K} \ \big| \ A^{2}_{n-1, n, K} \big\} \ge 10^{-1} \, e^{-q/2} \Big\}
$$
for $q > 0$.
According to Lemma \ref{decompose}, 
\begin{equation}\label{good}
\begin{split}
	&\text{if }  (V_0, V_1, V_2, x_1, x_2) \in \text{GOOD}_{K, n,q} \text{ then } \Pb \big\{ A_{l, n, K} \ \big| \ A^{2}_{l, n, K} \big\} \ge c e^{-q/2} \, e^{-\xi (n-l)} , \\
	&\text{if }  (V_0, V_1, V_2, x_1, x_2) \notin \text{GOOD}_{K, n,q} \text{ then } \Pb \big\{ A_{l, n, K} \ \big| \ A^{2}_{l, n, K} \big\} \le c^{-1} e^{-q/2}  \, e^{-\xi (n-l)}.
\end{split}
\end{equation}

Take $0< l\le m\le n-1\le  K-2$, $q > 0$ and an initial configuration $(V_0, V_1, V_2, x_1, x_2)$. We define the events $F_{1}$ and $F_{2}$ (we omit the dependence on various parameters in the notation below) by
\begin{align*}
	&F_{1} = \Big\{ \big( S^{1} [0, \tau^{1}_{m} ] \cup S^{2} [0, \tau^{2}_{m} ] \big) \subset \Bc_{ n+q} \Big\}, \\
	&F_{2} = \Big\{ \Big( V_{0}, V_{1} \oplus S^{1} [0, \tau^{1}_{m} ] , V_{2} \oplus S^{2} [0, \tau^{2}_{m} ], S^{1} (\tau^{1}_{m} ), S^{2} (\tau^{2}_{m} ) \Big) \in \text{GOOD}_{K, m, q} \Big\}.
\end{align*}
Note that if $K\le n+q$, then $F_1$ occurs on the event $A_{l, n, K}$.
Then we have
\begin{lemma}\label{no-travel}
	There exists a constant $C \in (0, \infty)$ such that for all $0< l\le m\le n-1\le  K-2$, $q > 0$ and every initial configuration $(V_0, V_1, V_2, x_1, x_2)  \in \mathrm{GOOD}_{K, n, q}$
	\begin{equation}\label{goodness}
		\Big| \Pb \big\{ A_{l, n, K} \cap F_{i} \big\} - \Pb \{ A_{l, n, K} \} \Big| \le C e^{-q/2}\, \Pb \{A_{l,n, K} \} 
	\end{equation}
	for each $i=1,2$.
	
\end{lemma}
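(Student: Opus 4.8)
The plan is to prove Lemma~\ref{no-travel} by combining the reverse separation lemma (Lemma~\ref{sep}) with the scale-by-scale control of the non-intersection probability afforded by Lemma~\ref{decompose}, exactly in the spirit of the analogous LERW estimate in \cite[Section 3.1]{LERW3exp}. We treat the two events $F_1$ and $F_2$ separately, but the underlying mechanism is the same: once the two walks are well-separated at scale $e^{n-1}$ (which happens with uniformly positive conditional probability given $A_{n-1,n,K}$), the bad behaviors described by $F_1^c$ and $F_2^c$ force the walks to do something that costs an extra polynomial/exponential factor, and this cost is quantified by Lemma~\ref{decompose} and the definition of $\mathrm{GOOD}_{K,n,q}$.

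First I would handle $F_2$. By Lemma~\ref{decompose} applied at the intermediate scale $e^m$, we have
\[
\Pb\{A_{l,n,K}\} \asymp \Pb\{A^2_{l,n,K}\} \cdot \Pb\{A_{m,n,K}\mid A^2_{m,n,K}\} \cdot e^{-\xi(m-l)}\cdot(\text{error from iterating}),
\]
and more precisely one decomposes $A_{l,n,K}$ according to the pair $\overline{S}[0,\tau^{\cdot}_m]$: summing the probability of $A_{l,n,K}$ over the realizations of $(S^1[0,\tau^1_m],S^2[0,\tau^2_m])$ that are non-intersecting and stay in $\Bc_n^c\cup\cdots$, the contribution of the \emph{new} initial configuration $\big(V_0, V_1\oplus S^1[0,\tau^1_m],\dots\big)$ to the remaining non-intersection from scale $e^m$ down to $e^l$ is, by Lemma~\ref{decompose}, comparable to $\Pb\{A_{m-1,m,K}\mid A^2_{m-1,m,K}\}\,e^{-\xi(m-l)}$. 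On the event $F_2^c$ this conditional probability is, by definition of $\mathrm{GOOD}$, at most $10^{-1}e^{-q/2}$, so the relative contribution of $A_{l,n,K}\cap F_2^c$ is $O(e^{-q/2})\,\Pb\{A_{l,n,K}\}$. This gives \eqref{goodness} for $i=2$. The role of the hypothesis $(V_0,V_1,V_2,x_1,x_2)\in\mathrm{GOOD}_{K,n,q}$ is precisely that the denominator $\Pb\{A_{n-1,n,K}\mid A^2_{n-1,n,K}\}$ is bounded below by $10^{-1}e^{-q/2}$, so that the whole probability $\Pb\{A_{l,n,K}\}$ is not itself atypically small and the $O(e^{-q/2})$ bound on the bad part is meaningful.

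For $F_1$, I would argue similarly: on $A_{l,n,K}$, if one of the walks, say $S^1$, exits $\Bc_{n+q}$ before returning to $\partial\Bc_l$, then using the strong Markov property at the first exit of $\Bc_{n+q}$, together with the reverse separation lemma (to guarantee good separation at scale $e^{n+q}$ with positive conditional probability — here one uses Lemma~\ref{sep} with $n$ replaced by $n+q$, after noting the configuration obtained by appending the excursion from $\partial\Bc_n$ to $\partial\Bc_{n+q}$ is still a valid configuration) and Lemma~\ref{decompose} again, the event forces an additional factor of $e^{-\xi q}$ coming from the walks having to survive non-intersection across the extra annulus $\Bc_{n+q}\setminus\Bc_n$ and then come back inside. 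Since $\xi>0$, $e^{-\xi q}\le e^{-q/2}$ up to adjusting constants (and one can in any case replace the stated $e^{-q/2}$ by $e^{-cq}$ for the appropriate $c>0$, or simply note $\xi_d \ge 1/2$ for $d=2,3$ is not needed — any positive exponent suffices after renaming), so $\Pb\{A_{l,n,K}\cap F_1^c\}\le Ce^{-q/2}\Pb\{A_{l,n,K}\}$. Note also that when $K\le n+q$ the event $F_1$ is automatic on $A_{l,n,K}$ since the walks are killed at $\partial\Bc_K$, so the bound is trivial in that regime.

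The main obstacle I anticipate is bookkeeping rather than conceptual: making the ``extra factor'' arguments rigorous requires iterating Lemma~\ref{decompose} across a logarithmic number of dyadic scales and controlling the accumulated multiplicative constants, and it requires verifying that appending the intermediate walk segments to $V_1, V_2$ produces configurations to which Lemma~\ref{sep} and Lemma~\ref{decompose} genuinely apply (in particular that the resulting paths still lie in the correct annular regions and meet $\partial\Bc_m$ resp.\ $\partial\Bc_{n+q}$ in single points, which on the relevant events holds by definition of $A_{l,n,K}$ and $A^2_{l,n,K}$). All of this is carried out in detail for the LERW analogue in \cite[Section 3.1]{LERW3exp}, so I would state the lemma, indicate these two reductions, and refer the reader there for the routine separation-lemma manipulations.
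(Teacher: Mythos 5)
Your argument for $i=2$ is essentially the same as the paper's: decompose by the strong Markov property at $\tau_m$, apply Lemma~\ref{decompose} to the new configuration, and use the definition of $\mathrm{GOOD}_{K,m,q}$ to see that on $F_2^c$ the post-$\tau_m$ conditional non-intersection probability is $\le c^{-1}\cdot10^{-1}e^{-q/2}e^{-\xi(m-l)}$, which is $O(e^{-q/2})$ times the typical rate. One small misattribution: for $i=2$ alone the hypothesis $(V_0,V_1,V_2,x_1,x_2)\in\mathrm{GOOD}_{K,n,q}$ is in fact not needed, because the factors $\Pb\{A_{n-1,n,K}\mid A^2_{n-1,n,K}\}$ cancel when you compare the $F_2^c$ contribution with $\Pb\{A_{l,n,K}\}$ via Lemma~\ref{decompose} at both scales; the $\mathrm{GOOD}$ hypothesis is genuinely used for the $i=1$ case.

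There is a real gap in the $i=1$ argument. You claim the event $\{S^j[0,\tau^j_m]\not\subset\Bc_{n+q}\}$ ``forces an additional factor of $e^{-\xi q}$ coming from the walks having to survive non-intersection across the extra annulus,'' but on $F_1^c$ only a \emph{single} walk makes the outward excursion to $\partial\Bc_{n+q}$, so the cost is not governed by the two-sided intersection exponent $\xi$. Lemma~\ref{decompose} quantifies the inward non-intersection cost and does not apply to the outward excursion, and the reverse separation lemma (Lemma~\ref{sep}) gives separation after conditioning on inward non-intersection, which is not the event at hand. The paper's actual argument is more elementary: it bounds $\Pb\{A_{l,n,K}\cap\{S^j[0,\tau^j_m]\not\subset\Bc_{n+q}\}\mid A^2_{l,n,K}\}\le Ce^{-q}e^{-\xi(n-l)}$ using the strong Markov property together with transience (Lemma~\ref{l:trans_recur}) when $d=3$, and the Beurling estimate (Proposition~\ref{p:Beurling}) applied to the outgoing and returning crossings of the annulus when $d=2$; dividing by the lower bound $\Pb\{A_{l,n,K}\mid A^2_{l,n,K}\}\ge ce^{-q/2}e^{-\xi(n-l)}$ from \eqref{good} (this is where $\mathrm{GOOD}$ enters) yields $Ce^{-q/2}$. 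Moreover, even granting an $e^{-\xi q}$ bound, for $d=3$ this is strictly weaker than $e^{-q}$ since $\xi_3<1$, so you would then need $\xi_3>1/2$ to recover the stated $e^{-q/2}$ in \eqref{goodness}; the parenthetical about ``renaming'' does not resolve this, since the $e^{-q/2}$ in the conclusion is dictated by the $e^{-q/2}$ threshold built into the definition of $\mathrm{GOOD}_{K,n,q}$ and cannot be freely rescaled independently.
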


\begin{proof}
	By dividing both sides of  \eqref{goodness} by $\Pb \{A^{2}_{l, n, K} \}$, we only have to deal with a random walk $S^{j}$ conditioned on the event that $\tau^{j}_{l} < \tau^{j}_{K}$. The proof of \eqref{goodness} for $i=2$ then easily follows from Lemma \ref{decompose} and \eqref{good}. So, we will only prove \eqref{goodness} for the case of $i=1$. We will consider the case $K\ge n+q$, otherwise it is trivial.
	
	We will first consider the case of $d=3$. The strong Markov property ensures that 
	\begin{align*}
		\Pb \big\{ A_{l, n, K} \cap F_{1}^{c}  \ \big| \ A^{2}_{l, n, K} \big\} \le \sum_{j=1}^{2} \Pb \Big\{ A_{l, n, K} \cap \big\{ S^{j} [0, \tau^{j}_{m} ] \not\subset \Bc_{n+q} \big\}  \ \Big| \ A^{2}_{l, n, K} \Big\} \le C e^{-q} e^{-\xi (n-l)}.
	\end{align*}
	Thus, the inequality \eqref{goodness} follows from \eqref{good} since $(V_0, V_1, V_2, x_1, x_2)  \in \text{GOOD}_{K, n, q}$.
	
	The case of $d=2$ is next. In this case, we can use the Beurling estimate (Proposition~\ref{p:Beurling}) to conclude that
	$$
	\Pb \Big\{ A_{l, n, K} \cap \big\{ S^{j} [0, \tau^{j}_{m} ] \not\subset \Bc_{n+q} \big\}  \ \Big| \ A^{2}_{l, n, K} \Big\} \le C e^{-q} e^{-\xi (n-l)}.
	$$
	This finishes the proof.
\end{proof}

Suppose that we have two initial configurations $(V_0, V_1, V_2, x_1, x_2) $ and $(V_0, U_1, U_2, x_1, x_2) $ such that $(V_{1}, V_{2} ) =_{n+q} (U_{1}, U_{2} )$ for some $q > 0$. Namely, $V_{j}$ and $U_{j}$ merge before hitting $\partial \Bc_{n+q}$ for the first time (recall that both $V_{j} $ and $U_{j}$ are paths starting from $\Bc_{K}^c$ and ending at $\partial \Bc_{n}$). Lemma \ref{no-travel} guarantees that a random walk $S^{j}$ conditioned on the event $A_{l,n,K}$ is contained in $\Bc_{n+q}$ with high (conditional) probability if the initial configuration is good (with conditional probability $1$ if $K\le n+q$). This implies that  if  $(V_{1}, V_{2} ) =_{n+q} (U_{1}, U_{2} )$ and $(V_0, V_1, V_2, x_1, x_2) $ is good, then the probability of the event $A_{l,n,K} (V_0, V_1, V_2, x_1, x_2) $ is close to that of the event $A_{l,n,K} (V_0, U_1, U_2, x_1, x_2) $. More precisely, we have the following as a direct consequence of Lemma \ref{no-travel}.

\begin{lemma}\label{approx}
	There exists a constant $C \in (0, \infty)$ such that for all $0< l\le m\le n-1\le  K-2$, $q > 0$ and all initial configurations $(V_0, V_1, V_2, x_1, x_2) $ and $(V_0, U_1, U_2, x_1, x_2) $ satisfying $(V_{1}, V_{2} ) =_{n+q} (U_{1}, U_{2} )$ and $(V_0, V_1, V_2, x_1, x_2)  \in \mathrm{GOOD}_{K, n, q}$
	\begin{equation}
		\Big| \Pb \Big\{ A_{l,n,K} (V_0, V_1, V_2, x_1, x_2) \Big\} - \Pb \Big\{ A_{l,n,K} (V_0, U_1, U_2, x_1, x_2) \Big\} \Big| \le C\, e^{-q/2}\, \Pb \Big\{ A_{l,n,K} (V_0, V_1, V_2, x_1, x_2) \Big\}. 
	\end{equation}
\end{lemma}

We are now ready to prove Theorem \ref{coupling}.

\begin{proof}[Proof of Theorem \ref{coupling}]
	Throughout the proof, we will assume $K\ge n+q$, otherwise it is simpler. Since we can take the constant $c$ in the theorem to be large enough, it suffices to prove the theorem when $n-m$ is large enough.
	Take two initial configurations $(V_0, V_1, V_2, x_1, x_2) $ and $(V_0, U_1, U_2, x_1, x_2) $ satisfying $(V_{1}, V_{2} ) =_{n+q} (U_{1}, U_{2} )$ and $(V_0, V_1, V_2, x_1, x_2)  \in \text{GOOD}_{K, n, q}$. Write $\overline{V} = (V_{1}, V_{2})$ and $\overline{U} = (U_{1}, U_{2} )$. Using Lemma \ref{approx}, we can define $\overline{\lambda}$ and $\overline{\lambda}^{\ast}$ on the same probability space {(we denote it by $P$ below)} such that $\overline{\lambda}$ has the distribution $\mu_{l,n,K} (V_0, V_1, V_2, x_1, x_2)$ and $\overline{\lambda}^{\ast}$ has the distribution $\mu_{l,n,K} (V_0, U_1, U_2, x_1, x_2)$, and that 
	\begin{align}\label{coup1}
		&P \Big\{ \overline{V} \oplus \overline{\lambda}_{m} =_{n+q} \overline{U} \oplus \overline{\lambda}^{\ast}_{m} \Big\} \ge 1 - C_{1} e^{-q/2},  \notag \\
		&P \Big\{ \big( V_{0}, V_{1} \oplus \lambda^{1}_{m}, V_{2} \oplus \lambda^{2}_{m}, y_{1}, y_{2} \big) \in \text{GOOD}_{K, m, q} \Big\} \ge 1- C_{1} e^{-q/2},
	\end{align}
	where 
	\begin{itemize}
		\item $\lambda^{j}_{m} = \lambda^{j} [0, \tau_{m} (\lambda^{j}) ]$ stands for the path $\lambda^{j}$ up to the first time that it hits $\Bc_m$,
		
		\item $y_{j} = \lambda^{j} (\tau_{m} (\lambda^{j}) )$ is the endpoint of $\lambda^{j}_{m}$, 
		
		\item $\overline{\lambda}_{m} = (\lambda^{1}_{m}, \lambda^{2}_{m} )$, and $ \overline{\lambda}^{\ast}_{m} = \big( \lambda^{1,\ast}_{m}, \lambda^{2,\ast}_{m} \big)$ is defined similarly.
	\end{itemize}
	See Figure \ref{fig:couple1} for this coupling.
	We will use the coupling \eqref{coup1} later when $q $ is large.
	
	\begin{figure}[!h]
		\begin{center}
			\includegraphics[scale=0.5]{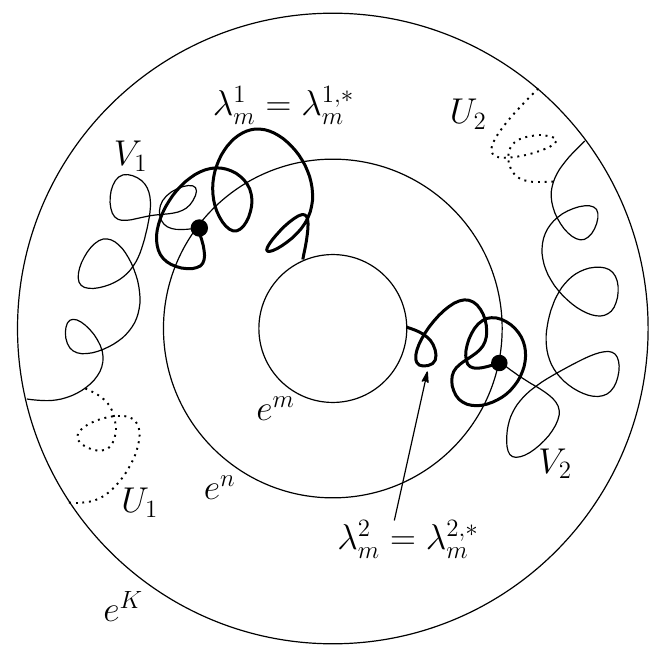}
			\caption{Illustration for the coupling \eqref{coup1}. A large part of $V_{j}$ (depicted by a thin curve) agrees with that of $U_{j}$ (depicted by a dotted curve). In this coupling, $\overline{\lambda}_{m} = \overline{\lambda}^{\ast}_{m}$ with high probability.}
			\label{fig:couple1}
		\end{center}
	\end{figure}

	On the other hand,  if $q$ is too small or even $V_{j}$ and $U_{j}$ do not share the same endpoint, using the reverse separation lemma (Lemma \ref{sep}), it is easy to derive the following coupling. Fix an integer $L > 2$ such that $C_{1} e^{-\frac{L-2}{2}} < 1/2$ where the constant $C_{1}$ is defined as in \eqref{coup1}. Take two arbitrary initial configurations $(V_0, V_1, V_2, x_1, x_2)$ and $(V_0, U_1, U_2, x'_1, x'_2)$. Then we can define $\overline{\lambda}$ and $\overline{\lambda}^{\ast}$ on the same probability space {(for ease of notation we still denote it by $P$)} such that $\overline{\lambda}$ has the distribution $\mu_{l,n,K} (V_0, V_1, V_2, x_1, x_2)$ and $\overline{\lambda}^{\ast}$ has the distribution $\mu_{l,n,K} (V_0, U_1, U_2, x'_1, x'_2)$, and that 
	\begin{align}\label{coup2}
		&P \Big\{ \overline{V} \oplus \overline{\lambda}_{n-L} =_{n-2}\  \overline{U} \oplus \overline{\lambda}^{\ast}_{n-L} \Big\} \ge c'> 0,  \notag \\
		&P \Big\{ \big( V_{0}, V_{1} \oplus \lambda^{1}_{n-L}, V_{2} \oplus \lambda^{2}_{n-L}, z_{1}, z_{2} \big) \in \text{GOOD}_{K, n-L, L-2} \Big\} \ge c',
	\end{align}
	where $z_{j} = \lambda^{j} (\tau_{n-L} (\lambda^{j}) )$ is the endpoint of $\lambda^{j}_{n-L}$. See Figure \ref{fig:couple2} for this coupling.

	\begin{figure}[!h]
		\begin{center}
			\includegraphics[scale=0.5]{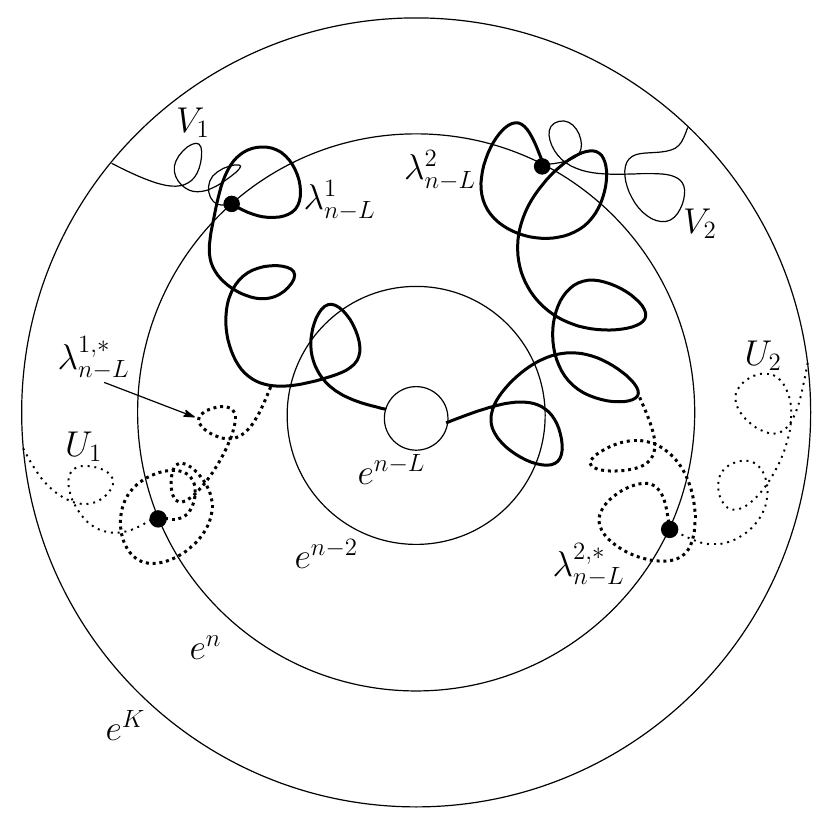}
			\caption{Illustration for the coupling \eqref{coup2}. There is a possibility that $V_{j} \cap U_{j} = \emptyset$ in this coupling. However, with positive probability, $\lambda^{j}_{n-L}$ (depicted by a thick curve) and $\lambda^{j,\ast}_{n-L}$ (drawn by a thick dotted curve)  merge before hitting $\partial {\cal B}_{n-2}$.}
			\label{fig:couple2}
		\end{center}
	\end{figure}

	Returning to the proof of the theorem, let us consider the following procedure for which we will use the coupling {leading to} \eqref{coup1} when $q \ge L-2$ and otherwise we will use that {leading to} \eqref{coup2}. Let  $J \ge 1$ be the largest integer satisfying $n-JL \ge m+1$. The first goal is to obtain a coupling of $\overline{\lambda}_{n-JL}$ and $\overline{\lambda}^{\ast}_{n-JL}$ where $\overline{\lambda}$ has the distribution $\mu_{l,n,K} (V_0, V_1, V_2, x_1, x_2)$ and $\overline{\lambda}^{\ast}$ has the distribution $\mu_{l,n,K} (V_0, U_1, U_2, x'_1, x'_2)$.
	To get it, we first consider a coupling of $\overline{\lambda}_{n-L}$ and $\overline{\lambda}^{\ast}_{n-L}$, then define a coupling of $\overline{\lambda}_{n-2L}$ and $\overline{\lambda}^{\ast}_{n-2L}$, and continue this procedure until the $J$-th step. Take $j \le J$. If there exists an integer $r \ge 1$ such that in the coupling above at the $j$-th step, 
	\begin{align}\label{sigma}
		\overline{\lambda}_{n-jL} =_{n-jL+r} \overline{\lambda}^{\ast}_{n-jL} \ \ \ \text{ and }  \ \ \  \overline{\lambda}_{n-jL}  \in \text{GOOD}_{K, n-jL, r}
	\end{align}
	then we let $\xi_{j}$ be the largest integer $r \ge 1$ satisfying \eqref{sigma}. Otherwise we set $\xi_{j} = 0$.
	
	Suppose that $\overline{\lambda}_{n-jL} $ and $ \overline{\lambda}^{\ast}_{n-jL}$ are given. To proceed to the next step, we define a coupling of $\overline{\lambda}_{n-(j+1)L} $ and $ \overline{\lambda}^{\ast}_{n-(j+1)L}$ either via {the coupling leading to} \eqref{coup1} or that leading to \eqref{coup2} according to whether $\xi_{j}  \ge L-2$. Note that by \eqref{coup1}, if $r \ge L-2$,
	$$
	P \big( \xi_{j+1} = r+L \ \big| \ \xi_{j} = r \big) \ge 1 - C_{1} e^{-r/2},
	$$
	while if $r < L-2$ the inequality \eqref{coup2} guarantees that 
	$$
	P \big( \xi_{j+1} \ge L-2 \ \big| \ \xi_{j} = r \big) \ge c'.
	$$
	
	Comparing $\{ \xi_{j} \}_{j \ge 1}$ with a one-dimensional Markov chain (see the proof of Theorem 4.1 in \cite{BMinvar} for this), it follows that there exist $ c, \beta \in (0, \infty)$ such that 
	$$
	P \big( \xi_{J} \le JL/2 \big) \le c\, e^{-\beta(n-m)}.
	$$
	This implies that we can couple $\overline{\lambda}_{n-JL}$ and $\overline{\lambda}^{\ast}_{n-JL}$ such that with probability at least $1 - c e^{-\beta(n-m)}$,
	\begin{align*}
		\overline{\lambda}_{n-JL} =_{n-JL/2} \overline{\lambda}^{\ast}_{n-JL} \ \ \ \text{ and }  \ \ \  \overline{\lambda}_{n-JL}  \in \text{GOOD}_{K, n-JL, JL/2}.
	\end{align*}
	Theorem \ref{coupling} then easily follows from this and we finish the proof.
\end{proof}

\section{Transfer between cut-point and cut-ball events}\label{sec:cpcb}
In this section, we obtain the transformation between the cut-point event and the discrete cut-ball event in Section~\ref{subsec:dc}, and as well between the discrete cut-ball event and the continuous cut-ball event in Section~\ref{subsec:dcc}. The results in this section are the key ingredients to prove our main theorems in the last section.

\subsection{Comparison in the discrete}\label{subsec:dc}
In this subsection, we define the discrete version of cut-ball events and compare the probability of {a} cut-point event with that of {a} cut-ball event in a very precise way (in the ``$\simeq$'' sense) with the help of the inward coupling obtained in the previous section.

Write $\lambda$ for $S[0,\tau_n]$, the simple random walk from $0$ to the exit of $\Bc_n$. For any set $B$ in $\Bc_n$ with $0\notin \ol B$, if $\lambda\cap B\neq\emptyset$, by \eqref{eq:decom1}, we can decompose $\lambda$ by the first-entry and the last-exit of $\overline B$ as follows:
\begin{equation}\label{eq:decomp}
	\lambda=\lambda_1\oplus\omega\oplus[\lambda_2]^R,
\end{equation}
where $\lambda_1=\lambda[0,H_{\partial B}(\lambda)]$, $\lambda_2=\lambda^R[0,H_{\partial B}(\lambda^R)]$, and $\omega=\lambda[H_{\partial B}(\lambda),H_{\partial B}(\lambda^R)]$. Note that we reversed the last part in the decomposition \eqref{eq:decom1} to simplify the notation later.
This is analogous to the continuous decomposition \eqref{eq:decomp2}. We recall that $z_n=\floo{e^{n}z}$ from \eqref{eq:dz-zn}.
\begin{definition}[Cut ball for SRW]\label{def:cut-Db}
    Let $z\in\Dc\setminus\{ 0 \}$.
	we say that the ball $\Bc_{3n/4}(z_n)$ is a cut ball for $\lambda$ if $\lambda\cap \Bc_{3n/4}(z_n)\neq\emptyset$, $\lambda_1\cap\lambda_2=\emptyset$ and $\omega\in \Bc_{5n/6}(z_n)$ where $\lambda=\lambda_1\oplus\omega\oplus[\lambda_2]^R$ is decomposed as in \eqref{eq:decomp} with $B=\Bc_{3n/4}(z_n)$.
	Denote by $K_{3n/4}(z)$ the event that $\Bc_{3n/4}(z_n)$ is a cut ball for $S[0,\tau_n]$.
\end{definition}

\begin{remark}\label{rem:blow-up}
	The choices for the ball sizes are not entirely arbitrary.
	The inner ball needs to be large compared to the scale of the coupling distance between Brownian and random walk paths with the Skorokhod embedding \eqref{eq:se}, which involves a spacial error of order $e^{(1/2+\eps)n}$ with $\eps > 0$. Thus, we choose the radius of the inner ball to be $e^{3n/4}$.
	The radius of the outer ball is picked to match the Definition~\ref{def:cut-bm} of a cut ball for a Brownian motion, modulo scaling by $e^n$. The inner radius is $e^{3n/4}$, which after downscaling becomes $e^{-s}$ with $s=n/4$, and we pick the outer ball of size $e^{-2s/3}$, which equals $e^{5n/6}$ after upscaling.
\end{remark}

In this section, we will prove the following propositions, which let us commute between the probabilities of cut points and that of cut balls (in $\simeq$ sense). As they can be proved in a similar way, we will only prove Proposition~\ref{prop:f2} and briefly discuss necessary adaptations for Proposition~\ref{prop:AKAA} at the end of this subsection.
\begin{proposition}\label{prop:f2}
	There is a function $f(n)$ of $n$ such that for all $z\in \Dc$ with $\dist(0,z,\partial\Dc)\ge e^{-n/6}$, 
	\begin{equation}
		\Pb\{ K_{3n/4}(z) \} \simeq f(n) \Pb\{ \Ac_n(z) \}. 
	\end{equation}
\end{proposition}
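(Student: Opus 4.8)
The plan is to relate the probability of the cut-ball event $K_{3n/4}(z)$ to that of the cut-point event $\Ac_n(z)$ by decomposing both events along a common mesoscopic scale and showing that the two differ only by a multiplicative factor $f(n)$ that does \emph{not} depend on $z$ (within the allowed range $\dist(0,z,\partial\Dc)\ge e^{-n/6}$). Concretely, fix the mesoscopic ball $\Bc_{n/6}(z_n)$ around $z_n$, just as in the proof of Theorem~\ref{t:one_pt}: on either event we can extract a pair of non-intersecting discrete paths $\overline\zeta=(\zeta_1,\zeta_2)\in\Xc_{n/6}(z_n)$ from $\lambda=S[0,\tau_n]$ by tracing from $z_n$ (resp.\ from the first hit of $\partial\Bc_{3n/4}(z_n)$), and the remaining two arms $(\lambda_1,\lambda_2)$ landing at $0$ and $\partial\Bc_n$. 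The idea is that conditionally on the ``outer'' data $(\lambda_1,\lambda_2,\overline\zeta)$ being a nice configuration, whether we additionally require a genuine cut point at $z_n$ inside $\Bc_{n/6}(z_n)$, or merely require that $\Bc_{3n/4}(z_n)$ be a cut ball, contributes a factor that depends only on $n$ — because of the scale invariance of the picture near $z_n$ together with the quasi-invariance / separation machinery.

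First I would set up the decomposition precisely. For the cut-point event, reuse \eqref{eq:lbd-dec} and \eqref{eq:one}: $\Pb\{\Ac_n(z)\}=\sum_{\overline\zeta}\nu^{\Bc_n}_{x_1,0}\otimes\nu^{\Bc_n}_{x_2,\partial\Bc_n}\{A_{0,\partial\Bc_n}(\overline\zeta)\}\,p(\overline\zeta)$, where $p(\overline\zeta)=\Pb^{z_n,z_n}\{\baS[0,\tau_{n/6}]=\overline\zeta\}$ is the probability of the two inner arms, run from $z_n$ out to $\partial\Bc_{n/6}(z_n)$, being non-intersecting and equal to $\overline\zeta$. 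For the cut-ball event, I would decompose $\lambda$ at the first and last visits of $\overline{\Bc_{3n/4}(z_n)}$ and then \emph{further} at the first and last visits of $\partial\Bc_{n/6}(z_n)$, giving $\lambda = [\lambda_1]^R\oplus[\zeta_1]^R\oplus\kappa_1\oplus\kappa_2\oplus\zeta_2\oplus\lambda_2$, where $(\zeta_1,\zeta_2)$ are again the paths from $z_n$ (more precisely, from $\partial\Bc_{n/6}(z_n)$ inward) and $(\kappa_1,\kappa_2)$ are the innermost pieces living between $\partial\Bc_{n/6}(z_n)$ and $\Bc_{3n/4}(z_n)$. The cut-ball requirement is then: $[\lambda_1]^R\oplus[\zeta_1]^R\oplus\kappa_1$ does not meet $\kappa_2\oplus\zeta_2\oplus\lambda_2$ (non-intersection of the two ``sides''), and the intermediate excursion $\omega$ between first and last hit of $\Bc_{3n/4}(z_n)$ stays inside $\Bc_{5n/6}(z_n)$. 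Writing $\widetilde p_B(\overline\zeta)$ for the mass of the inner pieces $(\kappa_1,\kappa_2)$ realizing the cut-ball event (with the two outer arms' endpoints and $\overline\zeta$ as boundary data), one gets $\Pb\{K_{3n/4}(z)\}=\sum_{\overline\zeta}\nu^{\Bc_n}_{x_1,0}\otimes\nu^{\Bc_n}_{x_2,\partial\Bc_n}\{A_{0,\partial\Bc_n}(\overline\zeta)\}\,\widetilde p_B(\overline\zeta)$, with \emph{exactly the same outer factor} as for $\Ac_n(z)$. So the whole problem reduces to showing $\widetilde p_B(\overline\zeta)\simeq f(n)\,p(\overline\zeta)$ uniformly over nice configurations $\overline\zeta\in\mathrm{NICE}_{n/6}(z)$, after first replacing both sums by their restrictions to nice configurations via Proposition~\ref{prop:An4} / Proposition~\ref{prop:bridge-nice}(1) (and an analogous statement for the cut-ball event, which follows the same routine).

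Next I would establish $\widetilde p_B(\overline\zeta)\simeq f(n)\,p(\overline\zeta)$ for a single nice $\overline\zeta$. This is where the inward coupling (Theorem~\ref{coupling}) and the separation lemmas enter. The key observation: both $p(\overline\zeta)$ and $\widetilde p_B(\overline\zeta)$ are, up to the $\simeq$-error, governed by the non-intersection probability of two walks from scale $e^{n/6}$ down to — respectively — the single point $z_n$, or the ball $\Bc_{3n/4}(z_n)$, with the initial configuration given by the $(n/6)$-scale arms $\overline\zeta$. By Lemma~\ref{decompose} (translated to this inward setting) and Theorem~\ref{coupling}, the dependence on the initial configuration $\overline\zeta$ factors out: once the two arms have traveled inward a constant number of scales from $\partial\Bc_{n/6}(z_n)$ and become well-separated, the inward coupling lets us replace $\overline\zeta$ by any other nice configuration up to a multiplicative error $1+O(e^{-un})$. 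Hence both $p(\overline\zeta)$ and $\widetilde p_B(\overline\zeta)$ equal, up to $\simeq$, a quantity depending only on $n$ times the common ``conditional non-intersection probability from scale $e^{n/6-O(1)}$'' — which itself is $\simeq$ independent of $\overline\zeta$. Taking $f(n)$ to be the ratio of the two $n$-dependent quantities (equivalently, $f(n)=\widetilde p_B(\overline\zeta_0)/p(\overline\zeta_0)$ for a fixed reference configuration, which by the above is $z$-independent) finishes the argument. One technical point to handle carefully: the cut-ball event additionally constrains the intermediate excursion $\omega$ to stay in $\Bc_{5n/6}(z_n)$; this constraint must be shown to only affect $\widetilde p_B(\overline\zeta)$ by an $n$-dependent (not $z$-dependent) factor, which follows since, after rescaling $z_n$-centered balls by $e^{-3n/4}$, this is exactly the compensating factor $\Qf^*[\Psi_s]$-type quantity of Section~\ref{subsec:cbd} with $s=n/4$, controlled by the freezing/Beurling estimates uniformly.

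The main obstacle I anticipate is precisely this last uniformity: proving that the cut-ball $\simeq$-decomposition holds with a configuration-independent (hence $z$-independent) factor, i.e.\ that the inward coupling can be applied to strip off the dependence on the arms $\overline\zeta$ \emph{before} any $z$-dependence (through the position of $z_n$ relative to $\partial\Bc_n$ and the origin) has crept in. In the regime $\dist(0,z,\partial\Dc)\ge e^{-n/6}$, the mesoscopic ball $\Bc_{n/6}(z_n)$ is well inside $\Bc_n$ and away from $0$ and $\partial\Bc_n$ by at least $e^{5n/6}$, so the inner picture (arms from $e^{n/6}$ down to $e^{3n/4}$ or to $z_n$) is genuinely decoupled from the macroscopic geometry; the $z$-dependence resides entirely in the outer factor $\nu^{\Bc_n}_{x_1,0}\otimes\nu^{\Bc_n}_{x_2,\partial\Bc_n}\{A_{0,\partial\Bc_n}(\overline\zeta)\}$, which is \emph{identical} for the two events and therefore cancels. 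Making this cancellation rigorous — carefully matching the boundary data (endpoints $x_1,x_2$ on $\partial\Bc_{n/6}(z_n)$ and the sausage-type interaction of the inner pieces with the outer arms) between the cut-point and cut-ball decompositions, and tracking that all $\simeq$-errors are $O(e^{-un})$ uniformly — is the bulk of the work, and it parallels closely the proof of Theorem~\ref{t:one_pt} together with the inward-coupling argument behind Lemma~\ref{decompose} and Theorem~\ref{coupling}. The adaptation to Proposition~\ref{prop:AKAA} (the two-sided / initial-configuration version) would follow the same template, replacing the outer arm to $0$ by an arm with a general nice initial configuration, exactly as in the passage from Proposition~\ref{prop:bridge-nice} to its generalizations.
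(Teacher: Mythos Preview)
Your overall strategy --- factor both probabilities as (outer arm mass) $\times$ (inner functional) and use the inward coupling of Theorem~\ref{coupling} to show the inner functional is $z$-independent --- is exactly the paper's approach. The gap is in your choice of decomposition scale. By working at $\partial\Bc_{n/6}(z_n)$ you claim the outer factor $\nu^{\Bc_n}_{x_1,0}\otimes\nu^{\Bc_n}_{x_2,\partial\Bc_n}\{A_{0,\partial\Bc_n}(\overline\zeta)\}$ is ``exactly the same'' for both events, but this does not hold: in the cut-point decomposition \eqref{eq:lbd-dec}, $\overline\zeta\in\Xc_{n/6}(z_n)$ is a pair of paths emanating from the single point $z_n$, and the outer arms $(\lambda_1,\lambda_2)$ must avoid precisely these paths (and may themselves re-enter $\Bc_{n/6}(z_n)$). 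In the cut-ball event the walk need not visit $z_n$ at all, so whatever plays the role of $\overline\zeta$ at scale $n/6$ is a structurally different object (paths from $\partial\Bc_{n/6}(z_n)$ inward toward $\partial\Bc_{3n/4}(z_n)$), and the avoidance constraint imposed on the outer arms is not the same. Your two sums therefore range over different index sets with different outer weights, and the cancellation you invoke is not justified as stated.

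The paper sidesteps this by decomposing instead at scale $3n/4+1$, just above the cut-ball radius. There the first-entry/last-exit decomposition of $\Bc_{3n/4+1}(z_n)$ gives \emph{identical} outer arms $\overline\eta=(\eta_1,\eta_2)$ for both events (non-intersecting in either case, since each $\eta_i$ is a subpath of the $i$-th half of $\lambda$ under either the cut-point or the cut-ball splitting), yielding a single common outer measure $p^z_{0,n}$. One then has $\Pb\{K_{3n/4}(z)\}=\|p^z_{0,n}\|\,\widehat p^z_{0,n}[\varphi_1]$ and $\Pb\{\Ac_n(z)\}=\|p^z_{0,n}\|\,\widehat p^z_{0,n}[\psi_1]$, where $\varphi_1,\psi_1$ are the respective inner functionals. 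The inward coupling (Lemma~\ref{lem:coupling}) is then applied to the \emph{outer} arms $\overline\eta$ --- not to inner arms running from $z_n$, as your sketch suggests --- to replace $\widehat p^z_{0,n}$ by a translate $\mu_0^z$ of a fixed reference measure. Since $\mu_0^z[\varphi_2]$ and $\mu_0^z[\psi_2]$ are translation-invariant in $z$, they depend only on $n$, and $f(n)=\mu_0^z[\varphi_2]/\mu_0^z[\psi_2]$ gives the claim.
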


\begin{proposition}\label{prop:AKAA}
	For all $V\in\Vc$, $z,w\in V$ with $|z-w|\ge e^{-n/6}$,
	\begin{equation}\label{eq:AKAA}
		\Pb\{ K_{3n/4}(z)\cap \Ac_n(w) \} \simeq_V
		f(n) \Pb\{ \Ac_n(z) \cap \Ac_n(w) \}. 
	\end{equation}
\end{proposition}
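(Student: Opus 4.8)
\textbf{Proof proposal for Proposition~\ref{prop:AKAA}.}

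The plan is to mimic the proof of Proposition~\ref{prop:f2} but carry along two independent ``targets'' $z_n$ and $w_n$, one of which (say $z_n$) is upgraded to a cut ball while the other remains a cut point. First I would combine Theorems~\ref{t:one_pt} and~\ref{t:two_pt}: since $c_1^2 e^{2\eta n}\Pb\{\Ac_n(z)\cap\Ac_n(w)\}\simeq_V G^{\cut}_{\Dc}(z,w)$ and, by Theorem~\ref{thm:gr} together with \eqref{eq:Gcut-zw}, $G^{\cut}_{\Dc}(z,w)$ is comparable (up to constants depending on $V$) to $|z-w|^{-\eta}$, the two-point cut-point probability is already pinned down up to a sharp multiplicative error. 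So it suffices to show that the cut-ball/cut-point event on the left of \eqref{eq:AKAA} has the same product structure, with the \emph{same} function $f(n)$ appearing at the $z$-slot as in Proposition~\ref{prop:f2}. To do this I would run the decomposition of $\lambda=S[0,\tau_n]$ used for Proposition~\ref{prop:f2} at scale $e^{n/6}$ around $z_n$, extracting the nice NI pair $\overline\zeta\in\mathrm{NICE}_{n/6}(z)$ (using Proposition~\ref{prop:bridge-nice2}(1) or its one-ball-upgraded variant to restrict to nice configurations at both $z$ and $w$), and then compare the event $K_{3n/4}(z)$ against $\Ac_n(z)$ \emph{conditionally} on the part of the walk that produces the cut point at $w_n$. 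The point is that the separation/inward-coupling machinery of Section~\ref{sec:coupling} (Theorem~\ref{coupling}) lets us forget the global configuration — including the $w$-excursions and their sausages — once the two relevant walks near $z_n$ are well separated at scale $e^{n/6}$; the factor relating the cut-ball event to the cut-point event at $z$ then depends only on scales between $e^{n/6}$ and $e^{3n/4}$ around $z_n$, hence is exactly the same $f(n)$, up to a multiplicative $1+O(e^{-un})$ error.

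Concretely the steps would be: (i) reduce, via Proposition~\ref{prop:bridge-nice2}(1) and Lemma~\ref{lem:two-p}/Lemma~\ref{lem:boundary} (to kill configurations where $z_n$ or $w_n$ is atypically placed), to summing over $\widehat\zeta=(\overline\zeta,\overline\zeta')\in\mathrm{NICE}_{n/6}(z,w)$ with the product weight $p(\overline\zeta)p(\overline\zeta')$ as in the proof of Theorem~\ref{t:two_pt}; (ii) for each fixed such $\widehat\zeta$, write both $\Pb\{K_{3n/4}(z)\cap\Ac_n(w)\mid \widehat\zeta\text{-extracted}\}$ and $\Pb\{\Ac_n(z)\cap\Ac_n(w)\mid\cdots\}$ in terms of a common family of non-intersecting random-walk path measures $\nu^{\Bc_n}_{x_1,0}\otimes\nu^{\Bc_n}_{x'_1,\partial\Bc_n}\otimes\nu^{\Bc_n}_{x_2,x'_2}$ restricted to $A_{z,w}(\widehat\zeta)$ resp.\ the cut-ball analogue $A^{K}_{z,w}(\widehat\zeta)$, exactly as in Section~\ref{sec:cpcb}; (iii) show the ratio of these two masses is $f(n)[1+O(e^{-un})]$, uniformly in $\widehat\zeta\in\mathrm{NICE}_{n/6}(z,w)$, by decomposing the $z$-directed walk $\lambda_1$ at scale $e^{n/6}$ and $e^{3n/4}$ about $z_n$ and invoking the reverse separation lemma (Lemma~\ref{sep}) plus Theorem~\ref{coupling} so that the conditional law in the annulus $\Bc_{3n/4}(z_n)\setminus\Bc_{n/6}(z_n)$ becomes asymptotically independent of everything outside (in particular of the $w$-excursions, of $\gamma_*$ and of their $(\mathcal Kn)$-sausages); (iv) sum over $\widehat\zeta$ and use \eqref{eq:two-1} to conclude. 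The definition of $f(n)$ itself would be taken verbatim from the proof of Proposition~\ref{prop:f2} — it is the universal ratio, coming from the quasi-invariant measures $\Qf,\Qf^*$ and the compensating factor $\Qf^*[\Psi_s]$ at $s$ of order $n$, between the cut-ball and cut-point probabilities at a single isolated point.

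The main obstacle I anticipate is step (iii): making rigorous that, on the two-point event, the presence of the $w$-cut-point structure (its NI excursions, the crossing path $\gamma_*$, and crucially the $(\mathcal Kn)$-sausages $\wt\zeta',\wt\zeta'_i$ that enter the continuous analogue) does not leak into the $z$-annulus $\Bc_{3n/4}(z_n)\setminus\Bc_{n/6}(z_n)$. Since $|z-w|\ge e^{-n/6}$, the two mesoscopic balls $\Bc_{n/6}(z_n)$ and $\Bc_{n/6}(w_n)$ are at Euclidean distance $\gtrsim e^{5n/6}$ apart, so geometrically there is room; the work is to quantify the conditional-independence via Lemma~\ref{decompose}, Lemma~\ref{approx} and Theorem~\ref{coupling}, getting an error $O(e^{-un})$ that survives the summation over the (exponentially many) configurations $\widehat\zeta$ — exactly the bookkeeping already done for Propositions~\ref{prop:f2} and~\ref{prop:bridge-nice2}, but now with the extra $w$-path riding along as frozen boundary data. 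A secondary technical point is checking that the ``bad'' boundary cases (either target near $\partial\Bc_n$, or the two targets unexpectedly close) contribute a negligible fraction of $\Eb[M_n^2]$, which follows from Lemma~\ref{lem:boundary} and Lemma~\ref{lem:two-p}. Given how closely this parallels the already-established one-point transfer (Proposition~\ref{prop:f2}) and the two-point cut-point estimate (Theorem~\ref{t:two_pt}), I would present it as a short argument stating the decomposition and citing the relevant lemmas, rather than reproving the coupling estimates.
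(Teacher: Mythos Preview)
There is a scale mismatch that makes your plan, as stated, unworkable. The NICE decomposition of Section~\ref{sec:pt_estim} extracts a pair $\overline\zeta\in\Xc_{n/6}(z_n)$ of non-intersecting walks from $z_n$ out to radius $e^{n/6}$, whereas the cut ball $\Bc_{3n/4}(z_n)$ has radius $e^{3n/4}\gg e^{n/6}$; thus $\overline\zeta$ lies strictly \emph{inside} the cut ball and is invisible to the event $K_{3n/4}(z)$, which neither requires nor sees a cut point at $z_n$. Consequently the object ``$A^K_{z,w}(\widehat\zeta)$'' in your step~(ii) cannot be defined so that summing over $\overline\zeta\in\mathrm{NICE}_{n/6}(z)$ recovers $\Pb\{K_{3n/4}(z)\cap\Ac_n(w)\}$, and step~(iii) refers to the wrong annulus. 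Note also that $f(n)=\mu_0^z[\varphi_2]/\mu_0^z[\psi_2]$ in \eqref{eq:fn} is built from the reference measure $\mu_0=\mu_{3n/4+1,19n/24,5n/6}$ and functionals supported on scales between $e^{3n/4}$ and $e^{5n/6}$; it involves neither scale $e^{n/6}$ nor any NICE configuration. (A smaller point: Proposition~\ref{prop:AKAA} is a purely discrete statement, so the $(\Kc n)$-sausages you worry about do not enter here.)

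The paper's argument stays entirely within the framework of Proposition~\ref{prop:f2}: one decomposes $\lambda$ at $\mathcal S=\partial\Bc_{3n/4+1}(z_n)$ into $\eta_1\oplus\omega_*\oplus\eta_2^R$ and splits into three cases according to whether the cut point at $w_n$ lies in $\eta_1$, in $\eta_2$, or in $\omega_*$. In the first two cases one reruns Lemma~\ref{lem:coupling} and Propositions~\ref{prop:ecb}--\ref{prop:phi} verbatim, except that the relevant $\eta_i$ is now sampled from the path measure \emph{restricted to having a cut point at $w_n$}; since the inward coupling (Theorem~\ref{coupling}) is insensitive to the outer initial configuration, the same $\mu_0^z$ and hence the same $f(n)$ emerges. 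The third case --- the walk enters the $z$-vicinity, leaves to make a cut point at $w_n$, and returns --- is the one your proposal does not address: here $\omega_*$ must travel from $\mathcal S$ out past $\partial\Bc_{5n/6}(z_n)$ (since $|z_n-w_n|\gtrsim e^{5n/6}$) and back, and a backtracking argument (disconnection exponent for $d=2$, Green's-function decay for $d=3$, as in \cite[\S4.4]{mink_cont}) shows this contributes an $O(e^{-un})$ fraction on both sides of \eqref{eq:AKAA}.
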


We start with Proposition~\ref{prop:f2}. The key ingredient is the inward coupling Theorem~\ref{coupling}. We first show how to adapt it to the current setting. We  consider the balls of radii $3n/4<3n/4+1<37n/48<19n/24<5n/6$ about $z_n$, respectively.
We will decompose $\lambda=S[0,\tau_n]$  by its visits to $\mathcal{S}:=\partial\Bc_{3n/4+1}(z_n)$ instead of $\partial\Bc_{3n/4}(z_n)$ (as we already encountered, it is somewhat more convenient to spare an additional scale, see Remark~\ref{rmk:additional-scale} and Lemma~\ref{lem:hp} below). More precisely (see Figure~\ref{fig:zw}), 
\begin{itemize}
	\item let $\eta_1$ be the part of $\lambda$ from $0$ to the first hitting of $\Sc$,
	\item let  $\eta_2$ be the part of $\lambda^R$ from its starting point to its first visit of $\Sc$,
	\item and let $\omega_*$ be the remaining part of $\lambda$ from $\eta_1$ to $\eta_2$. 
\end{itemize}
Then, we can write $\lambda$ as $\eta_1\oplus\omega_*\oplus \eta_2^R$. 
By \eqref{eq:decom2}, to get $\lambda$, we can first sample $\ol\eta:=(\eta_1,\eta_2)$ from the measure $\nu^{U}_{0,\Sc}\otimes \nu^{U}_{\partial\Bc_n,\Sc}$ with $U:=\Bc_n\setminus \Bc_{3n/4+1}(z_n)$, then sample $\omega_*$ from the measure $\nu^{U}_{x_1,x_2}$ with $x_i$ being the endpoint of $\eta_i$, finally concatenate them in the way of \eqref{eq:decomp}. 

Let $p^z_{0,n}$ be the measure $\nu^{U}_{0,\Sc}\otimes \nu^{U}_{\partial\Bc_n,\Sc}$ restricted to the NI event $\eta_1\cap\eta_2=\emptyset$.
Then, let $\mu_0$ be some reference measure $\mu_{3n/4+1,19n/24,5n/6}(V_0,V_1,V_2,y_1,y_2)$ defined in the beginning of Section~\ref{sec:coupling}.
Let $\mu_0^z$ be the image of $\mu_0$ under the map $x\mapsto x+z_n$.
The following lemma says that we can use Theorem~\ref{coupling} to couple the probability measures $\widehat p^z_{0,n}$ (the normalized version of $p^z_{0,n}$) and $\mu^z_0$ with exponential rate.

\begin{figure}[h!]
	\centering
	\includegraphics[width=.3\textwidth]{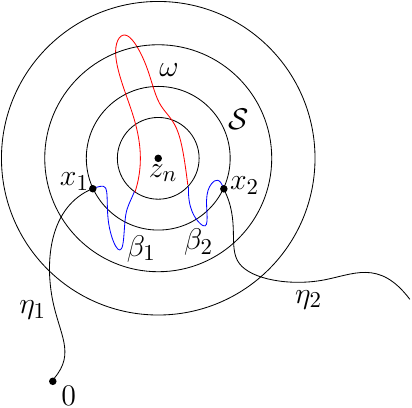}
	\caption{An illustration of path decomposition for $\lambda$. The four concentric balls about $z_n$ have radii $e^{3n/4}, e^{3n/4+1}$, $e^{37n/48}$ and $e^{5n/6}$, respectively and the second sphere is $\Sc$. $\bar\eta$ is in black with endpoints $\bar x$ and $\omega^*$ is the concatenation of $\bar\beta$ in blue and $\omega$ in red. }
	\label{fig:zw}
\end{figure}

\begin{lemma}\label{lem:coupling}
	There is a coupling $P$ of $\overline\eta$ and $\overline\eta'$ such that $\overline\eta\sim \widehat p^z_{0,n}$, $\overline\eta'\sim \mu^z_0$, and 
	\begin{equation*}
		P\{ \overline\eta=_{37n/48}\overline\eta' \} \ge 1-c e^{-un},
	\end{equation*}
	where $c,u$ are some universal positive constants.
\end{lemma}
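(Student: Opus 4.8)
\textbf{Proof proposal for Lemma~\ref{lem:coupling}.}

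The plan is to recognize $\widehat p^z_{0,n}$ (the normalized version of $p^z_{0,n}$) as a measure of exactly the type handled by Theorem~\ref{coupling}, i.e.\ a pair of non-intersecting random walks conditioned to avoid each other, started from the outside and run inward to a mesoscopic sphere. First I would identify the parameters. Translating everything by $-z_n$, the measure $\widehat p^z_{0,n}$ is the law of $(\eta_1,\eta_2)$ where $\eta_1$ is the excursion of $S[0,\tau_n]$ (run backwards from $0$) into $\partial\Bc_{3n/4+1}(z_n)$, and $\eta_2$ is the corresponding piece of $S^R$; after recentering at $z_n$, both walks start from (translated copies of) $\partial\Bc_n$ and $0$ respectively and are stopped at $\partial\Bc_{3n/4+1}$, conditioned on $\eta_1\cap\eta_2=\emptyset$. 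This is precisely $\mu_{l,n',K}(V_0,V_1,V_2,x_1,x_2)$ in the notation of Section~\ref{sec:coupling}, with the inner scale $l=3n/4+1$, an intermediate scale corresponding to $19n/24$, an outer scale $K$ corresponding to $5n/6$ (so that $n-m$ in Theorem~\ref{coupling} is of order $n$, namely $\asymp 19n/24-3n/4 = n/24$ up to constants, giving the exponential rate), and with initial configuration $V_0 = $ (translate of) $\Bc_{5n/6}^c$, $V_1,V_2$ the relevant reversed/forward pieces of the walk outside $\Bc_{19n/24}$, and $x_1,x_2 \in \partial\Bc_{19n/24}$. The reference measure $\mu_0^z$ is, by construction, $\mu_{3n/4+1,19n/24,5n/6}(V_0,V_1,V_2,y_1,y_2)$ translated to $z_n$.

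The key step is to check that the initial configurations of $\widehat p^z_{0,n}$ and of $\mu_0^z$ fit the hypotheses of Theorem~\ref{coupling}: both are quintuples $(V_0,V_1,V_2,x_1,x_2)$ with $V_1,V_2$ disjoint paths in $\Bc_{n}^c$ (within the recentered picture) running from $V_0 = \Bc_K^c$ to $\partial\Bc_n$, each meeting $\partial\Bc_n$ at a single point. This requires the condition $\dist(0,z,\partial\Dc)\ge e^{-n/6}$: it guarantees that $z_n$ is far enough from both $0$ and $\partial\Bc_n$ that the ball $\Bc_{5n/6}(z_n)$ is well inside $\Bc_n$ and does not contain the origin, so the decomposition of $\lambda$ by its visits to $\Sc$ is well-defined and the ``outside'' pieces $\eta_1$ (backward from $0$) and $\eta_2$ (forward, the reversed tail ending at $\partial\Bc_n$) genuinely lie in the complement of $\Bc_{5n/6}(z_n)$. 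One also needs to observe that, since these outside pieces are themselves pieces of a single conditioned walk, they automatically satisfy the non-intersection requirement between $V_1$ and $V_2$; and the reference configuration $\mu_0^z$ can be chosen to satisfy the same constraints. Then Theorem~\ref{coupling}, applied with $m$ chosen so that $e^m \asymp e^{37n/48}$ and with the stated $l,n',K$, produces a coupling $P$ of $\overline\eta\sim\widehat p^z_{0,n}$ and $\overline\eta'\sim\mu_0^z$ with $P\{\overline\eta =_{37n/48}\overline\eta'\}\ge 1-ce^{-\beta(19n/24-37n/48)} = 1-ce^{-\beta n/48}$, which is the claimed bound with $u=\beta/48$.

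The main obstacle I anticipate is bookkeeping rather than anything deep: one must carefully match the several scales ($3n/4$, $3n/4+1$, $37n/48$, $19n/24$, $5n/6$) to the roles of $l$, $m$, $n$, $K$ in Theorem~\ref{coupling} and verify the inequality chain $0<l\le m\le n'-1\le K-2$ holds for these choices, and one must confirm that after translating by $-z_n$ the relevant portions of $\lambda$ really do form a legitimate initial configuration (in particular that $V_1\cap V_2=\emptyset$ and each hits $\partial\Bc_n$ at a single point, which may require a cosmetic adjustment of the decomposition at the very endpoints, or absorbing the endpoint into $V_0$). A secondary point is that Theorem~\ref{coupling} as stated couples $\mu_{l,n,K}$ for two \emph{given} initial configurations, so here one applies it conditionally on the realization of the outside pieces $V_1,V_2$ of $\lambda$ — i.e.\ one first samples the outside of $\lambda$, then invokes the coupling with that (random) configuration against the fixed reference configuration of $\mu_0^z$ — and one should note the bound is uniform over admissible configurations, so no loss occurs upon averaging. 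Once these matchings are in place, the lemma follows directly.
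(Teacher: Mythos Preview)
Your skeleton is correct—decompose at $\partial\Bc_{19n/24}(z_n)$, condition on the outside pieces, and invoke Theorem~\ref{coupling} with $l=3n/4+1$, $m=37n/48$, the theorem's $n$ equal to $19n/24$, and $K=5n/6$—and this is exactly the paper's route. However, there is a substantive step you have glossed over as ``bookkeeping.'' Under $\widehat p^z_{0,n}$ the paths $\eta_1,\eta_2$ are constrained to stay in $U=\Bc_n\setminus\Bc_{3n/4+1}(z_n)$; after translating by $-z_n$ the outer domain is the \emph{translate} of $\Bc_n$, which is not a ball centered at the origin. Thus, conditionally on the outside pieces $\overline\xi$, the law of $\overline\gamma$ is a pair of NIRW's conditioned to reach $\Sc$ before exiting $\Bc_n$, \emph{not} before exiting $\Bc_{5n/6}(z_n)$. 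Theorem~\ref{coupling} requires $V_0=\Bc_K^c$ to be a concentric ball complement and the event $A^2_{l,n,K}$ is specifically $\{\tau_l<\tau_K\}$, so your claimed direct identification ``this is precisely $\mu_{l,n',K}$'' is not correct as stated.

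The paper's proof supplies exactly this missing step: it shows, via the gambler's ruin estimate (Lemma~\ref{l:trans_recur}) for $d=3$ or the Beurling estimate (Proposition~\ref{p:Beurling}) for $d=2$, that
\[
\Pb\{\gamma_i\cap\Bc_{5n/6}(z_n)^c\neq\emptyset\mid\overline\xi\}=O(e^{-un}),
\]
so that after further conditioning on $\gamma_1,\gamma_2\subset\Bc_{5n/6}(z_n)$ (a cost of only $O(e^{-un})$ in total variation), the conditional law of $\overline\gamma$ becomes precisely $\mu_{3n/4+1,19n/24,5n/6}(\Bc_{5n/6}(z_n)^c,\xi_1,\xi_2,y_1,y_2)$, and Theorem~\ref{coupling} then applies verbatim. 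This is not merely cosmetic: without it, the outer constraint is the wrong shape and the coupling theorem does not apply. Once you insert this step, your argument matches the paper's.
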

\begin{proof}
	Let $\overline\eta=(\eta_1,\eta_2)\sim \widehat p^z_{0,n}$ and $\overline\eta'=(\eta'_1,\eta'_2)\sim \mu^z_0$. We decompose $\overline\eta=\overline \xi\oplus\overline\gamma$ with $\overline \xi=\overline\eta[0,H_{\partial\Bc_{19n/24}(z_n)}]$. By {the} strong Markov property, given $\overline\xi$ with endpoints $\overline y=(y_1,y_2)$, the distribution of $\overline\gamma$ is just a pair of independent simple random walks started from $\overline y$ and stopped upon hitting $\Sc$, conditioned on the event that they hit $\Sc$ before exiting $\Bc_n$ and $\eta_1\cap\eta_2=\emptyset$. By using the {g}ambler's ruin estimate (Lemma~\ref{l:trans_recur}) when $d=3$ or the Beurling estimate (Proposition~\ref{p:Beurling}) when $d=2$, we see that there is a $u>0$ such that for $i=1,2$,
	\[
	\Pb\{ \gamma_i\cap \Bc_{5n/6}(z_n)^c \neq\emptyset \mid \overline\xi \}=O(e^{-un}).
	\]
	Therefore, if we further condition on the event that $\gamma_1$ and $\gamma_2$ stay inside $\Bc_{5n/6}(z_n)$, then 
	\[
	\overline\gamma\sim \mu_{3n/4+1,19n/24,5n/6}(\Bc_{5n/6}(z_n)^c, \xi_1,\xi_2,y_1,y_2).
	\]
    By using the same decomposition for $\overline\eta'$ instead, we arrive at the same conclusion: conditioned on $\overline\xi'$ and $\overline\gamma'$ staying inside $\Bc_{5n/6}(z_n)$, we have 
    \[
    \overline\gamma'\sim \mu_{3n/4+1,19n/24,5n/6}(\Bc_{5n/6}(z_n)^c, \xi'_1,\xi'_2,y'_1,y'_2).
    \]
    Applying the inward coupling (Theorem~\ref{coupling}) to $\ol\gamma$ and $\ol\gamma'$, we obtain the result. 
\end{proof}

After sampling $\overline\eta$ according to $p^z_{0,n}$, we only need to consider admissible intermediate part $\omega_*$ to satisfy the cut-ball event $K_{3n/4}(z)$ or the cut-point event $\Ac_n(z)$, respectively. 
We begin with the cut-ball event.
This is very similar to the continuous (cut-ball) case, see Section~\ref{subsec:BM-op}. 
If $\overline\eta=(\eta_1,\eta_2)\in \Gamma^{U}_{0,\Sc}\times \Gamma^{U}_{\partial\Bc_n,\Sc}$ is a pair of NI paths with terminal points $x_1,x_2$ on $\Sc$, let $p^{\overline\eta}$ denote the measure $\nu^{\Bc_n}_{x_1,x_2}$ restricted to those paths $\omega_*$ such that 
\begin{itemize}
	\item $\omega_*\cap \Bc_{3n/4}(z_n)\neq \emptyset$, and it can be decomposed as $\omega_*=\beta_1\oplus\omega\oplus \beta_2^R$ according to its first and last visit to $\partial\Bc_{3n/4}(z_n)$ (see Figure~\ref{fig:zw} for an illustration),
	\item and $(\eta_1\cup \beta_1)\cap (\eta_2\cup \beta_2)=\emptyset$ and $\omega\subset \Bc_{5n/6}(z_n)$.
\end{itemize}
We then define 
\begin{equation*}
	\varphi_1(\overline\eta):=\|p^{\overline\eta}\|, \quad \varphi_2(\overline\eta):=p^{\overline\eta}[1_{\beta_1,\beta_2\subset \Bc_{37n/48}(z_n)}].
\end{equation*}
With the above definition, we can write 
\begin{equation}\label{eq:cp-dec}
	\Pb\{ K_{n/2}(z) \}=\|p^z_{0,n}\|\, \widehat p^z_{0,n} [\varphi_1 ].
\end{equation}

The next lemma lets us compare $\varphi_1$ with $\varphi_2$.
\begin{lemma}\label{lem:hp}
	The following {holds}:
	\begin{equation}\label{eq:hp-1}
		\|\varphi_1\|:=\max_{\overline\eta} \varphi_1(\overline\eta) \asymp 
		\widehat p^z_{0,n}[\varphi_2]\asymp
		n^{3-d} e^{-3(d-2)n/4},
	\end{equation}
where the maximum is over all pairs of $\overline\eta=(\eta_1,\eta_2)\in \Gamma^{U}_{0,\Sc}\times \Gamma^{U}_{\partial\Bc_n,\Sc}$ such that $\eta_1\cap\eta_2=\emptyset$.
	{Moreover, }
\begin{equation}\label{eq:hp-2}
	\widehat p^z_{0,n}[\varphi_1]\simeq
	\widehat p^z_{0,n}[\varphi_2].
\end{equation}
\end{lemma}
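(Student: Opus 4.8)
The plan is to prove the three statements in the order \eqref{eq:hp-1} then \eqref{eq:hp-2}, mirroring the Brownian argument of Section~\ref{subsec:BM-op} (Lemmas~\ref{lem:sup-Psi}--\ref{lem:TV} and the proof of Proposition~\ref{prop:one-point}), with the inward coupling Lemma~\ref{lem:coupling} playing the role that the quasi-invariant convergence \eqref{eq:qi_bm_definition-inv} plays there. Throughout, write $U=\Bc_n\setminus\Bc_{3n/4+1}(z_n)$ and recall $\omega_*$ is decomposed as $\omega_*=\beta_1\oplus\omega\oplus\beta_2^R$ along the first and last visits to $\partial\Bc_{3n/4}(z_n)$.

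First I would establish the upper bound $\|\varphi_1\|\lesssim n^{3-d}e^{-3(d-2)n/4}$. Fix $\overline\eta=(\eta_1,\eta_2)$ with endpoints $x_1,x_2\in\Sc=\partial\Bc_{3n/4+1}(z_n)$. By the strong Markov property of the SRW at the first hitting of $\partial\Bc_{3n/4}(z_n)$ and at the last exit (equivalently, by decomposing $\nu^{\Bc_n}_{x_1,x_2}$ via \eqref{eq:decom2} with $\ol B=\Bc_{3n/4}(z_n)$), the total mass $\varphi_1(\overline\eta)$ is bounded by the mass of a SRW bridge from $x_1$ to $x_2$ forced to enter $\Bc_{3n/4}(z_n)$; that is, $\varphi_1(\overline\eta)\le \sum_{y\in\partial_i\Bc_{3n/4}(z_n)}G_{\Bc_n}(x_1,y)\,(\text{bridge through }y) \le c\max_{y\in\partial\Bc_{3n/4}(z_n),\,x\in\Sc}G_{U\cup\Bc_{3n/4}(z_n)}(x,y)$. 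When $d=3$ this is bounded by $\sup_{|x-y|\asymp e^{3n/4}}G(x,y)\asymp e^{-3n/4}$ using the free Green's function estimate (Theorem 4.3.1 of \cite{RWintro}); when $d=2$ one uses instead that a positive fraction of the bridge mass stays inside $\Bc_{37n/48}(z_n)$ and invokes Lemma~\ref{l:trans_recur} to get the extra factor $n$, exactly as in Lemma~\ref{lem:sup-Psi}. The matching lower bound for $\max_{\overline\eta}\varphi_1$ and for $\widehat p^z_{0,n}[\varphi_2]$ comes from the reverse separation lemma for NIRW's (Lemma~\ref{sep}): under $\widehat p^z_{0,n}$ a positive fraction of $\overline\eta$ are well-separated at scale $e^{3n/4+1}$, and on that event one explicitly constructs admissible $(\beta_1,\omega,\beta_2)$ staying inside $\Bc_{37n/48}(z_n)$ whose mass is $\asymp n^{3-d}e^{-3(d-2)n/4}$ by the one-point construction in Lemma~\ref{lem:biop}/Lemma~\ref{lem:Gcut-z} (one needs the Harnack principle, \cite[Theorem 6.3.9]{RWintro}, to pass from fixed endpoints to $\asymp$-estimates). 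This gives \eqref{eq:hp-1}.

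Next, for \eqref{eq:hp-2} I would first pass from $\widehat p^z_{0,n}$ to the reference measure $\mu^z_0$ using Lemma~\ref{lem:coupling}: on the coupling event $\{\overline\eta=_{37n/48}\overline\eta'\}$, which has probability $1-O(e^{-un})$, the paths $\overline\eta$ and $\overline\eta'$ agree after first entering $\Bc_{37n/48}(z_n)$, hence $\varphi_2(\overline\eta)=\varphi_2(\overline\eta')$ on that event (since $\varphi_2$ only sees $\omega_*$ together with $\eta_i$ inside $\Bc_{37n/48}(z_n)$, via the restriction $\beta_1,\beta_2\subset\Bc_{37n/48}(z_n)$). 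Using $\|\varphi_1\|=O(n^{3-d}e^{-3(d-2)n/4})$ to bound the contribution of the exceptional event and $\widehat p^z_{0,n}[\varphi_2]\asymp n^{3-d}e^{-3(d-2)n/4}$ from \eqref{eq:hp-1}, one gets $\widehat p^z_{0,n}[\varphi_2]=\mu^z_0[\varphi_2](1+O(e^{-un}))$, and similarly with $\varphi_2$ replaced by any functional controlled by $\varphi_1$ that is $\Bc_{37n/48}(z_n)$-local. Then the difference $\varphi_1(\overline\eta)-\varphi_2(\overline\eta)$ is bounded, exactly as in Lemma~\ref{lem:wi-large}, by the mass of those $\omega_*$ for which $\beta_1$ or $\beta_2$ exits $\Bc_{37n/48}(z_n)$; by the gambler's ruin estimate (Lemma~\ref{l:trans_recur}) for $d=3$ and the Beurling estimate (Proposition~\ref{p:Beurling}) applied twice for $d=2$, this costs an extra factor $O(e^{-un})$, so $|\varphi_1(\overline\eta)-\varphi_2(\overline\eta)|\le c\,n^{3-d}e^{-3(d-2)n/4}e^{-un}$ uniformly. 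Integrating against $\widehat p^z_{0,n}$ and dividing by $\widehat p^z_{0,n}[\varphi_2]\asymp n^{3-d}e^{-3(d-2)n/4}$ yields \eqref{eq:hp-2}.

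The main obstacle is the last uniform estimate $|\varphi_1-\varphi_2|\le c\,\|\varphi_1\|e^{-un}$: one must control, uniformly over \emph{all} NI pairs $\overline\eta$ (including poorly separated ones, where $\varphi_1(\overline\eta)$ itself may be much smaller than $\|\varphi_1\|$), the mass of intermediate paths $\omega_*$ that both realize the cut-ball geometry near $\Bc_{3n/4}(z_n)$ \emph{and} send $\beta_1$ or $\beta_2$ out to $\partial\Bc_{37n/48}(z_n)$. The point is that the escape to $\partial\Bc_{37n/48}(z_n)$ and the return, forced to avoid $\eta_{3-i}$, each cost a Beurling/gambler's-ruin factor while the remaining constraints still cost the full $n^{3-d}e^{-3(d-2)n/4}$ by the strong Markov property, so the bound is against $\|\varphi_1\|$ and not against the (possibly tiny) $\varphi_1(\overline\eta)$; this is why the statement is phrased with $\|\varphi_1\|$ and why \eqref{eq:hp-1} must be proved first. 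The $d=2$ case is the delicate one, since the naive Green's function bound diverges and one must carefully track where the bridge spends its time, exactly as in the proofs of Lemmas~\ref{lem:sup-Psi} and~\ref{lem:wi-large}.
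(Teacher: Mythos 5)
Your proof is correct and follows essentially the same route as the paper: the upper bound $\|\varphi_1\|\lesssim n^{3-d}e^{-3(d-2)n/4}$ via the Green's function in the pierced annulus, the lower bound on $\widehat p^z_{0,n}[\varphi_2]$ via the reverse separation Lemma~\ref{sep} plus an explicit attachment of well-separated $(\beta_1,\omega,\beta_2)$, and the uniform bound $\max_{\overline\eta}|\varphi_1(\overline\eta)-\varphi_2(\overline\eta)|\le O(e^{-un})\|\varphi_1\|$ (the key point that the bound is against $\|\varphi_1\|$, not against the possibly tiny $\varphi_1(\overline\eta)$) via Beurling for $d=2$ and gambler's ruin for $d=3$. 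One remark: the detour through the coupling with the reference measure $\mu^z_0$ (Lemma~\ref{lem:coupling}) that you insert at the start of the argument for \eqref{eq:hp-2} is not needed here; it never enters your final chain of inequalities, and it properly belongs to the proof of Proposition~\ref{prop:ecb}, not to this lemma, which is purely a statement about the measure $\widehat p^z_{0,n}$. Dropping that paragraph would leave a clean proof that matches the paper's.
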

\begin{proof}
	We start by showing 
	\begin{equation}\label{eq:hatp}
		\|\varphi_1\|=O(n^{3-d} e^{-3(d-2)n/4}).
	\end{equation}
	If $d=2$, it follows from the standard estimate about the Green's function in $\Bc_n$. If $d=3$, since $\dist(\{x_1,x_2\},S)\ge e^{3n/4}$, we have $\nu_{x_1,x_2}[1_{\omega_*\cap S\neq\emptyset}]=O(e^{-3n/4})$. This confirms \eqref{eq:hatp} in both cases. 
	
	To prove \eqref{eq:hp-1}, it is now sufficient to show $\widehat p^z_{0,n}[\varphi_2]\gtrsim n^{3-d} e^{-3(d-2)n/4}$. By the reverse separation lemma (Lemma~\ref{sep}), with positive probability, $(\eta_1,\eta_2)$ sampled from $\widehat p^z_{0,n}$ are well-separated at $\Sc$. If $\eta_1$ and $\eta_2$ are well-separated, then with {a} universal positive probability we can attach $\beta_1$ and $\beta_2$ to $\eta_1$ and $\eta_2$ respectively such that $(\eta_1\cup \beta_1)\cap (\eta_2\cup \beta_2)=\emptyset$, $(\beta_1\cup\beta_2)\subset \Bc_{37n/48}(z_n)$, and they are also well-separated at $\partial\Bc_{3n/4}(z_n)$. For well-separated $\beta_1$ and $\beta_2$, the total mass of $\omega$ we can attach is greater than a constant multiple of $n^{3-d} e^{-3(d-2)n/4}$. This finishes the proof of \eqref{eq:hp-1}.
	
	It remains to prove \eqref{eq:hp-2}. To this end, it suffices to show
	\begin{equation*}
		\max_{\overline\eta} (\varphi_1(\overline\eta)-\varphi_2(\overline\eta) ) = O(e^{-un}) \|\varphi_1\|.
	\end{equation*}
	Note that $$\varphi_1(\overline\eta)-\varphi_2(\overline\eta)\le p^{\overline\eta}[1_{\beta_1\nsubseteq \Bc_{37n/48}(z_n)}]+p^{\overline\eta}[1_{\beta_2\nsubseteq \Bc_{37n/48}(z_n)}].$$
	It boils down to show 
	\[
	\max_{\overline\eta} p^{\overline\eta}[1_{\beta_1\nsubseteq \Bc_{37n/48}(z_n)}]= O(e^{-un}) \|\varphi_1\|.
	\]
	If $\beta_1\nsubseteq \Bc_{37n/48}(z_n)$, we can decompose $\beta_1$ into a concatenation of a long path from $\Sc$ to $\Sc$ that reaches $\Bc_{37n/48}(z_n)^c$ and another path from $\Sc$ to $\partial\Bc_{3n/4}(z_n)$. Then the extra cost $O(e^{-un})$ comes from the first part of $\beta_1$ when $d=2$ (it should avoid $\eta_2$) by the Beurling estimate (see Proposition~\ref{p:Beurling}), or the second part of $\beta_2$ when $d=3$ by the gambler's ruin estimate (Lemma~\ref{l:trans_recur}). Thus, we conclude the proof.
\end{proof}

Now, we have the following equivalent (in the $\simeq$ sense) expression for the probability of cut-ball events. Recall that $\mu_0=\mu_{3n/4+1,38n/48,5n/6}(V_0,V_1,V_2,y_1,y_2)$ is some fixed reference measure. 
\begin{proposition}\label{prop:ecb}
	It holds that
	\[
	\widehat p^z_{0,n}[\varphi_2] \simeq \mu^z_0 [\varphi_2 ], \quad 
	\Pb\{ K_{n/2}(z) \}\simeq \|p^z_{0,n}\|\, \mu^z_0 [\varphi_2 ].
	\]
\end{proposition}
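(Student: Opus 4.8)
\textbf{Proof proposal for Proposition~\ref{prop:ecb}.}

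First a remark on notation: in the statement of Proposition~\ref{prop:ecb} (and in \eqref{eq:cp-dec}) the event $K_{n/2}(z)$ should be read as $K_{3n/4}(z)$, the cut-ball event introduced in Definition~\ref{def:cut-Db} whose path-decomposition is the one set up immediately above via the spheres $\partial\Bc_{3n/4}(z_n)$, $\Sc=\partial\Bc_{3n/4+1}(z_n)$, $\partial\Bc_{37n/48}(z_n)$, $\partial\Bc_{19n/24}(z_n)$, $\partial\Bc_{5n/6}(z_n)$; the subscript $n/2$ is a leftover from an earlier parametrization. I will treat the statement in this form, which is the only one consistent with \eqref{eq:cp-dec}, with $\varphi_1,\varphi_2$ and $p^{\overline\eta}$, and with the reference measure $\mu_0=\mu_{3n/4+1,19n/24,5n/6}(V_0,V_1,V_2,y_1,y_2)$ on which Lemma~\ref{lem:coupling} is built.

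\emph{Step 1: reduce the second identity to the first.} By the path decomposition of $\lambda=S[0,\tau_n]$ into $\eta_1\oplus\omega_*\oplus\eta_2^R$ across $\Sc$ and the strong Markov property (cf.\ \eqref{eq:decom2}), the event $K_{3n/4}(z)$ factorizes as in \eqref{eq:cp-dec}: $\Pb\{K_{3n/4}(z)\}=\|p^z_{0,n}\|\,\widehat p^z_{0,n}[\varphi_1]$. Hence, granting the first claimed identity $\widehat p^z_{0,n}[\varphi_2]\simeq\mu^z_0[\varphi_2]$, it suffices to show $\widehat p^z_{0,n}[\varphi_1]\simeq\mu^z_0[\varphi_2]$. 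But \eqref{eq:hp-2} of Lemma~\ref{lem:hp} already gives $\widehat p^z_{0,n}[\varphi_1]\simeq\widehat p^z_{0,n}[\varphi_2]$, so the second identity follows from the first by transitivity of $\simeq$ (both factors are positive and the error is $[1+O(e^{-un})]$). So everything comes down to the first identity.

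\emph{Step 2: prove $\widehat p^z_{0,n}[\varphi_2]\simeq\mu^z_0[\varphi_2]$ via the inward coupling.} By translation invariance we may center at $z_n$, i.e.\ work with $\widehat p_{0,n}$ and $\mu_0$ on configurations around the origin; the functional $\varphi_2$ becomes the corresponding centered functional $\widetilde\varphi_2$. The point is that $\varphi_2(\overline\eta)=p^{\overline\eta}[1_{\beta_1,\beta_2\subset\Bc_{37n/48}(z_n)}]$ depends on $\overline\eta$ \emph{only through} $\overline\eta$ restricted to $\Bc_{37n/48}(z_n)$: indeed $\omega_*$ is a walk from $\Sc=\partial\Bc_{3n/4+1}(z_n)$ to $\Sc$ inside $\Bc_n$, its relevant sub-pieces $\beta_1,\beta_2$ are required to stay in $\Bc_{37n/48}(z_n)$, $\omega$ stays in $\Bc_{5n/6}(z_n)$, and the only constraint linking $\omega_*$ to $\overline\eta$ is the non-intersection $(\eta_1\cup\beta_1)\cap(\eta_2\cup\beta_2)=\emptyset$, which — because $\beta_1\cup\beta_2\subset\Bc_{37n/48}(z_n)$ — involves $\overline\eta$ only inside $\Bc_{37n/48}(z_n)$. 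Thus $\varphi_2$ is a bounded measurable function of $\overline\eta$ stopped at its first exit of $\Bc_{37n/48}(z_n)$, i.e.\ of $\overline\eta=_{37n/48}$-class. Now apply Lemma~\ref{lem:coupling}: there is a coupling $P$ of $\overline\eta\sim\widehat p^z_{0,n}$ and $\overline\eta'\sim\mu^z_0$ with $P\{\overline\eta=_{37n/48}\overline\eta'\}\ge 1-ce^{-un}$; on this event $\varphi_2(\overline\eta)=\varphi_2(\overline\eta')$. Therefore
\[
|\widehat p^z_{0,n}[\varphi_2]-\mu^z_0[\varphi_2]|\le \|\varphi_2\|_\infty\,P\{\overline\eta\neq_{37n/48}\overline\eta'\}\le c\,\|\varphi_1\|\,e^{-un},
\]
using $\varphi_2\le\varphi_1\le\|\varphi_1\|$. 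By \eqref{eq:hp-1}, $\|\varphi_1\|\asymp n^{3-d}e^{-3(d-2)n/4}$ and $\widehat p^z_{0,n}[\varphi_2]\asymp n^{3-d}e^{-3(d-2)n/4}$, so the additive error is $O(e^{-un})$ times the main term; hence $\widehat p^z_{0,n}[\varphi_2]=\mu^z_0[\varphi_2]\,[1+O(e^{-u'n})]$, i.e.\ $\widehat p^z_{0,n}[\varphi_2]\simeq\mu^z_0[\varphi_2]$. (The same bound also shows $\mu^z_0[\varphi_2]\asymp n^{3-d}e^{-3(d-2)n/4}$, so it too is comparable to the main term, which is what we implicitly use to read the multiplicative error in both directions.)

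\emph{Step 3: assemble.} Combining Step~2 with \eqref{eq:cp-dec} and \eqref{eq:hp-2}: $\Pb\{K_{3n/4}(z)\}=\|p^z_{0,n}\|\,\widehat p^z_{0,n}[\varphi_1]\simeq\|p^z_{0,n}\|\,\widehat p^z_{0,n}[\varphi_2]\simeq\|p^z_{0,n}\|\,\mu^z_0[\varphi_2]$. This gives both asserted identities.

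\textbf{Main obstacle.} The crux is the measurability observation in Step~2: that $\varphi_2$ genuinely depends on $\overline\eta$ only through its trace up to $\partial\Bc_{37n/48}(z_n)$. One must be careful that the non-intersection constraint defining $p^{\overline\eta}$ does not secretly reintroduce dependence on $\overline\eta$ outside $\Bc_{37n/48}(z_n)$; this is exactly why $\varphi_2$ (with the extra truncation $\beta_1,\beta_2\subset\Bc_{37n/48}(z_n)$) is used rather than $\varphi_1$, and why the ``spare an extra scale'' gap between $\Sc=\partial\Bc_{3n/4+1}(z_n)$ and $\partial\Bc_{37n/48}(z_n)$ was built in. The passage $\varphi_1\rightsquigarrow\varphi_2$ is absorbed by \eqref{eq:hp-2}, so the only new input here is the coupling error, which Lemma~\ref{lem:coupling} supplies. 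A secondary point to check is that the error in $\simeq$ is multiplicative, which requires knowing the main term is of order $n^{3-d}e^{-3(d-2)n/4}$ for all three quantities — provided by \eqref{eq:hp-1} and the comparison just derived.
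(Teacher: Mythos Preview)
Your proof is correct and follows essentially the same approach as the paper's own proof: use the locality of $\varphi_2$ (it depends only on $\overline\eta$ after first hitting $\partial\Bc_{37n/48}(z_n)$) together with the inward coupling of Lemma~\ref{lem:coupling} to get the additive bound $|\widehat p^z_{0,n}[\varphi_2]-\mu^z_0[\varphi_2]|\lesssim e^{-un}\|\varphi_1\|$, then invoke \eqref{eq:hp-1} and \eqref{eq:hp-2} to upgrade to $\simeq$ and deduce the second identity from \eqref{eq:cp-dec}. Your observation that the subscript $n/2$ in $K_{n/2}(z)$ should read $3n/4$ is also correct---this is a typographical slip in the paper.
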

\begin{proof}
	Noting that $\varphi_2(\overline\eta)$ only depends on the part of $\overline\eta$ inside $\Bc_{37n/48}(z_n)$,
	by Lemma~\ref{lem:coupling}, we have 
	\[
	| \widehat p^z_{0,n}[\varphi_2] - \mu^z_0 [\varphi_2 ] | \lesssim e^{-un} \|\varphi_1\|. 
	\]
	This combined with \eqref{eq:hp-1} gives that  
	\[
	\widehat p^z_{0,n}[\varphi_2] \simeq \mu^z_0 [\varphi_2 ].
	\]
	By \eqref{eq:cp-dec} and \eqref{eq:hp-2}, we have 
	\[
	\Pb\{ K_{n/2}(z) \}=\|p^z_{0,n}\|\, \widehat p^z_{0,n} [\varphi_1 ]
	\simeq  \|p^z_{0,n}\|\,  \widehat p^z_{0,n} [\varphi_2 ]
	\simeq  \|p^z_{0,n}\|\,  \mu^z_0 [\varphi_2 ].
	\]
	This completes the proof.
\end{proof}

Next, we will obtain a similar expression for the probability $\Pb\{ \Ac_n(z) \}$ by replacing $p^z_{0,n}$ with $\mu_0^z$. For this purpose, we consider admissible $\omega_*$ to satisfy the cut-point event instead.
If $\overline\eta=(\eta_1,\eta_2)$ is a pair of paths with terminal points $x_1,x_2$ on $\Sc$, let $q^{\overline\eta}$ denote $p_{x_1,x_2}$ restricted to those paths $\omega_*$ such that $z$ is a cut point for $\eta_1\oplus\omega_*\oplus \eta_2^R$. For such $\omega_*$, we can further decompose it as $\beta_1\oplus\omega\oplus\beta_2^R$ according to its first and last visit to $\partial\Bc_{3n/4}(z_n)$ as we did before.
We then define 
\begin{equation*}
	\psi_1(\overline\eta):=\|q^{\overline\eta}\|, \quad
	\psi_2(\overline\eta):=q^{\overline\eta}[1_{\beta_1,\beta_2\subset \Bc_{37n/48}(z_n)}].
\end{equation*}
With the above definition, we can write 
\begin{align*}
	\Pb\{ \Ac_n(z) \}=\|p^z_{0,n}\|\, \widehat p^z_{0,n} [\psi_1 ].
\end{align*}
We also want to compare $\psi_1$ with $\psi_2$.
\begin{lemma}
	The following holds:
	\begin{equation}\label{eq:q-hp}
			\|\psi_1\|:=\max_{\overline\eta} \psi_1(\overline\eta) \asymp 
		\widehat p^z_{0,n}[\psi_2]\asymp e^{-3(\xi+2(d-2))n/4},
	\end{equation}
where the maximum is over all pairs of $\overline\eta=(\eta_1,\eta_2)\in \Gamma^{U}_{0,\Sc}\times \Gamma^{U}_{\partial\Bc_n,\Sc}$ such that $\eta_1\cap\eta_2=\emptyset$. Moreover,
	\begin{equation}\label{eq:q-hp'}
		\widehat p^z_{0,n}[\psi_1]\simeq
		\widehat p^z_{0,n}[\psi_2].
	\end{equation}
\end{lemma}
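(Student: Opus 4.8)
The statement is the exact analogue for the cut-point event of Lemma~\ref{lem:hp} for the cut-ball event, and the plan is to follow that proof line by line, replacing everywhere the intermediate-part constraint ``$\omega\subset \Bc_{5n/6}(z_n)$, $\omega_*\cap \Bc_{3n/4}(z_n)\neq\emptyset$'' by the stronger constraint ``$z$ is a cut point of $\eta_1\oplus\omega_*\oplus\eta_2^R$''. First I would record the elementary two-sided bound for $\|\psi_1\|$. For the upper bound, on the event that $z$ is a cut point we must have $\omega_*\cap\Bc_{3n/4}(z_n)\neq\emptyset$ together with the non-intersection of the two excursions of $\omega_*$ away from $\Bc_{3n/4}(z_n)$; decomposing $\omega_*=\beta_1\oplus\omega\oplus\beta_2^R$ as before and further decomposing $\omega$ by its scales inside $\Bc_{3n/4}(z_n)$, the cut-point requirement forces a pair of non-intersecting random walks all the way down to $z_n$, which contributes $e^{-\xi\cdot 3n/4}$ (via Proposition~\ref{prop:sharp-rw}), while the two point-to-point Green's function factors (one at scale $e^{3n/4}$, one connecting to $z_n$) contribute $e^{-2(d-2)\cdot 3n/4}$ in dimension $3$ and a bounded (log) factor in dimension $2$; this gives $\|\psi_1\|=O(e^{-3(\xi+2(d-2))n/4})$. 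For the matching lower bound on $\widehat p^z_{0,n}[\psi_2]$, I would argue exactly as in the proof of \eqref{eq:hp-1}: by the reverse separation lemma (Lemma~\ref{sep}) a positive fraction of $\overline\eta\sim\widehat p^z_{0,n}$ are well-separated at $\Sc$, and to such a configuration one can, with a universal positive probability, attach well-separated $\beta_1,\beta_2$ inside $\Bc_{37n/48}(z_n)$ reaching $\partial\Bc_{3n/4}(z_n)$, and then attach an intermediate piece $\omega$ creating a genuine cut point at $z_n$; the mass of admissible $\omega$ is of order $e^{-3(\xi+2(d-2))n/4}$ by the one-point lower-bound construction already carried out in Lemma~\ref{lem:biop} (applied in the ball $\Bc_{3n/4}(z_n)$, noting $d_z\ge e^{-n/6}$ so that $z_n$ sits well inside). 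Combining gives \eqref{eq:q-hp}.

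For \eqref{eq:q-hp'} I would show $\max_{\overline\eta}(\psi_1(\overline\eta)-\psi_2(\overline\eta))=O(e^{-un})\|\psi_1\|$, which suffices since $\psi_2\le\psi_1$. As in Lemma~\ref{lem:hp}, $\psi_1(\overline\eta)-\psi_2(\overline\eta)\le q^{\overline\eta}[1_{\beta_1\nsubseteq\Bc_{37n/48}(z_n)}]+q^{\overline\eta}[1_{\beta_2\nsubseteq\Bc_{37n/48}(z_n)}]$, and by symmetry it is enough to bound the first term. On $\{\beta_1\nsubseteq\Bc_{37n/48}(z_n)\}$ one decomposes $\beta_1$ at its excursion from $\Sc$ out to $\partial\Bc_{37n/48}(z_n)^c$ and back to $\Sc$; this excursion must avoid $\eta_2$, so in $d=2$ the Beurling estimate (Proposition~\ref{p:Beurling}) applied on the way out and on the way in yields an extra factor $O(e^{-un})$, while in $d=3$ the same extra factor comes from the gambler's ruin estimate (Lemma~\ref{l:trans_recur}) for the radial coordinate of that excursion (the annulus $\Bc_{37n/48}(z_n)\setminus\Bc_{3n/4+1}(z_n)$ has width of order $n$). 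The remaining part of the path — from $\Sc$ down through a cut point at $z_n$ and back to $x_2$ — has mass $\le\|\psi_1\|$ by definition, and combining with the strong Markov property gives the claimed bound. This is exactly the argument of the last paragraph of the proof of Lemma~\ref{lem:hp}, and presents no new difficulty.

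\textbf{Main obstacle.} The only genuinely nontrivial point is the upper bound $\|\psi_1\|=O(e^{-3(\xi+2(d-2))n/4})$: unlike the cut-ball case, where the constraint is just ``$\omega_*$ enters $\Bc_{3n/4}(z_n)$ and its two excursions don't meet'', the cut-point constraint must be propagated through \emph{all} dyadic scales from $e^{3n/4}$ down to the lattice scale around $z_n$, and one needs the sharp exponent $\xi$ (not merely $\xi$ up to logarithmic errors) together with the correct number of Green's-function factors. I would handle this by invoking the already-proved up-to-constants one-point estimate, Lemma~\ref{lem:biop} (equivalently Lemma~\ref{lem:utc}), applied to the random walk $\eta_1\oplus\omega_*\oplus\eta_2^R$ viewed as a walk in $\Bc_n$ started and ended near $\Sc$: conditionally on $\overline\eta$ being well-separated at $\Sc$, the conditional probability that $z_n$ is a cut point of the continued path is comparable to $\Pb\{z_n\in\Af_{3n/4}\}\asymp a(z)\,e^{-\eta\cdot 3n/4}$ with $a(z)\asymp 1$ under $d_z\ge e^{-n/6}$ (this is where the scaled ball $\Bc_{3n/4}(z_n)$ enters and why $\eta=\xi+d-2$ appears), and multiplying by the mass $\asymp e^{-(d-2)\cdot 3n/4}$ of the excursion $\eta_1$ from $\Sc$ to the neighborhood of $z_n$ recovers the stated exponent; for non-well-separated $\overline\eta$ the contribution is smaller by the separation lemma. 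No step beyond those already established in Sections~\ref{sec:moment}--\ref{sec:coupling} is required.
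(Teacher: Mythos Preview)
Your proof plan is correct and matches the paper's approach. The paper obtains the upper bound on $\|\psi_1\|$ by decomposing any admissible $\omega_*$ directly at the cut point $z_n$: one writes $\omega_*=\beta_1\oplus\xi_1^R\oplus\xi_2\oplus\beta_2^R$ where $(\xi_1,\xi_2)$ is a pair of NIRW's from $z_n$ to the first exit of $\Bc_{3n/4}(z_n)$ (total mass $\asymp e^{-3\xi n/4}$), and $\beta_i$ is sampled from $\nu^{\Bc_n}_{x_i,y_i}$ restricted to avoid $\eta_{3-i}\cup\beta_{3-i}\cup\xi_{3-i}$; the latter pair has total mass $O(e^{-3(d-2)n/2})$ by Lemma~\ref{l:paths_discon_beurling} (bounded via Green's functions in $d=3$, finite by the non-disconnection bound in $d=2$). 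This is the same decomposition you sketch in your plan, just organized by cutting at $z_n$ first rather than at $\partial\Bc_{3n/4}(z_n)$ first. Your lower bound via the separation lemma and your treatment of \eqref{eq:q-hp'} also agree with the paper, which simply remarks that the argument for \eqref{eq:hp-2} goes through verbatim.

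Your ``Main obstacle'' paragraph, however, is muddled and should be dropped. Lemma~\ref{lem:biop} gives the probability that a fixed point is a cut point of a simple random walk from the origin in $B_n$, not the mass of bridges $\omega_*$ from $x_1$ to $x_2$ passing through $z_n$ as a cut point; there is no clean way to read off $\max_{\overline\eta}\psi_1(\overline\eta)$ from it. The phrase ``started and ended near $\Sc$'' and the expression $\Pb\{z_n\in\Af_{3n/4}\}$ do not type-check (in the paper's notation $\Af_m$ refers to a ball of \emph{linear} radius $m$ around the origin, which does not contain $z_n$), and the claim $a(z)\asymp 1$ is false in general (e.g.\ $a(z)\asymp d_z^{-\eta}$ up to a logarithm when $|z|<1/2$, so $a(z)$ can be as large as $e^{\eta n/6}$ under the hypothesis $d_z\ge e^{-n/6}$). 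None of this detour is needed: the direct decomposition at $z_n$, which you already outlined in the proof plan and which the paper carries out, yields the sharp upper bound in a single step.
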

\begin{proof}
	We start with the upper bound of $\|\psi_1\|$. We construct admissible $\omega_*$ in the following way:
	\begin{itemize}
		\item Let $(\xi_1,\xi_2)$ be a pair of NIRW's from $z_n$ to the exit of $\Bc_{3n/4}(z_n)$ with endpoints $(y_1,y_2)$, which has total mass $\asymp e^{-3\xi n/4}$.
		\item Sample $\beta_i$ from the measure $\nu^{\Bc_n}_{x_i,y_i}$ for $i=1,2$, and restrict $(\beta_1,\beta_2)$ to the event that $\beta_i\cap (\eta_{3-i}\cup \beta_{3-i}\cup \xi_{3-i})=\emptyset$, which has total mass $O(e^{-3(d-2)n/2})$ by Lemma~\ref{l:paths_discon_beurling}.
	\end{itemize}
The multiplication of these masses gives the desired upper bound for $\|\psi_1\|$. For the lower bound of $\widehat p^z_{0,n}[\psi_2]$, we can use the separation lemma for $\overline\eta$ and $\overline\xi$ from the first bullet above, respectively, and then we can attach them by a  pair $(\beta_1,\beta_2)$ from the second bullet with the same order of total mass. This concludes the proof of \eqref{eq:q-hp}. Moreover, the proof of \eqref{eq:q-hp'} is similar to that of \eqref{eq:hp-2}, and thus omitted.
\end{proof}

The following proposition is an analogue of Proposition~\ref{prop:ecb}, which can be proved in a similar fashion. Thus, we present it without proof.
\begin{proposition}\label{prop:phi}
	It holds that 
		\begin{align*}
			\widehat p^z_{0,n}[\psi_2] \simeq \mu^z_0 [\psi_2 ],\quad\mbox{ and } \quad 
			\Pb\{ \Ac_n(z) \}\simeq \|p^z_{0,n}\|\, \mu^z_{0} [\psi_2].
		\end{align*}
\end{proposition}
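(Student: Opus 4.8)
Looking at the structure of the proof of Proposition~\ref{prop:ecb} and the remark that Proposition~\ref{prop:phi} is proved ``in a similar fashion,'' the plan is to mirror that argument with the roles of $\varphi_1, \varphi_2$ replaced by $\psi_1, \psi_2$.

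\textbf{Proof of Proposition~\ref{prop:phi} (sketch).}
First I would establish the estimate $|\widehat p^z_{0,n}[\psi_2] - \mu^z_0[\psi_2]| \lesssim e^{-un}\|\psi_1\|$. The crucial observation, exactly as in the proof of Proposition~\ref{prop:ecb}, is that $\psi_2(\overline\eta)$ depends on $\overline\eta$ only through its portion inside $\Bc_{37n/48}(z_n)$: by definition $\psi_2(\overline\eta) = q^{\overline\eta}[1_{\beta_1,\beta_2\subset \Bc_{37n/48}(z_n)}]$, and on this event the admissibility of $\omega_* = \beta_1\oplus\omega\oplus\beta_2^R$ only involves $\beta_1,\beta_2,\omega$, all of which stay in $\Bc_{37n/48}(z_n)\subset \Bc_{5n/6}(z_n)$, together with the parts of $\eta_1,\eta_2$ inside $\Bc_{37n/48}(z_n)$ (the requirement that $z$ be a cut point of $\eta_1\oplus\omega_*\oplus\eta_2^R$ is, on this event, equivalent to a non-intersection/disconnection condition confined to the mesoscopic ball, by the Beurling estimate for $d=2$ resp.\ Lemma~\ref{l:trans_recur} for $d=3$). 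Hence the coupling $P$ from Lemma~\ref{lem:coupling}, under which $\overline\eta =_{37n/48} \overline\eta'$ with $\overline\eta\sim\widehat p^z_{0,n}$ and $\overline\eta'\sim\mu^z_0$ off an event of probability $\le ce^{-un}$, gives $\psi_2(\overline\eta) = \psi_2(\overline\eta')$ on the coupling event. Since $\psi_2 \le \psi_1 \le \|\psi_1\|$ pointwise, the contribution of the bad event is $O(e^{-un}\|\psi_1\|)$, yielding the claimed bound. Combining with $\widehat p^z_{0,n}[\psi_2]\asymp \|\psi_1\| \asymp e^{-3(\xi+2(d-2))n/4}$ from \eqref{eq:q-hp} turns the additive error into a multiplicative one, so $\widehat p^z_{0,n}[\psi_2]\simeq \mu^z_0[\psi_2]$.

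For the second assertion, I would start from the decomposition $\Pb\{\Ac_n(z)\} = \|p^z_{0,n}\|\, \widehat p^z_{0,n}[\psi_1]$ recorded just above \eqref{eq:q-hp}, apply \eqref{eq:q-hp'} to replace $\widehat p^z_{0,n}[\psi_1]$ by $\widehat p^z_{0,n}[\psi_2]$ in the $\simeq$ sense, and then apply the first part to replace $\widehat p^z_{0,n}[\psi_2]$ by $\mu^z_0[\psi_2]$, again in the $\simeq$ sense. Chaining these gives $\Pb\{\Ac_n(z)\} \simeq \|p^z_{0,n}\|\,\mu^z_0[\psi_2]$, which is the statement.

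\textbf{Main obstacle.} The only genuinely non-mechanical point — and the place the ``similar fashion'' hides real content — is verifying that $\psi_2(\overline\eta)$ is indeed a function of $\overline\eta$ restricted to $\Bc_{37n/48}(z_n)$; unlike the cut-ball event $\varphi_2$, where the relevant condition $\omega\subset\Bc_{5n/6}(z_n)$ is manifestly local, here ``$z$ is a cut point of $\eta_1\oplus\omega_*\oplus\eta_2^R$'' a priori refers to the whole concatenation, including the far pieces of $\eta_1,\eta_2$. One must argue that on $\{\beta_1,\beta_2\subset\Bc_{37n/48}(z_n)\}$ being a cut point at $z_n$ reduces to the local non-intersection event $(\eta_1\cup\beta_1)\cap(\eta_2\cup\beta_2\cup\omega)=(\eta_2\cup\beta_2)\cap(\eta_1\cup\beta_1\cup\omega)=\emptyset$ inside the ball, since the excursion away from $\partial\Bc_{3n/4}(z_n)$ cannot contribute any return to $z_n$. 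This is where the freezing/Beurling-type inputs enter, and it is the step I would write out most carefully; everything else is a transcription of the proof of Proposition~\ref{prop:ecb}.
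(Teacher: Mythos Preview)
Your proposal is correct and follows the approach the paper indicates (the paper omits the proof, citing similarity to Proposition~\ref{prop:ecb}). One small correction: your parenthetical claim that ``$\beta_1,\beta_2,\omega$ \ldots\ all stay in $\Bc_{37n/48}(z_n)$'' is not literally true---only $\beta_1,\beta_2$ are restricted by the indicator in $\psi_2$, while the middle piece $\omega$ can in principle leave that ball and interact with $\eta_1,\eta_2$ outside, which is exactly the non-locality you flag in your ``Main obstacle'' paragraph. The resolution you point to is the right one: the contribution from $\{\omega\not\subset\Bc_{37n/48}(z_n)\}$ is $O(e^{-un}\|\psi_1\|)$ by the same Beurling/gambler's-ruin reasoning behind \eqref{eq:hp-2} and \eqref{eq:q-hp'}, so one may replace $\psi_2$ by the genuinely local $q^{\overline\eta}[1_{\beta_1,\beta_2,\omega\subset\Bc_{37n/48}(z_n)}]$ at a cost absorbed in $\simeq$, after which Lemma~\ref{lem:coupling} applies verbatim.
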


We are now ready to prove Proposition~\ref{prop:f2}.

\begin{proof}[Proof of Proposition~\ref{prop:f2}]
Note that both $\mu^z_{0} [\varphi_2]$ and $\mu^z_{0} [\psi_2]$ are translation invariant in $z$ by translation {invariance}. Let
\begin{equation}\label{eq:fn}
	f(n):=\mu^z_0[\varphi_2]/\mu^z_0[\psi_2]\asymp n^{3-d}e^{3\eta n/4}
\end{equation}
which only depends on $n$. 
Then, Proposition~\ref{prop:f2} follows from Propositions~\ref{prop:ecb} and \ref{prop:phi} directly.
\end{proof}

We now briefly discuss how to adapt the proof above for Proposition \ref{prop:AKAA}. The event $K_{3n/4}(z)\cap \Ac_n(w)$ can be decomposed into three sub-events:
\begin{itemize}
\item The cut-ball event $K_{3n/4}(z)$ happens before $\Ac_n(w)$ and the SRW $S$ no longer returns within the vicinity of $z$ after making a cut point at $w$;
\item The cut-ball event $K_{3n/4}(z)$ happens after $\Ac_n(w)$ and the SRW $S$ never enters the vicinity of $z$ before making a cut point at $w$;
\item The SRW $S$ approaches the vicinity of $z$ (at least) twice, but makes a cut point at $w$ between two visits and moreover forms a cut ball around $z$.
\end{itemize}
The event $\Ac_n(z) \cap \Ac_n(w)$ can be decomposed similarly.
For the first two sub-events, one can rerun the argument above (but sample, e.g.\ in the analysis of the first sub-event, $\eta_2$ in the proof of Proposition \ref{prop:f2} not from the original path measure but from the path measure restricted to those having a cut point at $w$ instead) to match the corresponding sub-events decomposed from $\Ac_n(z) \cap \Ac_n(w)$. For the third sub-event, by an argument similar to those in Section 4.4 of \cite{mink_cont} involving path decomposition and the probability decay due to extra backtracking (via disconnection exponent for $d=2$ and Green's function for $d=3$), we can show that it comprises a negligible proportion of $K_{3n/4}(z)\cap \Ac_n(w)$ in terms of probability. The same argument applies to the third sub-event decomposed from $\Ac_n(z) \cap \Ac_n(w)$.

\subsection{Comparison between the discrete and continuum}\label{subsec:dcc}
In this subsection, we couple the random walk $S[0,\tau_{n+1}]$ and the Brownian motion $W[0,T_{n+1}]$ in a common probability space via the {Skorokhod} embedding \eqref{eq:se} with $\eps=1/8$ such that
\begin{equation}\label{eq:se0}
	\Pb\{ H^c \} =O( e^{-10n} )\quad   \text{ with } \quad
	H:=\Big\{\max_{0\le t\le \tau_{n+1}\vee T_{n+1}} |S_t-W_t|\le e^{5n/8}\Big\}.
\end{equation}
We show that under this coupling we can approximate the discrete cut-ball event $K_{3n/4}(z)$ with the continuous one $\widetilde K^{(n)}_{3n/4}(z)$ very precisely, where $\widetilde K^{(n)}_{3n/4}(z)$ is the event $\widetilde K_{n/4}(z)$ after upscaling by $e^n$ (recall the cut-ball event $\widetilde K_{n/4}(z)$ for BM in Definition~\ref{def:cut-bm}). More precisely, 
$\widetilde K^{(n)}_{3n/4}(z)$ is the event that
$\Dc_{-n/4}(z)$ is a cut ball for $e^{-n}W[0,T_n]$, in which case (after upscaling by $e^n$) we also call $\Dc_{3n/4}(z)$ is a cut ball for $W[0,T_n]$. 
Since Brownian motion is scaling invariant, we know that (also see Remark~\ref{rem:blow-up} for this relation)
\begin{equation}\label{eq:K(n)}
	\Pb\{ \widetilde K_{n/4}(z) \}=\Pb\{ \widetilde K^{(n)}_{3n/4}(z) \}.
\end{equation} 
Because of the above equality, we will focus on $\widetilde K^{(n)}_{3n/4}(z)$ instead of $\widetilde K_{n/4}(z)$ in this section.

\begin{lemma}\label{lem:Kutc}
	For all $z\in \Dc$ with $\dist(0,z,\partial\Dc)\ge e^{-n/6}$,
	\[
	\Pb\{ K_{3n/4}(z) \} \asymp \Pb\{ \widetilde K_{n/4}(z) \} \asymp a(z)\, n^{3-d} e^{-\eta n/4},
	\]
	where $a(z)$ is given in \eqref{eq:g-z}.
\end{lemma}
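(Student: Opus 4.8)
\textbf{Proof plan for Lemma~\ref{lem:Kutc}.}

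The plan is to establish the three-way up-to-constants comparison by first reducing everything to the quantities already estimated in Section~\ref{subsec:dc}, and then relating those to $a(z)$. The equality $\Pb\{\wt K_{n/4}(z)\}\asymp a(z)\,n^{3-d}e^{-\eta n/4}$ is the ``continuum'' analogue and should come directly from Proposition~\ref{prop:one-point} (which gives $\Eb[L_{n/4}(z)]=c_*\Qf^*[\Psi_{n/4}]^{-1}e^{\eta n/4}\Pb\{\wt K_{n/4}(z)\}=G^{\cut}_{\Dc}(z)[1+o(1)]$), combined with Lemma~\ref{lem:QPA} ($\Qf^*[\Psi_{n/4}]\asymp (n/4)^{3-d}\asymp n^{3-d}$) and Lemma~\ref{lem:Gcut-z} ($G^{\cut}_{\Dc}(z)\asymp a(z)$). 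This chain instantly yields $\Pb\{\wt K_{n/4}(z)\}\asymp a(z)\,n^{3-d}e^{-\eta n/4}$ provided $\dist(0,z,\partial\Dc)\ge e^{-n/6}$, which is exactly the regime where Proposition~\ref{prop:one-point} applies (since $e^{-n/6}\ge e^{-2s/3}$ with $s=n/4$).

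For the discrete cut ball, the cleanest route is to invoke Proposition~\ref{prop:f2}, which gives $\Pb\{K_{3n/4}(z)\}\simeq f(n)\,\Pb\{\Ac_n(z)\}$, together with the explicit order $f(n)\asymp n^{3-d}e^{3\eta n/4}$ from \eqref{eq:fn} and the up-to-constants one-point estimate $\Pb\{\Ac_n(z)\}\asymp a(z)\,e^{-\eta n}$ from Lemma~\ref{lem:utc}. Multiplying, $\Pb\{K_{3n/4}(z)\}\asymp n^{3-d}e^{3\eta n/4}\cdot a(z)\,e^{-\eta n}=a(z)\,n^{3-d}e^{-\eta n/4}$. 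Since both $\Pb\{K_{3n/4}(z)\}$ and $\Pb\{\wt K_{n/4}(z)\}$ are now pinned to the same quantity $a(z)\,n^{3-d}e^{-\eta n/4}$ up to constants, the desired $\asymp$ between them follows. Alternatively — and this is perhaps worth recording as a sanity check rather than the primary argument — one can estimate $\Pb\{K_{3n/4}(z)\}$ directly by the path-decomposition/separation-lemma method of Lemma~\ref{lem:biop} and Lemma~\ref{lem:utc-E}: decompose $\lambda=S[0,\tau_n]$ through its first and last visits to $\Bc_{3n/4}(z_n)$, bound the mass of the pair of non-intersecting excursions across the annulus $\Bc_{3n/4}(z_n)\setminus\{z_n\}$ (this is where the factor $e^{-3\xi n/4}$ and, via the intermediate scale $\Bc_{5n/6}(z_n)$, the factor $n^{3-d}e^{-3(d-2)n/4}$ come from, exactly as in the proofs of Lemmas~\ref{lem:hp} and \ref{lem:Gcut-z}), and multiply by the mass of the two outer arms reaching $0$ and $\partial\Bc_n$, which contributes $a(z)\,e^{\eta n/6}$-type terms matching $a(z)$ after the arithmetic; the bookkeeping is identical in spirit to Lemma~\ref{lem:utc-E}.

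The main obstacle is essentially bookkeeping rather than a genuine difficulty: one must be careful that the $\simeq$ in Proposition~\ref{prop:f2} only holds in the stated regime $\dist(0,z,\partial\Dc)\ge e^{-n/6}$ (which is precisely the hypothesis here), and that the factor $f(n)$ from \eqref{eq:fn} is only an $\asymp$-statement, so the final bound is necessarily only $\asymp$ and not $\simeq$ — consistent with the way the lemma is phrased. A secondary point requiring attention is the compensating factor $\Qf^*[\Psi_{n/4}]\asymp n^{3-d}$ (Lemma~\ref{lem:QPA}): the $n^{3-d}$ appearing in the statement of Lemma~\ref{lem:Kutc} has two a priori distinct origins — the mesoscopic excursion count across the annulus of radius ratio $e^{n/4}$ on the cut-ball side, and the $(\log)^{d-3}$-type contribution hidden in $a(z)$ when $|z|<1/2$ — and one should verify these are not being double-counted. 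They are not: $a(z)$ for $|z|<1/2$ already absorbs a $[\log d_z^{-1}]^{d-3}$ coming from the two \emph{outer} arms (gambler's ruin near the origin), whereas the $n^{3-d}$ prefactor is the \emph{separate} contribution of the intermediate part $\omega$ living in the annulus $\Bc_{5n/6}(z_n)\setminus\Bc_{3n/4}(z_n)$ whose radius ratio is $e^{n/12}$, hence of order $n^{3-d}$ by Lemma~\ref{l:trans_recur}; these are genuinely different scales and multiply. With that distinction in place the proof is a short assembly of Propositions~\ref{prop:f2} and \ref{prop:one-point} and Lemmas~\ref{lem:utc}, \ref{lem:QPA}, \ref{lem:Gcut-z}.
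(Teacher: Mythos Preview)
Your proof is correct and free of circularity: Proposition~\ref{prop:f2}, equation~\eqref{eq:fn}, Lemma~\ref{lem:utc}, Proposition~\ref{prop:one-point}, Lemma~\ref{lem:QPA}, and Lemma~\ref{lem:Gcut-z} are all established prior to and independently of Lemma~\ref{lem:Kutc} in the paper's logical order, so the assembly you describe is legitimate. (The one cosmetic slip --- you write $e^{-n/6}\ge e^{-2s/3}$ for $s=n/4$ when in fact equality holds --- is harmless.)

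The paper, however, takes what you record only as a ``sanity check'': it omits the proof and points to Lemma~\ref{lem:biop}, meaning a direct path-decomposition argument that separately computes the mass of the NI excursions across the annulus, the intermediate piece $\omega$ confined to $\Bc_{5n/6}(z_n)$ (the $n^{3-d}e^{-3(d-2)n/4}$ factor, as in Lemma~\ref{lem:hp}), and the outer arms to $0$ and $\partial\Bc_n$ (the $a(z)$-dependence). Your primary route is slicker given the machinery already built in Sections~\ref{sec:gcb} and~\ref{subsec:dc} and avoids repeating the $|z|<1/2$ versus $|z|\ge 1/2$ case split; the paper's route is more self-contained and makes the provenance of each factor transparent without invoking $f(n)$ or Proposition~\ref{prop:one-point}. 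Your closing discussion --- distinguishing the $[\log d_z^{-1}]^{d-3}$ hidden in $a(z)$ from the separate $n^{3-d}$ coming from the intermediate part --- is precisely the bookkeeping that the direct approach would need, so you have in effect sketched both proofs.
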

We omit the proof as it is very similar to that of Lemma~\ref{lem:biop}, its counterpart in the continuum.

\begin{proposition}\label{prop:Ksim}
	For all $z\in \Dc$ with $\dist(0,z,\partial\Dc)\ge e^{-n/6}$,
		\begin{equation}
		\Pb\{ K_{3n/4}(z) \} \simeq  \Pb\{ \widetilde K^{(n)}_{3n/4}(z) \}. 
	\end{equation}
\end{proposition}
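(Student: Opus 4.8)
The plan is to mirror the discrete analysis of Section~\ref{subsec:dc}, running a parallel path-decomposition for both $K_{3n/4}(z)$ and $\widetilde K^{(n)}_{3n/4}(z)$ at the sphere $\Sc:=\partial\Bc_{3n/4+1}(z_n)$ (resp.\ $\partial\Dc_{3n/4+1}(z)$), and then matching the two sides under the Skorokhod coupling \eqref{eq:se0}. Concretely, write $\lambda=S[0,\tau_n]$ and decompose $\lambda=\eta_1\oplus\omega_*\oplus\eta_2^R$ by its first and last visits to $\Sc$, exactly as in Section~\ref{subsec:dc}; do the same for $W[0,T_n]$ with the continuum sphere, getting $\widetilde\eta_1\oplus\widetilde\omega_*\oplus\widetilde\eta_2^R$. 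On the outer part $\overline\eta=(\eta_1,\eta_2)$, I would first replace the "true'' law $\widehat p^z_{0,n}$ (the NI-conditioned pair of a SRW from $0$ and a SRW from $\partial\Bc_n$) by a fixed reference measure $\mu_0^z$ using Lemma~\ref{lem:coupling}, and replace the intermediate functional $\varphi_1$ by the truncated $\varphi_2$ (which only sees $\overline\eta$ inside $\Bc_{37n/48}(z_n)$) using Lemma~\ref{lem:hp}; this is already packaged as Proposition~\ref{prop:ecb}, giving $\Pb\{K_{3n/4}(z)\}\simeq\|p^z_{0,n}\|\,\mu_0^z[\varphi_2]$. The corresponding continuum statement — a Brownian analogue of Proposition~\ref{prop:ecb}, proved the same way with Section~\ref{subsec:qi}/\ref{subsec:qi-infty} quasi-invariant measures, the Beurling/freezing lemmas, and the reverse separation lemma in place of their discrete counterparts — yields $\Pb\{\widetilde K^{(n)}_{3n/4}(z)\}\simeq\|\widetilde p^z_{0,n}\|\,\widetilde\mu_0^z[\widetilde\varphi_2]$, where the reference measures $\mu_0^z$ and $\widetilde\mu_0^z$ differ only in that one is built from SRW and the other from BM stopped at the same radii.

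Having reduced both probabilities to a product of (i) the total NI-mass of the outer two excursions and (ii) the expectation of a truncated intermediate functional under a reference measure, I would prove two matching lemmas. First, $\|p^z_{0,n}\|\simeq\|\widetilde p^z_{0,n}\|$: both equal, up to the deterministic factors $G_{\Bc_n}$ / $\widetilde G_{\Dc_n}$ for the endpoint-to-endpoint pieces, a probability of the form $\Pb\{\wt A^\Delta_{\,\cdot}(\cdot)\}$ studied in Section~\ref{sec:nonint}; the comparison here is precisely of the type already carried out in Propositions~\ref{prop:An4}, \ref{prop:sharp-E} and \ref{prop:bridge-nice}, i.e.\ one runs the strong-approximation (Corollary~\ref{cor:sa}) or Skorokhod coupling, absorbs the spatial error $e^{5n/8}$ (much smaller than the inner radius $e^{3n/4}$, which is exactly why $3n/4$ was chosen, cf.\ Remark~\ref{rem:blow-up}) into a sequence of Beurling/freezing scale-by-scale estimates, and picks up only an $O(e^{-un})$ multiplicative error. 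Second, $\mu_0^z[\varphi_2]\simeq\widetilde\mu_0^z[\widetilde\varphi_2]$: here $\varphi_2$ is a functional of the configuration inside $\Bc_{37n/48}(z_n)$ only (a NI-excursion pair out of $\Sc$ together with the middle piece $\omega$ reaching $\Bc_{3n/4}(z_n)$), and under \eqref{eq:se0} the random walk and Brownian realizations of this finite-range object agree up to Hausdorff distance $e^{5n/8}$; since all the relevant scales ($e^{3n/4},e^{3n/4+1},e^{37n/48},e^{5n/6}$) are separated by factors $e^{\Theta(n)}$ from $e^{5n/8}$, the usual hittability inputs (Beurling for $d=2$, freezing Lemma~\ref{l:freezing} for $d=3$, plus the gambler's ruin Lemma~\ref{l:trans_recur}) show the two expectations differ by $[1+O(e^{-un})]$, after normalizing by the common order $n^{3-d}e^{-3(d-2)n/4}$ from \eqref{eq:hp-1}. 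Combining these with \eqref{eq:K(n)} finishes the proof.

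The main obstacle is the same one flagged in Section~\ref{sec:pt_estim}: one of the outer excursions, the piece from $0$ to $\Sc$, is sampled from a point-to-point path measure rather than from a genuine random walk, and no strong coupling for three-dimensional bridges is available. The remedy, exactly as in the proof of Proposition~\ref{prop:bridge-nice}, is to split that excursion at the mesoscopic radius — here the ball $\Bc_{3n/4+1}(z_n)$ already plays this role — so that the part that actually needs to be coupled is a bona fide SRW started from $\Sc$ and stopped on reaching $0$ (or $\partial\Bc_n$), whose law is \eqref{eq:S-stop}, while the remaining short bridge-type piece near the endpoints contributes only a Green's-function factor that is compared via the difference estimate for Green's functions (Lemma~\ref{lem:Gset} for $d=2$; the trivial two-sided bound for $d=3$). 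A secondary but purely bookkeeping difficulty is that the discrete cut-ball event involves the microscopic ball $\Bc_{3n/4}(z_n)$, which the $e^{5n/8}$ coupling error cannot resolve pointwise; this is handled not at the lattice scale but through the quasi-invariant-measure description (Proposition~\ref{prop:one-point} on the continuum side and Proposition~\ref{prop:f2} on the discrete side), so that the microscopic structure enters only through the compensating factor $\Qf^*[\Psi_s]$ of order $s^{3-d}$, identical on both sides.
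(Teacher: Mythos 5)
Your approach is genuinely different from the paper's, and considerably heavier. The paper proves Proposition~\ref{prop:Ksim} by a \emph{direct symmetric-difference estimate} under the Skorokhod coupling \eqref{eq:se0}: after recording the matching up-to-constants order $a(z)\,n^{3-d}e^{-\eta n/4}$ for both probabilities (Lemma~\ref{lem:Kutc}), it suffices to show $\Pb\{K_{3n/4}(z)\,\Delta\,\widetilde K^{(n)}_{3n/4}(z)\}\lesssim a(z)\,e^{-\eta n/4}e^{-un}$. The paper observes that on the good coupling event $H$, the discrepancy event $K_{3n/4}(z)\setminus\widetilde K^{(n)}_{3n/4}(z)$ forces one of four ``near-miss'' sub-events $F_1,\dots,F_4$ (SRW travelling far beyond $\Bc_n$; the intermediate piece $\omega$ staying in the thin annulus between $B_-$ and $B$; the outer NI pair getting within distance $e^{11n/16}$; $\omega^+$ nearly escaping the outer mesoscopic ball), and each is bounded by a gambler's-ruin / Beurling / freezing estimate plus, for $F_3$, a quantitative near-intersection bound from \cite{disconnect2dRW}. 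This is a short, self-contained argument that never touches the reference-measure machinery of Section~\ref{subsec:dc}. Your plan, by contrast, rebuilds that machinery on the continuum side and then matches the two sides, which is both longer and duplicates work that Proposition~\ref{prop:f2} was specifically designed to avoid on the discrete side.

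Beyond being the harder route, there is a genuine gap in one step. You claim $\mu_0^z[\varphi_2]\simeq\widetilde\mu_0^z[\widetilde\varphi_2]$ follows because ``under \eqref{eq:se0} the random walk and Brownian realizations of this finite-range object agree up to Hausdorff distance $e^{5n/8}$.'' But $\mu_0^z$ and $\widetilde\mu_0^z$ are \emph{fixed reference measures} on NI-conditioned excursion pairs — they are not marginals of the coupled pair $(S,W)$ produced by the Skorokhod embedding, so \eqref{eq:se0} says nothing about how a sample of $\mu_0^z$ relates to a sample of $\widetilde\mu_0^z$. To make that step rigorous you would need a fresh coupling between discrete and continuous NI excursions of the type appearing in Propositions~\ref{prop:An4} and \ref{prop:bridge-nice}, together with a continuum analogue of the inward coupling (Theorem~\ref{coupling} is stated and proved only for NIRW's). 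Relatedly, you remark that the microscopic contribution is ``identical on both sides'' because it enters only through $\Qf^*[\Psi_s]$; in the paper that identity is precisely Corollary~\ref{cor:sharp-f}, which is \emph{deduced from} Proposition~\ref{prop:Ksim}, so invoking it here would be circular. Your matching lemma would have to establish this identity from scratch, and that is exactly where the missing discrete-to-continuum coupling of reference measures is needed.
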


\begin{proof}
     By Lemma~\ref{lem:Kutc}, we only need to show that under the coupling \eqref{eq:se0} for some $u>0$,
     \begin{equation}
     	\Pb\{ K_{3n/4}(z)\Delta \widetilde K^{(n)}_{3n/4}(z) \} \lesssim  a(z)\, e^{-\eta n/4} \, e^{-un}.
     \end{equation}
 We will only deal with the event 
 \[
 E_0:=K_{3n/4}(z) \setminus \widetilde K_{3n/4}^{(n)}(z)
 \] 
 since $\widetilde K_{3n/4}^{(n)}(z)\setminus K_{3n/4}(z)$ can be analyzed in a similar way.
Let $B$ (resp.\ $B_{\pm}$) be the (discrete) ball of radius $e^{3n/4}$ (resp. $e^{3n/4}\pm 2e^{5n/8}$) around $z_n$. We decompose $\lambda=S[0,\tau_n]$ into $\lambda_1\oplus\omega\oplus\lambda_2^R$ (resp. $\lambda_1^{\pm}\oplus\omega^{\pm}\oplus[\lambda_2^{\pm}]^R$) according to its first and last visits to $B$ (resp. $B_{\pm}$). Recall the event $H$ from \eqref{eq:se0}. On the event $H\cap E_0$, at least one of the following three events should happen
 \begin{itemize}
 	\item $F_1$: $\lambda[\tau_n, \tau_{\partial B(e^n+e^{5n/8})}]\cap \Bc_{11n/16}(\lambda(\tau_n))^c\neq\emptyset$;
 	\item $F_2$:  $\omega\cap B_{-}=\emptyset$;
 	\item $F_3$: $\dist(\lambda_1^-,\lambda_2^-)\le e^{11n/16}$;
 	\item $F_4$: $\omega^+\nsubseteq B(z_n,e^{5n/6}-2e^{5n/8})$.
 \end{itemize}
 Then, $\Pb\{ H\cap E_0 \}\le \sum_{i=1}^{4}\Pb\{E_0\cap F_i\}$. 
 Thus, it suffices to show that for each $i$, there exists $u>0$ such that 
 \begin{equation}\label{eq:Ei}
 	\Pb\{E_0\cap F_i\}\lesssim a(z)\,e^{-\eta n/4} \, e^{-un}.
 \end{equation}
 The above estimate holds for $F_1$ by the {g}ambler's ruin estimate (Lemma~\ref{l:trans_recur}) and for  $F_2$ by standard estimate about the Green's function.
 
 By using Lemma 3.1 of \cite{disconnect2dRW}, we know that \eqref{eq:Ei} also holds for $F_3$.
 It remains to deal with $F_4$. For $x,y\in\partial B_+$, let $\omega^+$ be sampled according to $\wh\nu^{\Bc_n}_{x,y}$. Let $\beta_1\oplus\omega\oplus\beta_2^R$ be the decomposition of $\omega^+$ according to its first and last visits to $B$. 
 By the gambler's ruin estimate again, there exists  $u>0$ such that for all $x$ and $y$,
 \begin{equation}\label{eq:G}
 	\wh\nu^{\Bc_n}_{x,y}\{ \omega\subset\Bc_{5n/6}(z_n), \omega^+\nsubseteq B(z_n,e^{5n/6}-2e^{5n/8}) \}=O(e^{-un}).
 \end{equation}
This implies \eqref{eq:Ei} holds for $F_4$.
 Thus, we complete the proof of \eqref{eq:Ei} and the proof of the lemma as well.
\end{proof}

Now, we are able to get the sharp asymptotic for $f(n)$, which improves the up-to-constants estimate in \eqref{eq:fn}. 
\begin{corollary} \label{cor:sharp-f}
	It holds that
		\begin{equation}
		f(n)\simeq c_1 c_*^{-1} \Qf[\Psi_{n/4}] e^{3\eta n/4}.
	\end{equation}
\end{corollary}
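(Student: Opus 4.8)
The plan is to combine the three independent asymptotic identities that have just been established and simply read off the value of $f(n)$. Recall that $f(n)$ was defined in \eqref{eq:fn} only up to constants as $f(n) = \mu^z_0[\varphi_2]/\mu^z_0[\psi_2]$, but Propositions~\ref{prop:f2}, \ref{prop:one-point}, Theorem~\ref{t:one_pt} and Proposition~\ref{prop:Ksim} give it a sharp value. First I would write the chain
\[
\Pb\{ K_{3n/4}(z) \} \simeq f(n)\, \Pb\{ \Ac_n(z) \}
\]
from Proposition~\ref{prop:f2}, and the chain
\[
\Pb\{ K_{3n/4}(z) \} \simeq \Pb\{ \widetilde K^{(n)}_{3n/4}(z) \} = \Pb\{ \widetilde K_{n/4}(z) \}
\]
from Proposition~\ref{prop:Ksim} together with the scaling identity \eqref{eq:K(n)}. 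Solving for $f(n)$ gives
\[
f(n) \simeq \frac{\Pb\{ \widetilde K_{n/4}(z) \}}{\Pb\{ \Ac_n(z) \}}.
\]

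Next I would evaluate the numerator and denominator separately using the one-point estimates. For the denominator, Theorem~\ref{t:one_pt} gives $c_1 e^{\eta n} \Pb\{\Ac_n(z)\} \simeq G^{\cut}_{\Dc}(z)$, i.e. $\Pb\{\Ac_n(z)\} \simeq c_1^{-1} e^{-\eta n} G^{\cut}_{\Dc}(z)$, valid since $d_z \geq e^{-n/6}$. For the numerator, I would use the definition \eqref{eq:tnus} of $L_s(z) = c_* \Qf^*[\Psi_s]^{-1} e^{\eta s} 1_{\Kt_s(z)}$ with $s = n/4$, so that $\Pb\{\widetilde K_{n/4}(z)\} = \Eb[1_{\Kt_{n/4}(z)}] = c_*^{-1}\Qf^*[\Psi_{n/4}] e^{-\eta n/4} \Eb[L_{n/4}(z)]$. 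By Proposition~\ref{prop:one-point}, $\Eb[L_{n/4}(z)] = G^{\cut}_{\Dc}(z)[1 + O(e^{-un})]$, which is applicable because $d_z \geq e^{-n/6} \geq e^{-2(n/4)/3}$ for $n$ large. Hence
\[
\Pb\{\widetilde K_{n/4}(z)\} \simeq c_*^{-1}\Qf^*[\Psi_{n/4}] e^{-\eta n/4}\, G^{\cut}_{\Dc}(z).
\]
Dividing the two displays, the factor $G^{\cut}_{\Dc}(z)$ cancels (it is bounded away from $0$ and $\infty$ for fixed admissible $z$, by Lemma~\ref{lem:Gcut-z}, so the division is legitimate and the $[1+O(e^{-un})]$ errors propagate multiplicatively), leaving
\[
f(n) \simeq c_1 c_*^{-1} \Qf^*[\Psi_{n/4}]\, e^{\eta n}\, e^{-\eta n/4} = c_1 c_*^{-1} \Qf^*[\Psi_{n/4}]\, e^{3\eta n/4},
\]
which is the claimed identity (the statement writes $\Qf[\Psi_{n/4}]$, presumably a typographical conflation of $\Qf^*$ and $\Qf$; I would keep $\Qf^*$ to match \eqref{eq:tnus}).

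I should be a little careful about one bookkeeping point: $f(n)$ is only defined up to constants in \eqref{eq:fn}, so strictly speaking one must check that the $f(n)$ appearing in Proposition~\ref{prop:f2} is exactly $\mu^z_0[\varphi_2]/\mu^z_0[\psi_2]$ (it is, by the proof of Proposition~\ref{prop:f2}), and that this quantity is $z$-independent (also noted there). Then the computation above pins down its asymptotics. There is no real obstacle here — the corollary is a pure consequence of results already in hand — but the one point that needs attention is matching the scales: the discrete cut ball $K_{3n/4}(z)$ corresponds under downscaling by $e^n$ to $\widetilde K_{n/4}(z)$ (Remark~\ref{rem:blow-up}), so the parameter in $\Psi$ and in $L_s$ is $s = n/4$, not $n/2$ or $n$; getting this index right is what makes the exponent come out as $3\eta n/4 = \eta n - \eta n/4$ rather than anything else. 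Finally, since $\Qf^*[\Psi_{n/4}] \asymp n^{3-d}$ by Lemma~\ref{lem:QPA}, this is consistent with the cruder estimate $f(n) \asymp n^{3-d} e^{3\eta n/4}$ in \eqref{eq:fn}, which serves as a sanity check.
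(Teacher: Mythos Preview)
Your proof is correct and follows essentially the same route as the paper: combine Proposition~\ref{prop:f2}, Proposition~\ref{prop:Ksim} with \eqref{eq:K(n)}, Proposition~\ref{prop:one-point}, and Theorem~\ref{t:one_pt}, then divide to cancel $G^{\cut}_{\Dc}(z)$. Your observation about the notation is also well taken: the paper's statement and proof write $\Qf[\Psi_{n/4}]$, but the quantity defined in \eqref{eq:tnus} and estimated in Lemma~\ref{lem:QPA} is $\Qf^*[\Psi_{n/4}]$, so your reading is the internally consistent one.
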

\begin{proof}
	By \eqref{eq:K(n)} and Proposition~\ref{prop:Ksim}, 
	$
	\Pb\{ K_{3n/4}(z) \}\simeq \Pb\{ \widetilde K_{n/4}(z) \}.
	$
	By Proposition~\ref{prop:one-point},
	\[
	\Pb\{ \widetilde K_{n/4}(z) \}\simeq c_*^{-1} \Qf [\Psi_{n/4}] e^{-\eta n/4} G^{\cut}_{\Dc}(z).
	\]
	By Theorem~\ref{t:one_pt}, 
	\[
	\Pb\{ \Ac_n(z) \} \simeq c_1^{-1} e^{-\eta n} G^{\cut}_{\Dc}(z).
	\]
	By Proposition~\ref{prop:f2},
	\[
	f(n)\simeq \Pb\{ K_{3n/4}(z) \} \, \Pb\{ \Ac_n(z) \}^{-1}\simeq c_1 c_*^{-1} \Qf[\Psi_{n/4}] e^{3\eta n/4}.
	\]
	This finishes the proof.
\end{proof}

In the rest of this section, we will deal with the mixed (i.e., one cut point and one cut ball) case.
As before, we first give the up-to-constants estimate for the mixed two-point probability.
\begin{lemma}\label{lem:up-mix-1}
	For all $V\in\Vc$, $z,w\in V$ with $|z-w|\ge e^{-n/6}$,
	\begin{equation}\label{eq:up-mix-1}
	\Pb\{ K_{3n/4}(z) \cap \Ac_n(w) \}\asymp_V |z-w|^{-\eta} e^{-5\eta n/4}. 
	\end{equation}
\end{lemma}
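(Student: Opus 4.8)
The plan is to run the path-decomposition argument behind Lemmas~\ref{lem:two-p} and~\ref{lem:Kutc} simultaneously. Since $z,w\in V\in\Vc$, both points lie at distance $\asymp_V 1$ from $0$ and from $\partial\Dc$, so after blow-up $z_n,w_n$ lie at distance $\asymp e^n$ from $0$ and from $\partial\Bc_n$, and the only genuinely small scale is $\rho:=|z_n-w_n|$, with $e^{5n/6}\le\rho\asymp e^n|z-w|\lesssim e^n$. In particular $\rho\ge e^{5n/6}$, so $w_n$ sits outside $\Bc_{5n/6}(z_n)$ up to a negligible boundary layer, and the mesoscopic cut-ball configuration around $z_n$ is geometrically decoupled from the cut point at $w_n$. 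The local configuration at $w_n$ is a genuine cut point (non-intersecting arms all the way down to the lattice scale), while that at $z_n$ is a cut ball (non-intersecting arms only down to scale $e^{3n/4}$, plus a middle excursion confined to $\Bc_{5n/6}(z_n)$).

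Concretely, first write $\{K_{3n/4}(z)\cap\Ac_n(w)\}$ as a disjoint union over (i) the walk forming the cut ball at $z_n$ before making the cut point at $w_n$, (ii) the reverse order, and (iii) a ``double-backtracking'' event in which $S$ enters the vicinity of $z_n$ at least twice with the cut point at $w_n$ produced in between. On (i) and (ii) one decomposes $S[0,\tau_n]$ along its first and last visits to $\Bc_{3n/4}(z_n)$, to $w_n$, to the spheres $\partial B(z_n,\rho/4)$ and $\partial B(w_n,\rho/4)$, and to $\partial B(v,ce^n)$ for a suitable center $v$, exactly as in Lemma~\ref{lem:two-p}. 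The mass of $S[0,\tau_n]$ restricted to the event is then the product of: a non-intersecting RW pair localized at $w_n$ over scales $1$ up to $\rho$ (mass $\asymp\rho^{-\xi}$); a non-intersecting RW pair for the two cut-ball arms at $z_n$ over scales $e^{3n/4}$ up to $\rho$, together with the cut-ball middle excursion $\omega\subset\Bc_{5n/6}(z_n)$ (the $\Psi_{n/4}$-type factor already appearing in Lemma~\ref{lem:Kutc}); a ``bridge'' path joining the two local regions at scale $\rho$ (mass $\asymp\rho^{2-d}$); a far-field non-intersecting RW pair over scales $\rho$ up to $e^n$; and the two connecting pieces, one from $0$ (mass $\asymp e^{(2-d)n}$, up to a logarithm when $d=2$) and one to $\partial\Bc_n$ (mass $\asymp 1$). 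For the lower bound one confines the non-localized pieces to well-chosen tubes and invokes the (reverse) separation lemmas; for the upper bound one drops the tube restrictions. Multiplying and using $a(z),a(w)\asymp_V 1$ yields the order $|z-w|^{-\eta}e^{-5\eta n/4}$ (up to a power of $n$ that is immaterial below and equal to $1$ when $d=3$). As a cross-check, the same order follows from Proposition~\ref{prop:AKAA}, Theorem~\ref{t:two_pt} and \eqref{eq:Gcut-zw} together with $f(n)\asymp n^{3-d}e^{3\eta n/4}$ from \eqref{eq:fn}.

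The step I expect to be the main obstacle is showing that sub-event (iii) is negligible, i.e.\ contributes only an $O(e^{-un})$ fraction of the total. This is handled as in Section~4.4 of \cite{mink_cont} and in the discussion following Proposition~\ref{prop:AKAA}: each additional excursion of $S$ back toward $z_n$ after the cut point at $w_n$ must perform an extra disconnection (priced by the disconnection exponent via Lemma~\ref{l:paths_discon_beurling} when $d=2$) or pick up an extra Green's-function factor (when $d=3$), each polynomially small in $e^{n}$, so the contribution is negligible compared with $|z-w|^{-\eta}e^{-5\eta n/4}$. The usual ``boundary'' cases do not occur since $z,w\in V\in\Vc$; for general $z,w$ the half-space exponent bound \eqref{eq:tildexi} would dispose of them exactly as in Lemma~\ref{lem:two-p}.
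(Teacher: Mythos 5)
Your proposal is correct and follows the same route as the paper: split $K_{3n/4}(z)\cap\Ac_n(w)$ according to which of the vicinity of $z_n$ or $w_n$ is reached first, decompose the walk along its first/last visits to $\Bc_{3n/4}(z_n)$, $w_n$, the intermediate spheres at scale $|z-w|e^n$, and a far-field sphere, then multiply the masses (cut-ball NI excursion plus $\Psi_{n/4}$-type middle piece at $z_n$, NI cut-point pair at $w_n$, a bridge at scale $|z-w|e^n$, a far-field NI pair, and two connecting pieces), with the lower bound coming from the separation lemmas and the upper bound from dropping constraints. Two small remarks: your sub-event (iii) (extra back-and-forth between the two regions) is indeed negligible for the reason you cite, but the paper's partition by whether the walk hits $\Bc_{3n/4}(z_n)$ or $w_n$ first already exhausts the event, so (iii) need not be carved out separately here (it is handled this way for Proposition~\ref{prop:AKAA}, not for this lemma); and as you note, the computation literally produces an extra $n^{3-d}$ factor, which is consistent with Lemma~\ref{lem:Kutc} and is carried through in the way this lemma is used downstream.
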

\begin{proof}
	Let $d_V=\dist(0,V,\partial\Dc)$. Then $d_V\ge |z-w|\ge e^{-n/6}$ from the definition of a ``nice'' box. We only indicate how to get the total mass from the path-decomposition point of view and omit the technical details for brevity.
	\begin{itemize}
		\item Let $(\gamma_1,\gamma_2)$ be sampled from the boundary-to-boundary excursion measure in $B (z_n,|z-w|e^n/4)\setminus \ol{\Bc_{3n/4}(z_n)}$, i.e., the measure $\nu^{B (z_n,|z-w|e^n/4)\setminus \ol{\Bc_{3n/4}(z_n)}}_{\partial\Bc_{3n/4}(z_n), \partial B(z_n,|z-w|e^n/4)}$, such that $\gamma_1\cap\gamma_2=\emptyset$, which has total mass $e^{2(d-2)\frac34 n} (|z-w|e^{n/4})^{-\xi}$.
		\item Let $(\gamma_3,\gamma_4)$ be NIRW's from $w_n$ to $\partial B(w_n, |z-w|e^n/4)$ with total mass $\asymp(|z-w|e^n)^{-\xi}$. 
		\item Let $\xi$ be the path connecting the endpoints of $\gamma_2$ and $\gamma_3$. Let $(\eta_1,\eta_2)$ be NIRW's from the endpoints of $\gamma_1$ and $\gamma_4$ respectively to $\partial B((z_n+w_n)/2,d_V e^n/2)$. Then, the triple $(\xi,\eta_1,\eta_2)$ has total mass $$\asymp (|z-w|e^n)^{-(d-2)} \left( \frac{d_V e^n}{|z-w|e^n} \right)^{-\xi}.$$
		\item Let $\lambda_1$ be sampled from the path measure from the endpoint of $\eta_1$ to $0$ and $\lambda_2$ be the SRW from the endpoint of $\eta_2$ to its first visit of $\partial\Bc_n$, and restrict $\lambda_1\cap\lambda_2=\emptyset$. Then the total mass of such $(\lambda_1,\lambda_2)$ is $\asymp c(V) e^{-(d-2)n}$.
		\item Let $\omega$ be sampled from the path measure between the endpoints of $(\gamma_1,\gamma_2)$ and restricted in $\Bc_{5n/6}(z_n)$, which has total mass $\asymp n^{3-d} e^{-\frac34 (d-2)n}$.
	\end{itemize}
Multiplying all of the total masses above, we get that the probability that the SRW visits $\Bc_{3n/4}(z_n)$ before $w_n$ and $K_{3n/4}(z) \cap \Ac_n(w)$ occurs is 
\begin{align*}
	&\asymp e^{2(d-2)\frac34 n} (|z-w|e^{n/4})^{-\xi} \times (|z-w|e^n)^{-\xi} \times (|z-w|e^n)^{-(d-2)} \Big( \frac{d_V e^n}{|z-w|e^n} \Big)^{-\xi} \\
	&\qquad \quad \;\;\;\qquad \qquad \qquad \qquad \times c(V) e^{-(d-2)n} \times n^{3-d} e^{-\frac34 (d-2)n}\\
	&\asymp_V |z-w|^{-\eta} e^{-5\eta n/4}.
\end{align*}
By symmetry, we also know that {the probability of the} event that a SRW visits $w_n$ before $\Bc_{3n/4}(z_n)$ and $K_{3n/4}(z) \cap \Ac_n(w)$ occurs has the probability on the same order. Thus, we finish the proof.
\end{proof}

Next, we show that the same estimate also holds for BM cut-ball event in place of SRW cut-ball event in \eqref{eq:up-mix-1}, which is more complicated since it involves both BM and SRW.
\begin{lemma}\label{lem:up-mix}
	Under the coupling \eqref{eq:se0}, for all $V\in\Vc$, $z,w\in V$ with $|z-w|\ge e^{-n/6}$,
	\begin{equation}\label{eq:up-mix}
		 \Pb\{ \widetilde K^{(n)}_{3n/4}(z)\cap \Ac_n(w) \} \asymp_V |z-w|^{-\eta} e^{-5\eta n/4}.
	\end{equation}
\end{lemma}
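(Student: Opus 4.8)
The plan is to obtain the estimate \eqref{eq:up-mix} by transferring the bound of Lemma~\ref{lem:up-mix-1} across the Skorokhod coupling \eqref{eq:se0}, exactly in the spirit of Proposition~\ref{prop:Ksim} but now in the presence of an additional cut point of the random walk at $w_n$. Since $\Pb\{ K_{3n/4}(z) \cap \Ac_n(w) \}\asymp_V |z-w|^{-\eta} e^{-5\eta n/4}$ already, it suffices to show that under \eqref{eq:se0} there exists $u>0$ with
\[
\Pb\big\{ \big(K_{3n/4}(z)\cap \Ac_n(w)\big)\,\triangle\, \big(\widetilde K^{(n)}_{3n/4}(z)\cap \Ac_n(w)\big) \big\} \lesssim_V |z-w|^{-\eta}\, e^{-5\eta n/4}\, e^{-un}.
\]
Because $\Ac_n(w)$ is itself a purely discrete event common to both sides of the symmetric difference, the difference is driven entirely by the discrepancy between $K_{3n/4}(z)$ and $\widetilde K^{(n)}_{3n/4}(z)$, and the whole analysis of Proposition~\ref{prop:Ksim} should go through verbatim, provided one carries along the extra conditioning $\Ac_n(w)$. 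First I would split, as in the proof of Proposition~\ref{prop:Ksim}, into $E_0:=\big(K_{3n/4}(z)\setminus\widetilde K^{(n)}_{3n/4}(z)\big)\cap \Ac_n(w)$ and its reverse, and then on the good coupling event $H$ reduce $E_0$ to the union of the four ``bad-annulus'' events $F_1,\dots,F_4$ from that proof (failure near $\partial\Bc_n$, the intermediate piece missing the shrunken ball, a near-miss of $\lambda_1^-$ and $\lambda_2^-$, and the intermediate piece escaping $\Bc_{5n/6-2e^{5n/8}}(z_n)$).

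The key point is that each of $F_1,\dots,F_4$ buys an extra factor $e^{-un}$ over the unconstrained cut-ball probability, by the gambler's ruin estimate (Lemma~\ref{l:trans_recur}), the Beurling estimate (Proposition~\ref{p:Beurling}), the Green's function estimate, or Lemma~3.1 of \cite{disconnect2dRW}, \emph{and} that this extra cost is localized near $z_n$, at a scale much smaller than $|z-w|e^n$. Therefore, by the strong Markov property, one may first decompose the random walk path according to its excursion structure near $z_n$ and near $w_n$ (exactly the path decomposition used in the proof of Lemma~\ref{lem:up-mix-1}), apply the $F_i$ estimate locally to the piece near $z_n$ picking up the $e^{-un}$, and bound the remaining pieces — including those responsible for the cut point at $w_n$ — by the same total masses as in Lemma~\ref{lem:up-mix-1}. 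Multiplying through gives $|z-w|^{-\eta} e^{-5\eta n/4}\, e^{-un}$, which is the desired bound. The reverse inclusion $\widetilde K^{(n)}_{3n/4}(z)\setminus K_{3n/4}(z)$ intersected with $\Ac_n(w)$ is handled symmetrically, using the Brownian analogues of the same estimates together with \eqref{eq:se0}.

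I would organize the write-up so that the cut point at $w$ is simply inert baggage: condition on the part of $\lambda$ responsible for $\Ac_n(w)$ (on the event that the walk's visit to the vicinity of $w_n$ happens, say, after the relevant excursion near $z_n$ — the other order is symmetric), and run the $F_i$ estimates on the independent (by the Markov property) piece of the walk that governs the cut-ball event near $z_n$. One subtlety is to make sure the two localized regions — the ball $\Bc_{5n/6}(z_n)$ and the ball $B(w_n,|z-w|e^n/4)$ — are genuinely disjoint, which holds because $|z-w|e^n \gg e^{5n/6}$ thanks to $|z-w|\ge e^{-n/6}$; this is precisely the reason the threshold $e^{-n/6}$ appears in the hypothesis, and it is the same mechanism used in Lemma~\ref{lem:up-mix-1}.

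The main obstacle, as in Proposition~\ref{prop:Ksim}, is the bookkeeping for $F_3$ (the near-collision of the two halves of the walk at the shrunken scale $e^{3n/4}\pm 2e^{5n/8}$): one must invoke the quantitative disconnection/near-intersection estimate (Lemma~3.1 of \cite{disconnect2dRW} for $d=2$, and the gambler's-ruin/Green's-function input for $d=3$) and check that it still yields an $e^{-un}$ gain after one also conditions on $\Ac_n(w)$ and on the separation properties of the configuration near $z_n$. This is not conceptually new — it is exactly the difficulty already resolved in the one-point version — but it requires care to confirm that conditioning on $\Ac_n(w)$, which affects a far-away region, does not interfere with the local estimate near $z_n$; the strong Markov property and the disjointness of the two localized balls make this routine once set up. The remaining three events are straightforward adaptations of their one-point counterparts.
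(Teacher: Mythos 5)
Your route is genuinely different from the paper's, and it runs into exactly the gap that the paper's Remark~\ref{sesa} is warning about. The paper does \emph{not} prove Lemma~\ref{lem:up-mix} by a symmetric-difference transfer from Lemma~\ref{lem:up-mix-1}. Instead it computes $\Pb\{ \widetilde K^{(n)}_{3n/4}(z)\cap \Ac_n(w) \}$ \emph{directly}: it decomposes the Brownian path $W[0,T_n]$ into pieces $\gamma_1,\dots,\gamma_4$ around $z_n$, and then invokes Lemma~\ref{lem:sem} — the joint Markov-type property available for the Skorokhod pair $(W,S)$ — to establish the conditional independence \eqref{eq:con-ind} between the $W$-pieces governing the cut ball at $z_n$ and the $S$-piece $S[\iota_1,\iota_2]$ governing the cut point at $w_n$. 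That conditional-independence step is the heart of the proof and you never invoke it.

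Here is where your proposal breaks down. Carrying the symmetric-difference argument of Proposition~\ref{prop:Ksim} over to the two-point setting is fine in one direction: for $E_0 = \big(K_{3n/4}(z)\setminus \widetilde K^{(n)}_{3n/4}(z)\big)\cap \Ac_n(w)$, the bad events $F_1,\dots,F_4$ near $z_n$ are pure $S$-events, $\Ac_n(w)$ is also a pure $S$-event, and the strong Markov property of $S$ alone does the decoupling, exactly as you say. But for the reverse direction $\big(\widetilde K^{(n)}_{3n/4}(z)\setminus K_{3n/4}(z)\big)\cap \Ac_n(w)$ you must account for the situation where the RW cut ball fails because $\lambda_1\cap\lambda_2\neq\emptyset$ while the BM cut ball holds. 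If you phrase the bad event near $z_n$ in terms of $W$ (``the Brownian pieces are disjoint but $\lambda_1,\lambda_2$ intersect''), you then need to combine a $W$-event near $z_n$ with the $S$-event $\Ac_n(w)$; the Markov property of $W$ alone, or of $S$ alone, does not decouple these — that requires the joint-process structure of Lemma~\ref{lem:sem}. If instead you phrase it as a pure $S$-event (``$\lambda_1$ and $\lambda_2$ intersect but can be made disjoint by a perturbation of size $e^{5n/8}$''), you need a quantitative bound with an extra factor $e^{-un}$ for this intersection near-miss for NIRW's — and Remark~\ref{sesa} says explicitly that this estimate is not available in $d=3$ (only $d=2$, via Lemma~5.5 of \cite{MR3547746}). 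Your write-up waves this off with ``the strong Markov property and the disjointness of the two localized balls make this routine once set up,'' but the near-$z_n$ and near-$w_n$ regions being disjoint is not the issue; the issue is that one of the two events lives on the $W$-marginal and the other on the $S$-marginal, and that is precisely the obstruction that forces the paper's different proof.

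So: the logic of ``Lemma~\ref{lem:up-mix-1} plus a symmetric-difference bound gives Lemma~\ref{lem:up-mix}'' is sound in the abstract, but proving the symmetric-difference bound by the method you outline is blocked in $d=3$. To salvage this route you would have to incorporate Lemma~\ref{lem:sem} to run the decomposition for the $\widetilde K\setminus K$ side in a $(W,S)$-joint way — at which point you are essentially reproducing the paper's direct computation — or supply a new intersection near-miss estimate for NIRW's in $d=3$, which the paper flags as open.
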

\begin{proof}
	In this case, we need to use a certain type of strong Markov property derived in Lemma~\ref{lem:sem} to decouple BM and SRW. More precisely, let us assume  that the BM and the SRW are coupled according to \eqref{eq:se0} throughout the proof. 
	We also assume that $W$ visits $\Dc_{3n/4}(z_n)$ before $S$ visits $w_n$. The case that $S$ visits $w_n$ before $W$ visits $\Dc_{3n/4}(z_n)$ can be addressed in a similar way.
	
	We decompose $W[0,T_n]$ in the following way. We refer to {Figure}~\ref{fig:mix} for an illustration.

	\begin{itemize}
		\item Let $\gamma_1$ be $W$ started from $0$ until its first visit of $\partial\Dc_{3n/4}(z_n)$.
		\item Let $\gamma_2$ be $W$ started from the endpoint of $\gamma_1$ until hitting $\partial D(z_n,|z-w|e^n/4)$, and we denote this hitting time by $\sigma$ below.
		\item Let $\gamma_3$ be $W$ started form the endpoint of $\gamma_2$ until hitting $\partial D(w_n,|z-w|e^n/4)$ after its last visit of $\Dc_{5n/8}(w_n)$.
		\item $\gamma_4$ be $W$ started form the endpoint of $\gamma_3$ until its exit of $\Dc_n$.
	\end{itemize}
	
	\begin{figure}[h!]
		\centering
		\includegraphics[width=.6\textwidth]{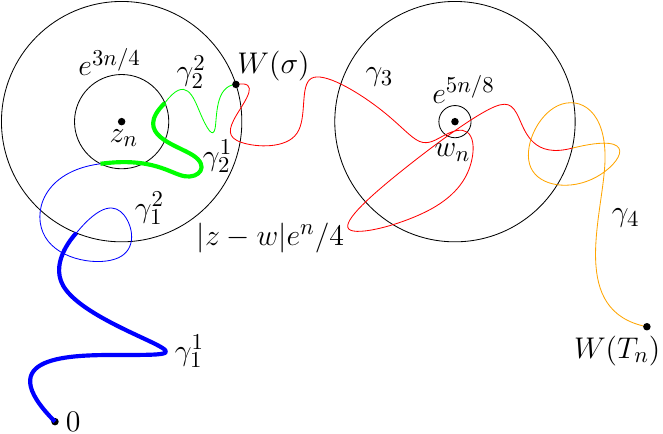}
		\caption{Proof of Lemma~\ref{lem:up-mix}. $\gamma_1$ is the concatenation of $\gamma^1_1$ (thick blue curve) and $\gamma_1^2$ (thin blue curve). $\gamma_2$ is the concatenation of $\gamma_2^1$ (thick green curve) and $\gamma^2_2$ (thin green curve). $\gamma_3$ is in red and $\gamma_4$ is in orange. The two big balls are of the same radius $|z-w|e^n/4$. The small ball around $z_n$ is of radius $e^{3n/4}$ and the small ball around $w_n$ is of radius $e^{5n/8}$. }
		\label{fig:mix}
	\end{figure}

	By Lemma~\ref{lem:sem} with $b=5/7>1/2+\eps$ with $\eps=1/8$, there exists an event $\Upsilon_{w,n}$ with
	\[
	\Pb\{  \Upsilon_{w,n}^c\cap \Ac_n(w) \}\lesssim e^{-10n},
	\]
	such that on $\Upsilon_{w,n}\cap \Ac_n(w)$, 
	\begin{equation}\label{eq:con-ind}
		\gamma_1\cup \gamma_2\cup \gamma_4\text{ and } S[\iota_1,\iota_2]\mbox{ 	are conditionally independent given $\gamma_3$,}
	\end{equation}
 where
	\[
	\iota_1=\inf\{ t> \sigma: S(t)\in B(w_n,|z-w|e^n/8) \},
	\]
	and 
	\[
	\iota_2=\inf\{ t>\tau_{w_n}: S(t)\in \partial B(w_n,|z-w|e^n/8) \}.
	\]
	If $w_n$ is a cut point for $S$, then it is also a cut point for $S[\iota_1,\iota_2]$, which satisfies 
	\begin{equation}\label{eq:AS}
		\Pb\{ w_n\in \Ac_{S[\iota_1,\iota_2]} \}\asymp (|z-w|e^n)^{-\eta}.
	\end{equation}
	Next, we compute the total mass of $(\gamma_1,\gamma_2,\gamma_4)$ that satisfies $\widetilde K^{(n)}_{3n/4}(z)$ as follows.
	\begin{itemize}
		\item Let $\gamma^1_1$ be the part of $\gamma_1$ started from $0$ stopped at its first visit of $\partial D(z_n,|z-w|e^n/4)$. Then the total mass of NI (non-intersecting) $(\gamma^1_1,\gamma_4)$ is  $\asymp_V \Big( \frac{d_V e^n}{|z-w|e^n} \Big)^{-\eta} $.
		\item Let $\gamma_2^1$ be the part of $\gamma_2$ from its starting point to its last visit of $\partial\Dc_{3n/4}(z_n)$. Let $\gamma_1^2=\gamma_1\setminus \gamma^1_1$ and $\gamma^2_2=\gamma_2\setminus\gamma_2^1$. By Lemma~\ref{lem:ex-ni}, the total mass of NI $(\gamma_1^2,\gamma^2_2)$ is
		\[
		\asymp e^{2(d-2)\frac34 n} \Big(\frac{|z-w|e^n}{e^{\frac34 n}}\Big)^{-\xi}\times (|z-w|e^n)^{-(d-2)}=|z-w|^{-\eta} e^{((d-2)/2-\xi/4)n}.
		\]
		\item The total mass of $\gamma_2^1\subseteq \Dc_{5n/6}(z_n)$ is $\asymp n^{3-d}e^{-\frac34 (d-2)n}$.
	\end{itemize}
	The multiplication of the above gives the total mass of admissible $(\gamma_1,\gamma_2,\gamma_4)$ that satisfies $\widetilde K^{(n)}_{3n/4}(z)$, which is  
	\[
	\asymp_V \Big( \frac{d_V}{|z-w|} \Big)^{-\eta} \cdot |z-w|^{-\eta} e^{((d-2)/2-\xi/4)n}
	\cdot n^{3-d}e^{-\frac34 (d-2)n}\asymp_V n^{3-d} e^{-\eta n/4}.
	\]
	Thus, by \eqref{eq:con-ind}, we can multiple the above by
	\eqref{eq:AS} to obtain the order of $|z-w|^{-\eta} e^{-5\eta n/4}$, which is exactly equal to the order of the probability on the left hand side of \eqref{eq:up-mix}. This concludes the proof of the lemma.
\end{proof}

\begin{remark}\label{sesa}
In Lemma~\ref{lem:up-mix}, we choose to use the Skorokhod embedding instead of the strong approximation (which provides with better error bounds that can facilitate some analysis when we prove Proposition \ref{prop:ct-3}) for a technical reason in the case $d=3$, which we now explain.

One core issue in the proof of Lemma~\ref{lem:up-mix} is to decouple the event around $z_n$ for $W$ and the event around $w_n$ for $S$ as \eqref{eq:con-ind}, or to obtain  a certain type of Markov property for the joint processes $(W,S)$, which is feasible with Skorokhod embedding but not with the strong approximation. 
Alternatively, one can try to bound $\Pb\{E_1\}$ by the probability of some event which is purely about $S$ or $W$, then one can obtain the strong Markov property from the marginal distribution of $S$ or $W$ solely. For example, the event $E_1$ implies that $\eta^1$ and $\eta^2$ intersect with each other but they can be made disjoint by perturbing them within a distance ${\cal K} n$ where $\eta^1$ and $\eta^2$ is obtained by applying a first-entry and last-exit decomposition of $\Bc_{3n/4}(z_n)$ for $S$, together with the event that $w_n$ is a cut point for $S$. These two events are all about $S$ and one can analyze it by the path-decomposition (or the strong Markov property) of $S$ as has been used many times in this paper. However, the problem is that under  this approach we do not know how to obtain the extra cost $O(e^{-un})$ from the requirement on $(\eta^1,\eta^2)$ when $d=3$, although we expect it should be true in some sense. In fact, this is true when $d=2$ by using Lemma 5.5 of \cite{MR3547746}.
\end{remark}

With up-to-constants estimates in hand, we are at the point to show a tight bound between the discrete and continuous cut-ball events, given a ``remote'' discrete cut point.

\begin{proposition}\label{prop:ct-3}
	Under the coupling \eqref{eq:se0}, for all $V\in\Vc$, $z,w\in V$ with $|z-w|\ge e^{-n/6}$,
	\begin{equation}\label{eq:K4}
		\Pb\{ K_{3n/4}(z) \cap \Ac_n(w) \} 
		\simeq_V \Pb\{ \widetilde K^{(n)}_{3n/4}(z)\cap \Ac_n(w) \}.
	\end{equation}
\end{proposition}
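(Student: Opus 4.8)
The proof should follow the template of Proposition~\ref{prop:Ksim}, but with the SRW now conditioned to have a cut point at $w_n$; the new ingredient is the decoupling of the behavior near $z_n$ from the behavior near $w_n$ via the Skorokhod embedding, exactly as in Lemma~\ref{lem:up-mix}. By Lemmas~\ref{lem:up-mix-1} and~\ref{lem:up-mix}, both sides of \eqref{eq:K4} are of order $|z-w|^{-\eta} e^{-5\eta n/4}$, so it suffices to prove that, under the coupling \eqref{eq:se0},
\[
\Pb\big\{ K_{3n/4}(z)\,\triangle\, \widetilde K^{(n)}_{3n/4}(z) \ ;\ \Ac_n(w) \big\} \lesssim_V |z-w|^{-\eta}\, e^{-5\eta n/4}\, e^{-un}
\]
for some $u>0$. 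As in Proposition~\ref{prop:Ksim}, I would only treat $E_0 := \big(K_{3n/4}(z)\setminus \widetilde K^{(n)}_{3n/4}(z)\big)\cap\Ac_n(w)$, the other difference being symmetric. Split according to whether (i) $W$ visits $\Dc_{3n/4}(z_n)$ before $S$ visits $w_n$, or (ii) the reverse; I would write out (i), as (ii) is handled identically with the roles swapped.

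\textbf{Key steps.} First, on the event $H$ of \eqref{eq:se0}, $E_0$ forces one of finitely many ``bad'' sub-events $F_1,\dots,F_4$ as in the proof of Proposition~\ref{prop:Ksim}: either the SRW overshoots a mesoscopic ball near its exit point, or the intermediate part $\omega$ of the SRW decomposition fails to enter the slightly shrunk ball $B_-$, or the two first-entry/last-exit arms $\lambda_1^-,\lambda_2^-$ come within $e^{11n/16}$ of each other, or the intermediate part $\omega^+$ leaves the slightly enlarged outer ball. Second — and this is where the new work lies — I would argue, exactly as in Lemma~\ref{lem:up-mix}, that on the event $\Upsilon_{w,n}$ provided by Lemma~\ref{lem:sem} (applied with $b=5/7$, $\eps=1/8$), the portion of $W$ describing the $z_n$-cut-ball configuration and the portion $S[\iota_1,\iota_2]$ describing the $w_n$-cut-point configuration are conditionally independent given the ``connecting'' excursion $\gamma_3$ of $W$. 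The cut-point-at-$w_n$ part always contributes a factor $\asymp (|z-w|e^n)^{-\eta}$ (cf. \eqref{eq:AS}), uniformly over the conditioning, so it factors out of every estimate. Third, for each $F_i$ I would re-run the path-decomposition bound from Proposition~\ref{prop:Ksim} on the $z_n$-side configuration of $W$ (or of $S$, for those $F_i$ phrased in terms of $S$), harvesting the extra $O(e^{-un})$ via the gambler's ruin estimate (Lemma~\ref{l:trans_recur}) when $d=3$, the Beurling estimate (Proposition~\ref{p:Beurling}) when $d=2$, the Green's-function bound near $\partial\Bc_n$ for $F_2$, and Lemma~3.1 of \cite{disconnect2dRW} for the near-intersection event $F_3$; this matches \eqref{eq:Ei}. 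Multiplying the $z_n$-side saving against the uniform $w_n$-side factor $(|z-w|e^n)^{-\eta}$ and against the order-$|z-w|^{-\eta}e^{-\eta n/4}\cdot n^{3-d}$ total mass of the connecting pieces (as computed in Lemma~\ref{lem:up-mix}) yields the claimed bound; combined with Lemmas~\ref{lem:up-mix-1} and~\ref{lem:up-mix} this gives \eqref{eq:K4}.

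\textbf{Main obstacle.} The delicate point is carrying the conditional-independence structure of Lemma~\ref{lem:sem} through the triage into $F_1,\dots,F_4$: the sub-events $F_i$ are defined jointly in terms of $S$ and $W$, while the ``$z_n$-side'' estimates want to speak only about one process near $z_n$ and the ``$w_n$-side'' estimate wants to speak only about $S$ near $w_n$. One must choose the decomposition of $W[0,T_n]$ (as $\gamma_1\oplus\gamma_2\oplus\gamma_3\oplus\gamma_4$, with $\gamma_3$ the connecting excursion through the $w_n$-region) so that each $F_i$ is expressible in terms of $\gamma_1\cup\gamma_2\cup\gamma_4$ plus a purely-$S$ statement, and then invoke \eqref{eq:con-ind} to split the expectation. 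The reason the Skorokhod embedding rather than the strong approximation (Corollary~\ref{cor:sa}) is used here is precisely this joint Markov property, which fails for the strong coupling — see Remark~\ref{sesa}. A secondary nuisance is the $d=3$ case of $F_3$-type events, where, as noted in Remark~\ref{sesa}, the naive purely-$S$ approach does not visibly produce the $O(e^{-un})$ gain; routing through the $W$-side excursion estimate via Lemma~\ref{lem:ex-ni} and the freezing lemma (Lemma~\ref{l:freezing}) circumvents this, just as in the proof of Proposition~\ref{prop:sharp-E}.
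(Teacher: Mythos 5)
Your proposal follows the same overall strategy as the paper: reduce, via Lemmas~\ref{lem:up-mix-1} and~\ref{lem:up-mix}, to the bound $\Pb\{E_i\}\lesssim_V |z-w|^{-\eta}e^{-5\eta n/4}e^{-un}$; split the symmetric-difference event into finitely many ``bad'' sub-events; invoke Lemma~\ref{lem:sem} to decouple the $z_n$-side configuration from the $w_n$-side cut-point factor $\asymp(|z-w|e^n)^{-\eta}$; and harvest the $O(e^{-un})$ saving on the $z_n$-side via the usual gambler's ruin/Beurling/freezing estimates. You also correctly identify the role of the Skorokhod embedding over the KMT coupling (Remark~\ref{sesa}), and the need to avoid a purely-$S$ near-intersection estimate in $d=3$.

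The one tactical point where your exposition diverges from the paper --- and where, as written, it would need tightening --- is the choice of triage. You propose to reuse the sub-events $F_1,\dots,F_4$ from the proof of Proposition~\ref{prop:Ksim}, which are all defined in terms of the SRW path $\lambda=S[0,\tau_n]$ and its first-entry/last-exit decomposition. But estimating $\Pb\{E_0\cap F_i\cap\Ac_n(w)\}$ in this form is the ``purely-$S$'' route that Remark~\ref{sesa} explicitly warns cannot currently be pushed through in $d=3$ (the near-intersection gain $O(e^{-un})$ for $(\eta^1,\eta^2)$ is not available). The paper avoids this from the outset: it defines the bad sub-events (a), (b), (c) directly in terms of the Brownian decomposition pieces $\gamma_1^1,\gamma_4,\gamma_1^2,\gamma_2^2,\gamma_2^1,\gamma_1,\gamma_3$ from Lemma~\ref{lem:up-mix}, so that the decoupling in \eqref{eq:con-ind} applies verbatim and the $z_n$-side saving is extracted from $W$-pieces alone. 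Your ``Main obstacle'' paragraph in effect rediscovers this --- you observe that each $F_i$ should be re-expressed as a $\gamma_1\cup\gamma_2\cup\gamma_4$-statement plus a purely-$S$ statement, which is precisely the paper's (a)--(c) list --- so the two routes converge. To turn your sketch into a proof one should skip the $F_1,\dots,F_4$ layer entirely and formulate the bad events in $W$-language from the start, as the paper does; the detour through the SRW-centric $F_i$ is not wrong in spirit, but it introduces an unnecessary re-expression step and, if taken literally, re-opens the $d=3$ difficulty that the $W$-side formulation exists to sidestep.
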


\begin{proof}
	We denote
	\[
	E_1=\Big(K_{3n/4}(z) \cap \Ac_n(w)\Big)\setminus \Big(\widetilde K^{(n)}_{3n/4}(z)\cap \Ac_n(w)\Big), \quad 
	E_2=\Big(\widetilde K^{(n)}_{3n/4}(z)\cap \Ac_n(w)\Big) \setminus \Big(K_{3n/4}(z) \cap \Ac_n(w)\Big).
	\]
	By Lemmas~\ref{lem:up-mix-1} and~\ref{lem:up-mix}, we only need to show for $i=1,2$ there exists $u>0$ such that 
	\begin{equation}\label{eq:Ei2}
	\Pb\{ E_i \}\lesssim_V |z-w|^{-\eta} e^{-5\eta n/4} e^{-un}.
	\end{equation}
    Let us deal with $E_2$ for an illustration. We will only sketch the proof, since it has a similar flavor to that of Proposition~\ref{prop:Ksim}.
	
	We use the notation introduced in the proof of Lemma~\ref{lem:up-mix}.
	Under the coupling \eqref{eq:se0}, on the event $E_1$, one of the following events will occur: \begin{enumerate}
	\item[(a)] the NI pairs $(\gamma^1_1,\gamma_4)$ and $(\gamma_1^2,\gamma^2_2)$ get close to each other within distance $O(e^{5n/8})$;
	\item[(b)] $\gamma_2^1$ get close to $\partial\Dc_{5n/6}(z_n)$ within distance $O(e^{5n/8})$;
	\item[(c)] the NI pair $(\gamma_1,\gamma_3)$ get close to each other within distance $O(e^{5n/8})$. 
\end{enumerate}
Now, using the same strategy as that of Proposition~\ref{prop:Ksim}, we can always obtain an extra cost $O(e^{-un})$ from (a) (b) and (c), respectively. Thus, we conclude that \eqref{eq:Ei2} holds for $E_1$. The event $E_2$ can be analyzed in a similar way. We thus finish the proof of this proposition.
\end{proof}

\section{Convergence of measures  \label{sec:measure_conv}}
In this section, we put everything together and prove Theorem~\ref{thm:rw_occ} and Theorem~\ref{thm:vg}. 

We begin with the following proposition.
\begin{proposition}\label{prop:vgtv}
	Under the coupling of \eqref{eq:se0}, there exists $u>0$ such that for all $V\in \Dc$ and $\dist(0,V,\partial \Dc)\ge e^{-n/6}$, 
	\begin{equation}\label{eq:vgtv}
		\Eb[|\nu_n(V)-\wt \nu_{n/4}(V)|^2]=O_V(e^{-un}).
	\end{equation}
\end{proposition}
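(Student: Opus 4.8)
The plan is to expand the $L^2$-norm and reduce \eqref{eq:vgtv} to matching the first and second moments of $\nu_n$ and $\wt\nu_{n/4}$, and then to invoke the precise one- and two-point asymptotics established earlier in the paper together with the discrete/continuous cut-ball comparison. Concretely, writing
\[
\Eb[|\nu_n(V)-\wt\nu_{n/4}(V)|^2]=\Eb[\nu_n(V)^2]-2\Eb[\nu_n(V)\wt\nu_{n/4}(V)]+\Eb[\wt\nu_{n/4}(V)^2],
\]
the last term is $O_V(e^{-un})$-close to $\Eb[\nu(V)^2]=\int_V\int_V G^{\cut}_{\Dc}(z,w)\,dz\,dw$ by Theorem~\ref{thm:tvv} (applied with $s=n/4$, which is legitimate since $\dist(0,V,\partial\Dc)\ge e^{-n/6}\ge e^{-2(n/4)/3}$). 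So the task is to show the other two terms are also $\int_V\int_V G^{\cut}_{\Dc}(z,w)\,dz\,dw\,[1+O_V(e^{-un})]$.

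First I would handle $\Eb[\nu_n(V)^2]$. By \eqref{eq:def_rwocc},
\[
\Eb[\nu_n(V)^2]=c_1^2 e^{-2n(2-\xi)}\sum_{x,y\in\eZn\cap V}\Pb\{x\in\Ac_\lambda,\ y\in\Ac_\lambda\},
\]
and after blowing up by $e^n$ this is $c_1^2 e^{2\eta n}e^{-2dn}\sum_{x_n,y_n}\Pb\{\Ac_n(x)\cap\Ac_n(y)\}$ where the sum runs over lattice points of $e^nV$. I would split the sum according to whether $|x-y|\ge e^{-n/6}$ or not. On the ``far'' range, Theorem~\ref{t:two_pt} gives $c_1^2 e^{2\eta n}\Pb\{\Ac_n(x)\cap\Ac_n(y)\}\simeq_V G^{\cut}_{\Dc}(x,y)$, and since $e^{-dn}\sum_{x_n}\to\int_V dx$ with the usual Riemann-sum control (using $G^{\cut}_{\Dc}(z,w)\asymp_V|z-w|^{-\eta}$ from \eqref{eq:Gcut-zw} and $\eta<d$ to justify uniform integrability of the singularity, together with Lemma~\ref{lem:continuous-G} for the continuity needed to replace $G^{\cut}_{\Dc}(x^{(n)},y^{(n)})$ by $G^{\cut}_{\Dc}(x,y)$), this contributes $\int_V\int_V G^{\cut}_{\Dc}(z,w)1_{|z-w|\ge e^{-n/6}}\,dz\,dw\,[1+O_V(e^{-un})]$, which is $\Eb[\nu(V)^2][1+O_V(e^{-un})]$ again by \eqref{eq:Gcut-zw}. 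On the ``near'' range $|x-y|<e^{-n/6}$, I would bound $\Pb\{\Ac_n(x)\cap\Ac_n(y)\}$ crudely via Lemma~\ref{lem:two-p} (two-point upper bound $O(r_1^{-\eta}r_2^{-\eta})$ after recentering at $V$, so effectively $O_V(|x-y|^{-\eta})$ times the one-point normalization) and sum; since $\int_V\int_V|z-w|^{-\eta}1_{|z-w|<e^{-n/6}}\,dz\,dw=O_V(e^{-(d-\eta)n/6})$ because $\eta<d$, this piece is negligible.

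The main work is $\Eb[\nu_n(V)\wt\nu_{n/4}(V)]$, the cross-term, and this is exactly what Proposition~\ref{prop:vgtv} is positioned to feed into. Here
\[
\Eb[\nu_n(V)\wt\nu_{n/4}(V)]=c_1\,e^{-n(2-\xi)}\int_V\sum_{x\in\eZn\cap V}\Eb[\delta_x\text{-term}\cdot L_{n/4}(w)]\,dw,
\]
and unwinding the definitions of $\nu_n$ and $L_{n/4}$ (recall $L_s(w)=c_*\Qf^*[\Psi_s]^{-1}e^{\eta s}1_{\wt K_s(w)}$) the summand is a constant multiple of $\Pb\{\Ac_n(x)\cap\wt K^{(n)}_{3n/4}(w)\}$, using the scaling identity $\wt K_{n/4}(w)=\wt K^{(n)}_{3n/4}(w)$ and the fact that under the coupling \eqref{eq:se0} the rescaled Brownian path is $e^{-n}W[0,T_n]$. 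Now Proposition~\ref{prop:ct-3} replaces $\Pb\{\wt K^{(n)}_{3n/4}(w)\cap\Ac_n(x)\}$ by $\Pb\{K_{3n/4}(w)\cap\Ac_n(x)\}[1+O_V(e^{-un})]$ (valid on the far range $|x-w|\ge e^{-n/6}$), then Proposition~\ref{prop:AKAA} converts this into $f(n)\Pb\{\Ac_n(w)\cap\Ac_n(x)\}[1+O_V(e^{-un})]$, and Corollary~\ref{cor:sharp-f} pins down $f(n)\simeq c_1 c_*^{-1}\Qf[\Psi_{n/4}]e^{3\eta n/4}$ — I should double-check here that the normalization constants $c_1,c_*,\Qf[\Psi_{n/4}]$ coming from $L_{n/4}$, $\nu_n$, and $f(n)$ cancel exactly (up to $[1+O(e^{-un})]$), which is the bookkeeping heart of the argument; note the $\Qf$ vs $\Qf^*$ discrepancy between $L_s$ and $f(n)$ will need reconciling, presumably via a reverse/forward version of Lemma~\ref{lem:QPA}. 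Having done that cancellation, the cross-term collapses to the same Riemann sum $c_1^2 e^{2\eta n}e^{-2dn}\sum_{x_n,w_n}\Pb\{\Ac_n(x)\cap\Ac_n(w)\}1_{|x-w|\ge e^{-n/6}}$ as in the $\Eb[\nu_n(V)^2]$ computation, hence equals $\Eb[\nu(V)^2][1+O_V(e^{-un})]$. The near range $|x-w|<e^{-n/6}$ is again dispatched by the crude two-point-type upper bound (Lemma~\ref{lem:up-mix-1}) and $\eta<d$. Combining the three expansions, the $\Eb[\nu(V)^2]$ terms cancel to leading order and what remains is $O_V(e^{-un})$, which is \eqref{eq:vgtv}.

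The step I expect to be the main obstacle is the exact constant bookkeeping in the cross-term: making sure that the chain Proposition~\ref{prop:ct-3} $\to$ Proposition~\ref{prop:AKAA} $\to$ Corollary~\ref{cor:sharp-f}, combined with the explicit normalizations $c_1 e^{-n(2-\xi)}$ in $\nu_n$ and $c_*\Qf^*[\Psi_{n/4}]^{-1}e^{\eta n/4}$ in $L_{n/4}$, really does produce the coefficient $c_1^2 e^{2\eta n}$ in front of $\Pb\{\Ac_n\cdot\cap\Ac_n\cdot\}$ and not some spurious $s$-dependent factor; the $\Qf^*$/$\Qf$ and $c_*$ factors have to telescope, and this requires being careful with the various ``$\simeq$'' error terms (all of the form $[1+O(e^{-un})]$, but with possibly different $u$'s which one replaces by their minimum) and with the harmless polynomial-in-$n$ prefactors $n^{3-d}$ that appear in $f(n)$ and in $\Qf[\Psi_{n/4}]$ and must cancel. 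A secondary technical point is the uniformity in $V$ of the Riemann-sum approximation near the diagonal, which relies on $\eta_d<d$ for $d=2,3$ — true since $\eta_2=5/4$ and $\eta_3<2$ — and on the two-point upper bounds being genuinely integrable against $dz\,dw$ over $V\times V$.
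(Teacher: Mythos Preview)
Your proposal is correct and follows essentially the same route as the paper: expand the square, handle $\Eb[\wt\nu_{n/4}(V)^2]$ via Theorem~\ref{thm:tvv}, handle $\Eb[\nu_n(V)^2]$ via Theorem~\ref{t:two_pt} plus a near-diagonal bound (the paper uses Lemma~\ref{lem:boundary} directly; your route via Lemma~\ref{lem:two-p} is equivalent), and handle the cross-term by the chain Proposition~\ref{prop:ct-3} $\to$ Proposition~\ref{prop:AKAA} $\to$ Corollary~\ref{cor:sharp-f} $\to$ Theorem~\ref{t:two_pt}, with Lemma~\ref{lem:continuous-G} justifying the Riemann-sum step.

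Two small remarks. First, your worry about $\Qf$ versus $\Qf^*$ in the constant bookkeeping is well spotted, but it is only a typo in the paper: the $\Qf[\Psi_{n/4}]$ appearing in Corollary~\ref{cor:sharp-f} and in the paper's own proof of this proposition should read $\Qf^*[\Psi_{n/4}]$ throughout (trace it back through the proof of Corollary~\ref{cor:sharp-f}, which invokes Proposition~\ref{prop:one-point}, where only $\Qf^*$ appears). With that correction the constants telescope exactly as you anticipate, and the $n^{3-d}$ factors cancel between $f(n)$ and $\Qf^*[\Psi_{n/4}]$. Second, for the near-diagonal piece of the cross-term, Lemma~\ref{lem:up-mix-1} is not quite the right citation since it assumes $|z-w|\ge e^{-n/6}$; the paper simply bounds $\Pb\{\Ac_n(z)\cap\wt K^{(n)}_{3n/4}(w)\}\le\Pb\{\Ac_n(z)\}$, integrates the one-point function over $V$, and uses $e^{-dn/6}e^{-\eta n}\ll n^{3-d}e^{-5\eta n/4}$ (equivalent to $\eta<2d/3$), which is even cruder than what you propose.
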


\begin{proof}[Proof of Theorem \ref{thm:rw_occ}]
The claim follows from Proposition \ref{prop:vgtv} immediately since by Theorem~\ref{thm:tvv} we have $\Eb [ \wt \nu_{n/4}(V)^2 ] \simeq \Eb[\nu(V)^2]$ .
\end{proof}

\begin{proof}[Proof of Proposition \ref{prop:vgtv}]
	All the implied constants in the proof can depend on $V$.
	Combining Proposition~\ref{prop:ct-3} with Proposition~\ref{prop:AKAA}, we have for all $z,w\in V$ with $|z-w|\ge e^{-n/6}$, 
	\begin{equation}\label{eq:8-1}
		 \Pb \{ \Ac_n(z) \cap \widetilde K^{(n)}_{3n/4}(w) \} 
		  \simeq \Pb\{ \Ac_n(z) \cap K_{3n/4}(w) \} 
		 \simeq f(n)\, \Pb \{ \Ac_n(z) \cap \Ac_n(w) \}.
	\end{equation}
It then follows from Theorem~\ref{t:two_pt} and Corollary~\ref{cor:sharp-f} that for $|z-w|\ge e^{-n/6}$,
\[
 \Pb \{ \Ac_n(z) \cap  \widetilde K^{(n)}_{3n/4}(w) \} 
 \simeq f(n) c_1^{-2} e^{-2\eta n} G^{\cut}_{\Dc}(z,w)\simeq 
 c_1^{-1} c_*^{-1} G^{\cut}_{\Dc}(z,w) \Qf[\Psi_{n/4}] n^{3-d} e^{-5\eta n/4}.
\]
	Using the above estimates,
	\begin{align*}
		&\  \int_{V}\int_{V} \Pb \{ \Ac_n(z) \cap  \widetilde K^{(n)}_{3n/4}(w) \} \, dz\, dw  \\
		\ge &\  \int_{V}\int_{V} \Pb \{ \Ac_n(z) \cap  \widetilde K^{(n)}_{3n/4}(w) 1_{ |z-w|\ge e^{-n/6} } \} \, dz\, dw \\
		\simeq &\  c_1^{-1} c_*^{-1} \Qf[\Psi_{n/4}] n^{3-d} e^{-5\eta n/4}  
		\int_{V}\int_{V} G^{\cut}_{\Dc}(z,w) 1_{\{ |z-w|\ge e^{-n/6} \}} \, dz\, dw \\
		\simeq &\  c_1^{-1} c_*^{-1}  \Eb[\nu(V)^2] \,
		\Qf[\Psi_{n/4}] \, e^{-5\eta n/4}  \\
		\gtrsim &\ n^{3-d} e^{-5\eta n/4}.
	\end{align*}
	On the other hand,
	\begin{align*}
		&\int_{V}\int_{V} \Pb \{ \Ac_n(z) \cap \widetilde K^{(n)}_{3n/4}(w) \} 1_{\{ |z-w|\le e^{-n/6} \}} \, dz\, dw \\
		\lesssim\, & e^{-dn/6} \int_{V} \Pb \{ \Ac_n(z) \} dz
		\lesssim e^{-dn/6} e^{-\eta n} \int_{V} G^{\cut}_{\Dc}(z) dz
		\lesssim e^{-dn/6-\eta n}.
	\end{align*}
	Since {$\eta=\eta_d<2d/3$ for both $d=2,3$}, we have $d/6+\eta>5\eta/4$ and the integral over the near diagonal $\{ |z-w|\le e^{-n/6} \}$ only comprises an exponentially small portion {in the integral}. Therefore,
	\begin{align}
		\label{eq:and}
		\int_{V}\int_{V} \Pb \{ \Ac_n(z) \cap \widetilde K^{(n)}_{3n/4}(w) \} \, dz\, dw 
		\simeq c_1^{-1} c_*^{-1}  \Eb[\nu(V)^2] \,
		\Qf[\Psi_{n/4}] \, e^{-5\eta n/4}.
	\end{align}
	Recalling \eqref{eq:def_rwocc} for the definition of $\nu_n$, it follows that 
	\begin{align*}
		\Eb [ \nu_n(V) \wt \nu_{n/4}(V) ]&=c_1 e^{\eta n} c_* \Qf[\Psi_{n/4}]^{-1} e^{\eta n/4} \int_{V}\int_{V} \Pb \{ \Ac_n(z) \cap \widetilde K^{(n)}_{3n/4}(w) \} \, dz\, dw
		\simeq \Eb[\nu(V)^2].
	\end{align*}
    Next, we deal with $\Eb[\nu_n(V)^2]$. Write 
    \[ 
    \Eb[\nu_n(V)^2] = \sum_{z\in V^{(n)}}\sum_{w\in V^{(n)}} c_1^2 e^{-2(2-\xi)n} \Pb\{ \Ac_n(z) \cap \Ac_n(w) \}.
    \]
    By Lemma~\ref{lem:boundary}, we have 
    \[
    \sum_{z\in V^{(n)}}\sum_{w\in V^{(n)}} c_1^2 e^{-2(2-\xi)n} 1_{ |z-w|\le e^{-n/6} } \,  \Pb\{ \Ac_n(z) \cap \Ac_n(w) \} = O(e^{-(2-\xi)n/6}),
    \]
    and by Theorem~\ref{t:two_pt},
    \begin{align*}
    &\ \sum_{z\in V^{(n)}}\sum_{w\in V^{(n)}} c_1^2 e^{-2(2-\xi)n} 1_{ |z-w|\ge e^{-n/6} } \,  \Pb\{ \Ac_n(z) \cap \Ac_n(w) \} \\
    =&\ \sum_{z\in V^{(n)}}\sum_{w\in V^{(n)}} c_1^2 e^{-2(2-\xi)n} 1_{ |z-w|\ge e^{-n/6} } \, c_1^{-2} e^{-2\eta n} G^{\cut}_{\Dc}(z,w) \\
    \simeq&\ \int_{V}\int_{V} G^{\cut}_{\Dc}(z,w) 1_{\{ |z-w|\ge e^{-n/6} \}} \, dz\, dw,
    \end{align*}
where we use the Riemann sums to approximate the integral in the last line by applying the continuity result for the cut-point Green's function obtained in Lemma~\ref{lem:continuous-G}. It then follows that $\Eb[\nu_n(V)^2]\simeq \Eb[\nu(V)^2]$.
    Moreover, we have
	$\Eb [ \wt \nu_{n/4}(V)^2 ] \simeq \Eb[\nu(V)^2]$ by Theorem~\ref{thm:tvv}. Therefore, we conclude the proof of the proposition by splitting \eqref{eq:vgtv} in a similar fashion as in the proof of Theorem~\ref{thm:tvv} and noting that they cancel out since each of them is close to $\Eb[\nu(V)^2]$.
\end{proof}

We continue with the proof of Theorem~\ref{thm:vg}.
\begin{proof}[Proof of Theorem~\ref{thm:vg}]
	We first show \eqref{eq:nug}. Recall that as in the statement of  Theorem~\ref{thm:vg}, the function $g$ is continuous on $\overline\Dc$, for any $m\ge 1$, there exists $k_m\ge m$ such that 
	\[
	\sup_{|x-y|\le 4\cdot 2^{-k_m}} |g(x)-g(y)|\le 2^{-m}.
	\]
	Write $k$ for $k_m$ for brevity. Let $\{E_1, E_2,\cdots,E_{L_k}\}$ be the set of dyadic box of side length $2^{-k}$ in $\Dc$ that are contained in $\Dc$. Let $H_k$ be the union of these boxes. Then, for some constant $c>0$,
	\[
	\Dc\setminus H_k \subseteq B(c2^{-k})\cup (\Dc\setminus B(1-c2^{-k})).
	\]
	By Lemma~\ref{lem:boundary},
	\[
	\Eb [ \nu_n(\Dc\setminus H_k)^2 ]\lesssim 2^{-k/2}.
	\]
	
	Define $g_k:=\sum_{i=1}^{L_k} g(x_i) 1_{E_i}$ where $x_i$ is the center of $E_i$. Therefore,
	\begin{align*}
		\Eb( \nu_n(g) - \nu_n(g_k) )^2 &\le \Eb( \nu_n(g) - \nu_n(g 1_{H_k}) )^2 + \Eb( \nu_n(g 1_{H_k}) - \nu_n(g_k) )^2\\
		& \lesssim \|g\|^2 2^{-k/2} + 2^{-2m} \Eb ( \nu_n(\Dc) )^2 \\
		&\lesssim 2^{-k/2} + 2^{-2m},
	\end{align*}
where we use \eqref{eq:eb-3} to bound $\Eb ( \nu_n(\Dc) )^2=O(1)$ in the last inequality and we allow the implied constant to depend on $g$. It is easy to see that the same estimate also holds with $\nu_n$ replaced by $\nu$, i.e., 
\[
\Eb( \nu(g) - \nu(g_k) )^2 \lesssim 2^{-k/2} + 2^{-2m}.
\]
Using Theorem~\ref{thm:rw_occ},
\[
\Eb( \nu_n(g_k) - \nu(g_k) )^2 \lesssim \sum_{i=1}^{L_k} \Eb (\nu_n(E_i)-\nu(E_i))^2
\lesssim \sum_{i=1}^{L_k} c(E_i) e^{-un},
\]
where $c(E_i)$ is a constant depends on $E_i$. Three estimates above together yields
\[
\Eb[ (\nu_n(g)-\nu(g))^2 ] \lesssim 2^{-k/2} + 2^{-2m} + \sum_{i=1}^{L_k} c(E_i) e^{-un}.
\]
Letting $n$ tend to infinity first and $m$ tend to infinity afterwards and noticing that $k\ge m$, we complete the proof of \eqref{eq:nug}.

Finally, we show that $\nu_n$ converges in probability for the weak topology towards $\nu$. Given the kind of $L^2$ convergence that we have proven, we can readily infer this by using the portmanteau theorem, and the fact that $\nu$ is absolutely continuous with respect to Lebesgue measure by Theorem~\ref{thm:mink}. As the Portmanteau theorem is usually stated for probability measures, we refer the reader to Section 6 of \cite{MR3652040}\footnote{Note that although \cite{MR3652040} deals with Gaussian Mulplicative Chaos, the argument therein is not GMC-specific.} for a proof of the same flavor.
\end{proof}

\bibliography{refs.bib}
\end{document}